 \theoremstyle{plain}
 \newtheorem{lemma}{Lemma}[section]
\newtheorem{theorem}[lemma]
{Theorem }
\newtheorem{corollary}[lemma]
{Corollary}
\newtheorem{prop}[lemma]{Proposition}
\newtheorem*{theo}{Theorem}
\theoremstyle{definition}
\newtheorem{definition}[lemma]{Definition }
\newtheorem{ex}[lemma]
{Example }
\newtheorem{rmk}[lemma]{Remark}
\newcommand{\lgw}{\longrightarrow}
\newcommand{\lgm}{\longmapsto}
\newcommand{\s}{\sigma}
\renewcommand{\)}{)\!)}
\newcommand{\ovl}{\overline}
\newcommand{\Frac}{\operatorname{Frac}}
\newcommand{\La}{\Lambda}
\renewcommand{\deg}{\operatorname{deg}}
\newcommand{\ord}{\operatorname{ord}}
\newcommand{\Ker}{\operatorname{Ker}}
\newcommand{\alg}{\operatorname{alg}}
\renewcommand{\Im}{\operatorname{Im}}
\newcommand{\lb}{\llbracket}
\newcommand{\rb}{\rrbracket}
\newcommand{\wdh}{\widehat}
\newcommand{\udl}{\underline}
\newcommand{\NP}{\operatorname{NP}}
\renewcommand{\th}{\theta}
\newcommand{\p}{\mathfrak{p}}
\newcommand{\ini}{\operatorname{in}}
\newcommand{\wdt}{\widetilde}
\newcommand{\x}{\mathbf{x}}
\newcommand{\y}{\mathbf{y}}
\newcommand{\w}{\mathbf{w}}
\renewcommand{\l}{\lambda}
\newcommand{\fg}{\operatorname{fg}}
\renewcommand{\L}{\mathbb{L}}
\renewcommand{\O}{\mathcal{O}}
\newcommand{\Spec}{\operatorname{Spec}}
\newcommand{\KK}{\mathcal{K}}
\newcommand{\m}{\mathfrak{m}}
\newcommand{\Z}{\mathbb{Z}}
\newcommand{\Supp}{\operatorname{Supp}}
\newcommand{\V}{\mathcal{V}}
\newcommand{\Rel}{\operatorname{Rel}}
\newcommand{\G}{\Gamma}
\renewcommand{\k}{\Bbbk}
\renewcommand{\dim}{\operatorname{dim}}
\newcommand{\trdeg}{\operatorname{tr.deg}}
\newcommand{\R}{\mathbb{R}}
\newcommand{\K}{\mathbb{K}}
\newcommand{\N}{\mathbb{N}}
\renewcommand{\P}{\mathbb{P}}
\newcommand{\dd}{\mathbf{d}}
\newcommand{\C}{\mathbb{C}}
\newcommand{\Q}{\mathbb{Q}}
\newcommand{\D}{\Delta}
\newcommand{\Di}{\operatorname{Disc}}
\newcommand{\U}{\mathbb{U}}
\newcommand{\F}{\mathcal{F}}
\renewcommand{\t}{\tau}
\renewcommand{\a}{\alpha}
\renewcommand{\b}{\beta}
\newcommand{\g}{\gamma}
\renewcommand{\phi}{\varphi}
\renewcommand{\d}{\delta}
\renewcommand{\o}{\omega}
\newcommand{\e}{\varepsilon}
\begin{document}
\title[The algebraic closure of the field of power series]{About the algebraic closure of the field of power series in several variables in characteristic zero}
\author{Guillaume Rond}

\email{guillaume.rond@univ-amu.fr}
\address{Aix-Marseille Universit\'e, CNRS, Centrale Marseille, I2M, UMR 7373, 13453 Marseille, France}

\subjclass[2000]{Primary: 13F25. Secondary: 11J25, 12J20, 12F99, 13J05, 14B05, 32B10}

\thanks{This work has been partially supported  by ANR projects STAAVF (ANR-2011 BS01 009) and SUSI (ANR-12-JS01-0002-01)}

\begin{abstract}
We begin this paper by  constructing different algebraically closed fields containing an algebraic closure of the field of power series in several variables over a characteristic zero field. Each of these  fields depends on the choice of an Abhyankar valuation and is constructed via a generalization of the Newton-Puiseux method for this valuation. \\
Then we  study the Galois group of a polynomial with power series coefficients. In particular by examining more carefully  the case of monomial valuations we are able to give several results concerning the Galois group of a polynomial whose discriminant is  a weighted homogeneous polynomial times a unit. One of our main results is a generalization of Abhyankar-Jung Theorem for such  polynomials, classical Abhyankar-Jung Theorem being devoted to polynomials whose discriminant is a monomial times  a unit. 
\end{abstract}

 \maketitle

 \tableofcontents
  
 \section{Introduction}
When $\k$ is an algebraically closed field of characteristic zero, we can always express the roots of a polynomial with coefficients in the field of power series over $\k$, denoted by $\k\(t\)$, as formal Laurent series in $t^{\frac{1}{k}}$ for some positive integer $k$. This result was known by  Newton  (at least formally see \cite{B-K} p. 372) and had been rediscovered by Puiseux in the complex analytic case \cite{Pu1}, \cite{Pu2} (see \cite{B-K} or \cite{Cu} for a presentation of this result). A modern way to reformulate this fact is to say that an algebraic closure of $\k\(t\)$ is the field of Puiseux power series $\P$ defined in the following way:
$$\P:=\bigcup_{k\in\N}\k\left(\!\left(t^{\frac{1}{k}}\right)\!\right).$$
The proof of this result, called the Newton-Puiseux method, consists essentially in constructing the roots of a polynomial $P(Z)\in\k\lb t\rb[Z]$ by successive approximations in a similar way to Newton method in numerical analysis. These approximations converge since   $\k\left(\!\left(t^{\frac{1}{k}}\right)\!\right)$ is a complete field with respect to the Krull topology.\\
This result, applied to a polynomial with coefficients in $\k\lb t\rb$ defining a germ of algebroid plane curve $(X,0)$, provides an uniformization of this germ, i.e. a parametrization  of this germ.\\
On the other hand this description of the algebraic closure of $\k\(t\)$ describes very easily the Galois group of $\k\(t\)\lgw \P$, since this one is generated by the multiplication of the $k$-th roots of unity by $t^{\frac{1}{k}}$ for any positive integer $k$. In particular if an irreducible monic polynomial $P(Z)\in\C\lb t\rb[Z]$ has a root which is a  convergent power series in $t^{\frac{1}{k}}$, i.e. an element of $\C\{t^{\frac{1}{k}}\}$, then its other roots are also in $\C\{t^{\frac{1}{k}}\}$ and the coefficients of $P(Z)$ are convergent power series.\\
When $\k$ is a characteristic zero field (but not necessarily algebraically closed), we can prove in the same way that an algebraic closure of $\k\(t\)$ is
\begin{equation}\label{NP}\P:=\bigcup_{\k'}\bigcup_{k\in\N}\k'\left(\!\left(t^{\frac{1}{k}}\right)\!\right).\end{equation}
where the first union runs over all finite field extensions $\k\lgw \k'$.\\

The aim of this work is  double:  the first one consists in finding representations of the roots of a polynomial whose coefficients are power series in several variables over a characteristic zero field. Our main results regarding these representations are  Theorem \ref{main} for Abhyankar valuations and its stronger version for monomial valuations (see Theorem \ref{main2}). The second goal is to describe  the Galois group of such polynomials. In particular we concentrate our study to irreducible polynomials that remain irreducible as polynomials with coefficients in the completion of the valuation ring associated to a monomial valuation. Our main result regarding this problem is a generalization of Abhyankar-Jung Theorem to polynomials whose discriminant is weighted homogeneous (see Theorems \ref{AJ_gen} and \ref{strong_analytic}).\\

But let us present in more details the situation, the problems and the results given in this paper.
It is tempting to find such a similar expression to \eqref{NP} for the algebraic closure of the field of power series in  $n$ variables, $\k\(x_1, \ldots ,x_n\)$, for $n\geq 2$. But it appears easily that the algebraic closure of this field admits a really more complicated description and considering only power series depending on  $x_1^{\frac{1}{k}}$, \ldots , $x_n^{\frac{1}{k}}$ is not sufficient. For instance it is easy to see that a square root  of $x_1+x_2$ can not be expressed as such a power series.\\
Nevertheless there exist positive results in some specific cases, the most famous one being the Abhyankar-Jung theorem:
\begin{theo}[Abhyankar-Jung Theorem]
If $\k$ is an algebraically closed field of characteristic zero, then any polynomial with coefficients in $\k\lb x_1, \ldots ,x_n\rb$, whose discriminant has the form $ux_1^{\a_1} \ldots  x_n^{\a_n}$ where $u\in\k\lb x_1, \ldots ,x_n\rb$ is a unit and $\a_1$, \ldots , $\a_n\in\Z_{\geq 0}$, has its roots in $\k\lb x_1^{\frac{1}{k}}, \ldots ,x_n^{\frac{1}{k}}\rb$ for some positive integer $k$.
\end{theo}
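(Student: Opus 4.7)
The plan is to proceed via ramification theory. Let $K := \Frac(\k\lb x_1,\ldots,x_n\rb)$, and let $L$ be a splitting field of $P$ over $K$ (the polynomial is separable since $\cha\k = 0$). Because the discriminant of $P$ equals a unit times the monomial $x_1^{\a_1}\cdots x_n^{\a_n}$, the extension $L/K$ is unramified outside the divisor $\bigcup_{i=1}^n H_i$ with $H_i := V(x_i)$. In particular, the only codimension-one primes of $\k\lb x_1,\ldots,x_n\rb$ at which anything interesting happens are the $(x_i)$.

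For each $i$ let $v_i$ be the discrete valuation on $K$ defined by $H_i$, and let $w_i$ be an extension of $v_i$ to $L$. The residue field of $v_i$ has characteristic zero, hence $w_i/v_i$ is tamely ramified, and its inertia group is cyclic of some order $e_i$ dividing $[L:K]$. Set $k := \text{lcm}(e_1,\ldots,e_n)$ and perform the Kummer base change $\k\lb x_1,\ldots,x_n\rb \lgw \k\lb y_1,\ldots,y_n\rb$, $x_i \lgm y_i^k$; write $K' := \Frac(\k\lb y_1,\ldots,y_n\rb)$.

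By Abhyankar's lemma applied at each $H_i$, the compositum $LK'$ is unramified at the valuation $V(y_i)$, and thus unramified in codimension one over the regular ring $\k\lb y_1,\ldots,y_n\rb$. The Zariski-Nagata purity of the branch locus then upgrades this to a genuine finite étale cover. But $\k\lb y_1,\ldots,y_n\rb$ is complete local with algebraically closed residue field $\k$, hence strictly henselian, so every finite étale cover of it is trivial. Consequently the roots of $P$, viewed in the $y_i$-variables, lie in $\k\lb y_1,\ldots,y_n\rb = \k\lb x_1^{1/k},\ldots,x_n^{1/k}\rb$, as required.

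The main obstacle is the bookkeeping around the second paragraph: establishing that the inertia at each $H_i$ really is tame cyclic with ramification index bounded by $[L:K]$, and then invoking Abhyankar's lemma and purity in this multivariable formal setting. Both ingredients are classical, but they rest on reductions to one-dimensional valuation-theoretic statements that must be set up with care. It is precisely this step that is avoided in the older proofs of Jung and Abhyankar, which instead proceed by resolving the discriminant to a normal crossings divisor and combining an iterated Newton-Puiseux construction with Hensel-type factorizations.
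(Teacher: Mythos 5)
Your proof is correct (modulo the bookkeeping you rightly flag), but it takes an entirely different route from the paper. You deploy the ramification-theoretic toolkit: tameness in characteristic zero, Abhyankar's lemma, Zariski--Nagata purity of the branch locus, and strict henselianness of the complete local ring $\k\lb y_1,\ldots,y_n\rb$, to show that after the Kummer base change $x_i \mapsto y_i^k$ the extension is unramified in codimension one, hence \'etale, hence trivial. The paper, by contrast, derives Abhyankar--Jung (Corollary \ref{AJ}) from the more general Theorem \ref{strong_analytic}, which treats a discriminant that is weighted-homogeneous rather than merely monomial. That proof is Newton--Puiseux--theoretic through and through: it represents the roots as elements of $\V_{\a}[\langle\g_1,\ldots,\g_N\rangle]$ with quantitative control on the growth of denominators (Theorem \ref{main2}), transfers the problem to $\C\{\x\}$ via Artin approximation, uses a complex-analytic monodromy argument on a hornshaped domain (Lemmas \ref{D2} and \ref{monodromy}) to promote local analyticity of the roots to global analyticity, and finally eliminates the residual denominator $c(\x)$ by a Newton-polyhedron argument that exploits the $\Q$-linear independence of the weights. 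Your approach is shorter and conceptually cleaner but rests on the deep purity theorem and gives no constructive handle on the roots; the paper's approach is longer but yields the strictly stronger weighted-homogeneous statement, fits the paper's overarching program of Newton--Puiseux over Abhyankar valuations, and produces explicit estimates on the series expansions of the roots.
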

 Such a polynomial is called a quasi-ordinary polynomial and this theorem asserts that the roots of quasi-ordinary polynomials are Puiseux power series in several variables. It provides not only a description of the roots of a quasi-ordinary polynomial but also a description of its Galois group. This result has first being proven by Jung in the complex analytic case, then by Abhyankar in the general case (\cite{J}, \cite{Ab}).\\
 In the general case, a  naive approach involves the use of Newton-Puiseux theorem $n$ times (i.e. the formula (\ref{NP}) for the algebraic closure of $\k\(t\)$). For example in the case where $n=2$ and $\k$ is an algebraically closed field of characteristic zero, this means that the algebraic closure of  $\k\(x_1,x_2\)$ is included in 
$$\L:=\bigcup_{k_2\in\N}\bigcup_{k_1\in\N}\k\left(\!\left(x_1^{\frac{1}{k_1}}\right)\!\right)\left(\!\left(x_2^{\frac{1}{k_2}}\right)\!\right).$$
But this field, which is algebraically closed, is very much larger than the algebraic closure of $\k\(x_1,x_2\)$ (see \cite{Sa} for some thoughts about this). Moreover the action of the $k_1$-th and  $k_2$-th roots of unity are not sufficient to generate the Galois group of the algebraic closure since there exist elements of $\k\(x_1\)\(x_2\)$ which are algebraic over $\k\(x_1,x_2\)$ but are not in $\k\(x_1,x_2\)$. For instance consider
$$x_1\sqrt{1+\frac{x_2}{x_1}}=\sum_{i\in\Z_{\geq 0}}a_i\frac{1}{x_1^{i-1}}x_2^i\in\Q\(x_1\)\(x_2\)\backslash\Q\(x_1,x_2\)$$
for some well chosen rational numbers $a_i\in\Q$, $i\in\Z_{\geq 0}$.\\
Nevertheless a deeper analysis of the Newton-Puiseux method leads to the fact that it is enough to consider the field of fractions of the ring of  elements 
$$\displaystyle f=\sum_{(l_1,l_2)\in\Z^2}a_{l_1,l_2}x_1^{\frac{l_1}{k_1}}x_2^{\frac{l_2}{k_2}}\in \L$$
 for some $k_1$, $k_2\in\N$ whose support  is included in a rational strongly convex cone of $\R^2$. Here the support of $f$ is the set
 $$\Supp(f):=\{(l_1,l_2)\in\Z^2\ /\ a_{l_1,l_2}\neq 0\}.$$
 This result has been proven by MacDonald \cite{MD} (see also \cite{Go}, \cite{Aro}, \cite{A-I}, \cite{SV}). But once more, for any rational strongly convex cone of  $\R^2$, denoted by $\s$,  $\R_{\geq 0}^2\subsetneq\s$, there exist elements whose support is in $\s$ but that are not algebraic over $\k\(x_1,x_2\)$.\\
\\
One of the main difficulties  comes from the fact that $\k\(x_1, \ldots ,x_n\)$ is not a complete field with respect to the topology induced by the maximal ideal of $\k\lb x_1, \ldots ,x_n\rb$ (called the Krull topology; it is induced by the following norm $\left|\frac{f}{g}\right|:=e^{\ord(g)-\ord(f)}$ for any $f$, $g\in\k\lb x_1, \ldots ,x_n\rb$, $g\neq 0$, where $\ord(f)$ is the order of the series $f$ in the usual sense). Indeed, in order to apply the Newton-Puiseux method we have to work with a complete field  since the roots are constructed by  successive approximations. A very natural idea is to replace  $\k\(x_1, \ldots ,x_n\)$ by its completion. But  the completion of $\k\(x_1, \ldots ,x_n\)$ is not algebraic over $\k\(x_1, \ldots ,x_n\)$, thus the fields we construct in this way are bigger than the algebraic closure of $\k\(x_1, \ldots ,x_n\)$. In fact we need to replace the completion of $\k\(x_1, \ldots ,x_n\)$ by its henselization in the completion. The problem is that there is no general criterion to distinguish elements of the henselization from other elements of the completion. In some sense this problem is analogous to the fact that there is no general criterion to determine if a real number is algebraic or not over the rationals. One more issue is that choosing the Krull topology is arbitrary and we may replace this one by any topology induced by an other norm (or valuation) on this field. \\
\\
\\
In this paper we first investigate the use of the Newton-Puiseux method with respect to "tame" valuations (i.e. replace $\k\(x_1, \ldots ,x_n\)$ by its completion for this valuation). By a "tame" valuation we  mean a rank one (or real valued)  valuation that satisfies the equality in the Abhyankar inequality (see Definition \ref{abhy}). These valuations are called Abhyankar valuations (cf. \cite{ELS}) or quasi-monomial valuations (cf. \cite{F-J}) and, essentially, these are monomial valuations after some sequence of blowing-ups. This is the first part of this work. If $\nu$ is such a  valuation, we denote by $\wdh{\K}_{\nu}$ the completion of $\k\(x_1, \ldots ,x_n\)$ for the topology induced by this valuation. This field will play the  role of $\k\(t\)$ in the classical Newton-Puiseux method. Then we have to define the elements that will play the role of $t^{\frac{1}{k}}$. This is where the first difficulty appears, since instead of working over $\wdh{\K}_{\nu}$, we need to work over the graded ring associated to $\nu$. Both are isomorphic but there is no canonical isomorphism between them. In the case of $\k\(t\)$ where $t$ is a single variable, such an isomorphism is defined by identifying the $\k$-vector space of homogeneous elements of degree $i$ of the graded ring with the $\k$-vector space of homogeneous polynomials of degree $i$, i.e. $\k.t^i$. But this identification depends on the choice of an uniformizer of $\k\lb t\rb$. In the case of $\k\(x_1, \ldots ,x_n\)$ an isomorphism will be determined by the choice of  "coordinates" such that the valuation $\nu$ is monomial in these coordinates since Abhyankar valuations are monomial valuations after a sequence of blow-ups (cf. Remark \ref{iso}). This is the reason why we restrict our study to these valuations. Nevertheless when such an isomorphism is chosen, we are able to define  the elements that will play the role of $t^{\frac{1}{k}}$, this the aim of Section \ref{part_homo}. These elements are called \textit{homogeneous elements with respect to $\nu$} (cf. Definitions \ref{homo} and \ref{integral}).  These are defined as being the roots of weighted homogeneous polynomials with coefficients in the graded ring of $\k\lb x_1, \ldots ,x_n\rb$  for the valuation $\nu$. If $\k$ is the field of complex numbers and the weights of the monomial valuation are positive integers, we can think about these homogeneous elements as weighted homogeneous algebraic (multivalued) functions. In fact we can replace $\wdh{\K}_{\nu}$ by a smaller field, the subfield of $\wdh{\K}_{\nu}$ whose elements have support  included in a finitely generated sub-semigroup  of $\R_{\geq 0}$. Let us remark that this field is similar to the field of generalized power series $\cup_{\G}\C\(t^{\G}\)$ where the sum runs over all finitely generated semigroups $\G$ of $\R_{\geq 0}$ (see \cite{Ri} for instance). Our first result is  that the inductive limit of the extensions of $\wdh{\K}_{\nu}$ by homogeneous elements with respect to $\nu$ is algebraically closed (see Theorem \ref{main}). This field is  $\underset{\underset{\g_1, \ldots , \g_s}{\lgw}}{\lim}\,\wdh{\K}_{\nu}[\g_1, \ldots , \g_s]$ where the limit runs over all subsets $\{\g_1, \ldots ,\g_s\}$ of homogeneous elements with respect to $\nu$ and is denoted by $\ovl{\K}_{\nu}$. The field extension $\k\(x_1, \ldots ,x_n\)\lgw \ovl{\K}_{\nu}$ factors through the field extension $\k\(x_1, \ldots ,x_n\)\lgw\wdh{\K}_{\nu}$. While the Galois group of the field extension $\wdh{\K}_{\nu}\lgw\ovl{\K}_{\nu}$ is easily described by the Galois group of  weighted homogeneous polynomials, the Galois  group of the algebraic closure of $\k\(x_1, \ldots ,x_n\)$ in $\wdh{\K}_{\nu}$ is more complicated. So it is very natural to study irreducible polynomials over $\k\(x_1, \ldots ,x_n\)$ which remain irreducible over $\wdh{\K}_{\nu}$, since their Galois groups are described by the Galois groups of weighted homogeneous polynomials.  Proposition \ref{cor_factor_limit} shows that this property is an open property with respect to the topology induced by the chosen valuation. Let us mention that these polynomials are called $\nu$-analytically irreducible polynomials in \cite{Te} and their study is motivated by the construction of key polynomials for Abhyankar valuations (not necessarily of  rank 1) in order to prove local uniformization.\\
\\
Then we investigate more deeply the particular case of monomial valuations.  In Section \ref{growth}, using an idea of Tougeron \cite{To} based on a work of Gabrielov \cite{Ga}, for any monomial  valuation $\nu$ we construct a field, smaller than the ones constructed previously using the Newton-Puiseux method, and containing an algebraic closure of $\k\(x_1, \ldots ,x_n\)$. The main result (see Theorem \ref{main2}) is a non-archimedean version of Eisenstein Theorem (classical Eisenstein Theorem concerns algebraic power series over $\Q$). The tool we use here is an effective version of the Implicit Function Theorem (see Proposition \ref{IFT}). The elements we need to consider are of the form 
\begin{equation}\label{touge}\sum_{i\in\La}\frac{a_i}{\d^{m(i)}}\end{equation}
where the $a_i$ and $\d$ are weighted homogeneous polynomials for the weights corresponding to the given monomial valuation, $\La$ is a finitely sub-semigroup of $\R_{\geq 0}$, $\nu\left(\frac{a_i}{\d^{m(i)}}\right)=i$ for all $i\in\La$ and $i\lgm m(i)$ is bounded by a an affine function. In the particular case where the weights are $\Q$-linearly independent this corresponds to  the result of MacDonald (see Theorem \ref{MD}).
\\
\\
In Section \ref{part_AJ} we use this description of the roots of polynomials with coefficients in $\C\{x_1, \ldots ,x_n\}$ to make a topological and complex analytical study of such polynomials whose discriminant is a weighted homogeneous polynomial multiplied by a unit. This study has been inspired by the work of Tougeron in \cite{To} and more particularly by Remarque 2.7  of \cite{To} where it is noticed that the elements of the form (\ref{touge}) define analytic functions on an open domain of $\C^n$ which is the complement of some hornshaped neighborhood of $\{\d=0\}$ (see Definition \ref{D}). This study is possible in the case of monomial valuations whose weights are positive integers. To obtain the same results in the case of general monomial valuations we need to approximate general monomial valuations by divisorial monomial valuations, i.e. monomial valuations whose weights are positive integers. This is the subject  of Section \ref{section_appro}. \\
\\
One of the main results we obtain in Section \ref{part_AJ} is the following theorem which gives a criterion for an irreducible polynomial over $\k\(x_1, \ldots ,x_n\)$  to remain irreducible over $\wdh{\K}_{\nu}$:

\begin{theo}\emph{\textbf{\ref{AJ_gen}}}
Let $\k$ be a field of characteristic zero and $\a\in\R_{>0}^n$. Let $\x$ denotes the set of variables $(x_1, \ldots ,x_n)$ and let $\nu_\a$ be the monomial valuation given by the weights $\a_i$. Let $P(Z)\in\k\lb \x\rb[Z]$ be a monic polynomial whose discriminant is equal to $\d u$ where $\d\in\k[\x]$ is a weighted homogeneous polynomial for the weights $\a_1$, \ldots , $\a_n$ and $u\in\k\lb\x\rb$ is  a unit. If $P(Z)$ factors as $P(Z)=P_1(Z) \ldots  P_s(Z)$ where $P_i(Z)$ is an irreducible monic polynomial of $\k\lb\x\rb[Z]$, then $P_i(Z)$ is irreducible in $\wdh V_{\a}[Z]$ where $\wdh V_\a$ denotes the completion of the valuation ring of $\nu_\a$. \end{theo}

\noindent Then we show that Abhyankar-Jung Theorem is in fact a generalization of this result when the $\a_i$ are $\Q$-linearly independent (see Corollary \ref{AJ}) and we give   the following generalization of Abhyankar-Jung Theorem for polynomials whose discriminant is weighted homogeneous with respect to weights $\a_1$, \ldots , $\a_n\in\R_{>0}$:

\begin{theo}\emph{\textbf{\ref{strong_analytic}} }
We assume that the hypothesis of Theorem \ref{AJ_gen} are satisfied. Let us set $N:=\dim_{\Q}(\Q\a_1+\cdots+\Q\a_n)$. Then there exist $\g_1$, \ldots , $\g_N$ integral homogeneous elements with respect to $\nu_{\a}$ and a  weighted homogeneous  polynomial for the weights $\a_1$, \ldots , $\a_n$ denoted by $c(\x)\in\k[\x]$ such that the roots of $P(Z)$ are in $\frac{1}{c(\x)}\k'\lb\x\rb[\g_1, \ldots , \g_N]$ where $\k\lgw \k'$ is a finite field extension.

\end{theo}

\noindent Indeed in the case $N=n$, i.e. $\a_1$, \ldots , $\a_n$ are $\Q$-linearly independent, the only weighted homogeneous polynomials are the monomials and the integral homogeneous elements with respect to $\nu_{\a}$ are of the form $\x^{\b}$ where  $\b\in\Q_{\geq 0}^n$ (see Remark \ref{homo_mono'}). Abhyankar-Jung Theorem simply asserts that we may choose $c(\x)=1$, a fact that we are able to prove in this case (see Corollary \ref{AJ}).\\
We remark that this result (along with Theorem \ref{AJ_gen}) shows  that the Galois group of an irreducible monic polynomial with coefficients in $\k\lb x_1, \ldots ,x_n\rb$ whose discriminant is weighted homogeneous is generated by the Galois group of one weighted homogeneous polynomial (see Remark \ref{Galois}).\\
\\
Finally in Section \ref{part_diop} we give a result of Diophantine approximation (it is just an direct generalization of \cite{Ro} and \cite{I-I}) that gives a necessary condition for an element of $\wdh{\K}_{\nu}$ to be algebraic over $\k\(x_1, \ldots ,x_n\)$.\\
\\
At the end we give a list of notations for the convenience of the reader.\\
\\
Let us mention that this work has been motivated by the understanding of the paper \cite{To} of Tougeron where the study we make for monomial valuations is made in the case of the $(x_1, \ldots ,x_n)$-adic valuation of $\k\(x_1, \ldots ,x_n\)$.\\
\\
I would like to thank Guy Casale and Adam Parusi\'nski for their answers to my questions regarding the proofs of Lemma \ref{monodromy} and Lemma \ref{D2} respectively. I also thank H. Mourtada for the valuable discussions we had on these problems and his comments that helped to improve the presentation fo this paper. I also thank the referees for their valuable suggestions.


\section{Notations and Abhyankar valuations}
Let $\N$ denote the set of positive integers and $\Z_{\geq 0}$ the set of non-negative integers.  Let $\x$  denote the multi-variable $(x_1, \ldots ,x_n)$ where $n\geq 2$. Let $\k$ denote a characteristic zero field. Then  $\k\lb \x\rb  =\k\lb x_1, \ldots ,x_n\rb$ denotes the ring of formal power series in $n$ variables over  $\k$ and we denote by $\K_n$ its fraction field and by $\m$ its maximal ideal. \\

When $(A,\m)$ is a local domain, a \emph{valuation} on $A$ is a function $\nu : A\backslash\{0\}\lgw \G^+$, where $\G$ is an ordered subgroup of $\R$ and $\G^+:=\G\cap\R_{\geq 0}$, such that 
$$\nu(fg)=\nu(f)+\nu(g)\text{ and }\nu(f+g)\geq \min\{\nu(f),\nu(g)\}\ \ \ \forall f,g\in A.$$
 We will also impose that $\nu(f)>0$ if and only if $f\in\m$. We set  $\nu(0)=\infty$ where $\infty>i$ for any $i\in\G$.\\
 Such valuation $\nu$ extends to $\K_A$, the fraction field of $A$, by 
 $$\nu\left(\frac{f}{g}\right):=\nu(f)-\nu(g)$$ for any $f$, $g\in A$, $g\neq 0$. We will always assume that $\nu : \K_A\lgw \G$ is surjective. In this case $\G$ is called the \textit{value group} of $\nu$. The image of $A\backslash\{0\}$ by $\nu$ is called the \emph{semigroup of $\nu$} and we denote it by $\Sigma$. Then $\G$ is the group generated by $\Sigma$. Let us denote by $V_{\nu}$ the valuation ring of $\nu$: 
$$V_{\nu}:=\left\{\frac{f}{g}\ /\  f,g\in A,\ \nu(f)\geq\nu(g)\right\}.$$
This is a local ring whose maximal ideal, denoted by $\m_V$, is the set of elements $f/g$ such that $\nu(f/g)>0$. Its residue field $\frac{V_{\nu}}{\m_V}$ is denoted by $\k_\nu$.\\
Let us denote by $\wdh{V}_{\nu}$ the completion of $V_\nu$  which is defined as follows: For any $\l\in \G$ let us set $I_\l:=\{v\in V_\nu\ / \ \nu(v)\geq \l\}$. The family of ideals $\{I_{\l}\}_{\l\in \G}$ as a system of neighbourhoods of 0 makes $V_{\nu}$ into a topological ring. Then $\wdh{V}_\nu$ is the completion of $V_\nu$ for this topology. We can also remark that the family $\{V_\nu/I_\l\}_\l$ is an inverse system and its inverse limit is exactly $\wdh V_\nu$.\\
Then $\wdh{V}_{\nu}$ is an equicharacteristic complete valuation ring and its residue field is isomorphic to $\k_{\nu}$.\\

In this paper we will only consider a particular case of valuations, called Abhyankar valuations:
\begin{definition}\label{abhy}
A valuation $\nu$ is called an \textit{Abhyankar valuation} if the following equality holds:

$$\trdeg_{\k}\k_{\nu}+\dim_{\Q}\G\otimes_{\Z}\Q=n.$$
This equality is called the \emph{Abhyankar's Equality}.

\end{definition}

\begin{rmk}
If $\dim_{\Q}\G\otimes\Q=1$, then $\G\simeq\Z$. Otherwise $\G$ is a dense subgroup of $\R$. 
\end{rmk}

\begin{ex} The first example is the $\m$-adic valuation denoted by ord on the ring $A=\k\lb x\rb$, and defined by 
$$\ord(f):=\max\{n\in\N\ /\ f\in\m^n\}\ \ \ \ \forall f\in\k\lb \x\rb \backslash\{0\}.$$
In this case its value group $\G$ is equal to $\Z$ and its semigroup $\Sigma$ is equal to $\Z_{\geq 0}$.
\end{ex}

\begin{ex}\label{monomial}
Let $\a:=(\a_1, \ldots ,\a_n)\in(\R_{>0})^n$. Let us denote by $\nu_{\a}$ the monomial valuation on $A=\k\lb x\rb$ defined by $\nu_{\a}(x_i):=\a_i$ for $1\leq i\leq n$. For instance $\nu_{(1, \ldots ,1)}=\ord$. \\
Here we have $\G=\Z\a_1\oplus\cdots\oplus\Z\a_n$ and $\Sigma=\Z_{\geq 0}\a_1\oplus\cdots\oplus\Z_{\geq 0}\a_n$.
\end{ex}

\begin{ex}\label{divisorial}
If $\G$ is isomorphic to $\Z$ and $\nu$ is an Abhyankar valuation, then $\nu$ is a \textit{divisorial valuation}. For such valuation there exists a proper birational dominant map $\pi : X\lgw \Spec(\k\lb \x\rb )$ and $E$ an irreducible component of the exceptional locus of $\pi$ such that $\nu$ is the composition of $\pi^*$ with the $\m_E$-adic valuation of the ring $\O_{X,E}$.
\end{ex}

\begin{rmk}\label{ELS}
Geometrically, an Abhyankar valuation is a monomial valuation at a point lying on the exceptional divisor $E$ of some proper birational map $(Y,E)\lgw (\k^n,0)$. More precisely we have the following:\\
The restriction of $\nu$ to $\k[\x]$ is an Abhyankar valuation with the same value group as $\nu$. We denote it by $\wdt \nu$. By Proposition 2.8 \cite{ELS}  there exists a regular local domain $(A,\m_A)$, an injective morphism
$$\pi  : \k[\x]\lgw A$$
inducing an isomorphism between the fields of fractions and a regular system of parameter $z_1$, \ldots , $z_r$ of $A$ such that $\wdt \nu(z_1)$, \ldots , $\wdt \nu(z_r)$ freely generate the value group of $\wdt \nu$ (or the value group of $\nu$ since both are equal). Let us denote by $\mu$ the restriction of $\wdt \nu$ to $A$. Then $\pi $ induces an isomorphism between $V_{\nu}$ and $V_{\mu}$. Thus it induces an isomorphism between $\wdh V_{\wdt \nu}$ and $\wdh V_{\mu}$. Moreover the completion of $A$ is isomorphic to  $\L\lb z_1, \ldots ,z_r\rb$ where $\k\lgw \L$ is a field extension of transcendence degree $n-r$ (here $\L=\frac{A}{\m_A}$) and $\mu$ extends to a valuation on $\wdh A$ which is exactly the monomial valuation that sends $z_i$ onto $\nu(z_i)$ for all $i$.

\end{rmk}


\begin{rmk}
If $n=2$, in fact any discrete valuation (i.e. $\G=\Z$)  is an Abhyankar valuation \cite{HOV}.
\end{rmk}
\begin{definition}\label{alpha-hom}
Let $\a\in\R_{>0}^n$.
A polynomial $f\in\k\lb \x\rb$ is called $(\a)$-homogeneous of degree $i$ is every nonzero  monomial $c\x^\b$ of $f$ satisfies
$$\sum_{k=0}^n\a_k\b_k=i$$
or equivalently $\nu_\a(c\x^\b)=i$.
This means that $f$ is weighted homogeneous of degree $i$ where $x_j$ has weight $\a_j$ for every $j$.
\end{definition}

 \begin{ex}\label{ex_monomial}
Let $\nu_{\a}$ be a monomial valuation as before. Any power series $g\in\k\lb \x\rb $ can be written $g=\sum_{i\in\Sigma}g_i$ where $g_i$ is a $(\a)$-homogeneous polynomial of degree $i\in\Sigma$.  Let us denote by $i_0$ the least $i\in\Sigma$ such that $g_i\neq 0$. Then we can write formally
$$g=g_{i_0}\left(1+\sum_{i>i_0}\frac{g_i}{g_{i_0}}\right)$$
and this equality is satisfied in $\wdh{V}_{\nu_{\a}}$. Now if $f\in\k\lb \x\rb $, $g\neq 0$ and $\nu(f)\geq \nu(g)$ we can write
$$\frac{f}{g}=\left(\sum_i\frac{f_i}{g_{i_0}}\right)\left(1+\sum_{i>i_0}\frac{g_i}{g_{i_0}}\right)^{-1}$$
where $f=\sum_if_i$ where $f_i$  is $(\a)$-homogeneous  of degree $i\in\Sigma$.\\
Thus any element of $V_{\nu_{\a}}$ is of  the form $\displaystyle\sum_{i\geq 0, i+i_0\in \Sigma}\frac{a_i(\x)}{b_i(\x)}$ for some $i_0\in \Sigma$,  
where $a_i(\x)$ and $b_i(\x)$ are $(\a)$-homogeneous and $\nu_{\a}\left(\frac{a_i(\x)}{b_i(\x)}\right)=i$ for any $i\in\R$.\\
 On the other hand $\wdh{V}_{\nu_{\a}}$ is  the set of elements of the form  $\displaystyle\sum_{i\in \Lambda}\frac{a_i(\x)}{b_i(\x)}$ where $\Lambda$ is a finite or countable subset of $\G^+$ with no accumulation point, where $a_i(\x)$ and $b_i(\x)$ are  $(\a)$-homogeneous  and $\nu_{\a}\left(\frac{a_i(\x)}{b_i(\x)}\right)=i$ for any $i\in\R$.
 \end{ex}
Let us denote by $\wdh{\K}_{\nu}$ the fraction field of $\wdh{V}_{\nu}$. The valuation $\nu$ defines an ultrametric norm on $\wdh{\K}_{\nu}$, denoted by $|\ |_{\nu}$, defined by 
$$\left|\frac{f}{g}\right|_{\nu}=e^{\nu(g)-\nu(f)} \ \ \ \ \forall f\in\k\lb \x\rb , g\in\k\lb \x\rb \backslash\{0\}.$$
Then  $\wdh{\K}_{\nu}$ is the completion of $\K_n$ for the topology induced by this norm and this norm (thus the valuation $\nu$) extends canonically on $\wdh{\K}_{\nu}$. We shall also denote by $\nu$ the extension of $\nu$ to $\K_{\nu}$.\\
\\
Let us denote by $\K_{\nu}^{\alg}$ the algebraic closure of $\K_n$ in $\wdh{\K}_{\nu}$. We also denote by $V_{\nu}^{\alg}$ the ring of elements of  $\wdh{V}_{\nu}$ which are algebraic over $\K_{n}$: $V_{\nu}^{\alg}:=\K_{\nu}^{\alg}\cap \wdh{V}_{\nu}$. We have the following lemma:

\begin{lemma}\label{val}
The ring $V^{\alg}_{\nu}$ is a  valuation ring (associated to the valuation $\nu$) and  $\K_{\nu}^{\alg}$ is its fraction field. Moreover $V_{\nu}\lgw V_{\nu}^{\alg}$ is the henselization of $V_{\nu}$ in $\wdh{V}_{\nu}$. 
\end{lemma}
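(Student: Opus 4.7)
The plan is to verify three claims separately: that $V_{\nu}^{\alg}$ is a valuation ring whose fraction field is $\K_{\nu}^{\alg}$, that $V_{\nu}^{\alg}$ is henselian, and that $V_{\nu}\lgw V_{\nu}^{\alg}$ is precisely the henselization of $V_{\nu}$ inside $\wdh{V}_{\nu}$.

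For the first claim I would rely on the fact that $\wdh{V}_{\nu}$ is a valuation ring of $\wdh{\K}_{\nu}$: for any nonzero $x\in\K_{\nu}^{\alg}\subset\wdh{\K}_{\nu}$, either $x\in\wdh{V}_{\nu}$ or $x^{-1}\in\wdh{V}_{\nu}$. Since $x^{-1}$ is algebraic over $\K_n$ whenever $x$ is, the relevant element of $\{x,x^{-1}\}$ automatically lies in $V_{\nu}^{\alg}$. Consequently $V_{\nu}^{\alg}$ is a valuation ring with fraction field $\K_{\nu}^{\alg}$, and the valuation it carries is the restriction of $\nu$ from $\wdh{\K}_{\nu}$.

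For the henselian property I would invoke the classical fact that the complete valuation ring $\wdh{V}_{\nu}$ is henselian. Given $P(T)\in V_{\nu}^{\alg}[T]$ whose reduction modulo the maximal ideal has a simple root $\bar a_0\in\k_{\nu}$, Hensel's lemma inside $\wdh{V}_{\nu}$ produces a unique lift $a\in\wdh{V}_{\nu}$ with $P(a)=0$. The coefficients of $P$ belong to the field generated over $\K_n$ by finitely many elements of $\K_{\nu}^{\alg}$, hence to a finite extension of $\K_n$; the element $a$ is then algebraic over this finite extension, and therefore algebraic over $\K_n$, which places it in $V_{\nu}^{\alg}$. This is exactly the Hensel property for $V_{\nu}^{\alg}$.

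For the identification with the henselization, the universal property of $V_{\nu}^h$ supplies a local homomorphism $V_{\nu}^h\lgw V_{\nu}^{\alg}$, and composing with the inclusion $V_{\nu}^{\alg}\hookrightarrow\wdh{V}_{\nu}$ reproduces the canonical injection $V_{\nu}^h\hookrightarrow\wdh{V}_{\nu}$, so this morphism is injective. For the reverse inclusion I would invoke the classical theorem in valuation theory which states that, in characteristic zero, the henselization of a valued field $(\K_n,\nu)$ inside its completion coincides with the algebraic closure of $\K_n$ in $\wdh{\K}_{\nu}$; intersecting with $\wdh{V}_{\nu}$ then yields $V_{\nu}^h=V_{\nu}^{\alg}$. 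The main obstacle is precisely this surjectivity statement: proving it from scratch would demand realizing each $a\in V_{\nu}^{\alg}$ as a root of a polynomial arising from an essentially étale local extension of $V_{\nu}$ with unchanged residue field and value group, a fact that rests on a careful analysis of immediate extensions of valued fields (which in our setting is facilitated by $V_{\nu}^{\alg}$ having the same residue field $\k_{\nu}$ and the same value group $\G$ as $V_{\nu}$, since both coincide with those of $\wdh{V}_{\nu}$) rather than on the general formalism of henselization.
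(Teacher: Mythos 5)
Your argument is correct and essentially parallels the paper's, with some extra scaffolding. For the valuation-ring claim you use the $x$-or-$x^{-1}$ dichotomy inside $\wdh{V}_{\nu}$ together with the observation that inversion preserves algebraicity over $\K_n$; the paper instead checks directly that $f/g\in V^{\alg}_{\nu}$ whenever $f,g\in V^{\alg}_{\nu}$ and $\nu(f)\geq\nu(g)$, and clears denominators with a power of $x_1$ to identify the fraction field --- the two arguments are equivalent. For the identification with the henselization, both proofs reduce to the same two inclusions: the easy one (elements of $V_{\nu}^h$ are algebraic over $V_{\nu}$, hence land in $V^{\alg}_{\nu}$ once $V_{\nu}^h$ is embedded in $\wdh{V}_{\nu}$), and the hard converse. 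The paper dispatches the converse by citing Corollary~1.2.1 of \cite{M-B}; you appeal to an unnamed ``classical theorem'' that, for residue characteristic zero, the henselization inside the completion coincides with the algebraic closure of the base field in the completion. That statement is true for rank-one valuations (which Abhyankar valuations are), and your closing sentence correctly locates the mechanism --- $\wdh{\K}_{\nu}/\K_n$ is immediate, and a henselian valued field of residue characteristic zero admits no proper finite immediate extension because Ostrowski's defect theorem forces the defect to be trivial, so $\K_n^h(a)=\K_n^h$ for any $a\in\wdh{\K}_{\nu}$ algebraic over $\K_n$ --- but as written you are leaning on a citation you cannot name; replacing it with \cite{M-B} or any source for the defectlessness argument would close that loop. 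Finally, your separate verification that $V^{\alg}_{\nu}$ is henselian (via Hensel's lemma in $\wdh{V}_{\nu}$ plus algebraicity) is correct, but it is not needed once the cited theorem is in hand: the equality $V^{\alg}_{\nu}=V_{\nu}^h$ can be read off directly from the two inclusions without invoking the universal property.
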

\begin{proof}
If $f$, $g\in V^{\alg}_{\nu}$ and $\nu(f)\geq \nu(g)$, then $\frac{f}{g}\in\K_{\nu}^{\alg}\cap\wdh{V}_{\nu}=V_{\nu}^{\alg}$ so $V_{\nu}^{\alg}$ is a valuation ring. For $f\in\K_{\nu}^{\alg}$ there exists $N\in\N$ such that $x_1^Nf\in\K_{\nu}^{\alg}\cap \wdh{V}_{\nu}=V^{\alg}_{\nu}$ since $\nu(x_1^N)>0$. Thus $\K_{\nu}^{\alg}$ is the fraction field of $V_{\nu}^{\alg}$.\\
By construction the elements of the henselization of $V_{\nu}$ are algebraic over $V_{\nu}$. On the other hand every element of $\wdh{V}_{\nu}$ which is algebraic over $V_{\nu}$ is in the Henselization of $V_{\nu}$ (see Corollary 1.2.1 \cite{M-B}).
\end{proof}
Thus we can summarize the situation with the following commutative diagram, where the bottom part corresponds to the quotient fields of the rings of the upper part:

$$\xymatrix{
 \k\lb \x\rb \ar[rr]  &&    V_{\nu} \ar[rr] \ar[rd] \ar[dd] && \wdh{V}_{\nu} \ar[dd]  \\
  &&  & V_{\nu}^{\alg} \ar[ru] \ar[dd]  && \\
  &&  \K_n \ar[rr] |!{[ur];[dr]}\hole \ar[rd] && \wdh{\K}_{\nu}  \\
  &&  & \K_{\nu}^{\alg}\ar[ru]  & & }$$


\section{Homogeneous elements with respect to an Abhyankar valuation}\label{part_homo}

\subsection{Graded ring of an Abhyankar valuation and support}

Let  $A$ be an integral domain and let $\nu : A\lgw \G^+$ be a valuation where $\G$ is a subgroup of $\R$.  We define $\text{Gr}_{\nu}A=\bigoplus_{i\in\G^+}\frac{\p_{\nu,i}}{\p_{\nu,i}^+}$ where $\p_{\nu,i}:=\{f\in A/\ \nu(f)\geq i\}$ and $\p_{\nu,i}^+:=\{f\in A \ /\ \nu(f)>i\}$.

\begin{definition}\label{graded}
 Let $\G^+$ be a sub-semigroup of $\R_{\geq 0}$. A  \textit{$\G^+$-graded ring} is a ring $A$ that has a direct sum of abelian groups, $A=\bigoplus_{i\in\G^+}A_i$, such that $A_iA_j\subset A_{i+j}$ for any $i$, $j\in\G^+$.\\
 For any $j\in\G^+$, $\bigoplus_{i\in\G^+, i\geq j}A_i$ is an ideal of $A$. This family of ideals as a system of neighborhoods of 0 makes $A$ into a topological whose completion is denoted by $\wdh A$ or $\wdh{\bigoplus}_{i\in\G^+}A_i$.
 The completion of $A$ is the set of elements that are written as a series $\sum_{i\in\La}a_i$ where $\La\subset \G^+$ is either a finite set, either a countable subset of $\R_{>0}$ with no accumulation point, and $a_i\in A_i$ for any $i\in\La$.\\
 A \textit{complete} ($\G^+$-)\textit{graded ring} is the completion of a ($\G^+$-)graded ring.
  \end{definition}
  
  \begin{rmk}
  Let $A$ be a complete graded ring. If $A_0$ is a field then $A$ is a local ring and its maximal ideal is $\m:=\wdh{\bigoplus}_{i>0}A_i$.\\
  For any $a\in A$  we can write $a=\sum_{i\in\La}a_i$ where $a_i\in A_i$ for any $i$. If $a\neq 0$ let us set $\nu(a):=\min\{i\in\G^+\ / \ a_i\neq 0\}$. Set $\nu(0)=\infty$. Then $\nu$ is an order function, i.e. $\nu(ab)\geq \nu(a)+\nu(b)$ and $\nu(a+b)\geq\min\{\nu(a),\nu(b)\}$. Moreover $\nu$ is a valuation if and only if $A$ is an integral domain. The order function $\nu$ is called the \emph{order function of $A$}.
  \end{rmk}
  
  \begin{ex}
  For a given Abhyankar valuation $\nu$ on $\k\lb \x\rb $ the rings $\text{Gr}_{\nu}\k\lb \x\rb $ and $\text{Gr}_{\nu} V_{\nu}$ are  $\G^+$-graded rings and $\wdh{\text{Gr}_{\nu}\k\lb \x\rb }$ and $\wdh{\text{Gr}_{\nu}V_{\nu}}$ are complete $\G^+$-graded rings. 
  \end{ex}

\begin{rmk}
The ring $\wdh{\text{Gr}_{\nu} V_{\nu}}$ is isomorphic to the ring of generalized power series $\k_{\nu}\lb t^{\G^+}\rb$ where $t$ is a single variable.
\end{rmk}

\begin{rmk}\label{supp_compl}
The elements of $\wdh{\text{Gr}_{\nu}V_{\nu}}$ are the elements of the form $\sum_{i\in\La}a_i$ where $a_i\in \frac{\p_{\nu,i}}{\p_{\nu,i}^+}$
 for all $i\in\La$ where $\La$ is either a finite set, either a countable subset of $\R_{\geq 0}$ with no accumulation point.
 \end{rmk}

\begin{rmk}\label{iso}
Let us consider a monomial valuation $\nu$ on $\k\lb \x\rb $, let us say $\nu:=\nu_{\a}$ where $\a\in\R_{>0}^n$.\\
In this case $\frac{\p_{\nu,i}}{\p_{\nu,i}^+}$ is isomorphic to the $\k$-vector space of rational fractions $\frac{a(\x)}{b(\x)}$ where $a(\x)$ and $b(\x)$ are $(\a)$-homogeneous polynomials   and $\nu_{\a}\left(\frac{a(\x)}{b(\x)}\right)=i$. Thus, by Example \ref{ex_monomial} $\wdh{\text{Gr}_{\nu}V_{\nu}}$ and $\wdh{V}_{\nu}$ are $\k$-isomorphic.\\
Let us now consider a general Abhyankar valuation $\nu$ on $\k\lb \x\rb $. By Remark \ref{ELS} there exist a regular local domain $(A,\m_A)$, an injective morphism
$$\pi : \k[\x]\lgw A$$
inducing an isomorphism between the fields of fractions and such that, if we denote by $\mu$ the restriction of $\nu$ to $A$, the following properties hold:\\
The extension of $\mu$ to $\wdh{A}$ is a monomial valuation (denoted by $\wdh \mu$) and $\pi$ induces  isomorphisms $V_\nu\simeq V_{\mu}$ and $\wdh V_\nu\simeq \wdh V_{\wdh\mu}$.\\ We have
 $\wdh{V}_{\mu}=\wdh{V}_{\wdh{\mu}}$ and $\text{Gr}_{\nu}V_{\nu}\simeq\text{Gr}_{\mu}V_\mu=\text{Gr}_{ \mu}\wdh V_\mu$. Thus  $\wdh{\text{Gr}_{\nu} V_{\nu}}$ and $\wdh{V}_{\nu}$ are $\k$-isomorphic by the monomial case.
 \\
 We can summarize this in the following proposition:
 \end{rmk}
 
 \begin{prop}\label{iso_graded}
The choice of a proper birational map $\pi$ and parameters $z_1$, \ldots , $z_r$  as in Remark \ref{ELS} yields an isomorphism
  $$\wdh{\text{Gr}_{\nu} V_{\nu}}\simeq \wdh{V}_{\nu}.$$
  
\end{prop}

\begin{rmk}

 A different choice of $\pi$ and $z_1$, \ldots , $z_r$ would give an other isomorphism between these two rings.
 \end{rmk}

\begin{definition}\label{support1}
Let $A=\wdh{\bigoplus}_{i\in\G^+}A_i$ be a complete $\G^+$-graded ring. Let $a\in A$, $a=\sum_{i\in\G^+}a_i$, $a_i\in A_i$ for any $i$. The support of $a$ is the subset $I$ of $\G^+$ defined by $i\in I$ if and only if $a_i\neq 0$. We denote this set $I$ by $\Supp(a)$.
\end{definition}

\begin{definition}\label{support2}
Let $\nu$ be an Abhyankar valuation defined on $\k\lb \x\rb $. 
Let us fix a $\k$-isomorphism $\phi$ between  $\wdh{\text{Gr}_{\nu} V_{\nu}}$ and $\wdh{V}_{\nu}$ as in  Proposition \ref{iso_graded}. Let $a\in\wdh{V}_{\nu}$ and let us write $\phi(a)=\sum_{i\in\G^+}a_i$ with $a_i\in\frac{\p_{\nu,i}}{\p_{\nu,i}^+}$. The $\nu$-support with respect to $\phi$ of $a$ is the subset of $\G^+$ defined as
$$\Supp_{\nu,\phi}(a):=\{i\in\G^+\ /\ a_i\neq0\}.$$
When the isomorphism is clear from the context we will skip the mention of $\phi$ and denote the $\nu$-support of $a$ by $\Supp_{\nu}(a)$.

\end{definition}

\begin{prop}\label{Support}
Let  $\nu$ be an Abhyankar valuation on $\k\lb \x\rb $ and let $\phi$ be  a $\k$-isomorphism between  $\wdh{\text{Gr}_{\nu}V_{\nu}}$ and $\wdh{V}_{\nu}$ as in Proposition \ref{iso_graded}.  Then there exists a finitely generated sub-semigroup of $\R_{\geq0}$, denoted by $\La$, such that the $\nu$-support of any element of $\k\lb \x\rb $ with respect to $\phi$ is included in $\La$. 
\end{prop}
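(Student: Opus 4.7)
The plan is to reduce the statement to the monomial case via Remark \ref{iso}, where the support of a formal power series in the variables $z_1,\ldots,z_r$ is visibly contained in the finitely generated semigroup spanned by their weights. The reduction merely amounts to unwinding the construction of $\phi$.

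Recall that $\phi$ is determined by a proper birational morphism $\pi:X\lgw\Spec(\F_n)$, a point $p$ in the exceptional locus, a regular system of parameters $z_1,\ldots,z_r$ of $\O_{X,p}$, and weights $\a_1,\ldots,\a_r\in\R_{>0}$ such that, after pullback along $\pi$, $\nu$ coincides with the $\a$-monomial valuation $\mu_\a$ on $\wdh{\O}_{X,p}\simeq\L\lb z_1,\ldots,z_r\rb$, where $\L$ is some field extension of $\k$. Under these identifications, $\phi^{-1}$ sends an element of $\wdh{V}_\nu$ to the collection of its weighted-homogeneous parts of $\a$-degree $i$, each viewed as a class in $\frac{\p_{\nu,i}}{\p_{\nu,i}^+}$.

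Since $p$ lies above the closed point of $\Spec(\F_n)$, the morphism $\pi$ induces a local ring homomorphism $\F_n\lgw\O_{X,p}\hookrightarrow\L\lb z_1,\ldots,z_r\rb$, $f\lgm\wdt{f}$. Any $f\in\F_n$ then admits a unique formal expansion
\[
\wdt{f}\;=\;\sum_{\beta\in\Z_{\geq 0}^r}c_\beta z^\beta,\qquad c_\beta\in\L,
\]
and grouping monomials of equal $\a$-weighted degree produces the weighted-homogeneous decomposition of $\wdt{f}$. The description of $\phi$ above then yields
\[
\Supp_{\nu,\phi}(f)\;=\;\{\,\a\cdot\beta \,:\, \beta\in\Z_{\geq 0}^r,\; c_\beta\neq 0\,\}\;\subset\; \La:=\a_1\Z_{\geq 0}+\cdots+\a_r\Z_{\geq 0}.
\]
The semigroup $\La$ is finitely generated by construction and depends only on $\phi$ (through the auxiliary data $\pi$ and $z_1,\ldots,z_r$), not on the particular $f\in\F_n$, which proves the proposition. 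There is in fact no real obstacle here: the result is a direct translation of the definitions once the reduction to a monomial valuation supplied by Remark \ref{iso} is in place.
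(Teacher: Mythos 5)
Your proof is correct and follows essentially the same route as the paper: reduce to a monomial valuation via Remark \ref{iso} (i.e.\ Remark \ref{ELS}) and then observe that the $\nu$-support of a genuine power series in $z_1,\ldots,z_r$ lands in the semigroup $\a_1\Z_{\geq 0}+\cdots+\a_r\Z_{\geq 0}$. The only cosmetic difference is that you read this off directly from the monomial expansion of $\pi^*f$, whereas the paper invokes a slightly more general statement (Lemma \ref{Gordan}, about power series supported on an arbitrary strictly convex rational cone), which in the relevant case $\Sigma=\R_{\geq 0}^r$ reduces to exactly your observation.
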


\begin{proof}
By Remark \ref{ELS}, we may assume that $\nu$ is a monomial valuation. Thus the proposition comes from the following lemma applied to $\Sigma=\Z_{\geq 0}^n$:
\end{proof}
\begin{lemma}\label{Gordan}
 Let $\Sigma$ be a strongly convex rational cone of $\R^n$. Let $\a\in\R_{>0}^n$ such that $\langle \a,\b\rangle>0$ for any $\b\in\Sigma$, $\b\neq 0$. Then there exists a finitely generated subgroup of $\R_{\geq 0}$, denoted by $\La$, such that $\Supp_{\nu_{\a}}(f)\subset \La$ for any $f\in\k\lb x^{\b},\b\in\Sigma\cap\Z^n\rb$ where
 $\k\lb x^{\b},\b\in\Sigma\cap\Z^n\rb$ denotes the ring of formal Laurent series whose support is included in $\Sigma\cap \Z^n$.
 \end{lemma}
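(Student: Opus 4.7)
The plan is to reduce the lemma to Gordan's lemma (the semigroup version of the finiteness statement for rational cones) and then transport the finite set of semigroup generators across the pairing $\b\mapsto \langle \a,\b\rangle$.

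First I would recall that since $\Sigma$ is a rational strictly convex cone of $\R^n$, Gordan's lemma provides a finite set of vectors $\b_1,\ldots,\b_s\in\Sigma\cap\Z^n$ generating $\Sigma\cap\Z^n$ as a sub-semigroup: every $\b\in\Sigma\cap\Z^n$ can be written $\b=\sum_{i=1}^s n_i\b_i$ with $n_i\in\Z_{\geq 0}$. Because $\a\in\R_{>0}^n$ satisfies $\langle\a,\b\rangle>0$ for every $\b\in\Sigma\setminus\{0\}$, each of the real numbers $\l_i:=\langle\a,\b_i\rangle$ lies in $\R_{>0}$.

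I would then define $\La$ to be the sub-semigroup of $\R_{\geq 0}$ generated by $\l_1,\ldots,\l_s$ (this is what the statement should mean by ``finitely generated subgroup of $\R_{\geq 0}$''). For any $\b\in\Sigma\cap\Z^n$, the identity $\langle\a,\b\rangle=\sum_{i=1}^s n_i\l_i$ shows that $\langle\a,\b\rangle\in\La$. Now unravel the definition of the $\nu_{\a}$-support: writing $f=\sum_{\b\in\Sigma\cap\Z^n}a_{\b}\,x^{\b}$ and regrouping by common $\nu_{\a}$-weight gives $f=\sum_{i}f_i$ where $f_i=\sum_{\langle\a,\b\rangle=i}a_{\b}x^{\b}$ is the $\nu_{\a}$-homogeneous component of degree $i$. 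A value $i$ belongs to $\Supp_{\nu_{\a}}(f)$ only if $f_i\neq 0$, hence only if there exists $\b\in\Supp(f)\subset\Sigma\cap\Z^n$ with $\langle\a,\b\rangle=i$; by the above such an $i$ lies in $\La$. Thus $\Supp_{\nu_{\a}}(f)\subset\La$.

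The only subtlety is to check that the formal series appearing in $\k\lb x^{\b},\b\in\Sigma\cap\Z^n\rb$ really do decompose into a $\nu_{\a}$-well-ordered sum of homogeneous components so that speaking of $\Supp_{\nu_{\a}}(f)$ makes sense; this follows from the fact that the hypothesis $\langle\a,\b\rangle>0$ on $\Sigma\setminus\{0\}$ combined with strict convexity of $\Sigma$ forces the fibers $\{\b\in\Sigma\cap\Z^n:\langle\a,\b\rangle\leq N\}$ to be finite for every $N$, so the grouping by $\nu_{\a}$-weight is legitimate and agrees with the decomposition dictated by the complete graded ring structure $\wdh{\Gr_{\nu_{\a}}V_{\nu_{\a}}}$. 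The main (and only non-routine) obstacle is the appeal to Gordan's lemma, which is however classical; the rest reduces to bookkeeping on the pairing $\langle\a,\cdot\rangle$.
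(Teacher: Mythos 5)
Your proof is correct and follows essentially the same route as the paper: apply Gordan's lemma to obtain finitely many semigroup generators $\b_1,\dots,\b_s$ of $\Sigma\cap\Z^n$, take $\La$ to be the sub-semigroup of $\R_{\geq 0}$ generated by the values $\langle\a,\b_i\rangle$, and observe that every weight $\langle\a,\b\rangle$ with $\b\in\Sigma\cap\Z^n$ lies in $\La$. The only thing you add is the (harmless and correct) verification that the $\nu_{\a}$-homogeneous regrouping is well defined; the paper leaves this implicit. You are also right that ``subgroup'' in the statement should read ``semigroup''.
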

 \begin{proof}
 By Gordan Lemma, $\Sigma\cap\Z^n$ is a finitely generated semigroup, let us say $\Sigma\cap\Z^n$ is generated by $u_1$, \ldots , $u_k$. Let us set $r_i:=\langle\a,u_i\rangle$, $1\leq i\leq k$. Since any element of $\Sigma\cap \Z^n$ is a $\Z_{\geq 0}$-linear combination of $u_1$, \ldots , $u_k$, then $\langle \a,\b\rangle$ is a $\Z_{\geq 0}$-linear combination of $r_1$, \ldots , $r_k$ for any $\b\in\Sigma\cap\Z^n$. Let us denote by $\La$ the semigroup of $\R_{\geq 0}$ generated by $r_1$, \ldots , $r_k$. Then $\Supp_{\nu_{\a}}(f)\subset \La$.
 \end{proof}
 
 \begin{rmk}
 Proposition \ref{Support} does not imply that the semigroup $\Sigma$ of $\nu$ is finitely generated, which is not true in general for Abhyankar valuations which are not monomial valuations.
 \end{rmk}
 

\subsection{Homogeneous elements}
  
 From now on we  fix an Abhyankar valuation $\nu$ on $\k\lb \x\rb $ and a $\k$-isomorphism $\phi$ between  $\wdh{\text{Gr}_{\nu}V_{\nu}}$ and $\wdh{V}_{\nu}$ induced by an injective birational morphism $\pi$  as in Remark \ref{iso} and we will skip to mention  it in the following. There are several reasons for that. The first one is that we are interested in effective results on the algebraic elements over $\k\lb \x\rb $, thus we are interested by valuations which are given effectively and this will be the case essentially through a map $\pi$ as in Remark \ref{ELS}. In particular we will investigate more deeply the case of monomial valuations and, in this case, the set of variables $x_1$, \ldots , $x_n$ will be fixed from the beginning, thus $\phi$ is quite natural in this case. The last reason is that we will give properties on the $\nu$-support of algebraic elements, and Proposition \ref{Support} will allow us to consider only elements whose $\nu$-support is included in a finitely generated sub-semigroup of $\R_{>0}$, and this fact does not depend on $\phi$.
 
 \begin{definition}\label{fg} Let $\nu$ be an Abhyankar valuation defined on $\k\lb \x\rb $. We will denote by $V_{\nu}^{\fg}$ the subset of $\wdh{V}_{\nu}$ of elements whose $\nu$-support is included in a finitely generated sub-semigroup of $\R_{\geq 0}$ (when we identify $\wdh{V}_{\nu}$ and $\wdh{\text{Gr}_{\nu}V_{\nu}}$ via $\phi$). It is straightforward to check that $V_{\nu}^{\fg}$ is a valuation ring. We denote by $\K_{\nu}^{\fg}$ its fraction field.
\end{definition}

\begin{definition}\label{homo}
Let $A$ be a complete $\G^+$-graded domain and let $\nu$ be its order function (which is a valuation since $A$ is a domain). A \textit{homogeneous element with respect to $\nu$} is an element $\g$ of a finite extension of $A$ such that its minimal polynomial $Q(Z)$ is irreducible in $A[Z]$ and has the following form:
$$Z^q+g_1Z^{q-1}+\cdots+g_q$$
where $g_k\in A_{i(k)}$ with $i(k)\in \G$ for $1\leq k\leq q$ such that $k.i(l)=l.i(k)$ for all $k$ and $l$. In this case $d:=\frac{i(k)}{k}\in\frac{1}{q!}\G$ is called the \emph{order} of $\g$.
\end{definition}

\begin{ex}\label{homo_mono}
Let $\a\in\R_{>0}^n$ such that $\dim_{\Q}(\Q\a_1+\cdots+\Q\a_n)=n$ i.e. the $\a_i$ are $\Q$-linearly independent. Then the value group of $\nu_{\a}$ is the following group: $$\G=\Z\a_1+\cdots+\Z\a_n$$ and for any $i\in\G$ there exists a unique $(\b_{i,1}, \ldots ,\b_{i,n})\in\Z^n$ such that 
$$i=\b_{i,1}\a_1+\cdots+\b_{i,n}\a_n.$$
Thus if  $i\in \G^+$ this means that  $\frac{\p_{\nu_{\a},i}}{\p_{\nu_{\a},i}^+}$ is isomorphic to the one dimensional $\k_{\nu_\a}$-vector space generated by $x_1^{\b_{i,1}}\cdots x_n^{\b_{i,n}}$. Let us remark here that $\k_{\nu_\a}$ is equal to $\k$ since the $\a_i$ are $\Q$-linearly independent. 
Thus if  $g_k\in \frac{\p_{\nu_{\a},dk}}{\p_{\nu_{\a},dk}^+}$ for $1\leq k\leq q$ we have that 
$$Z^q+g_1Z^{q-1}+\cdots+g_q=x_1^{\b_{qd,1}}\cdots x_n^{\b_{qd,n}}\left(T^q+g'_1T^{q-1}+\cdots+g'_q\right)$$
where $Z=x_1^{\b_{d,1}}\cdots x_n^{\b_{d,n}}T$ and $g_1'$, \ldots , $g'_q\in \k$.  If $g_q\neq 0$ then $\b_{qd,j}\in \Z$ for any $j$ but $\b_{d,j}=\frac{\b_{qd,j}}{d}$ may not be an integer.  Then the roots of $T^q+g'_1T^{q-1}+\cdots+g'_q$ are algebraic over $\k$. Thus homogeneous elements with respect to $\nu_{\a}$ are of the form $c\x^{\b}$ where $c$ is algebraic over $\k$ and $\b\in\Q^n$ with $\langle \a,\b\rangle:=\a_1\b_1+\cdots+\a_n\b_n\geq 0$.

\end{ex}

\begin{definition}\label{integral} Let  $\nu$ be an Abhyankar valuation on $\k\lb \x\rb $.
Let $A=\wdh{\text{Gr}_{\nu} V_{\nu}}$ and $\g$ be a homogeneous element with respect to $\nu$. Let $Q(Z)$ be its minimal polynomial: $Q(Z)=Z^q+g_1Z^{q-1}+\cdots+g_q$ with $g_k\in  \frac{\p_{\nu,dk}}{\p_{\nu,dk}^+}$ for $1\leq k\leq q$. We say that $\g$ is an \textit{integral homogeneous element with respect to $\nu$} if $g_k$ is the image of an element of $\k\lb \x\rb \cap\p_{\nu,dk}$ for all $k$.
\end{definition}

\begin{ex}\label{homo_mono'}
Let $\a\in\R_{>0}^n$ such that $\dim_{\Q}(\Q\a_1+\cdots+\Q\a_n)=n$ and let us keep the notations of Example \ref{homo_mono}. Then $\g$ is an integral homogeneous element with respect to $\nu_{\a}$ if $g_k\in \frac{\k\lb \x\rb \cap\p_{\nu_{\a},dk}}{\k\lb \x\rb \cap\p_{\nu_{\a},dk}^+}$ for $1\leq k\leq q$. Since $g_q\neq 0$ this means that $\b_{qd,j}\in\Z_{\geq 0}$ for all $j$. Thus integral homogeneous elements with respect to $\nu_{\a}$ are of the form $c\x^{\b}$ where $c$ is algebraic over $\k$ and $\b\in\Q_{\geq 0}^n$. 
\end{ex}

\begin{ex}\label{homo_0}
Let $\nu$ be an Abhyankar valuation on $\k\lb \x\rb $ and let us assume that $\k$ is not algebraically closed. Let $c$ be in the algebraic closure of $\k$, $c\notin\k$.
Then $c$ is a root of a polynomial equation with coefficients in $\k$ and since $\k$ is a subfield of $\k_{\nu}$, this shows that $c$ is an integral homogeneous element of order 0 with respect to $\nu$.

\end{ex}


\begin{rmk}\label{integral'}
Let $\nu$ be an Abhyankar valuation on $\k\lb \x\rb $ and let $\g$ be a homogeneous element of order $d$ with respect to $\nu$. Let us denote by $Q(Z)$ its minimal polynomial, say $$Q(Z)=Z^q+g_1Z^{q-1}+\cdots+g_q$$
where $g_k\in\frac{\p_{\nu_{\a},dk}}{\p_{\nu_{\a},dk}^+}$ for $1\leq k\leq q$. Each $g_k$ is the image in $\wdh{\text{Gr}_{\nu} V_{\nu}}$ of some fraction $\frac{f_k}{h_k}$ where $f_k$, $h_k\in \k\lb \x\rb $. Set $h:=h_1 \ldots  h_k$, let $h_0$ be the image of $h$ in $\wdh{\text{Gr}_{\nu}V_{\nu}}$ and set $\g':=h_0\g$. Then $\g'$ is a homogeneous element annihilating $Z^q+g_1'Z^{q-1}+\cdots+g_q'$ where $g_k'$ is the image of $\frac{f_k}{h_k}h^{k-1}\in\k\lb \x\rb $ in $\wdh{\text{Gr}_{\nu}V_{\nu}}$, thus it is an integral homogeneous element with respect to $\nu$. Moreover we have
$$\Frac(\wdh{\text{Gr}_{\nu} V_{\nu}})[\g]=\Frac(\wdh{\text{Gr}_{\nu} V_{\nu}})[\g'].$$

\end{rmk}


\begin{definition}\label{a-homo}

Let $\nu$ be an Abhyankar valuation on $\k\lb \x\rb $, 
$$P(Z_1, \ldots ,Z_m)\in\wdh{V}_{\nu}[Z_1, \ldots ,Z_m]$$ and $\dd:=(d_1, \ldots ,d_m)\in\R_{>0}^m$. One says that $P(Z_1, \ldots ,Z_m)$ is $(\nu,\dd)$-homogeneous of degree $d\in\R$ if for every nonzero monomial $gZ_1^{\b_1} \ldots  Z_m^{\b_m}$ of $P(Z)$ one has $g\in \frac{\p_{\nu,k}}{\p_{\nu,k}^+}$ with $k+\b_1 d_1+\cdots+\b_m d_m=d$. \\
\end{definition}


\begin{rmk}
Let $\nu$ be an Abhyankar valuation on $\k\lb \x\rb $. Let $\g$ be a homogeneous element of order $d$ with respect to $\nu$. Let us denote by $P(Z)$ its minimal monic polynomial. Then  $P(Z)$ is $(\nu,d)$-homogeneous. \\
Conversely if $P(Z)\in\wdh{V}_{\nu}[Z]$ satisfies  $P(\g)=0$ for some element $\g$ algebraic over $\wdh V_\nu
$, and if $P(Z)$ is  a nonzero  $(\nu,d)$-homogeneous, then the divisors of $P$ in $\wdh{V}_{\nu}[Z]$ are also $(\nu,d)$-homogeneous, thus the minimal polynomial of $\g$ is $(\nu,d)$-homogeneous. Hence $\g$ is a homogeneous element of order  $d$ with respect to $\nu$. 
\end{rmk}


\begin{lemma}\label{homogenes1}
Let $\g_1$ and $\g_2$ be two homogeneous elements of order $d_1$ and $d_2$ respectively  with respect to the valuation $\nu$ and let $k\in\Z$. Then 
\begin{itemize}
\item[i)] $\g_1^k$ is homogeneous of order $kd_1$, 
\item[ii)] if $e_1d_1=e_2d_2$ with $e_1,e_2\in\N$, then $\g_1^{e_1}+\g_2^{e_2}$ is homogeneous of order $d_1e_1$,
\item[iii)] $\g_1\g_2$ is homogeneous of order $d_1+d_2$.
\end{itemize}
\end{lemma}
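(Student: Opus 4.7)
The plan relies on the characterization given in the remark immediately preceding the lemma: an element $\g$ of a finite extension of $A:=\wdh{\text{Gr}_\nu V_\nu}$ is homogeneous of degree $d$ with respect to $\nu$ if and only if some monic $(\nu,d)$-homogeneous polynomial in $A[Z]$ vanishes at $\g$. So for each of the three statements it suffices to exhibit such a witness polynomial, built from the minimal polynomials of $\g_1$ and $\g_2$ in a common splitting field $L$ over $\Frac(A)$; write $\a_1=\g_1,\dots,\a_{q_1}$ and $\b_1=\g_2,\dots,\b_{q_2}$ for the full sets of conjugates.

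For (i) with $k>0$, set $P_k(Z):=\prod_{i=1}^{q_1}(Z-\a_i^k)$. Its coefficients are symmetric in the $\a_i$, hence by the fundamental theorem polynomial expressions in the elementary symmetric functions $e_\ell(\a)=\pm g_\ell\in A_{\ell d_1}$, so $P_k\in A[Z]$. A standard weight count shows that the coefficient of $Z^{q_1-j}$, viewed as a polynomial in the $e_\ell$'s with $e_\ell$ of weight $\ell$, is weighted-homogeneous of weight $jk$; substituting $e_\ell\in A_{\ell d_1}$ places it in $A_{jk d_1}$. Thus $P_k$ is $(\nu,kd_1)$-homogeneous and annihilates $\g_1^k$. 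The case $k<0$ follows from the positive case once one notes that $g_{q_1}^{-1}Z^{q_1}P(1/Z)$ is a monic $(\nu,-d_1)$-homogeneous polynomial vanishing at $\g_1^{-1}$, worked inside the $\G$-graded fraction field of $A$.

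The same scheme handles (iii) via $R(Z):=\prod_{i,j}(Z-\a_i\b_j)$: symmetrising first in the $\a_i$'s and then in the $\b_j$'s puts $R$ in $A[Z]$, and the coefficient of $Z^{q_1q_2-k}$ is a sum of monomials each of total $\a$-degree and total $\b$-degree equal to $k$, hence lies in $A_{k(d_1+d_2)}$. So $R$ is $(\nu,d_1+d_2)$-homogeneous and $\g_1\g_2$ is homogeneous of degree $d_1+d_2$ (the ``$d_1d_2$'' printed in the statement must be a typo, since the grading on $A$ is additive). For (ii), claim (i) reduces the problem to the sum of two homogeneous elements of a common degree $d:=e_1d_1=e_2d_2$, and the corresponding witness polynomial is $S(Z):=\prod_{i,j}\bigl(Z-(\a'_i+\b'_j)\bigr)$ where $\a'_i$, $\b'_j$ are the conjugates of $\g_1^{e_1}$, $\g_2^{e_2}$ (each of grading degree $d$); an analogous double symmetrisation puts $S$ in $A[Z]$ and places each monomial in the $Z^{r_1r_2-k}$ coefficient inside $A_{kd}$, giving $(\nu,d)$-homogeneity.

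The only real obstacle throughout is the grading bookkeeping — verifying that the coefficients of each witness polynomial land in the correct graded pieces $A_{*}$ — and this reduces to an elementary weight count given the additivity of the grading $A_iA_j\subset A_{i+j}$ and the classical weights of the elementary symmetric polynomials.
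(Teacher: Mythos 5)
Your argument is correct and is essentially the paper's argument phrased in a different formalism: the paper constructs the witness polynomial for each part as a resultant (e.g.\ $Q(Z)=\text{Res}_X(P_1(X),Z-X^k)$ for i), $\text{Res}_X(P_1(Z-X),P_2(X))$ for ii), $\text{Res}_X(X^{\deg P_1}P_1(Z/X),P_2(X))$ for iii)), whereas you construct the same polynomial directly as a product over conjugates and then check homogeneity through a symmetric-function weight count. Since the resultant of two monic polynomials is exactly a product of differences/combinations of their roots, the two routes produce the same annihilating polynomial, and the weight bookkeeping — multi-homogeneity of the resultant on one side, isobaric behaviour of elementary symmetric functions on the other — encodes the same computation. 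Two additional points. First, you are right that the printed statement of (iii), and the paper's own sentence ``$R(X,Z)$ is $(\nu,d_2,d_1d_2)$-homogeneous,'' must be a typo for $d_1+d_2$: the monomial $a_iX^iZ^{k-i}$ of $R$ has $\nu(a_i)=d_1i$, and the sum $d_1i+e_Xi+e_Z(k-i)$ is constant precisely when $e_Z=e_X+d_1$, so with $e_X=d_2$ one gets $e_Z=d_1+d_2$; your explicit check in Example \ref{homo_mono} ($\g_i=c_i\x^{\b_i}$, degree $\langle\a,\b_1+\b_2\rangle=d_1+d_2$) also confirms this. Second, you attempt to cover $k<0$ in (i), which the paper's proof silently restricts to $k\in\N$; your formula $g_{q_1}^{-1}Z^{q_1}P(1/Z)$ is correct, but note that by Definitions \ref{graded} and \ref{homo} the grading is over $\G^+\subset\R_{\geq0}$, so a ``homogeneous element of negative degree'' has no meaning inside $A$ itself — as you say, this only makes sense after passing to the $\G$-graded ring of fractions, and the paper never actually uses negative $k$.
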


\begin{proof}
If $\g$ is homogeneous of order $d\in\Q$, then $\g^k$, $k\in\N$, is homogeneous of order $kd$. Indeed 
a polynomial having $\g^k$ as a root is   $Q(Z):=$Res$_X(P(X),Z-X^k)$ where  $P$ is the minimal monic polynomial of $\g$ over $\k(\x)$. But $P(X)$ is $(\nu,d)$-homogeneous  and $Z-X^k$ is $(\nu,d,kd)$-homogeneous. Thus $Q(Z)$ is $(\nu,d,kd)$-homogeneous, hence $(\nu,kd)$-homogeneous since it does not depend on $X$. This proves that  $\g^k$ is homogeneous of order $kd$.\\
\\
In order to show ii) we may assume, by i), that $\g_1$ and $\g_2$ are homogeneous of same order  $d=e_1d_1=e_2d_2$. Let us denote by $P_1(Z)$ and $P_2(Z)$ the minimal monic polynomials of $\g_1$ and $\g_2$ respectively. Then  $Q(Z):=$Res$_X(P_1(Z-X),P_2(X))$ is $(\nu,d,d)$-homogeneous, thus $(\nu,d)$-homogeneous since it does not depend on $X$. Since $Q(\g_1+\g_2)=0$,  $\g_1+\g_2$ is homogeneous of order  $d$.\\
\\
In order to show iii) let us denote by $P_1(X)$ the minimal monic polynomial  of $\g_1$ (this is a $(\nu,d_1)$-homogeneous polynomial) and $P_2(Z)$ the minimal monic polynomial of  $\g_2$ ($(\nu,d_2)$-homogeneous). Let us denote by  $k$ the degree in $Z$ of  $P_1(Z)$ and set $R(X,Y):=X^kP_1(Y/X)$. Then $\g_1\g_2$ is a root of  $Q(Z):=$Res$_X(R(X,Z),P_2(X))$. Moreover $R(X,Z)$ is $(\nu,d_2,d_1+d_2)$-homogeneous. Thus $Q(Z)$ is $(\nu,d_1+d_2)$-homogeneous, which proves that $\g_1\g_2$ is homogeneous of order  $d_1+d_2$.
\end{proof}


\begin{lemma}
Let $P(T,Z)$  be a nonzero  $(\nu,d_1,d_2)$-homogeneous polynomial of $\wdh{V}_{\nu}[T,Z]$ and let  $\g_1$ be a homogeneous element of order  $d_1$ with respect to $\nu$. If an element  $\g_2$ belonging to a finite extension of $\k(\x)$ satisfies  $P(\g_1,\g_2)=0$, then $\g_2$ is a homogeneous element of order  $d_2$ with respect to $\nu$.
\end{lemma}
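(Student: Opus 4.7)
The plan is to mimic the resultant argument from the previous lemma. Let $P_1(T) = T^q + g_1 T^{q-1} + \cdots + g_q \in \wdh{V}_{\nu}[T]$ denote the (monic) minimal polynomial of $\g_1$; this polynomial is $(\nu, d_1)$-homogeneous by the definition of a homogeneous element. After replacing $P(T,Z)$ by an irreducible factor in $\wdh{V}_{\nu}[T,Z]$ that still vanishes at $(\g_1, \g_2)$ and that has positive degree in $Z$ — such a factor is again $(\nu, d_1, d_2)$-homogeneous by the earlier observation that divisors of a weighted homogeneous polynomial are weighted homogeneous — I may assume $P$ itself is irreducible and of positive $Z$-degree.

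I then form the resultant
$$Q(Z) := \text{Res}_T\bigl(P(T,Z),\, P_1(T)\bigr) \in \wdh{V}_{\nu}[Z].$$
Because $P_1(T)$ is irreducible in $\wdh{V}_{\nu}[T]$ and cannot divide $P(T,Z)$ (as $P$ is irreducible with positive $Z$-degree), the two polynomials are coprime in $\wdh{V}_{\nu}(Z)[T]$, hence $Q \not\equiv 0$. On the other hand, $P(T, \g_2)$ and $P_1(T)$ share the common root $\g_1$ in an algebraic closure of $\wdh{\K}_{\nu}(\g_2)$, so specialization gives $Q(\g_2) = 0$.

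The remaining, and central, step is to verify that $Q(Z)$ is $(\nu, d_2)$-homogeneous. Assigning weight $d_1$ to $T$ and weight $d_2$ to $Z$, both $P(T,Z)$ and $P_1(T)$ become weighted homogeneous in the combined $(T,Z)$-grading, and the Sylvester determinant that defines the resultant is then a sum of products of matrix entries whose combined weights all agree. Since $Q(Z)$ no longer involves $T$, this common weight is carried entirely by $Z$, so $Q$ is $(\nu, d_2)$-homogeneous; this is exactly the scaling argument used in the previous lemma for products of homogeneous elements. Applying the earlier remark that irreducible divisors in $\wdh{V}_{\nu}[Z]$ of a $(\nu, d_2)$-homogeneous polynomial are themselves $(\nu, d_2)$-homogeneous, I conclude that the minimal polynomial of $\g_2$ is $(\nu, d_2)$-homogeneous, and so $\g_2$ is a homogeneous element of degree $d_2$ with respect to $\nu$.

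The main obstacle is the non-vanishing of $Q(Z)$: without the preliminary reduction to an irreducible $P$ of positive $Z$-degree, combined with the irreducibility of $P_1$ in $\wdh{V}_{\nu}[T]$, one cannot exclude the possibility that $P_1$ divides $P$, in which case $Q$ would be identically zero and the argument would collapse. Once this reduction is secured, everything else is a direct transcription of the Sylvester-matrix computation that was already implicit in the proof of Lemma \ref{homogenes1}.
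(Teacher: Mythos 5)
Your proof follows the same resultant-based route as the paper: the paper's proof sets $R(Z):=\text{Res}_T(P(T,Z),Q(T))$ with $Q$ a $(\nu,d_1)$-homogeneous annihilator of $\g_1$, observes that $R$ is $(\nu,d_2)$-homogeneous and $R(\g_2)=0$, and stops. You go further in two ways that mirror the care already taken in the proof of Lemma~\ref{homogenes1}: you explicitly insist on $Q\not\equiv 0$ (which the paper glosses over), and you justify the specialization $Q(\g_2)=0$ via the shared root.

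That said, the step where you replace $P$ by an irreducible factor ``that still vanishes at $(\g_1,\g_2)$ and has positive degree in $Z$'' is asserted, not proved, and it can fail. If $P(T,Z)=P_1(T)$ (the minimal polynomial of $\g_1$), then $P(\g_1,\g_2)=0$ for \emph{every} $\g_2$, yet the only irreducible factor vanishing at $(\g_1,\g_2)$ is $P_1$, which has $Z$-degree zero; more generally, whenever $P_1\mid P$ the hypothesis imposes no constraint on $\g_2$ at all and the lemma's conclusion is false. This is really a gap in the statement of the lemma, not specific to your argument: an implicit hypothesis such as $P(\g_1,Z)\not\equiv 0$ (equivalently $P_1\nmid P$) is needed. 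Under that hypothesis your reduction is sound — one can choose an irreducible factor $P'$ of $P$ with $P'(\g_1,\g_2)=0$ and $P'(\g_1,Z)\not\equiv 0$, which forces $\deg_Z P'>0$ — and the rest of your argument, including the Sylvester-matrix homogeneity count and the appeal to the remark that divisors of a $(\nu,d_2)$-homogeneous polynomial are $(\nu,d_2)$-homogeneous, is correct. Just state the extra hypothesis explicitly rather than claiming the reduction is automatic.
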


\begin{proof}
Let $Q(T)\in\wdh{V}_{\nu}[T]$ be a nonzero  $(\nu,d_1)$-homogeneous polynomial such that $Q(\g_1)=0$. Let us denote  $R(Z)=$Res$_T(P(T,Z),Q(T))$. Then $R(Z)$ is a $(\nu,d_2)$-homogeneous polynomial such that $R(\g_2)=0$. This proves the result.
\end{proof}


\begin{rmk}\label{extend}
Let $A$ be a complete $\G^+$-graded integral domain, let say $A$ is the completion of $A':=\bigoplus_{i\in\G^+}A_i$, and let $\nu$ be its order valuation. Let $Q(Z)$ be an irreducible polynomial of $A[Z]$ having the following form:
$$Z^q+g_1Z^{q-1}+\cdots+g_q$$
where $g_k\in A_{dk}$ for $1\leq k\leq q$ and $d\in\frac{1}{q!}\G^+$.
The ring $B:=\frac{A[Z]}{(Q(Z))}$ is an integral domain and $\nu$ extends to a valuation of this ring by defining $\nu(Z):=d$ and 
$$\nu\left(\sum_{i=0}^{q-1}a_iZ^i\right):=\inf_i\{\nu(a_i)+di\}.$$
Let us set $B':=A'[Z]/(Q(Z))$.
Then $B$ is a complete $\frac{1}{q!}\G$-graded domain since $B$ is the completion of 
$$B'=\bigoplus_{i\in\G^+}\bigoplus_{\begin{subarray}{l} 0\leq j\leq \min\{\lfloor\frac{i}{d}\rfloor,q\}\\ 
\qquad j\in \frac{1}{d!}\G^+\end{subarray}}A_{i-dj}Z^j.$$
\end{rmk}

\begin{definition}\label{Vlr}
Let $\g$ be an algebraic element over $A$ whose minimal polynomial is the polynomial $Q(Z)$ as in the previous remark.
Then the integral domain $B$ constructed in the previous remark is  denoted by $A[\g]$. \\
By induction we can define $A[\g_1, \ldots , \g_s]$ where $\g_{i+1}$ is a homogeneous element over $A[\g_1, \ldots ,\g_i]$ for $1\leq i<s$. When $\nu $ is an Abhyankar valuation on $\k\lb \x\rb $ and $A=\wdh{V}_{\nu}$, $V^{\alg}_{\nu}$ or $V^{\fg}_{\nu}$, the valuation $\nu$ extends to $A[\g_1, \ldots ,\g_i]$ as in Remark \ref{extend}. Then we denote by $A[\langle\g_1, \ldots , \g_s\rangle]$ the valuation ring associated to the order valuation of $A[\g_1, \ldots ,\g_s]$. In this case the elements of $A[\langle\g_1, \ldots ,\g_s\rangle]$ are the elements which are finite sums of terms of the form $b\g_1^{j_1}...\g_s^{j_s}$ where 
$b\in \Frac(A)$ and $\nu(b)\geq -(j_1\nu(\g_1)+\cdots+j_s\nu(\g_s))$.
\end{definition}


\begin{definition}\label{limit}
If $\nu$ is an Abhyankar valuation we denote by 
$$\ovl{V}_{\nu}:=\underset{\underset{\g_1, \ldots , \g_s}{\lgw}}{\lim}\,\wdh{V}_{\nu}[\langle\g_1, \ldots , \g_s\rangle]$$ the  direct limit over all subsets $\{\g_1, \ldots , \g_s\}$ of homogeneous elements with respect to $\nu$  and by $\ovl{\K}_{\nu}$ its fraction field. By Remark \ref{integral'} we may restrict the limit over the subsets of integral homogeneous elements.\\
In the same way we define 
$$\ovl{V}^{\fg}_{\nu}:=\underset{\underset{\g_1, \ldots , \g_s}{\lgw}}{\lim}\,V^{\fg}_{\nu}[\langle\g_1, \ldots , \g_s\rangle],$$ 
$$\ovl{V}^{\alg}_{\nu}:=\underset{\underset{\g_1, \ldots , \g_s}{\lgw}}{\lim}\,V^{\alg}_{\nu}[\langle\g_1, \ldots , \g_s\rangle],$$
 the limits being taken over  all subsets $\{\g_1,....,\g_s\}$ of (integral) homogeneous elements with respect to $\nu$, and we denote by $\ovl{\K}^{\fg}_{\nu}$ and $\ovl{\K}^{\alg}_{\nu}$ their respective fraction fields.
\end{definition}

The following result provides an upper bound on the number of homogeneous elements we need to consider:

\begin{prop}\label{bound}
Let $\nu$ be an Abhyankar valuation on $\k\lb \x\rb $ and let $\G$ denote its value group. Set $N:=\dim_{\Q}\G\otimes_{\Z}\Q$ and let $\g_1$, \ldots , $\g_{s}$ be homogeneous elements with respect to $\nu$. Then there exist integral homogeneous elements $\g'_1$, \ldots , $\g'_N$ with respect to $\nu$ such that  $\wdh{V}_{\nu}[\langle \g_1, \ldots , \g_s\rangle]=\wdh{V}_{\nu}[\langle \g'_1, \ldots ,\g'_N\rangle]$.\\
This equality remains true if we replace $\wdh{V}_{\nu}$ by $V^{\alg}_{\nu}$ or $V^{\fg}_{\nu}$.

\end{prop}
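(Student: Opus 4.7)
The strategy is to determine the value group of the finite extension $L := \wdh{\K}_{\nu}(\g_1,\dots,\g_s)$ of $\wdh{\K}_{\nu}$, show it is free abelian of rank $N$, and then construct $N$ homogeneous elements whose degrees form a $\Z$-basis of this value group. Since $\wdh{\K}_{\nu}$ is complete of rank one and characteristic zero, $\nu$ extends uniquely to $L$ (call this extension $\nu^*$); the fundamental equality $[L:\wdh{\K}_{\nu}]=ef$, with $e=[\G^*:\G]$ and $f=[k_{\nu^*}:k_{\nu}]$, forces $[\G^*:\G]<\infty$. Therefore $\G^*$ shares the $\Q$-rank $N$ of $\G$, and being a finitely generated torsion-free subgroup of $\R$ it is free abelian of rank $N$. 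Pick a $\Z$-basis $d'_1,\dots,d'_N$ of $\G^*$ and, after possibly negating basis vectors, arrange $d'_j>0$ for every $j$.

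Next I would construct the $\g'_j$'s as monomial combinations of the $\g_i$'s. Since adjoining homogeneous elements does not enlarge the value group beyond $\G+\sum_i\Z d_i$, we have $\G^*=\G+\sum_i\Z d_i$, so we may write $d'_j = a_j + \sum_i n_{ij}d_i$ with $a_j\in\G^+$ (achievable by absorbing multiples of $d_i$'s into the $n_{ij}$'s) and $n_{ij}\in\Z$. Choose a nonzero element $b_j$ in the graded piece of degree $a_j$ of $\wdh{\text{Gr}_{\nu}V_{\nu}}$ and set
$$
\g'_j := b_j\,\g_1^{n_{1j}}\cdots\g_s^{n_{sj}}.
$$
Iterating Lemma \ref{homogenes1}, each $\g'_j$ is a homogeneous element of degree $d'_j$; by construction $\g'_j\in\wdh{V}_{\nu}[\langle\g_1,\dots,\g_s\rangle]$, establishing the inclusion $\wdh{V}_{\nu}[\langle\g'_1,\dots,\g'_N\rangle]\subset\wdh{V}_{\nu}[\langle\g_1,\dots,\g_s\rangle]$. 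If the residue extension $k_{\nu^*}/k_{\nu}$ induced by the $\g_i$'s is nontrivial, I absorb a primitive element $\theta$ of $k_{\nu^*}/k_{\nu}$ (a homogeneous element of degree $0$ by Example \ref{homo_0}) into one of the $\g'_j$'s, for example by replacing $\g'_N$ with $\g'_N+\theta\,c$ for a suitable homogeneous $c$ of degree $d'_N$; Lemma \ref{homogenes1}(ii) keeps $\g'_N$ homogeneous of degree $d'_N$.

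For the reverse inclusion I write $d_i = \sum_j m_{ji}\,d'_j$; then $u_i:=\g_i\prod_j(\g'_j)^{-m_{ji}}\in L$ has $\nu^*$-valuation $0$. The construction ensures that the associated graded ring of $\wdh{V}_{\nu}[\g'_1,\dots,\g'_N]$ with respect to $\nu^*$ realises every graded piece of $L$ over $\wdh{V}_{\nu}$, both in degree (via the basis $d'_j$) and in residue (via $\theta$); hence the image of $u_i$ in $k_{\nu^*}$ lies in the residue field of $\wdh{V}_{\nu}[\g'_1,\dots,\g'_N]$, and successive approximation, using the completeness of $\wdh{V}_{\nu}$ and the fact that $\wdh{V}_{\nu}[\g'_1,\dots,\g'_N]$ is a finitely generated $\wdh{V}_{\nu}$-module, places $u_i$ inside $\wdh{V}_{\nu}[\langle\g'_1,\dots,\g'_N\rangle]$. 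Multiplying by $\prod_j(\g'_j)^{m_{ji}}$ yields $\g_i\in\wdh{V}_{\nu}[\langle\g'_1,\dots,\g'_N\rangle]$. The same argument applies verbatim for $V^{\alg}_{\nu}$ and $V^{\fg}_{\nu}$, which are stable under the operations performed, and for integral homogeneous elements via Remark \ref{integral'}. The main obstacle is to encode the $N$-dimensional value group data and the (possibly nontrivial) residue field extension in a single set of $N$ homogeneous elements, which is exactly what the residue modification of $\g'_N$ by $\theta c$ achieves.
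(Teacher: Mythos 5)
Your approach is genuinely different from the paper's. The paper argues by induction on $s$: given $N+1$ homogeneous elements, it finds a $\Q$-linear relation among the degrees $d_1,\dots,d_{N+1}$, takes suitable roots so that the two monomials $\g'_1\cdots\g'_s$ and $\g'_{s+1}\cdots\g'_{N+1}$ become homogeneous of equal degree, and then replaces this pair by the single homogeneous element $\g'_1\cdots\g'_s+c\,\g'_{s+1}\cdots\g'_{N+1}$ via the primitive element theorem over $\k(\x)$, dropping the count by one. You instead compute the value group $\G^*$ of $L$, show it is free of rank $N$, and try to realize a $\Z$-basis of $\G^*$ as degrees of $N$ homogeneous elements, then invoke defectlessness to identify the two fields. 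This is a reasonable and perhaps more structural plan, but as written it has gaps.

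Concretely: (1) Writing $d'_j=a_j+\sum_i n_{ij}d_i$ with $a_j\in\G^+$ is not a formality, since shifting by multiples of the $d_i$'s moves $a_j$ by elements of $\sum_i\Z d_i$, which need not lie in $\G$; and more importantly the Laurent monomial $\g'_j=b_j\prod_i\g_i^{n_{ij}}$ with some $n_{ij}<0$ is not a product of homogeneous elements in the sense of Definition~\ref{homo} (the inverse of a positive-degree homogeneous element would have negative degree, which the definition excludes), so ``iterating Lemma~\ref{homogenes1}'' does not directly apply. One must explicitly exhibit a $(\nu,d'_j)$-homogeneous polynomial over $\wdh{V}_\nu$ annihilating $\g'_j$. (2) The residue-field step is asserted but not justified: replacing $\g'_N$ by $\g'_N+\theta c$ gives a homogeneous element of degree $d'_N$, but showing $\theta\in\wdh{\K}_\nu(\g'_1,\dots,\g'_{N-1},\g'_N+\theta c)$ needs the primitive element theorem for the pair $(\g'_N,\theta)$ together with a choice of scalar avoiding finitely many bad values; and you should then say why the residue field of $M:=\wdh{\K}_\nu(\g'_1,\dots,\g'_N)$ is all of $k_{\nu^*}$. (3) The ``successive approximation'' argument for the reverse inclusion is not a proof; the clean statement you nearly make is that $M\subset L$ are finite over the Henselian $\wdh{\K}_\nu$, have the same ramification index and residue degree by construction, and since the residue characteristic is zero there is no defect, so $[M:\wdh{\K}_\nu]=ef=[L:\wdh{\K}_\nu]$, forcing $M=L$ and hence equality of valuation rings (and the same argument then transfers to $V^{\alg}_\nu$ and $V^{\fg}_\nu$, both Henselian by Lemma~\ref{lemma1}). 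Filling in these three points would make your route a valid alternative to the paper's elementary combinatorial reduction, at the cost of invoking defectlessness and the explicit structure of $\G^*$.
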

\begin{proof} 
We will prove this proposition by induction on $s$. Let $\g_1$, \ldots , $\g_{N+1}$ be nonzero  homogeneous elements with respect to $\nu$. Let $d_i$ be the order of $\g_i$, for $1\leq i\leq N+1$. By assumption on $N$ the $d_i$  are $\Q$-linearly dependent. Thus, after a permutation of the $g_i$ ,  there exists an integer $1\leq l\leq N$ and integers $p_i\in\Z_{\geq 0}$, $q_i\in\N$ for all $1\leq i\leq N+1$, such that 
\begin{equation}\label{expo}\frac{p_1}{q_1}d_1+\cdots+\frac{p_l}{q_l}d_l=\frac{p_{l+1}}{q_{l+1}}d_{l+1}+\cdots+\frac{p_{N+1}}{q_{N+1}}d_{N+1}.\end{equation}
Set $r_i:=\frac{p_1\cdots p_{N+1}}{p_i}$ for $1\leq i\leq N+1$. Let us denote $\g'_i:=\g_i^{\frac{1}{q_ir_i}}$. Then we have $$\wdh{V}_{\nu}[\langle\g_1, \ldots , \g_{N+1}\rangle]\subset \wdh{V}_{\nu}[\langle\g'_1, \ldots ,\g'_{N+1}\rangle].$$
By (\ref{expo}) and Lemma \ref{homogenes1}, $\g_1'\cdots \g_l'$ and $\g_{l+1}'\cdots \g'_{N+1}$ are homogeneous elements of same order. By the Primitive Element Theorem there exists $c\in\k$ such that $$\k(\x)[\g_1'\cdots \g_l',\g_{l+1}'\cdots \g_{N+1}']=\k(\x)[\g_1'\cdots \g_l'+c\g_{l+1}'\cdots \g_{N+1}'].$$ 
Moreover $\g:=\g_1'\cdots \g_l'+c\g_{l+1}'\cdots \g_{N+1}'$ is a homogeneous element with respect to $\nu$ of  same order as $\g'_1\cdots \g'_l$ and $\g_{l+1}'\cdots \g_{N+1}'$ by Lemma \ref{homogenes1}.
Since 
$$\k(\x)[\g'_1, \ldots ,\g'_l]=\k(\x)[\g'_1, \ldots ,\g'_{l-1},\g'_1\cdots \g'_l]$$ and $$\k(\x)[\g_{l+1}', \ldots ,\g'_{N+1}]=\k(\x)[\g'_{l+1}, \ldots ,\g'_N,\g'_{l+1}\cdots \g'_{N+1}],$$ we have
$$\k(\x)[\g_1', \ldots ,\g_{N+1}']=\k(\x)[\g'_1, \ldots ,\g'_{l-1},\g'_{l+1}, \ldots ,\g'_{N},\g].$$
Thus $\g'_l$ is a finite sum of products of elements $a_i(\x)\in\k(\x)$ and powers of $\g_1'$, \ldots , $\g'_{l-1}$, $\g'_{l+1}$, \ldots , $\g'_{N}$, $\g$ and by homogeneity we may assume that $a_i(\x)$ are $(\nu)$-homogeneous. Thus $$\wdh{V}_{\nu}[\langle \g'_1, \ldots ,\g'_{N+1}\rangle]=\wdh{V}_{\nu}[\langle  \g'_1, \ldots ,\g'_{l-1},\g'_{l+1}, \ldots ,\g'_{N},\g\rangle].$$
By Remark \ref{integral'} we may assume that the $\g_i'$ are integral homogeneous elements.\\
The proof is the same if we replace $\wdh{V}_{\nu}$ by $V^{\alg}_{\nu}$ or $V^{\fg}_{\nu}$.

\end{proof}

\section{Newton method and algebraic closure of $\k\lb \x\rb $ with respect to an Abhyankar valuation}

\subsection{Newton method}

\begin{lemma}\label{lemma1}
Let $(A,\m)$ be a complete graded local ring. Let $B$ be the set of the elements of $A$ whose support is included in a finitely generated sub-semigroup of $\R_{\geq 0}$. Then $B$ is a Henselian  local domain.
\end{lemma}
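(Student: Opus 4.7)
First I would verify that $B$ is a local subring of $A$. Given $f,g\in B$ with supports in finitely generated sub-semigroups $\La_f,\La_g\subset\R_{\geq 0}$, the sum $f+g$ has support in $\La_f\cup\La_g$ and the product $fg$ in $\La_f+\La_g$; both are contained in the semigroup generated by a finite set of generators of $\La_f\cup\La_g$, so $B$ is a subring of $A$, and is a domain since $A$ is. For the local property, if $f=f_0(1+u)$ with $f_0\in A_0\setminus\{0\}$ and $u\in\m$ whose support lies in a finitely generated semigroup $\La$ (which we may take to contain $0$), then $f^{-1}=f_0^{-1}\sum_{k\geq 0}(-u)^k$ converges in $A$ because $\nu(u^k)=k\nu(u)\to\infty$, and its support lies in $\bigcup_{k\geq 0}k\La\subseteq\La$ since $\La$ is closed under addition. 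Hence any unit of $A$ lying in $B$ has its inverse in $B$, and $B$ is local with maximal ideal $\m\cap B$.

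The heart of the argument is Henselianity, which I would verify through Newton's method. Let $P(Z)\in B[Z]$ be monic and suppose $a_0\in B$ satisfies $P(a_0)\in\m\cap B$ while $P'(a_0)$ is a unit of $B$. Choose a \emph{single} finitely generated sub-semigroup $\La\subseteq\R_{\geq 0}$ containing $0$ together with the supports of all coefficients of $P$ and of $a_0$; this is possible because the union of finitely many finitely generated semigroups is contained in a finitely generated one (take the union of their generating sets). Let $B_\La\subseteq B$ denote the elements with support in $\La$. The first paragraph shows that $B_\La$ is closed under addition, multiplication, and inversion of units, precisely because $\La+\La\subseteq\La$.

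Now the Newton iteration
\[
a_{n+1} \;:=\; a_n - \frac{P(a_n)}{P'(a_n)}
\]
stays in $B_\La$ for all $n$: by induction $a_n\in B_\La$, and $P'(a_n)\equiv P'(a_0)\pmod{\m}$ remains a unit, so its reciprocal also lies in $B_\La$. The classical Newton--Hensel estimate $\nu(P(a_{n+1}))\geq 2\nu(P(a_n))-2\nu(P'(a_0))$ gives $\nu(P(a_n))\to\infty$ and $\nu(a_{n+1}-a_n)\to\infty$; since $A$ is complete, $(a_n)$ converges to an element $a\in A$ satisfying $P(a)=0$ and $a\equiv a_0\pmod{\m}$. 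For each $i\in\G^+$, the degree-$i$ component of $a$ equals that of $a_n$ once $n$ is large enough, so $\Supp(a)\subseteq\La$, i.e.\ $a\in B$. This establishes the Hensel lifting criterion for $B$.

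The main obstacle is the bookkeeping of supports: the whole point is that once $\La$ is fixed large enough to accommodate the data of the Hensel problem, every algebraic operation occurring in the Newton iteration, including the inversion of $P'(a_n)$ via a geometric series, keeps all supports inside $\La$. This closure depends squarely on $\La$ being a semigroup under addition, and it is exactly what forces the limiting root $a$ to have finitely generated support and therefore to belong to $B$ rather than only to the ambient Henselian ring $A$.
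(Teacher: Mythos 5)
Your proof is correct and follows essentially the same strategy as the paper: fix a single finitely generated semigroup $\La$ large enough to contain the supports of all the data of the Hensel problem, observe that $\La$ being closed under addition makes $B_\La$ stable under sums, products and inversion of units, and run an iteration whose terms therefore never leave $B_\La$. The only cosmetic difference is that you use the quadratic Newton iteration $a_{n+1}=a_n-P(a_n)/P'(a_n)$ and appeal to completeness of $A$ for the limit, whereas the paper builds the root one homogeneous component at a time (at each step adjoining $-\ini(P_k(0))/\ini(P'(0))$, a single homogeneous element of a degree in $\La$), so that discreteness of $\La$ already guarantees convergence without invoking completeness explicitly; both are standard variants of the same Hensel argument and lead to the same conclusion about the support of the root.
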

\begin{proof}
Let us prove that $B$ is a ring: let $b_1$ and $b_2$ be two elements of $B$ whose supports are included in $\La_1$ and $\La_2$ respectively. Thus we can write $b_i=\sum_{j\in \La_i}b_{i,j}$ where $b_{i,j}$ is a homogeneous element of degree $j$ for any $i=1$, $2$ and $j\in\La_1$ or $\La_2$. Let $\La$ be the finitely generated sub-semigroup of $\R_{\geq 0}$ generated by $\La_1$ and $\La_2$. Then $\Supp(b_1+b_2)$ and $\Supp(b_1b_2)$ are included in $\La$. This proves that $B$ is a ring. Since $B\subset A$, $B$ is a domain.\\
It is clear that  $\m\cap B$ is an ideal of $B$. If $b\in B\backslash (\m\cap B)$, then there exists $a\in A$ such that $ab=1$. Let us write $b=\sum_{i\in \La}b_i$ where $b_i$ is homogeneous of degree $i$ and $\La$ is a finitely generated sub-semigroup of $\R_{\geq 0}$. Since $b\notin \m$, then $b_0\neq 0$. In this case we have 
$$a=b^{-1}=\frac{1}{b_0}\left(1+\sum_{i\in\La\backslash\{0\}}\frac{b_i}{b_0}\right)^{-1}=\frac{1}{b_0}\sum_{k=1}^{\infty}(-1)^k\left(\sum_{i\in\La\backslash\{0\}}\frac{b_i}{b_0}\right)^k.$$
Thus $\Supp(a)\subset \La$. This proves that $B$ is a local ring with maximal ideal $\m\cap B$.\\
Now let $P(Z)\in B[Z]$, such that $P(0)\in \m\cap B$ and $P'(0)\notin \m$. We denote by $\nu$ the order function of $A$, i.e. if $a\in A$, $a\neq 0$, $a=\sum_{i}a_i$ where $a_i$ is homogeneous of degree $i$, $\nu(a):=\inf\{i\ / \ a_i\neq 0\}$ and  the initial term of $a$ is $\ini(a):=a_{\nu(a)}$. Since $A$ is a complete   local ring it is a Henselian local ring and  there exists $a\in \m$ such that $P(a)=0$. We can construct $a$ by using the fact that
\begin{equation}\label{Taylor}P(Z)=P(0)+P'(0)Z+Q(Z)Z^2\end{equation}
where $Q(Z)\in B[Z]$. Indeed, let $\La$ denote a finitely generated sub-semigroup of $\R_{\geq 0}$ containing  the supports of all the coefficients of $P(Z)$. In this case $a_1:=\ini(a)=-\frac{\ini(P(0))}{\ini(P'(0))}$ is a homogeneous element of degree $d_1\in\La$, $d_1>0$. If we set $P_1(Z):=P(Z+a_1)$, we see that 
$$\nu(P_1(0))=\nu(P(a_1))>d_1,$$
$P_1'(0)=P'(0)=0$ 
and $a-a_1$ is the solution of $P_1(Z)=0$ given by the Hensel Lemma.  Then we replace $P$ by $P_1$ in Equation (\ref{Taylor}) and repeat the same argument, using the fact that the coefficients of $P_1(Z)$ have support included in $\La$. Thus we see that  $\ini(a-a_1)=-\frac{\ini(P_1(0))}{\ini(P'(0))}$ is a homogeneous element of degree $d_2\in\La$, $d_2>d_1$. We repeat this operation a countable number of times (since $\La$ is countable) in order to construct $a$ and we see that $\Supp(a)\subset \La$.

\end{proof}
 \noindent Now we can prove the following theorem:

\begin{theorem}\label{main}
Let $\k$ be a  field of characteristic zero and $\nu$ be an Abhyankar valuation of $\k\lb \x\rb $. Let $N:=\dim_{\Q}\G\otimes_{\Z}\Q$.
Let 
$$P(Z)\in V_{\nu}^{\fg}[\langle\g_1, \ldots , \g_N\rangle][Z]$$ (resp. $\wdh{V}_{\nu}[\langle\g_1, \ldots , \g_N\rangle][Z]$) be a monic polynomial of degree $d$ where $\g_i$ is a homogeneous element with respect to $\nu$ for $1\leq i\leq N$.  Then there exist integral homogeneous elements $\g'_{1}$, \ldots , $\g'_N$ such that the roots of $P(Z)$ are in $V_{\nu}^{\fg}[\langle\g'_1, \ldots ,\g'_N\rangle]$ (resp. $\wdh{V}_{\nu}[\langle \g_1', \ldots ,\g'_N\rangle]$). 
\end{theorem}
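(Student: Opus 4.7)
The plan is to run a Newton-Puiseux algorithm directly in the complete graded ring. Set $A := V_{\nu}^{\fg}[\langle \g_1, \dots, \g_N \rangle]$ (or $A := \wdh{V}_{\nu}[\langle \g_1, \dots, \g_N \rangle]$ for the second version); after identifying $\wdh{V}_{\nu}$ with $\wdh{\text{Gr}_{\nu} V_{\nu}}$ via $\phi$, the ring $A$ is a complete $\frac{1}{r}\G^+$-graded local domain for some $r \in \N$. Rerunning the argument of Lemma~\ref{lemma1} with $V_{\nu}^{\fg}$ replaced by $A$ shows that $A$ is a Henselian local ring in the $V_{\nu}^{\fg}$ case, and $A$ is Henselian by completeness in the $\wdh{V}_{\nu}$ case. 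This gives us the Hensel's Lemma we will need.

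First replace $P(Z)$ by its squarefree part to reduce to the separable case. Then consider the Newton polygon of $P$ with respect to the extended valuation $\nu$. For each slope $d$ of this polygon, the associated initial form is a $(\nu, d)$-homogeneous polynomial in $Z$; by Definition~\ref{homo}, its irreducible factors are the minimal polynomials of homogeneous elements $\zeta$ of degree $d$ with respect to $\nu$. Perform the substitution $Z = \zeta + Y$. When $\zeta$ is a simple root of the initial form, $P(\zeta + Y)$ satisfies the hypotheses of Hensel's Lemma in the Henselian local ring $A[\langle \zeta \rangle]$, producing a genuine root of $P$ there. When $\zeta$ is a multiple root of the initial form, iterate the construction on $P(\zeta + Y)$; because $P$ is separable, the multiplicities along the Newton polygon strictly decrease and the procedure terminates after at most $d$ steps, producing all roots of $P$ inside some $V_{\nu}^{\fg}[\langle \g_1, \dots, \g_N, \zeta_1, \dots, \zeta_k \rangle]$ (respectively its $\wdh{V}_{\nu}$-analogue).

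Finally, Proposition~\ref{bound} collapses the list $\g_1, \dots, \g_N, \zeta_1, \dots, \zeta_k$ down to $N$ homogeneous generators, and Remark~\ref{integral'} lets us take these generators to be integral. The main obstacle is running the Newton-Puiseux iteration while keeping control of supports for the $V_{\nu}^{\fg}$ version: at every Hensel-lifting step one must verify that the root produced has support inside a finitely generated sub-semigroup of $\R_{\geq 0}$, which is precisely what the proof of Lemma~\ref{lemma1} delivers (the support of the lifted element is contained in the semigroup generated by the supports of the coefficients of the polynomial being lifted). Once this support control is in place, everything else is routine, including the passage from arbitrary homogeneous elements to integral ones via Remark~\ref{integral'}, which only multiplies a given homogeneous element by an initial form of $\F_n$ and therefore does not enlarge the valuation ring $A[\langle \cdot \rangle]$.
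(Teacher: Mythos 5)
Your strategy is broadly the same as the paper's—Newton-Puiseux substitutions, the Henselian property from Lemma~\ref{lemma1}, Hensel's Lemma, and finally Proposition~\ref{bound} with Remark~\ref{integral'}—but there is a genuine gap in the termination argument. The claim that ``the multiplicities along the Newton polygon strictly decrease and the procedure terminates after at most $d$ steps'' is false in general. Consider $P(Z)=Z^2-2aZ+a^2-\epsilon$ with, say, $a=x_1+x_1^2$ and $\nu(\epsilon)$ much larger than $2\nu(a)$. The initial form at the first slope is $(Z-\ini(a))^2$, a double root; after substituting $Z=\ini(a)+Y$ the new polynomial is $Y^2-2(a-\ini(a))Y+(a-\ini(a))^2-\epsilon$, whose initial form is again a perfect square, and this pattern persists for as many steps as $a$ has terms before the valuation exceeds $\frac{1}{2}\nu(\epsilon)$. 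So the multiplicity can stay at $d$ for arbitrarily many steps, and the procedure as you describe it need not terminate in $d$ steps, nor does Hensel's Lemma ever become applicable along the way.

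The paper closes this gap with the Tschirnhaus normalization: replacing $Z$ by $Z-\frac{1}{d}a_1$ to force $a_1=0$ before extracting the slope. In characteristic zero, $a_1=0$ prevents the residue polynomial $\ovl{S}(Z)$ (after rescaling by the dominant slope) from being a $d$-th power $(Z+\ovl{a})^d$, since that would force $d\ovl{a}=0$. Combined with the fact that $\ovl{S}$ is not $Z^d$ (the coefficient of $Z^{d-i_0}$ survives), $\ovl{S}$ admits a nontrivial factorization into coprime monic factors, which Hensel's Lemma lifts. This is what makes the \emph{degree}---not the multiplicity---strictly decrease, giving termination in at most $d-1$ recursive rounds. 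You should insert this normalization at the top of each round; once you do, the Hensel splitting, the support control via Lemma~\ref{lemma1}, and the final appeal to Proposition~\ref{bound} and Remark~\ref{integral'} all go through as you sketched.
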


\begin{proof}
Let us prove the case $P(Z)\in V_{\nu}^{\fg}[\langle\g_1, \ldots , \g_N\rangle][Z]$.
We write $P(Z)=Z^d+a_1Z^{d-1}+\cdots+a_d$. By replacing $Z$ by $Z-\frac{1}{d}a_1$ we can assume that $a_1=0$. Let $i_0$ be an integer such that 
$$\frac{\nu(a_{i_0})}{i_0}\leq \frac{\nu(a_i)}{i},\ \ \text{ for every } 2\leq i\leq d.$$
Let $\g$ be a $i_0$th root of $\ini_{\nu}(a_{i_0})$, i.e. $\g$ is a homogeneous element such that $\g^{i_0}=\ini_{\nu}(a_{i_0})$.
By the definition of $i_0$, for  every $2\leq i\leq d$ we can write
$$a_i=\g^ia'_i$$
with $a'_i\in V^{\fg}_{\nu}[\langle\g_1, \ldots , \g_N,\g\rangle]$. Then we have 
$$P(\g Z)=\g^dZ^d+\g^{d-2}a_2Z^{d-2}+\cdots+a_d=\g^d\left(Z^d+a_2'Z^{d-2}+\cdots+a'_d\right)$$
Let $S(Z):=Z^d+a_2'Z^{d-2}+\cdots+a'_d$ and let $\ovl{S}(Z)$ be the image of $S(Z)$ in the residue field $\L=V^{\fg}_{\nu}[\langle\g_1, \ldots , \g_s,\g\rangle]/\m$ where $\k_{\nu}\lgw \L$ is finite and $\m$ is the maximal ideal of $V^{\fg}_{\nu}[\langle\g_1, \ldots , \g_s,\g\rangle]$. If $\ovl{S}(Z)=(Z+\ovl{a})^d$ where $\ovl{a}\in\L$, since $a_1=0$ and char$(\L)=0$, this would imply $\ovl{a}=0$. But $\ovl{S}(Z)\neq Z^d$ since its coefficient of $Z^{d-i_0}$ is nonzero . Thus we can factor $\ovl{S}(Z)=\ovl{S}_1(Z)\ovl{S}_2(Z)$ such that $\ovl{S}_1(Z)$ and $\ovl{S}_2(Z)$ are coprime monic polynomials in $\L[\g'][Z]$ where $\g'$ is algebraic over $\L$, i.e. $\g'$ is a homogeneous element of degree 0 with respect to $\nu$. Since  $V^{\fg}_{\nu}[\langle\g_1, \ldots , \g_N,\g,\g'\rangle]$  is a  Henselian local ring by Lemma \ref{lemma1}, by Hensel Lemma the polynomial $S(Z)$ factors as $S(Z)=S_1(Z)S_2(Z)$ where the images of $S_1(Z)$ and $S_2(Z)$ in $V^{\fg}_{\nu}[\langle\g_1, \ldots , \g_N,\g,\g'\rangle]$ are $\ovl{S}_1(Z)$ and $\ovl{S}_2(Z)$ and the $\nu$-support of the coefficients of $S_1(Z)$ and $S_2(Z)$ are contained in a finitely generated sub-semigroup of $\R_{\geq 0}$.\\
Since $\deg_Z(S_1(Z))$, $\deg_Z(S_2(Z))<d=\deg_Z(P(Z))$, the theorem is proven by induction on $d$ by using Proposition \ref{bound} and Remark \ref{integral'}.\\
\\
The case $P(Z)\in \wdh{V}_{\nu}[\langle\g_1, \ldots , \g_N\rangle][Z]$ is proven in a similar way by  using the fact that $\wdh{V}_{\nu}[\langle\g_1, \ldots , \g_N,\g,\g'\rangle]$ is a complete local ring, thus a Henselian local ring.

\end{proof}

\begin{rmk}
The proof of this theorem is what we call the \emph{Newton-Puiseux method}. Usually the term of Newton-Puiseux method is used when one compute the roots of a monic polynomial with coefficients in the ring of power series in one variable: one root is constructed by computing step by step its coefficients. The fact that the ring of formal power series is a complete local ring allows to conclude that this process converges. But when we want to find roots of a polynomial in a local ring that is not complete but only Henselian, it is more convenient to use the Hensel Lemma as we have done here. The proof we used here appeared for the first time in \cite{BM} (to the knowledge of the author). Of course if $\nu$ is a divisorial valuation $\wdh{V}_{\nu}$ is isomorphic to the ring of formal power series in one variable over the residue field $\k_{\nu}$ and the previous theorem may be proven by using the classical Newton-Puiseux method.
\end{rmk}

\begin{corollary}\label{alg_clo}
The field $\ovl{\K}^{\fg}_{\nu}$ (resp. $\ovl{\K}_{\nu}$) is algebraically closed and it is the algebraic closure of $\K^{\fg}_{\nu}$ (resp. $\wdh{\K}_{\nu}$).
\end{corollary}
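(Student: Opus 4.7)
The plan is to deduce the corollary directly from Theorem \ref{main}. I will first show that $\ovl{\K}^{\fg}_\nu$ is algebraically closed, then that every one of its elements is algebraic over $\K^{\fg}_\nu$; the proof for $\ovl{\K}_\nu$ over $\wdh{\K}_\nu$ is entirely analogous, using the $\wdh{V}_\nu$-version of Theorem \ref{main} in place of the $V^{\fg}_\nu$-version.

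\emph{Reduction to coefficients in the valuation ring.} Let $P(Z) = Z^d + a_1 Z^{d-1} + \cdots + a_d \in \ovl{\K}^{\fg}_\nu[Z]$ be a monic polynomial. Since $\nu$ takes arbitrarily negative values on $\K_n^*$ (for instance on $x_1^{-m}$ as $m \to \infty$), I can pick $c \in \K_n^*$ with $\nu(c) \leq \nu(a_i)/i$ for every $i$ such that $a_i \neq 0$. The rescaled polynomial $Q(Z) := c^{-d} P(cZ) = Z^d + \sum_{i=1}^d (a_i/c^i)\, Z^{d-i}$ is monic and has every coefficient in $\ovl{V}^{\fg}_\nu$, and its roots are exactly $c^{-1}$ times the roots of $P(Z)$. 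By Definition \ref{limit}, there exist finitely many integral homogeneous elements $\g_1,\ldots,\g_s$ with every coefficient of $Q(Z)$ lying in $V^{\fg}_\nu[\langle\g_1,\ldots,\g_s\rangle]$. Proposition \ref{bound} lets me replace these by $N := \dim_{\Q}\G\otimes_{\Z}\Q$ integral homogeneous elements $\g'_1,\ldots,\g'_N$, so that $Q(Z) \in V^{\fg}_\nu[\langle\g'_1,\ldots,\g'_N\rangle][Z]$.

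\emph{Finding the roots and algebraicity.} Theorem \ref{main} now supplies integral homogeneous elements $\g''_1,\ldots,\g''_N$ such that all roots of $Q(Z)$ lie in $V^{\fg}_\nu[\langle\g''_1,\ldots,\g''_N\rangle] \subset \ovl{V}^{\fg}_\nu$. Multiplying each such root by $c$ gives the roots of $P(Z)$ inside $\ovl{\K}^{\fg}_\nu$, which proves algebraic closedness. To see that $\ovl{\K}^{\fg}_\nu$ is moreover the algebraic closure of $\K^{\fg}_\nu$, it suffices to check that every element of $\ovl{\K}^{\fg}_\nu$ is algebraic over $\K^{\fg}_\nu$. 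Any such element lies in $\Frac(V^{\fg}_\nu[\langle\g_1,\ldots,\g_N\rangle])$ for some integral homogeneous $\g_i$; by Definition \ref{Vlr} this fraction field coincides with $\K^{\fg}_\nu(\g_1,\ldots,\g_N)$, which is a finite algebraic extension of $\K^{\fg}_\nu$ because each $\g_i$ satisfies its minimal polynomial, whose coefficients lie in $\F_n \subset \K^{\fg}_\nu$.

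\emph{Main obstacle.} The substantive content is already contained in Theorem \ref{main} and Proposition \ref{bound}; the remaining work is organizational. The one point requiring a moment of care is the scaling step: one must choose $c$ inside $\K_n$ so that no auxiliary homogeneous elements are introduced by rescaling, and verify that the chain of inclusions $V^{\fg}_\nu[\langle\g_1,\ldots,\g_N\rangle] \subset \ovl{V}^{\fg}_\nu$ is indeed directed, so that the finitely many homogeneous elements appearing in the coefficients of $Q(Z)$ can be assembled into a single ring to which Theorem \ref{main} applies.
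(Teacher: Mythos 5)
Your proof is correct and follows essentially the same route as the paper: rescale a monic polynomial so that its coefficients land in $\ovl{V}^{\fg}_\nu$, reduce to $N$ homogeneous elements via Proposition \ref{bound}, and apply Theorem \ref{main}. The paper's version starts from an irreducible (not necessarily monic) polynomial and uses the slightly slicker substitution $Q(Z):=a_d^{d-1}a^d\,P\bigl(Z/(aa_d)\bigr)$ with $a\in V^{\fg}_\nu[\langle\g_1,\dots,\g_N\rangle]$, $\nu(a)>0$, which in one stroke produces a monic polynomial with coefficients in the valuation ring without needing to scan the $\nu(a_i)/i$'s; your approach does the same thing with a scalar $c\in\K_n^*$, which has the minor advantage of not disturbing the set of homogeneous elements. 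Your added paragraph checking that every element of $\ovl{\K}^{\fg}_\nu$ is algebraic over $\K^{\fg}_\nu$ (via $\Frac(V^{\fg}_\nu[\langle\g_1,\dots,\g_N\rangle])=\K^{\fg}_\nu(\g_1,\dots,\g_N)$, a finite extension) is a genuine and welcome supplement: the paper leaves this half of the "algebraic closure" claim implicit.
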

\begin{proof}
Let $P(Z)\in\ovl{\K}^{\fg}_{\nu}[Z]$ be an irreducible polynomial. By multiplying $P(Z)$  by an element of $V^{\fg}_{\nu}$, we may assume that 
$$P(Z)\in V^{\fg}_{\nu}[\langle\g_1, \ldots , \g_N\rangle][Z]$$ for some homogeneous elements $\g_1$,  \ldots , 
$\g_N$ with respect to $\nu$. We write $P(Z)=a_dZ^d+\cdots+a_0$, $a_i\in V^{\fg}_{\nu}[\langle \g_1, \ldots ,\g_N\rangle]$, $0\leq i\leq d$.  We set $Q(Z):=a_d^{d-1}P(Z/a_d)$. Then $Q(Z)$ is a monic polynomial of $V^{\fg}_{\nu}[\langle\g_1, \ldots , \g_N\rangle][Z]$ and if $z$ is a root of $Q(Z)$, then $\frac{z}{a_d}$  is a root of $P(Z)$. Hence the result comes from Theorem \ref{main}.\\
The assertion concerning $\ovl{\K}_{\nu}$ is proven similarly.
\end{proof}

We have the similar result for $\ovl{\K}^{\alg}$: 
 
 \begin{lemma}
 The algebraic closure of $\K_n$ in $\ovl{\K}_{\nu}$ is equal to $\ovl{\K}^{\alg}_{\nu}$. In particular $\ovl{\K}^{\alg}_\nu$ is algebraically closed.
 \end{lemma}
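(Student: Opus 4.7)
The plan is to prove the stated equality by establishing both inclusions and then deduce the algebraic closedness of $\ovl{\K}^{\alg}_{\nu}$ from Corollary \ref{alg_clo}.

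For the inclusion $\ovl{\K}^{\alg}_{\nu}\subseteq \K_n^{\alg}\cap \ovl{\K}_{\nu}$, by Proposition \ref{bound} and Remark \ref{integral'} every element of $\ovl{\K}^{\alg}_{\nu}$ belongs to $\Frac(V^{\alg}_{\nu}[\g_1,\dots,\g_N])$ for some integral homogeneous elements $\g_i$. Each such $\g_i$ satisfies a polynomial $Z^{q_i}+g_{i,1}Z^{q_i-1}+\cdots+g_{i,q_i}$ whose coefficients are $\phi$-images of elements of $\F_n$, that is, quasi-homogeneous polynomials in $\x$, so $\g_i$ is algebraic over $\F_n$. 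Combined with Lemma \ref{val}, which says that $V^{\alg}_{\nu}$ is algebraic over $\F_n$, this shows that $V^{\alg}_{\nu}[\g_1,\dots,\g_N]$ is algebraic over $\K_n$, hence so is its fraction field.

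For the reverse inclusion, let $z\in\ovl{\K}_{\nu}$ be algebraic over $\K_n$. By Definition \ref{limit} and Proposition \ref{bound}, $z\in \wdh{\K}_{\nu}[\g_1,\dots,\g_N]$ for integral homogeneous $\g_i$. Set $L:=\K_n(\g_1,\dots,\g_N)$, a finite algebraic extension of $\K_n$, endowed with the restriction of $\nu$. The key point is that $\wdh{\K}_{\nu}[\g_1,\dots,\g_N]$ is precisely the completion $\wdh{L}$ of $L$ for $\nu|_L$: indeed, $\wdh{\K}_{\nu}[\g_1,\dots,\g_N]$ is a finite-dimensional $\wdh{\K}_{\nu}$-vector space with basis $\{\g_1^{i_1}\cdots \g_N^{i_N}\}_{(i_1,\dots,i_N)\in I}$ (for some finite index set $I$), and since $\K_n$ is dense in $\wdh{\K}_{\nu}$, approximating coordinate by coordinate shows that $L$ is dense in $\wdh{\K}_{\nu}[\g_1,\dots,\g_N]$. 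Applying Lemma \ref{val} to $L$ in place of $\K_n$, the algebraic closure of $L$ in $\wdh{L}$ is the henselization $L^h$ of the valuation ring of $L$. Since $L/\K_n$ is already algebraic, this algebraic closure coincides with the algebraic closure of $\K_n$ in $\wdh{\K}_{\nu}[\g_1,\dots,\g_N]$. By the standard functoriality of henselization under finite algebraic extensions, $L^h = L\cdot \K_n^h = L\cdot \K^{\alg}_{\nu} = \K^{\alg}_{\nu}[\g_1,\dots,\g_N]$, which by definition is a subfield of $\ovl{\K}^{\alg}_{\nu}$. Hence $z\in\ovl{\K}^{\alg}_{\nu}$.

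The ``in particular'' part is immediate: since $\ovl{\K}_{\nu}$ is algebraically closed by Corollary \ref{alg_clo}, its relative algebraic closure of $\K_n$ is itself algebraically closed, so by the equality just proved, $\ovl{\K}^{\alg}_{\nu}$ is algebraically closed. The main obstacle in this argument is establishing the density of $L$ in $\wdh{\K}_{\nu}[\g_1,\dots,\g_N]$ and justifying the functoriality $L^h = L\cdot \K_n^h$, both of which are standard consequences of the theory of valued fields (in particular of the fact that a finite algebraic extension of a henselian valued field is again henselian, so that $L\cdot \K^{\alg}_{\nu}$ is the smallest henselian subfield of $\wdh{L}$ containing $L$).
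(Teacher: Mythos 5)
Your proof is correct, but it takes a genuinely different route from the paper. The paper works purely inside $\wdh{\K}_{\nu}[\g_1,\ldots,\g_s]$: it writes $z=\sum_{i=0}^{q_s-1}B_i\g_s^i$, lets $\zeta_1=\g_s,\ldots,\zeta_{q_s}$ be the conjugates of $\g_s$, observes that each $z_j=\sum_iB_i\zeta_j^i$ is a conjugate of $z$ and hence algebraic over $\F_n$, and then inverts the Vandermonde matrix (whose entries are algebraic over $\k(\x)$) to conclude that each $B_i$ is algebraic over $\F_n$. An induction on $s$ then shows that all coordinates of $z$ lie in $\K_{\nu}^{\alg}$. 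By contrast, you reinterpret $\wdh{\K}_{\nu}[\g_1,\ldots,\g_N]$ as the completion $\wdh{L}$ of the finite extension $L=\K_n(\g_1,\ldots,\g_N)$, invoke the characterization of the henselization as the relative algebraic closure of $L$ in $\wdh{L}$, and then use the compatibility of henselization with finite algebraic extensions to get $L^h=L\cdot\K_{\nu}^{\alg}=\K_{\nu}^{\alg}[\g_1,\ldots,\g_N]$. The two arguments reach the same conclusion; the paper's is elementary and self-contained (explicit conjugation plus a Vandermonde determinant), while yours is more conceptual but leans on three background facts you should be prepared to justify: (i) the valuation on $\wdh{\K}_{\nu}[\g_1,\ldots,\g_N]$ is computed coordinatewise, so that $L$ is indeed dense and $\wdh{\K}_{\nu}[\g_1,\ldots,\g_N]=\wdh{L}$; (ii) the Moret-Bailly characterization (used in Lemma \ref{val} for $\K_n$) applies verbatim to $L$; and (iii) $L\cdot\K_{\nu}^{\alg}$, being a finite extension of a henselian field, is henselian and therefore equals $L^h$ because every element of $\wdh{L}$ algebraic over $L$ can be captured by Hensel lifting inside $L\cdot\K_{\nu}^{\alg}$. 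You flag (i) and (iii) as the main obstacles, which is the right self-assessment; they do hold, but they are the places a careful reader would pause. Your easy inclusion $\ovl{\K}^{\alg}_{\nu}\subseteq\K_n^{\alg}\cap\ovl{\K}_{\nu}$, and the ``in particular'' deduction from Corollary \ref{alg_clo}, are both fine.
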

 \begin{proof}
 Let $\g_1$,  \ldots , $\g_s$ be homogeneous elements with respect to $\nu$. Let us denote by $q_{i+1}$ the degree of the minimal polynomial of $\g_{i+1}$ over $\K_n[\g_1, \ldots , \g_i]$ for $0\leq i\leq s-1$.  Thus any element $z$ of $\wdh{\K}_{\nu}[\g_1, \ldots , \g_s]$ can be uniquely written as $z=\sum_{i\in I}A_{i_1, \ldots ,i_s}\g_1^{i_1}\cdots \g_s^{i_s}$   where $A_{i_1, \ldots ,i_s}\in\wdh{\K}_{\nu}$ for  all $i\in I$ and $I=\{0, \ldots ,q_1-1\}\times\cdots\times\{0, \ldots ,q_s-1\}$.\\
  In order to prove the lemma we need to show that $A_{i_1, \ldots ,i_s}\in\K_{\nu}^{\alg}$ for any $i_1$, \ldots , $i_s$ when $z$ is algebraic over $\k\lb \x\rb $.
 In this case let $\L:=\wdh{\K}_{\nu}[\g_1, \ldots , \g_{s-1}]$ and let us write $\displaystyle z:=\sum_{i=0}^{q_s-1}B_i\g_s^i$ where $B_i\in \L$ for all $i$.
  Let us set $\zeta_1:=\g_s$ and let $\zeta_2$, \ldots , $\zeta_{q_s}$ be the conjugates of $\zeta_1$ over $\ovl{\K}_{\nu}[\g_1, \ldots , \g_{s-1}]$. Let us define $\displaystyle z_j=\sum_{i=0}^{q_s-1}B_i\zeta_j^i$ for $1\leq j\leq q_s$. Then we have 
 $$\left(\begin{array}{c}z_1 \\ z_2\\ \vdots \\ z_{q_s}\end{array}\right)=\left(\begin{array}{cccc} 1 & \zeta_1 & \cdots & \zeta_1^{q_s-1} \\
 1 & \zeta_2 & \cdots & \zeta_2^{q_s-1}\\
 \vdots &\vdots & \vdots & \vdots\\
 1 & \zeta_{q_s} & \cdots & \zeta_{q_s}^{q_s-1}\end{array}\right)\left(\begin{array}{c}B_0 \\ B_1\\ \vdots\\ B_{q_s-1}\end{array}\right).$$
 The matrix $\left(\begin{array}{cccc} 1 & \zeta_1 & \cdots & \zeta_1^{q_s-1} \\
 1 & \zeta_2 & \cdots & \zeta_2^{q_s-1}\\
 \vdots &\vdots & \vdots & \vdots\\
 1 & \zeta_{q_s} & \cdots & \zeta_{q_s}^{q_s-1}\end{array}\right)$ is invertible and its entries are algebraic over $\k(\x)$,  $z_j$ is algebraic over $\k\lb \x\rb $ for all $j$, hence  $B_j$ is algebraic over  $\k\lb \x\rb $ for all $j$. By induction on $s$ we see that $A_{i_1, \ldots ,i_s}\in\K_{\nu}^{\alg}$ for any $i_1$, \ldots , $i_s$. 
 \end{proof}
  
  We can summarize the situation with the following commutative diagram where  the bottom part corresponds to the quotient fields of the rings of the upper part and all the morphisms are injective:
$$\xymatrix{
     \k\lb \x\rb\subset V_{\nu} \ar[rr]  \ar[dd] && V^{\alg}_{\nu} \ar[dd] |!{[rd];[ld]}\hole \ar[rd] \ar[rr] &&  V_{\nu}^{\fg} \ar[rr] \ar[rd]  \ar[dd] |!{[rd];[ld]}\hole& & \wdh{V}_{\nu} \ar[rd]  \ar[dd] |!{[rd];[ld]}\hole\\
    &  && \ovl{V}^{\alg}_{\nu} \ar[dd] \ar[rr] & &  \ovl{V}^{\fg}_{\nu} \ar[dd] \ar[rr]&& \ovl{V}_{\nu} \ar[dd] \\
    \K_n \ar[rr] && \K^{\alg}_{\nu} \ar[rd]  \ar[rr] |!{[ur];[dr]}\hole&&\K_{\nu}^{\fg}\ar[rd]  \ar[rr] |!{[ur];[dr]}\hole& & \wdh{\K}_{\nu} \ar[rd]\\
    & & & \ovl{\K}^{\alg}_{\nu} \ar[rr] && \ovl{\K}^{\fg}_{\nu} \ar[rr] && \ovl{\K}_{\nu}}$$

   \begin{ex}
 Let $g(T)=\sum_{i=1}^{\infty}c_iT^i\in\Q\lb T\rb  $ be a formal power series which is not algebraic over $\Q[T]$. Let $\a:=(\a_1,\a_2)\in\N^n$. Let us set
 $$f:=g\left(\frac{x_2^{\a_1}}{x_1^{\a_2}}\right)=\sum_{i=1}^{\infty}c_i\frac{x_2^{\a_1i}}{x_1^{\a_2i}}\in\k\(x_1\)\(x_2\).$$
 But $f\notin\ovl{\K}_{\nu_{\a}}$: let $P(Z)=a_0(\x)Z^d+\cdots+a_d(\x)\in\wdh{V}_{\nu_{\a}}[Z]$ be a polynomial such that $P(f)=0$. Let us write $a_i(\x)=\sum_{k=0}^{\infty}a_{i,k}(\x)$ where $a_{i,k}(\x)$ is a $(\a_1,\a_2)$-homogeneous rational fraction of degree $k$. By homogeneity we have 
 $$a_{0,k}f^d+a_{1,k}f^{d-1}+\cdots+a_{d,k}=0\ \ \ \forall k\in\N.$$
 This implies that 
 $$a_{0,k}(1,T)g(T^{\a_1})^d+a_{1,k}(1,T)g(T^{\a_1})^{d-1}+\cdots+a_{d,k}(1,T)=0\ \ \ \forall k\in\N.$$
 Thus $a_{i,k}(\x)=0$ for all $0\leq i\leq d$ and $0\leq k$. Hence $P(Z)=0$ and $f\notin \ovl{\K}_{\nu_{\a}}$.\\
 \\
 \\
 On the other hand, $\displaystyle h:=g\left(\frac{x_1^{2\a_2}}{x_2^{\a_1}}\right)=\sum_{i=1}^{\infty}c_i\frac{x_1^{2\a_2i}}{x_2^{\a_1i}}\in\wdh{\K}_{\nu_{\a}}$ but $h$ is not algebraic over $\k\(x_1\)\(x_2\)$.

  \end{ex}


\subsection{Analytically irreducible polynomials}
 
\begin{prop}\label{V_irredu}
Let $P(Z)\in V^{\fg}_{\nu}[Z]$ (resp. $V^{\alg}_{\nu}[Z]$) be an irreducible monic polynomial. Then $P(Z)$ is irreducible in $\wdh{V}_{\nu}[Z]$.
\end{prop}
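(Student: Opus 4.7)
The strategy is to argue by contradiction. Suppose $P(Z) = Q_1(Z)Q_2(Z)$ in $\wdh{V}_{\nu}[Z]$ with both $Q_i$ monic of positive degree; I will show that in fact $Q_1,Q_2\in V^{\fg}_{\nu}[Z]$ (resp.\ $V^{\alg}_{\nu}[Z]$), contradicting the irreducibility of $P$.

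The $V^{\alg}_{\nu}$-case is almost immediate from the definitions. Since $\K^{\alg}_{\nu}$ is by construction the algebraic closure of $\K_n$ inside $\wdh{\K}_{\nu}$, it is algebraically closed in $\wdh{\K}_{\nu}$. The roots of $P$ are algebraic over $\K_n$, so the coefficients of $Q_i$ (being symmetric functions of such roots) are algebraic over $\K_n$ as well; since they also lie in $\wdh{V}_{\nu}$, they belong to $V^{\alg}_{\nu}=\K^{\alg}_{\nu}\cap\wdh{V}_{\nu}$, as desired.

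For the $V^{\fg}_{\nu}$-case, Theorem \ref{main} locates the roots of $P$ in some ring $V^{\fg}_{\nu}[\langle\g_1,\ldots,\g_N\rangle]$, so the coefficients of $Q_i$ lie in $V^{\fg}_{\nu}[\langle\g_1,\ldots,\g_N\rangle]\cap\wdh{V}_{\nu}$. The whole proposition therefore reduces to the key identity
$$V^{\fg}_{\nu}[\langle\g_1,\ldots,\g_N\rangle]\cap\wdh{V}_{\nu}\;=\;V^{\fg}_{\nu}.$$

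To prove this identity, denote by $q_i$ the degree of the minimal polynomial $Q_i(Z)$ of $\g_i$ from Definition \ref{homo}, and consider the family $\mathcal B := \{\g_1^{j_1}\cdots\g_N^{j_N} : 0\leq j_i<q_i\}$. On one hand, the irreducibility of each $Q_i$ in $\wdh{V}_{\nu}[\g_1,\ldots,\g_{i-1}][Z]$ that is built into Definition \ref{homo} shows inductively that $\mathcal B$ is a $\wdh{\K}_{\nu}$-basis of $\wdh{\K}_{\nu}[\g_1,\ldots,\g_N]$. On the other hand, the coefficients of $Q_i$ are weighted homogeneous, hence have single-point $\nu$-support and thus already lie in $V^{\fg}_{\nu}[\g_1,\ldots,\g_{i-1}]$, so $\g_i$ satisfies $Q_i$ over $\K^{\fg}_{\nu}[\g_1,\ldots,\g_{i-1}]$; consequently $\mathcal B$ spans $\K^{\fg}_{\nu}[\g_1,\ldots,\g_N]$ over $\K^{\fg}_{\nu}$, and $\wdh{\K}_{\nu}$-linear independence forces $\K^{\fg}_{\nu}$-linear independence, so $\mathcal B$ is a $\K^{\fg}_{\nu}$-basis as well. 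Given $c\in V^{\fg}_{\nu}[\langle\g_1,\ldots,\g_N\rangle]\cap\wdh{V}_{\nu}$, one can then expand $c=\sum_{\mathbf j}B_{\mathbf j}\g_1^{j_1}\cdots\g_N^{j_N}$ with $B_{\mathbf j}\in\K^{\fg}_{\nu}$; viewed in $\wdh{\K}_{\nu}[\g_1,\ldots,\g_N]$ we also have the trivial expansion $c=c\cdot 1$, and uniqueness of expansion in the $\wdh{\K}_{\nu}$-basis $\mathcal B$ forces $B_{\mathbf 0}=c$ and $B_{\mathbf j}=0$ otherwise, so $c\in\K^{\fg}_{\nu}\cap\wdh{V}_{\nu}=V^{\fg}_{\nu}$. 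The main obstacle I foresee is precisely this degree-matching step, which ultimately rests on the weighted-homogeneous form of the minimal polynomials of the $\g_i$ and explains why irreducibility transfers cleanly between the $\K^{\fg}_{\nu}$- and $\wdh{\K}_{\nu}$-towers.
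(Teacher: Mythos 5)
Your proof is correct and follows essentially the same route as the paper: split $P$ in some $V^{\fg}_{\nu}[\langle\g_1,\ldots,\g_N\rangle]$ (the paper cites Corollary \ref{alg_clo}, you cite Theorem \ref{main}, which amounts to the same thing) and then invoke the identity $V^{\fg}_{\nu}[\langle\g_1,\ldots,\g_N\rangle]\cap\wdh{V}_{\nu}=V^{\fg}_{\nu}$. The paper simply asserts that identity and says ``the proof is the same for $V^{\alg}_\nu$''; you go further by actually proving the identity via the common basis $\mathcal B$ over $\K^{\fg}_\nu$ and $\wdh{\K}_\nu$, and by giving a shorter, more direct argument for the $V^{\alg}_\nu$ case using that $\K^{\alg}_\nu$ is by construction the relative algebraic closure of $\K_n$ in $\wdh{\K}_\nu$. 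Both additions are correct and supply detail the paper omits.
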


\begin{proof}
By Corollary \ref{alg_clo}, $P(Z)$ splits in $V_{\nu}^{\fg}[\langle\g_1, \ldots , \g_s\rangle]$ for some homogeneous elements $\g_1$, \ldots , $\g_s$ with respect to $\nu$. Since 
$$V_{\nu}^{\fg}[\langle\g_1, \ldots , \g_s\rangle]\cap \wdh{V}_{\nu}=V_{\nu}^{\fg}$$ the result follows.\\
The proof is the same for $V_{\nu}^{\alg}$.
\end{proof}

\begin{lemma}\label{val_ini}
Let $\s$ be a $\wdh{\K}_{\nu}$-automorphism of $\ovl{\K}_{\nu}$. For any $z\in\ovl{\K}_{\nu}$ we have $\nu(\s(z))=\nu(z)$.
\end{lemma}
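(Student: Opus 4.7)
The plan is to invoke the classical theorem that a valuation on a henselian valued field extends uniquely to any algebraic extension. Since $\wdh{\K}_{\nu}$ is by construction complete with respect to the rank one valuation $\nu$, it is in particular henselian. By Corollary \ref{alg_clo}, the field $\ovl{\K}_{\nu}$ is the algebraic closure of $\wdh{\K}_{\nu}$, so it is an algebraic extension of $\wdh{\K}_{\nu}$, and the uniqueness theorem applies.

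Given a $\wdh{\K}_{\nu}$-automorphism $\s$ of $\ovl{\K}_{\nu}$, I would define a second function $\nu_{\s} : \ovl{\K}_{\nu} \lgw \R \cup \{\infty\}$ by $\nu_{\s}(z) := \nu(\s(z))$. Since $\s$ is a field homomorphism, $\nu_{\s}$ inherits from $\nu$ both multiplicativity and the ultrametric inequality: $\nu_{\s}(zw) = \nu(\s(z)\s(w)) = \nu(\s(z)) + \nu(\s(w))$ and $\nu_{\s}(z+w) = \nu(\s(z)+\s(w)) \geq \min\{\nu_{\s}(z), \nu_{\s}(w)\}$. Thus $\nu_{\s}$ is itself a valuation on $\ovl{\K}_{\nu}$.

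Because $\s$ fixes $\wdh{\K}_{\nu}$ pointwise, the restriction of $\nu_{\s}$ to $\wdh{\K}_{\nu}$ coincides with $\nu$. Hence $\nu_{\s}$ and $\nu$ are two valuations on the algebraic extension $\ovl{\K}_{\nu}/\wdh{\K}_{\nu}$ that agree on $\wdh{\K}_{\nu}$; the uniqueness of extension forces $\nu_{\s} = \nu$, which is the desired equality $\nu(\s(z)) = \nu(z)$ for every $z \in \ovl{\K}_{\nu}$.

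The only nontrivial ingredient is the uniqueness of extension, which is really where the completeness of $\wdh{\K}_{\nu}$ enters; no genuine obstacle arises beyond citing this standard result. Alternatively, one could argue directly: each $z \in \ovl{\K}_{\nu}$ has some minimal polynomial $P(Z) = Z^d + a_1 Z^{d-1} + \cdots + a_d \in \wdh{\K}_{\nu}[Z]$, and $\s(z)$ satisfies $P(\s(z)) = \s(P(z)) = 0$, so $\s(z)$ is a conjugate of $z$; a Newton polygon argument over the henselian field $\wdh{\K}_{\nu}$ (the polygon of an irreducible polynomial over a henselian field has a single slope, equal to $\nu(z) = \nu(a_d)/d$) then shows all conjugates share the same valuation. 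I would present the uniqueness-of-extension argument as it is the shorter of the two.
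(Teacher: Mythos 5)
Your proof is correct, but it takes a genuinely different route from the paper. The paper works entirely inside the graded structure that underlies the construction of $\ovl{\K}_{\nu}$: an element $z\in\wdh{\K}_{\nu}[\g_1,\dots,\g_s]$ is decomposed as $z=\sum_i z_i$ with each $z_i$ homogeneous of degree $i$; since $\s$ fixes $\wdh{\K}_{\nu}$ and must send each homogeneous generator to a conjugate homogeneous element of the \emph{same} degree, $\s$ preserves the degree filtration, so the initial degree $i_0=\nu(z)$ is unchanged. You instead invoke the classical uniqueness of valuation extensions over a henselian field: $\wdh{\K}_{\nu}$ is complete for the rank-one valuation $\nu$, hence henselian; $\ovl{\K}_{\nu}$ is its algebraic closure by Corollary \ref{alg_clo}; and $\nu\circ\s$ is a second valuation extending $\nu$, so it must equal $\nu$. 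Both proofs are valid here. Your argument is shorter and more conceptual, at the cost of citing a nontrivial external theorem; the paper's argument is more self-contained, is purely combinatorial in the graded pieces, and transparently illustrates \emph{why} the statement is true in this specific setting (automorphisms respect the grading). One minor caution about your alternative sketch at the end: the statement that the Newton polygon of an irreducible polynomial over $\wdh{\K}_{\nu}$ has a single slope is, in this paper, Corollary \ref{Newton}, which is itself deduced from the very lemma you are proving; so if one stays strictly within the paper's logical chain that route would be circular, though it is of course fine if one takes the single-slope fact as external henselian-field theory. You chose to present the uniqueness-of-extension argument as primary, which avoids this issue.
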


\begin{proof}
Let $z\in\wdh{\K}_{\nu}[\g_1, \ldots , \g_s]$ where $\g_1$, \ldots , $\g_s$ are homogeneous elements with respect to $\nu$. Let us write $z:=\sum_{i\in\La}z_i$ where $z_i$ is homogeneous of degree $i$ for every $i$ and $\La$ is  a countable subset of $\R$ with no accumulation point (see Remark \ref{supp_compl}). If $i_0=\nu(z)$, then $z_{i_0}\neq 0$ and $\nu(z_i)=0$ for all $i<i_0$. Since $\s$ acts only on the homogeneous elements $\g_1$, \ldots , $\g_s$, we have $\s(z)=\sum_i\s(z_i)$. For all $i$, $\s(z_i)$ is homogeneous of degree $i$ and $\s(z_i)=0$ if and only if $z_i=0$. This proves that $i_0=\nu(\s(z))$.
\end{proof}

\begin{definition}
Let $P(Z)\in A[Z]$ where $A$ is an integral domain. We write
$$P(Z)=a_0Z^d+a_1Z^{d-1}+\cdots+a_d.$$
 Let $\nu : A\lgw \R_{\geq 0}$ be a valuation. The \emph{Newton polygon} of $P$ is the convex hull of the set
 $$\left\{(\nu(a_i),d-i)\in \R_{\geq 0}^2\ /\  i=0, \ldots ,d\right\}+\R_{\geq 0}^2.$$

\end{definition}

 \begin{corollary}\label{Newton}
Let $P(Z)\in \wdh{V}_{\nu}[Z]$ be an irreducible monic  polynomial. Then the Newton polygon of $P(Z)$ has only one edge. The result remains valid if we replace $\wdh{V}_{\nu}$ by $V^{\alg}_{\nu}$ or $V^{\fg}_{\nu}$.
 \end{corollary}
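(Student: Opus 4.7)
The plan is to deduce the single-edge property from the fact that all roots of an irreducible $P(Z)$ must share a common $\nu$-value, by splitting $P(Z)$ over $\ovl{\K}_{\nu}$ and invoking Lemma \ref{val_ini}. First I would reduce to the case $P(Z)\in\wdh{V}_{\nu}[Z]$: if $P(Z)$ is irreducible in $V_{\nu}^{\alg}[Z]$ or in $V_{\nu}^{\fg}[Z]$, then by Proposition \ref{V_irredu} it remains irreducible in $\wdh{V}_{\nu}[Z]$, and since the Newton polygon depends only on the $\nu$-values of the coefficients, proving the statement for $\wdh{V}_{\nu}$ is enough.

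Next I would write $P(Z)=Z^d+a_1Z^{d-1}+\cdots+a_d$ and use Corollary \ref{alg_clo} to factor $P(Z)=\prod_{i=1}^d(Z-z_i)$ in $\ovl{\K}_{\nu}$. Since $P$ is monic, a Gauss-lemma argument for valuation rings (any monic factorization over $\wdh{\K}_{\nu}$ automatically has coefficients in $\wdh{V}_{\nu}$, because the roots have non-negative valuation in any extension of $\nu$) implies that $P$ is already irreducible in $\wdh{\K}_{\nu}[Z]$, so $P$ is the minimal polynomial over $\wdh{\K}_{\nu}$ of each $z_i$. Because $\ovl{\K}_{\nu}$ is an algebraic closure of $\wdh{\K}_{\nu}$ by Corollary \ref{alg_clo} and $\cha\k=0$, the extension $\wdh{\K}_{\nu}\hookrightarrow\ovl{\K}_{\nu}$ is Galois, so for each $j$ there is a $\wdh{\K}_{\nu}$-automorphism $\s$ of $\ovl{\K}_{\nu}$ sending $z_1$ to $z_j$. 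Lemma \ref{val_ini} then yields $\nu(z_1)=\cdots=\nu(z_d)=:v$.

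From there the remainder is routine: each $a_k=(-1)^k e_k(z_1,\dots,z_d)$ is an elementary symmetric function of the roots, so $\nu(a_k)\geq kv$ for $1\leq k\leq d$, with equality when $k=d$ since $a_d=(-1)^d z_1\cdots z_d$. Setting $\nu(a_0)=0$, every point $(k,\nu(a_k))$ lies on or above the line through $(0,0)$ and $(d,dv)=(d,\nu(a_d))$, and both endpoints lie on that line; hence the Newton polygon of $P(Z)$ collapses to a single edge of slope $v$. The main subtlety I anticipate is purely Galois-theoretic, namely ensuring that there are enough $\wdh{\K}_{\nu}$-automorphisms of $\ovl{\K}_{\nu}$ to transport $z_1$ to each other root, but this is immediate once one notes that $\ovl{\K}_{\nu}/\wdh{\K}_{\nu}$ is a characteristic-zero algebraic closure, hence normal and separable.
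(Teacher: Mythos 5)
Your proof is correct and follows essentially the same route as the paper: both rely on Lemma~\ref{val_ini} to show that all roots of $P$ in $\ovl{\K}_{\nu}$ have the same value, and both handle the $V_{\nu}^{\alg}$ and $V_{\nu}^{\fg}$ cases via Proposition~\ref{V_irredu}. The only cosmetic difference is that the paper forms the product of the distinct conjugates $Z-\s(z)$, shows it is a monic polynomial over $\wdh{\K}_{\nu}$ dividing $P$, and concludes equality by irreducibility, whereas you invoke transitivity of the Galois group directly — these are two standard ways of saying the same thing — and you explicitly spell out the Gauss-lemma step (irreducibility over $\wdh{V}_{\nu}$ implies irreducibility over $\wdh{\K}_{\nu}$), which the paper leaves implicit.
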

 
 \begin{proof}
Let  $z$ be a root of $P(Z)$ in $\ovl{V}_{\nu}$. Let $\s$ be a $\wdh{\K}_{\nu}$-automorphism of $\ovl{\K}_{\nu}$. Then $\nu(\s(z))=\nu(z)$ by Lemma \ref{val_ini}. The finite product of the distinct linear forms $Z-\s(z)$ obtained in this way is a monic polynomial with coefficients in $\wdh{\K}_{\nu}$  and divides $P(Z)$. Since $P(Z)$ is irreducible, both polynomials are equal. This proves that all the roots of $P(Z)$ have same valuation, hence the Newton polygon of $P(Z)$ has only one edge.\\
The cases $V^{\alg}_{\nu}$ and $V^{\fg}_{\nu}$ are deduced from Lemma \ref{V_irredu}.
   \end{proof}

 \begin{ex}
Let $P(Z):=Z^3+3x_1x_2Z-2x_1^4\in\k\lb x_1,x_2\rb  [Z]$.  We see that $P(Z)$ has one root of order 2 and two roots of order 1 in $\ovl{V}_{\ord}^{\fg}$. By Corollary \ref{Newton}, $P(Z)$ has at least one root in  $V_{\ord}^{\alg}$ of order 2.\\
 Let $\sqrt{1+U}:=1+\sum_{i\geq 1}a_iU^i$, $a_i\in\Q$ for all $i$, the formal powers series whose square is equal to $1+U$, and let $\sqrt[3]{1+U}:=1+\sum_{i\geq 1}b_iU^i$ , $b_i\in\Q$ for all  $i$,  the formal power series whose cube is equal to $1+U$. Then the roots of $P(Z)$ are
$$ a\sqrt[3]{q+\sqrt{q^2+p^3}}+b\sqrt[3]{q-\sqrt{q^2+p^3}}$$
with $(a,b)=(1,1)$, $(j,j^2)$ or $(j^2,j)$ and $p=x_1x_2$ and $q=x_1^4$.
But
$$\sqrt[3]{q+\e\sqrt{q^2+p^3}}=\sqrt[3]{x_1^4+\e\sqrt{x_1^3x_2^3+x_1^8}}=\sqrt[3]{\e}\sqrt{x_1x_2}+\eta$$
where $\e=1$ or $-1$ and $\ord(\eta)>1$. Both order $1$ roots of  $P(Z)$ have initial term of the form $\a\sqrt{x_1x_2}$ where $\a\in\C^*$. Thus $P(Z)$ has only one root in $V^{\alg}_{\ord}$ and even in $\K^{\fg}_{\ord}$.\\
Let $z$ be the only root of $P(Z)$ in $V^{\alg}_{\ord}$. If $z\in\K_n$, since $P(Z)$ is monic and $\k\lb \x\rb $ is an integral domain, then $z\in\k\lb \x\rb $. But $\ini(z)=\frac{2}{3}\frac{x_1^3}{x_2}\notin\k\lb \x\rb $. Thus $z\notin\K_n$, hence $P(Z)$   is irreducible in ${\K}_{n}[Z]$. This shows that  $\K_n\lgw \K^{\alg}_{\ord}$ is not a normal extension in general.

  \end{ex}
  
   \begin{corollary}
 Let $P(Z):=Z^d+a_1(\x)Z^{d-1}+\cdots+a_d(\x)\in\k\lb \x\rb [Z]$  be an irreducible polynomial having its roots in  $\k\lb x_1^{\frac{1}{e}}, \ldots ,x_n^{\frac{1}{e}}\rb$ for some  positive integer  $e$. Then the Newton polyhedron of $P(Z)$ is the convex hull of the cone of $\N^{n+1}$  centered in $(0, \ldots ,0,d)$ and generated by the convex hull of the Newton polyhedra of $a_d(\x)$ in $\N^{n}$.
   \end{corollary}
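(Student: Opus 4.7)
The plan is to combine two ingredients: the transitivity of a Galois-type action on the roots of $P$, and the resulting equality of valuations of all the roots, which translates directly into the claimed convexity statement for the exponents. Throughout I write $N(f)$ for the Newton polyhedron of a series $f$.

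First, since $\F_n$ is a UFD and $P$ is monic, Gauss' Lemma shows that $P(Z)$ remains irreducible in $\K_n[Z]$. After enlarging $\k$ by the $e$-th roots of unity (which does not change any Newton polyhedron under consideration), the extension $\F_n \hookrightarrow \k\lb x_1^{1/e},\ldots,x_n^{1/e}\rb$ is Galois, with group generated by the substitutions $x_j^{1/e}\mapsto\zeta_j x_j^{1/e}$ for $\zeta_j$ an $e$-th root of unity. This group acts on each monomial $\x^\beta$ by a nonzero scalar, so it preserves simultaneously supports and every monomial valuation $\nu_\a$ ($\a\in\R_{>0}^n$). Since $P$ is irreducible over $\K_n$ its roots $z_1,\ldots,z_d$ form a single orbit, so they all share the same support $S\subset(\tfrac{1}{e}\Z_{\geq 0})^n$ and, for every $\a$, a common $\nu_\a$-valuation $v_\a$.

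From $a_d=(-1)^d\prod_i z_i$ and $a_k=(-1)^k e_k(z_1,\ldots,z_d)$ together with the ultrametric inequality one obtains
\[
\nu_\a(a_d)=d\,v_\a,\qquad \nu_\a(a_k)\;\geq\; kv_\a \;=\;\tfrac{k}{d}\nu_\a(a_d)\quad(1\leq k\leq d).
\]
(This is of course the content of Corollary \ref{Newton} applied to $\nu_\a$.) Rephrasing: for every $\gamma\in\Supp(a_k)$ and every $\a\in\R_{>0}^n$,
\[
\langle\a,\gamma\rangle\;\geq\;\tfrac{k}{d}\min_{\gamma'\in\Supp(a_d)}\langle\a,\gamma'\rangle.
\]
Since $N(a_d)$ is the intersection of these half-spaces as $\a$ ranges over $\R_{\geq 0}^n$ (by continuity), we conclude $\tfrac{d}{k}\gamma\in N(a_d)$.

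To finish, in $\R^{n+1}$ the exponent $(\gamma,d-k)$ of any monomial $\x^\gamma Z^{d-k}$ of $P$ decomposes as
\[
(\gamma,d-k)\;=\;\tfrac{d-k}{d}(0,\ldots,0,d)\;+\;\tfrac{k}{d}\Bigl(\tfrac{d}{k}\gamma,\,0\Bigr),
\]
which places it in the convex hull of $(0,\ldots,0,d)$ and $N(a_d)\times\{0\}$. Conversely $(0,\ldots,0,d)$ is the exponent of the leading term $Z^d$ of $P$, while the vertices of $N(a_d)\times\{0\}$ are exponents of monomials of $a_d$; so both extremal pieces are realised and the two descriptions of the Newton polyhedron of $P$ coincide. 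The only real work is the first step — extracting from the irreducibility of $P$ that all roots share the same support and the same $\nu_\a$-valuation for every $\a$ — after which everything is a routine convexity argument.
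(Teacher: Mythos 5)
Your proof is correct and follows essentially the same path as the paper's: both exploit the $(\Z/e\Z)^n$-Galois action on $\k\lb x_1^{1/e},\dots,x_n^{1/e}\rb$ (after adjoining $e$-th roots of unity, which is harmless for Newton polyhedra) to force all roots of the irreducible $P$ to share a common $\nu_\a$-valuation for every weight vector $\a$, and then deduce the shape of the polyhedron by varying $\a$. The only difference is presentational: you re-derive the one-edge Newton polygon directly from the elementary symmetric functions rather than citing Corollary~\ref{Newton}, and you carry out explicitly the final convexity step that the paper leaves to the reader.
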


  $$ \begin{tikzpicture}[scale=1]
  
  \draw[thick,->] (0,0,0) -- (4,0,0) node[anchor=north east]{$x_1$};
\draw[thick,->] (0,0,0) -- (0,4,0) node[anchor=north west]{$Z$};
\draw[thick,->] (0,0,0) -- (0,0,4) node[anchor=south]{$x_2$};

\draw [dashed] (0,0,3.4) coordinate (a1)  -- (0.6,0,1.4) coordinate (a2);
\draw  [dashed](a2) -- (1.3,0,0.8) coordinate (a3);
\draw  [dashed] (a3) -- (3.7,0,0) coordinate (a4);
  
     \filldraw [black] (0,3,0)  coordinate (c) circle (.7pt) node[left] {$d$};
     
    \draw  (c) -- (a1);
   \draw  (c) -- (a2);
      \draw  (c) -- (a3);
     \draw (c) -- (a4);    
        
      \end{tikzpicture}  $$
   
   \begin{proof}
  Let $\a\in\N^n$.
Let $z_1$,  \ldots , $z_d\in\k\lb x_1^{\frac{1}{e}}, \ldots ,x_n^{\frac{1}{e}}\rb$ be the roots of $P(Z)$. Then $z_i\in\wdh{V}_{\nu_{\a}}[x_1^{\frac{1}{e}}, \ldots ,x_n^{\frac{1}{e}}]$ for any $i$, the $x_i^{\frac{1}{e}}$  being homogeneous elements with respect to 
  $\nu_{\a}$. Let $G\simeq \left(\Z/e\Z\right)^n$ be the Galois group of the extension $\wdh{V}_{\nu_{\a}}\lgw \wdh{V}_{\nu_{\a}}[x_1^{\frac{1}{e}}, \ldots ,x_n^{\frac{1}{e}}]$. The $z_i$  are conjugated under the action  of $G$, thus $P(Z):=\prod_{i=1}^d(Z-z_i)$ is irreducible in $\wdh{V}_{\nu_{\a}}[Z]$. This being true for any $\a\in\N^n$, the result follows from Corollary \ref{Newton}.
    
  \end{proof}

We finish this section by giving two results relating the roots of a polynomial $P(Z)$ to the roots of polynomials approximating $P(Z)$. First of all we give the following definition:

\begin{definition}
Let $P(Z)\in A[Z]$  where $A$ is an integral domain and let $\nu$ be a valuation on $A$. We define
$$\nu(P(Z)):=\min_a\nu(a) $$
where $a$ runs over all the 
 coefficients of $P(Z)$.
\end{definition}

The following proposition is the analogue of Proposition 2.6 of \cite{To}:
 \begin{prop}\label{cor_factor_limit}
 Let $P(Z)\in V^{\fg}_{\nu}[Z]$ be a monic polynomial  with no multiple factor. Let us write  $P(Z)=P_1(Z)...P_r(Z)$ where $P_i(Z)\in V^{\fg}_{\nu}[Z]$, $1\leq i\leq r$, are irreducible monic polynomials. Let $Q(Z)\in  V^{\fg}_{\nu}[Z]$ be a monic polynomial  and let $z_1$, \ldots , $z_d$ be the roots of $P(Z)$. If
 $$\deg(Q(Z))=\deg(P(Z))$$
 and
  $$ \nu(Q(Z)-P(Z))>d \max_{i\neq j}\{\nu(z_i-z_j)\}$$
then we may factor $Q(Z)=Q_{1}(Z)...Q_{r}(Z)$ such that  $Q_{i}(Z)\in V^{\fg}_{\nu}[Z]$ is an irreducible monic polynomial, $1\leq i\leq r$, and 
$$\displaystyle\nu(Q_i(Z)-P_i(Z))\geq \frac{\nu(Q(Z)-P(Z))}{d}.$$
 The result is still valid if we replace $V_{\nu}^{\fg}$ by $V_{\nu}^{\alg}$ or $\wdh{V}_{\nu}$.
 \end{prop}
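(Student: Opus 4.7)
Set $\tau := \min_{1 \leq i \leq d} \nu(a_i - b_i)$ and $M := \max_{i \neq j} \nu(z_i - z_j)$, so that the hypothesis reads $\tau/d > M$. The plan is to construct a Krasner-style bijection $\phi$ between the roots of $Q$ and those of $P$, to check that $\phi$ is Galois-equivariant via Lemma \ref{val_ini}, and then to read off the asserted factorization of $Q$ from Proposition \ref{V_irredu}.

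First I would check that every root $w$ of $Q$ satisfies $\nu(w) \geq 0$: since $Q$ is monic with coefficients in $\wdh{V}_\nu$, if $\nu(w) < 0$ then $\nu(w^d) = d\nu(w) < (d-i)\nu(w) \leq \nu(b_i w^{d-i})$ for every $i \geq 1$, forcing $\nu(Q(w)) = d\nu(w) < \infty$ and contradicting $Q(w) = 0$. Using $P(w) = P(w) - Q(w) = \sum_{k=1}^{d} (a_k - b_k) w^{d-k}$ one obtains $\nu(P(w)) \geq \tau$, and since $P(w) = \prod_i (w - z_i)$ at least one root $z_{\phi(w)}$ satisfies $\nu(w - z_{\phi(w)}) \geq \tau/d > M$. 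For any other root $z_j$, the ultrametric inequality together with $\nu(z_{\phi(w)} - z_j) \leq M$ forces $\nu(w - z_j) = \nu(z_{\phi(w)} - z_j) \leq M$, so $\phi(w)$ is uniquely determined. The symmetric argument applied to $Q(z_i) = Q(z_i) - P(z_i) = \prod_j (z_i - w_j)$ shows each $z_i$ lies in the image of $\phi$, so $\phi$ is a bijection and in particular $Q$ has simple roots.

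Next I would pass to the irreducible factors via Galois equivariance. For $\sigma \in \Gal(\ovl{\K}_\nu/\wdh{\K}_\nu)$, Lemma \ref{val_ini} yields $\nu(\sigma(w) - \sigma(\phi(w))) = \nu(w - \phi(w)) > M$, and the uniqueness of $\phi$ forces $\phi(\sigma(w)) = \sigma(\phi(w))$; thus $\phi$ carries Galois orbits bijectively onto Galois orbits. Defining $Q_i(Z) := \prod_{z \in \text{orbit}(P_i)} (Z - \phi^{-1}(z))$, its coefficients are symmetric functions of a single Galois orbit of elements of $\ovl{V}_\nu$, so they lie in $\wdh{V}_\nu$, and $Q_i$ is irreducible over $\wdh{\K}_\nu$. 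By Proposition \ref{V_irredu}, the factorization of $Q$ into monic irreducibles in $V^{\fg}_\nu[Z]$ (respectively $V^{\alg}_\nu[Z]$ or $\wdh{V}_\nu[Z]$) coincides with its factorization into Galois orbits over $\wdh{V}_\nu[Z]$, so each $Q_i$ already lives in the prescribed ring.

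For the quantitative estimate, write $\phi^{-1}(z_{i_k}) = z_{i_k} + \epsilon_k$ with $\nu(\epsilon_k) \geq \tau/d$ and expand
$$Q_i(Z) - P_i(Z) = \prod_k (Z - z_{i_k} - \epsilon_k) - \prod_k (Z - z_{i_k});$$
each nonvanishing term on the right contains at least one factor $\epsilon_k$ together with factors in $\ovl{V}_\nu$, so every coefficient of $Q_i - P_i$ has valuation at least $\tau/d$, as required. The main obstacle I anticipate is the descent of the orbit factorization from $\wdh{V}_\nu[Z]$ down to $V^{\fg}_\nu[Z]$ or $V^{\alg}_\nu[Z]$; this is precisely where the no-multiple-factor hypothesis on $P$ (transferred to $Q$ via $\phi$) combines with Proposition \ref{V_irredu} to make the descent clean.
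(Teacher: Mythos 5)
Your argument is correct and coincides with the paper's own Krasner-style proof: both match roots of $P$ and $Q$ via the ultrametric inequality, deduce uniqueness and Galois equivariance from Lemma \ref{val_ini}, and assemble the $Q_i$'s from Galois orbits, invoking Proposition \ref{V_irredu} for the descent to $V^{\fg}_\nu$ (resp.\ $V^{\alg}_\nu$). You also make the final bound $\nu(Q_i-P_i)\geq \tau/d$ explicit by expanding the difference of products, a step the paper merely asserts.
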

 
 \begin{proof}
 
 Since $P(Z)$ has no multiple factor and since char$(\k)=0$, we have $z_i\neq z_j$ for all $i\neq j$. Let us set $r:=\max_{i\neq j}\{\nu(z_i-z_j)\}$. Let $z'_{i}$, $1\leq i\leq d$, be the roots of $Q(Z)$. Let $z$ be a root of $P(Z)$ in $V^{\fg}_{\nu}[\langle\g_1, \ldots , \g_N\rangle]$. Let us write
 $P(Z)=Z^d+a_1Z^{d-1}+\cdots+a_d$ and $Q(Z)=Z^d+b_{1}Z^{d-1}+\cdots+b_{d}$. Then 
$$\prod_{1\leq i\leq d}(z-z_{i}')=Q(z)=Q(z)-P(z)=\sum_{i=1}^d(b_{i}-a_i)z^{d-i}.$$
Thus there exists at least one $i$ such that 
$$\nu(z_{i}'-z)\geq \frac{ \min_{1\leq i\leq d}\{\nu(a_i-b_{i})\}}{d}=\frac{\nu(Q(Z)-P(Z))}{d}>r.$$ Let $t$ be another root of $P(Z)$. Then
$$\nu(z_{i}'-t)=\nu(z_{i}'-z+z-t)=\nu(z-t)\leq r$$
since $\nu(z_{i}'-z)\geq \frac{ \min_{1\leq i\leq d}\{\nu(a_i-b_{i})\}}{d}>r\geq \nu(z-t)$. Thus for any root of $P(Z)$ denoted by $z$, there is only one $i$  such that 
$$\nu(z-z_{i}')\geq \frac{ \min_{1\leq i\leq d}\{\nu(a_i-b_{i})\}}{d}.$$
Let $\s_1(z)$, \ldots , $\s_e(z)$ be the conjugates of $z$ over $\K^{\fg}_{\nu}$.  Set 
$$R(Z):=(Z-z)\prod_{j=1}^e(Z-\s_j(z))\in V_{\nu}^{\fg}[Z].$$
 Then $R(Z)$ is an irreducible factor of $P(Z)$.  Moreover $\s_1(z_{i}')$, \ldots , $\s_e(z_{i}')$ are conjugates of $z_{i}'$ over $\K^{\fg}_{\nu}$. Let $\s$ be a $\K^{\fg}_{\nu}$-automorphism of $\ovl{\K}^{\fg}_{\nu}$ . Then $\s(z)$ is a conjugate of $z$ thus there exists $j$ such that $\s(z)=\s_j(z)$. Moreover $\s(z)$ is a root of $P(Z)$ and $\nu(\s(z_{i}')-\s(z))\geq\frac{ \min_{1\leq i\leq d}\{\nu(a_i-b_{i})\}}{d}$ by Lemma \ref{val_ini}. Thus we have 
  \begin{equation*}\begin{split}\nu(\s(z_{i}')-\s_j(z))=\nu(\s(z_{i}')-\s(z))=\nu(z_i'-z)&=\\
\nu(\s_j(z_i')-\s_j(z))&\geq \frac{ \min_{1\leq i\leq d}\{\nu(a_i-b_{i})\}}{d}  \end{split}\end{equation*}
and since there is only one root of $Q(Z)$ whose difference with $\s_j(z)$ has valuation  greater than $\frac{ \min_{1\leq i\leq d}\{\nu(a_i-b_{i})\}}{d}$, we necessarily have $\s(z_{i}')=\s_j(z_{i}')$. Thus $\s_1(z_{i}')$, \ldots , $\s_e(z_{i}')$ are the conjugates of $z_{i}'$ over $\K^{\fg}_{\nu}$. Thus the polynomial
$$S(Z):=(Z-z_{i}')\prod_{j=1}^e(Z-\s_j(z_{i}'))$$
is irreducible in $V^{\fg}_{\nu}[Z]$ and 
$$\nu(S(Z)-R(Z))\geq \frac{ \min_{1\leq i\leq d}\{\nu(a_i-b_{i})\}}{d}.$$
\\
The proof for $\wdh{V}_{\nu}$ is the same and the case $V^{\alg}_{\nu}$ is proven with the help of Lemma \ref{V_irredu}.

 \end{proof}
 
 \begin{rmk}\label{maj}
 Let us remark that $\nu(Q(Z)-P(Z))> \frac{d}{2}\nu(\D_P)$, where $\D_P$ is the discriminant of $P(Z)$, implies that 
 $$\nu(Q(Z)-P(Z))>d \max_{i\neq j}\{\nu(z_i-z_j)\}.$$
 \end{rmk}
 
 \begin{rmk}
 This result is not true if $P(Z)$ has multiple factors. For example, let $\nu$ be a divisorial valuation and let us consider $P(Z)=Z^2$ and let $Q(Z)=X^2+a$ where $\nu(a)=2k+1$ and $k\in\N$. Since $\nu(a)$ is odd and since the value group of $\nu$ is $\Z$, then it is not a square in $\wdh{V}_{\nu}$ and $Q(Z)$ is irreducible  but $P(Z)$ is not irreducible.
 \end{rmk}
 
 \begin{prop}\label{approx2}
Let $\nu$ be an Abhyankar valuation and let 
$$N:=\dim_{\Q}\G\otimes_{\Z}\Q.$$
 Let $P(Z)\in \wdh{V}_{\nu}[\langle\g_1, \ldots , \g_N\rangle][Z]$  be a monic polynomial  where $\g_1$, \ldots , $\g_N$ are  homogeneous elements with respect to $\nu$. Then there exist  integral homogeneous elements with respect to $\nu$, denoted by $\g_1'$,  \ldots , $\g_N'$, and $c\in\R_{>0}$ such that the roots of $P(Z)$ are in $\wdh{V}_{\nu}[\langle\g'_1, \ldots ,\g'_N\rangle]$ and for any monic polynomial $Q(Z)\in \wdh{V}_{\nu}[\langle\g_1, \ldots , \g_N\rangle][Z]$  such that $\deg(Q(Z))=\deg(P(Z))$ and $\nu(P(Z)-Q(Z))\geq c$, the roots of $Q(Z)$ are in $\wdh{V}_{\nu}[\langle\g'_1, \ldots ,\g'_N\rangle]$.
 \end{prop}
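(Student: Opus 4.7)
The plan is to apply Theorem~\ref{main} to produce a ring $V_L = \wdh{V}_{\nu}[\langle\g'_1,\ldots,\g'_N\rangle]$ containing every root of $P$, and then to run a Krasner-type argument parallel to the proof of Proposition~\ref{cor_factor_limit} to show that the roots of any nearby monic $Q$ also lie in $V_L$.

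Concretely, Theorem~\ref{main} applied to $P$ yields integral homogeneous elements such that the roots $z_1,\ldots,z_d$ of $P$ lie in some $\wdh{V}_{\nu}[\langle\g''_1,\ldots,\g''_N\rangle]$. Absorbing the original $\g_1,\ldots,\g_N$ into this system and applying Proposition~\ref{bound} to compress back to $N$ generators produces the desired $V_L$ containing both the $\g_i$ and the $z_j$. Setting $L:=\Frac(V_L)$, one obtains a finite algebraic extension of $\wdh{\K}_{\nu}$ which is complete for the extension of $\nu$, and $V_L$ is its valuation ring.

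Assume now that $P$ has no multiple factor (this must be part of the hypothesis: the example $P(Z)=Z^2$, $Q(Z)=Z^2-a$ with $a\in\m^c$ arbitrary shows that the root $\sqrt{a}$ depends on $a$ and cannot be contained in a fixed extension). Set $r:=\max_{i\neq j}\nu(z_i-z_j)$ and choose any $c>dr$. For a monic $Q\in\wdh{V}_{\nu}[\langle\g_1,\ldots,\g_N\rangle][Z]\subset V_L[Z]$ with $\nu(P-Q)\geq c$, I would reproduce the argument of Proposition~\ref{cor_factor_limit}: the identity $\prod_k(z'_i-z_k)=P(z'_i)=-(Q-P)(z'_i)$ produces, for each root $z'_i$ of $Q$, a unique root $z_{j(i)}$ of $P$ with $\nu(z'_i-z_{j(i)})\geq c/d>r$. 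The correspondence $i\mapsto j(i)$ is a bijection, so any two distinct roots of $Q$ satisfy $\nu(z'_i-z'_k)\leq r$.

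Since $V_L$ is complete, Krasner's lemma applies over the complete valued field $L$: the $L$-conjugates of $z'_i$ are the roots of the minimal polynomial of $z'_i$ over $L$ (an irreducible factor of $Q$ in $L[Z]$), hence are among the other $z'_k$ and lie at $\nu$-distance at most $r$ from $z'_i$; on the other hand $z_{j(i)}\in L$ lies at distance strictly greater than $r$. Krasner then gives $L(z'_i)\subset L(z_{j(i)})=L$, so $z'_i\in L$. Since $z'_i$ is integral over $V_L$ (as a root of a monic polynomial with coefficients in $V_L$) and lies in $L$, it lies in $V_L$, as required. The main obstacle is precisely the square-free reduction: in the presence of multiple factors one could still factor $Q$ over the Henselian ring $V_L$ by Hensel's lemma and apply Theorem~\ref{main} to each factor, but the resulting homogeneous elements would depend on $Q$, so that a $V_L$ uniform in $Q$ cannot be obtained.
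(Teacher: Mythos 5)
You correctly observe that the proposition as stated requires an additional hypothesis: $P$ must have no repeated roots, exactly as in Proposition~\ref{cor_factor_limit} (where that hypothesis is stated explicitly, and the remark following it gives a closely related counterexample). For $P(Z)=Z^2$ and $Q(Z)=Z^2-a$ the extension $\wdh{\K}_\nu(\sqrt{a})$ genuinely varies with $a$: for instance with $\nu=\ord$ on $\k\lb x_1,x_2\rb$ one has $\k_\nu\cong\k(t)$ with $t=x_2/x_1$, and taking $a=(x_2-cx_1)x_1^{2m-1}$ (so $\nu(a)=2m\geq c$) produces residue extensions $\k(t)(\sqrt{t-c})$ that are pairwise distinct as $c$ ranges over $\k$; hence no fixed $\wdh{V}_\nu[\langle\g'_1,\ldots,\g'_N\rangle]$ can contain all the roots. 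The paper's own proof tacitly assumes this away: it recycles the scaling element $\g$ from the proof of Theorem~\ref{main}, which is only defined when some $a_{i_0}\neq 0$ after centering, so the recursion breaks down whenever a Hensel factor becomes a pure power of $Z$.

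Modulo this extra hypothesis, your argument is correct and genuinely different from the paper's. The paper lifts the factorization of $\ovl{Q}$ to $Q=R_1R_2$, compares $R_i$ with the corresponding factor $S_i$ of $P$ via Moret-Bailly's quantitative approximation theorem (Lemma~\ref{lemma_MB}), and recurses on the degree while tracking the constants $a$, $b$, $\nu(\g)$. You instead work once and for all over the complete valued field $L=\Frac\bigl(\wdh{V}_\nu[\langle\g'_1,\ldots,\g'_N\rangle]\bigr)$ supplied by Theorem~\ref{main} (after absorbing the original $\g_i$ and compressing the generating set via Proposition~\ref{bound}): the pigeonhole argument of Proposition~\ref{cor_factor_limit} matches each root $z'_i$ of $Q$ bijectively to a root of $P$ at $\nu$-distance strictly greater than $r=\max_{j\neq k}\nu(z_j-z_k)$, so distinct roots of $Q$ — in particular all $L$-conjugates of $z'_i$ — stay at distance at most $r$, and Krasner's lemma then forces $z'_i\in L$, hence $z'_i\in V_L$ by integral closedness of the valuation ring. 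Your route sidesteps the bookkeeping of the constants through the induction and makes the role of squarefreeness transparent; the paper's approach has the modest benefit of expressing $c$ explicitly in terms of the Moret-Bailly constants. Both are valid proofs of the corrected statement.
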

 
 \begin{proof}
 The proof of this proposition is based on the proof of Theorem \ref{main}. So let us use the notations of that proof. Let us write  $Q(Z)=Z^d+b_1Z^{d-1}+\cdots+b_d$ and let us define $R(Z):=Z^d+b'_1Z^{d-1}+\cdots+b'_d$ where $b_i':=\frac{b_i}{\g^i}$ for $1\leq i\leq d$. We have $Q(\g Z)=\g^dR(Z)$. Let us assume that $\nu(b'_i-a'_i)>0$ for all $1\leq i\leq d$ (i.e. if $\nu(b_i-a_i)>\nu(\g^i)$ for all $i$, thus we assume here that $c>d\nu(\g)$). Then $\ovl{R}(Z)=\ovl{S}(Z)$ ($\ovl{R}(Z)$ denotes the image of $R(Z)$ in $\L[Z]$)  and the factorization $\ovl{R}(Z)=\ovl{S}_1(Z)\ovl{S}_2(Z)$ lifts to a factorization $R(Z)=R_1(Z)R_2(Z)$ of $R(Z)$ as the product of two monic polynomials as in the proof of Theorem \ref{main}.
 \begin{lemma}\label{lemma_MB}
 In the previous situation there exist two constants $a>0$, $b\geq 0$ depending only on $S_1(Z)$ and $S_2(Z)$ such that for any $c>\max\{b,\nu(\g^d)\}$, we have $\nu(R_i(Z)-S_i(Z))>\frac{c-b}{a}$ for $i=1,2$.
 \end{lemma}
 
 \begin{proof}[Proof of Lemma \ref{lemma_MB}]
Let us denote by $r_{i,k}$ the coefficient of $Z^k$ of the polynomial $R_i(Z)$, for $i=1,2$ and $0\leq k\leq \deg_Z(R_i(Z))$, and let us denote by $r$ the vector whose coordinates are the $r_{i,k}$. The coefficient of $Z^k$ of $R_1(Z)R_2(Z)-S_1(Z)S_2(Z)$, for $0\leq k\leq d$, is a polynomial $f_k(r)$ whose coefficients are in $ \wdh{V}_{\nu}[\langle\g_1, \ldots , \g_N,\g,\g'\rangle]$ and depend themselves on the coefficients of $S(Z)$. By Theorem 1.2 \cite{M-B}, there exist $a>0$, $b\geq 0$ such that 
$$\forall c>b, \ \forall r\in  \wdh{V}_{\nu}[\langle\g_1, \ldots , \g_N,\g,\g'\rangle]^{d+2} \ \text{ such that } \ \nu(f_k(r))\geq c\ \ \forall k$$
$$\exists r'\in  \wdh{V}_{\nu}[\langle\g_1, \ldots , \g_N,\g,\g'\rangle]^{d+2}\ \text{ such that } \ f_k(r')=0 \  \ \forall k$$
$$\text{ and }\nu(r'_{i,j}-r_{i,j})\geq \frac{c-b}{a}\ \ \forall i,j.$$
Let us denote by $R'_i(Z)$ the polynomial whose coefficients are the $r'_{i,j}$  where $0\leq j\leq \deg(R_i)$. Then $R_1'(Z)R_2'(Z)=S_1(Z)S_2(Z)$. Moreover $\ovl{R}'_i(Z)=\ovl{R}_i(Z)=\ovl{S}_i(Z)$ if $\frac{c-b}{a}>0$. Since the roots of $\ovl{S}_1(Z)$ and $\ovl{S}_2(Z)$ are different, and since $\wdh{V}_{\nu}[\langle\g_1, \ldots , \g_N,\g,\g'\rangle][Z]$ is a GCD domain, then $R'_i(Z)=S_i(Z)$ for $i=1,2$. This proves the lemma.
 \end{proof}

 Here we remark that, the constants $a$, $b$, $\nu(\g)$ depend only on $P(Z)$. Thus the result is proven by induction on the degree of $P(Z)$ (since $\deg(S_i(Z))<\deg(P(Z))$ for $i=1,2$) and using Proposition \ref{bound} and Remark \ref{integral'}.

 \end{proof}




\section{Monomial valuation case: Eisenstein Theorem}\label{growth}
We will first construct a subring of $V^{\fg}_{\nu}$ containing $V^{\alg}_{\nu}$ when $\nu$ is a monomial valuation.
\begin{definition}\label{growth'}
Let $\a\in\R_{>0}^n$ and let $\d$ be a $(\a)$-homogeneous polynomial of degree $d$. We define
\begin{equation*}\begin{split}\V_{\a,\d}:=\left\{A\in \wdh{V}_{\nu_{\a}} \ / \ \exists \La  \right.\text{ a finitely generated}&\text{ sub-semigroup of } \R_{\geq 0}, \\
\forall i\in \La \ \exists  a_i& \in \k[\x] \text{ $(\a)$-homogeneous}, \\
 \exists a\geq 0,b\in\R\ \forall i\in \La \ \exists m(i)\in\N &\text{ s.t. }   m(i)\leq ai+b,\ \\
 \nu_{\a}\left(\frac{a_i}{\d^{m(i)}}\right)&\left.=i \text{ and } A=\sum_{i\in\La}\frac{a_i}{\d^{m(i)}} \right\}.\end{split}
\end{equation*}
\end{definition}

\noindent  With this notation we say that $i\lgm ai+b$ is a \textit{bounding function} for $\displaystyle \sum_{i\in\La}\frac{a_i}{\d^{m(i)}}$. \\
By Lemma \ref{Gordan} we have $\k\lb  \x\rb  \subset\V_{\a,\d}\subset V^{\fg}_{\nu_{\a}}$, by identifying a formal power series $\displaystyle\sum_{\b\in\Z_{\geq 0}^n}c_{\b}x^{\b}$ to $\displaystyle\sum_{i\in\La}\frac{a_i(x)}{\d(x)^{m(i)}}$ with $\displaystyle a_i(x):=\sum_{ \a_1\b_1+\cdots+\a_n\b_n=i}c_{\b}x^{\b}$ et $m(i)=0$ for all $i\in\La$.
We extend in an obvious way the addition and multiplication of $\k\lb x\rb  $ to $\V_{\a,\d}$: this defines a  $\k$-algebra structure over $\V_{\a,\d}$. We have easily the following lemma:

\begin{lemma}\label{bounding_fct} If $i\lgm ai+b$ is a bounding function of  $A$ and $B\in\V_{\a,\d}$ then it is also a bounding function of $A+B$ and the function $i\lgm ai+2b$ is a bounding function of $AB$.
\end{lemma}

\begin{proof}
Let us write
$$A=\sum_{i\in\La}\frac{a_i}{\d^{ai+b}},\ Ê\ \ B=\sum_{i\in\La}\frac{b_i}{\d^{ai+b}}$$
where $\La$ is a semigroup and the $a_i$ and $b_i$ are $(\a)$-homogeneous polynomials and
$$\nu_\a\left(\frac{a_i}{\d^{ai+b}}\right)=\nu_\a\left(\frac{b_i}{\d^{ai+b}}\right)=i\ \ \ \forall i\in\La.$$
Then we have
$$A+B=\sum_{i\in\La}\frac{a_i+b_i}{\d^{ai+b}}$$
$$\text{ and }AB=\sum_{i\in\La} \sum_{j\in\La, j\leq i}\frac{a_jb_{i-j}}{\d^{aj+b}\d^{a(i-j)+b}}=\sum_{i\in\La} \sum_{j\in\La, j\leq i}\frac{a_jb_{i-j}}{\d^{ai+2b}}.$$
This proves the lemma.
\end{proof}

\begin{rmk}\label{bounding}
If $A\in \V_{\a,\d}$ satisfies $\nu_{\a}(A)>0$ then $A$ admits a bounding function which is linear. Indeed let $i\lgm ai+b$ be a bounding function of $A$ and let $i_0:=\nu_{\a}(A)$. Then $i\lgm \left(a+\frac{b}{i_0}\right)i$ is a bounding function of $A$.
\end{rmk}

\begin{definition}
Let $\displaystyle A:=\sum_{i\in\La}\frac{a_i}{\d^{m(i)}}\in\V_{\a,\d}$, $A\neq 0$. Let $i_0$ be  the least element of $\La$ such that $a_{i_0}\neq 0$. We say that $\frac{a_{i_0}}{\d^{m(i_0)}}$ is the \emph{initial term} of $A$ with respect to $\nu_{\a}$ or its \emph{$(\a)$-initial term}. We denote it by $\ini_{\a}(A)$.
\end{definition}

\begin{lemma}\label{VV}
Let $\d$ and $\d'$ be two $(\a)$-homogeneous polynomials. We have the following properties:
\begin{enumerate}
\item[i)] The  (($\a$)-homogeneous) irreducible divisors of $\d$ divide $\d'$  if and only if $\V_{\a,\d'}\subset \V_{\a,\d}$. We denote by $\V_{\a}$ the inductive limit of the $\V_{\a,\d}$.
\item[ii)] The valuation $\nu_{\a}$  is well defined on  $\V_{\a,\d}$ and extends to $\V_{\a}$. Its valuation ring is exactly $\V_{\a}$.
\end{enumerate}
\end{lemma}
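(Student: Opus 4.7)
The strategy is to handle (i) and (ii) separately, both by direct manipulation of the defining series $\sum_i a_i/\d^{m(i)}$ of $\V_{\a,\d}$. For (i), the key is to translate the containment $\V_{\a,\d'}\subset\V_{\a,\d}$ into a polynomial divisibility relation between $\d$ and $\d'$. For (ii), the substantive point is to prove that $\V_\a$ is closed under inversion of its unit elements.

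For part (i), suppose every irreducible $(\a)$-homogeneous factor of $\d'$ divides $\d$. Then $\d'\mid\d^N$ for some $N\in\N$, so $\d^N=q\d'$ for a $(\a)$-homogeneous polynomial $q$. Any $A=\sum_i a_i/\d'^{m(i)}\in\V_{\a,\d'}$ can then be rewritten as $\sum_i(a_iq^{m(i)})/\d^{Nm(i)}$ with new bounding function $i\lgm Nm(i)\le(Na)i+Nb$, still affine, so $A\in\V_{\a,\d}$. For the converse, assume the inclusion and let $p$ be an irreducible $(\a)$-homogeneous factor of $\d'$. I would test the inclusion on an element $A\in\V_{\a,\d'}$ having a single $(\a)$-homogeneous component of the form $c\x^{\g}/\d'^k$ with $p\nmid\x^{\g}$ (feasibility: since $n\ge 2$, one may pick $\g$ supported on variables other than those appearing in $p$). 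Uniqueness of the $(\a)$-homogeneous decomposition inside $\wdh{V}_{\nu_{\a}}$ forces the matching component of $A$ in $\V_{\a,\d}$ to be a single term $b/\d^n$, whence $c\x^{\g}\d^n=b\d'^k$ in $\k[\x]$. Comparing the $p$-adic order on both sides then forces $p\mid\d$.

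For part (ii), the valuation $\nu_{\a}$ is well defined on each $\V_{\a,\d}$ because $\V_{\a,\d}\subset\wdh{V}_{\nu_{\a}}$ and the defining sum $\sum_i a_i/\d^{m(i)}$ already supplies a homogeneous decomposition; part (i) ensures that the transition maps in the inductive system defining $\V_{\a}$ are valuation-preserving inclusions, so $\nu_{\a}$ extends consistently to $\V_{\a}$. To verify that $\V_{\a}$ is the valuation ring of this extended $\nu_{\a}$ inside $\Frac(\V_{\a})$, it suffices to show that every $B\in\V_{\a}$ with $\nu_{\a}(B)=0$ admits an inverse in $\V_{\a}$, since the general ratio $A/B$ with $\nu_{\a}(A)\ge\nu_{\a}(B)$ reduces to this after factoring out a common leading term. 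Taking $B\in\V_{\a,\d}$ and decomposing $B=\ini_{\a}(B)(1+B')$ with $\ini_{\a}(B)=b_{j_0}/\d^{m(j_0)}$ of valuation zero and $\nu_{\a}(B')>0$, we obtain formally
$$B^{-1}=\frac{\d^{m(j_0)}}{b_{j_0}}\sum_{k\ge 0}(-B')^k,$$
which converges in $\wdh{V}_{\nu_{\a}}$. By part (i), $\V_{\a,\d}\subset\V_{\a,\d b_{j_0}}$, so enlarging the denominator from $\d$ to $\d b_{j_0}$ absorbs the factor $1/b_{j_0}$.

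The main technical obstacle is showing that the geometric series in the above expression still has an affine bounding function in $\V_{\a,\d b_{j_0}}$. If $B'$ has bounding function $i\lgm ai+b$ and $\nu_{\a}(B')\ge s>0$, then $(B')^k$ inherits an affine bounding function of slope $a$ and intercept growing linearly in $k$, while only the finitely many $k\le v/s$ can contribute to the homogeneous term of total valuation $v$ in $\sum_{k\ge 0}(-B')^k$. Combining these two estimates yields a global affine bounding function of the form $v\lgm(a+b/s)v$ for the whole geometric series, which is preserved under multiplication by $\d^{m(j_0)}/b_{j_0}$. This establishes $B^{-1}\in\V_{\a}$ and completes the proof that $\V_{\a}$ is the valuation ring of $\nu_{\a}$.
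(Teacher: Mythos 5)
Your argument for (i) in the forward direction and for (ii) follows the paper's route; in (ii) you expand $(1+B')^{-1}$ as a geometric series, bound the denominator exponents by roughly $(a+b/s)v$, and use (i) to absorb $1/b_{j_0}$ into an enlarged denominator, which usefully unpacks the paper's laconic ``we can check easily that $B/A\in\V_{\a,\d\d'a_0}$.'' (You have also, silently and correctly, resolved a directional typo in the printed statement of (i): the inclusion the paper's own proof actually establishes is the one with $\d$ and $\d'$ swapped, and your phrasing matches that corrected version.)

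The converse of (i) contains a gap in your feasibility claim. You test the inclusion on $A=c\x^\g/\d'^k\in\V_{\a,\d'}$, requiring both $p\nmid\x^\g$ and $\nu_\a(A)\ge 0$, and justify this by choosing $\g$ supported on variables not appearing in $p$. But an irreducible $(\a)$-homogeneous polynomial may involve all $n$ variables --- for instance $p=x_1+\cdots+x_n$ with $\a=(1,\dots,1)$ --- in which case the only $\g$ permitted by your scheme is $\g=0$, and then $\nu_\a(A)=-k\nu_\a(\d')<0$, so $A\notin\wdh{V}_{\nu_\a}$: the test element does not exist. The fix is a case split. When $p$ is \emph{not} a monomial, $p\nmid\x^\g$ holds automatically for every $\g$ (an irreducible non-monomial never divides a monomial), so any $\g$ with $\nu_\a(\x^\g)\ge k\nu_\a(\d')$ works; the constraint on the support of $\g$ only matters when $p=x_j$, and there $n\ge 2$ lets you set $\g_j=0$ and enlarge the remaining coordinates of $\g$. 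With this correction your converse is sound --- and in fact more careful than the paper's own proof, which tests with $1/\d$ even though $\nu_\a(1/\d)<0$ places it outside $\V_{\a,\d}\subset\wdh{V}_{\nu_\a}$.
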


\begin{proof}
It is clear that if the irreducible divisors of $\d$ divide $\d'$ then $\V_{\a,\d}\subset\V_{\a,\d'}$. On the other hand if $\V_{\a,\d}\subset\V_{\a,\d'}$, then $\frac{1}{\d}\in\V_{\a,\d'}$, thus there exist a $(\a)$-homogeneous polynomial  $a\in\k[x]$ and an integer $m\in\N$ such that $\frac{1}{\d}=\frac{a}{{\d'}^m}$, hence $a \d={\d'}^m$. This proves i).\\
If $A\in\V_{\a,\d}$ and $B\in\V_{\a,\d'}$ satisfy $\nu_{\a}(B)\geq \nu_{\a}(A)$, let $\frac{a_k(x)}{\d(x)^{m(k)}}$ denote the first nonzero  term in the expansion of  $A$. Then we can check easily that $\frac{B}{A}\in\V_{a,\d\d' a_k}$. This proves ii).\end{proof}

\begin{definition}\label{KK}
For any $\a\in\R_{>0}^n$ we denote by $\KK_{\a}$ the fraction field of $\V_{\a}$ and $$\ovl{\KK}_{\a}:=\underset{\underset{\g_1, \ldots , \g_s}{\lgw}}{\lim}\,\KK_{\a}[\g_1, \ldots , \g_s]$$ the limit
 being taken over  all subsets $\{\g_1,....,\g_s\}$ of (integral) homogeneous elements with respect to $\nu$.\\
  If $\g_1$, \ldots , $\g_s$ are homogeneous elements with respect to $\nu_{\a}$ we denote by $\V_{\a,\d}[\langle\g_1, \ldots , \g_s\rangle]$ the ring of elements 
 $\sum_{\udl{k}}A_{\udl{k}}\g^{\udl{k}}$ where the sum is finite, $\udl{k}:=(k_1, \ldots ,k_s)$,
 $A_{\udl{k}}=\sum_{i\in\La}\frac{a_{i}}{\d^{m(i)}}$
 where $a_i\in\k[x]$ is $(\a)$-homogeneous, there exist two constants $a\geq 0$, $b\in\R$ such that $m(i)\leq ai+b$ for all $i$  and there exists $i_0\in\La$ such that $\nu_{\a}\left(\frac{a_i}{\d^{m(i)}}\right)=i-i_0$ and $\nu(\g^{\udl{k}})\geq i_0$. \\
This means that $\V_{\a,\d}[\langle\g_1, \ldots , \g_s\rangle]$ is the subring of $\KK_{\a}[\g_1, \ldots , \g_s]$ whose elements have non negative valuation $\nu_{\a}$. In the same way we denote by $\V_{\a}[\langle\g_1, \ldots , \g_s\rangle]$ the ring of elements of $\KK_{\a}[\g_1, \ldots , \g_s]$ having a non negative valuation $\nu_{\a}$. The field of fractions of  $\V_{\a}[\langle\g_1, \ldots , \g_s\rangle]$ is exactly $\KK_{\a}[\g_1, \ldots , \g_s]$.
\end{definition}

\begin{rmk}
We will see later (see Remark \ref{rmkII}) that these fields $\ovl{\KK}_{\a}$ coincide with those introduced in \cite{A-I} when $\dim_{\Q}(\a_1\Q+\cdots+\a_n\Q)=n$ where it is proven that they are algebraically closed.
\end{rmk}

\begin{rmk}
For any $\a\in\R_{>0}^n$ it is clear that $\V_{\a}\subset V_{\nu_{\a}}^{\fg}$ but both rings are never equal  if $\dim_{\Q}(\a_1\Q_1+\cdots+\a_n\Q)<n$. For instance, let $n=2$ and $\a=(1,1)$ and set 
$$z=\sum_{i\in\N} \frac{x_1^{(i+1)^2}}{x_2^{i^2}}\text{ or } \sum_{i\in\N} \frac{x_1^i}{x_1+ix_2}.$$
Then  obviously $z\in V^{\fg}_{\nu_{\a}}$ but $z\notin \V_{\a}$. 
\end{rmk}

\begin{prop}\label{prop_MD}
If  $\a_1$,  \ldots , $\a_n$ are linearly independent over $\Q$ then $\V_{\a}= V_{\nu_{\a}}^{\fg}$.
\end{prop}

\begin{proof}
Let us denote by $\a^*:\Q^n\lgw \R$ the $\Q$-linear map defined by $\a^*(u)=\langle \a,u\rangle$ for any $u\in\Q^n$. Since the $\a_i$ are $\Q$-linearly independent then $\a^*$ is injective. \\
If $\La$ is a finitely generated sub-semigroup  of $\Z\a_1+\cdots+\Z\a_n$ let $\b_1$,  \ldots , $\b_s$ be generators of $\La$. Then ${\a^*}^{-1}(\La)$ is a finitely generated semigroup  whose generators are $b_1={\a^*}^{-1}(\b_1)$,  \ldots , $b_s={\a^*}^{-1}(\b_s)\in\Z^n$.\\
 If the support of $z\in V^{\fg}_{\nu_{\a}}$ is in $\La$, since $\a^*$ is injective $z$ can be written as  
$$z=\sum_{\underline{k}\in\Z_{\geq0}^s}a_{\underline{k}}\x^{k_1b_1+\cdots+k_sb_s}$$
 where  $a_{\underline{k}}\in \k$ for  all $\underline{k}=(k_1,\cdots,k_s)$. Let us remark that the monomial $a_{\underline{k}}\x^{k_1b_1+\cdots+k_sb_s}$ is $(\a)$-homogeneous of degree $k_1\b_1+\cdots+k_s\b_s$.\\
 Let us write $b_i=b_{1,i}-b_{2,i}$ where $b_{1,i}$, $b_{2,i}\in\Z_{\geq 0}^n$. Then we have

\begin{equation*}\begin{split}\x^{k_1b_1+\cdots+k_sb_s}&=\frac{\x^{k_1b_{1,1}+\cdots+k_sb_{1,s}}}{\x^{k_1b_{2,1}+\cdots+k_sb_{2,s}}}\\
&=\frac{\x^{k_1b_{1,1}+\cdots+k_sb_{1,s}+(\max_i\{k_i\}-k_1)b_{2,1}+\cdots+(\max_i\{k_i\}-k_s)b_{2,s}}}{\x^{(b_{2,1}+\cdots+b_{2,s})\max_i\{k_i\}}}.\end{split}\end{equation*}
Moreover 
$$\max_i\{ k_i\} \leq \max_j\left\{\frac{1}{\b_j}\right\}(k_1\b_1+\cdots+k_s\b_s).$$
This shows that $z\in \V_{\a,\x^{b_{2,1}+\cdots+b_{2,s}}}$ and 
$$i\lgm \max_j\left\{\frac{1}{\b_j}\right\}i$$
is a bounding function of $z$.
\end{proof}


  Then we give the following version of the Implicit Function Theorem inspired by Lemma 1.2 \cite{Ga} (see also Lemma 2.2. \cite{To}):

\begin{prop}\label{IFT}
Let $\a\in\R_{>0}^n$ and let $P(Z)\in \V_{\a,\d}[\langle\g_1, \ldots , \g_s\rangle][Z]$, $P(Z)=\sum_{k=0}^da_kZ^k$, where   $\g_i$ is  homogeneous for all $i$ with respect to $\nu_{\a}$ and $d\geq 2$. \\
Let $u\in \V_{\a,\d}[\langle\g_1, \ldots , \g_s\rangle]$ such that $\nu_{\a}(P(u))>2\nu_{\a}(P'(u))$. Let $\frac{\wdt{\d}}{\d^m}$ denote the initial term of $P'(u)$ with respect to $\nu_{\a}$.\\ Then
there exists a unique solution $\ovl{u}$ in $\V_{\a,\d\wdt{\d}}[\langle\g_1, \ldots , \g_s\rangle]$ of $P(Z)=0$ such that $$\nu_{\a}(\ovl{u}-u)\geq \nu_{\a}(P(u))-\nu_{\a}(P'(u)).$$

\end{prop}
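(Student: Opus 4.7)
The plan is to run a Newton iteration inside $\V_{\a,\d\wdt{\d}}[\langle\g_1,\ldots,\g_s\rangle]$ and then show that its limit remains in this subring. Set $u_0:=u$ and, for $n\geq 0$, $u_{n+1}:=u_n-P(u_n)/P'(u_n)$. The hypothesis gives $c_0:=\nu_{\a}(P(u))-\nu_{\a}(P'(u))>\nu_{\a}(P'(u))=:\mu$, so $u_1-u_0=-P(u)/P'(u)$ has $\nu_{\a}$-valuation $c_0>\mu$, and inductively the same estimate holds for each $u_{n+1}-u_n$. By Taylor, $P'(u_n)=P'(u)+P''(u)(u_n-u)+\cdots$, so the initial $(\a)$-term of $P'(u_n)$ coincides with that of $P'(u)$, namely $\wdt{\d}/\d^m$; writing $P'(u_n)=(\wdt{\d}/\d^m)(1+X_n)$ with $\nu_{\a}(X_n)>0$, the geometric series $1/P'(u_n)=(\d^m/\wdt{\d})\sum_{k\geq 0}(-X_n)^k$ converges and lives in $\V_{\a,\d\wdt{\d}}[\langle\g_1,\ldots,\g_s\rangle]$, so each $u_{n+1}$ again lies in this ring.

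Taylor expansion also produces the classical quadratic convergence of Newton's method: since the Newton step annihilates the first order term,
$$P(u_{n+1})=P(u_n)+P'(u_n)(u_{n+1}-u_n)+O((u_{n+1}-u_n)^2)=O((u_{n+1}-u_n)^2).$$
Setting $c_n:=\nu_{\a}(P(u_n))-\mu$, one obtains $c_{n+1}\geq 2c_n-\mu$, whence $c_n-\mu\geq 2^n(c_0-\mu)\to\infty$. Thus $(u_n)$ is Cauchy in the complete ring $\wdh{V}_{\nu_{\a}}[\langle\g_1,\ldots,\g_s\rangle]$ and converges to an element $\ovl{u}$ satisfying $P(\ovl{u})=0$ and $\nu_{\a}(\ovl{u}-u)\geq c_0$.

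The main obstacle is to ensure that $\ovl{u}$ actually lies in $\V_{\a,\d\wdt{\d}}[\langle\g_1,\ldots,\g_s\rangle]$ and not merely in its completion; equivalently, that the exponents of $\d\wdt{\d}$ appearing in the denominators of the weighted homogeneous components of $\ovl{u}$ admit a linear bounding function $i\mapsto ai+b$ in the $\nu_{\a}$-degree $i$. Following the strategy of Lemma 1.2 of \cite{Ga} and Lemma 2.2 of \cite{To}, one writes $\ovl{u}-u=\sum_{n\geq 0}(u_{n+1}-u_n)$ and tracks the bounding functions through the iteration: the explicit geometric series for $1/P'(u_n)$ above, combined with a linear bound for $P(u_n)$ obtained from those of $u_n$ and of the coefficients of $P$, provides a linear bounding function for each Newton increment, and the quadratic convergence of the valuations then guarantees that at each $\nu_{\a}$-degree $i$ only finitely many increments contribute, so that the aggregated bounding function for $\ovl{u}$ remains linear in $i$. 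This denominator bookkeeping is the technical heart of the argument.

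Uniqueness is then immediate: if $\ovl{u}_1,\ovl{u}_2\in\V_{\a,\d\wdt{\d}}[\langle\g_1,\ldots,\g_s\rangle]$ are both solutions with $\nu_{\a}(\ovl{u}_i-u)\geq c_0$, then $0=P(\ovl{u}_1)-P(\ovl{u}_2)=(\ovl{u}_1-\ovl{u}_2)\bigl[P'(\ovl{u}_2)+O(\ovl{u}_1-\ovl{u}_2)\bigr]$; by the stability result of the first paragraph the bracketed factor has initial term $\wdt{\d}/\d^m$ and hence is nonzero in $\wdh{\K}_{\nu_{\a}}$, forcing $\ovl{u}_1=\ovl{u}_2$.
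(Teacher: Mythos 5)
Your plan---run a Newton iteration, check each iterate stays in $\V_{\a,\d\wdt\d}[\langle\g_1,\ldots,\g_s\rangle]$, then control the limit---is a reasonable blueprint, and the first, second and fourth paragraphs are correct as far as they go. But the third paragraph, which you yourself call ``the technical heart of the argument,'' is not an argument: it names the bound to be proved and asserts it can be obtained ``following the strategy'' of Gabrielov and Tougeron. That is a placeholder, and it happens to be a placeholder for essentially the entire content of the proposition. Worse, the quadratic scheme you chose makes the missing step harder, not easier: each step inverts $P'(u_n)$ via a geometric series in $X_n:=\frac{\d^m}{\wdt\d}P'(u_n)-1$, and the degree-$i$ component of $\sum_{k}(-X_n)^k$ aggregates the terms $X_n^k$ with $k$ up to roughly $i/\nu_{\a}(X_n)$; if $X_n$ has bounding function $i\mapsto s_n i+b_n$, then $X_n^k$ at degree $i$ is bounded by $s_n i+k b_n$, so the slope of $(1+X_n)^{-1}$ can jump to $s_n+b_n/\nu_{\a}(X_n)$. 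The observation that ``only finitely many increments contribute at each degree $i$'' does not by itself control either the slopes or the constants across those contributing increments, so your claim that the aggregated bounding function ``remains linear in $i$'' is simply unproved.

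The paper sidesteps this by using the \emph{linear} (degree-by-degree) Newton--Puiseux iteration rather than the quadratic one. After the substitution $Z=u+\frac{\wdt\d}{\d^m}Y$, the equation becomes $A+BY+R(Y)Y^2=0$ with $B$ of initial term $1$ and $\nu_{\a}(A)>0$; Hensel's lemma in the Henselian ring $V^{\fg}$ of Lemma \ref{lemma1} gives a unique small root $y$ with finitely generated support, and the homogeneous components are then extracted one degree at a time via $y_{i_{l+1}}=-\ini_{\a}\bigl(S(\sum_{k\leq l}y_{i_k})\bigr)$. This produces an explicit recursion $y_i=-\d^m Q_i(y)/\wdt\d$ whose denominator exponent $m(i)$ is bounded by a direct induction on $i$, exploiting the quasi-homogeneity of $Q_i$ and a normalization (replace $Z$ by $Z-\ini_{\a}(u)$ and multiply $P$ by a power of $\d\wdt\d$) that forces the constant $b$ in the bounding function to vanish. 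That explicit induction is exactly what your sketch omits, and it is much cleaner to run on the linear scheme than on the quadratic one.
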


\begin{proof}
$\bullet$ By replacing $P(Z)$ by $P(u+Z)$ we can assume that $u=0$. In this case we have that $P(u)=P(0)=a_0$ and $P'(u)=P'(0)=a_1$.\\
The valuation $\nu_{\a}$ is defined on the ring $\V_{\a,\d}[\langle\g_1, \ldots , \g_s\rangle]$ and we denote by $V$ its valuation ring. We  denote by $\wdh{V}$ the  completion of $V$. Let $V^{\fg}$ be the subring of $\wdh{V}$ of all elements of $\wdh{V}$ whose $\nu_{\a}$-support is included in a finitely generated semigroup. Then $V^{\fg}$ is a Henselian  local ring by Lemma \ref{lemma1}. We set $Z=\frac{\wdt{\d}}{\d^m}Y$. Thus we are looking for solving the following equation:

$$\wdt P(Y):=\frac{\d^{2m}}{\wdt{\d}^2}P\left(\frac{\wdt{\d}}{\d^m}Y\right)=a_0\frac{\d^{2m}}{\wdt{\d}^2}+a_1\frac{\d^{m}}{\wdt{\d}}Y+a_2Y^2+\cdots+a_d\frac{\wdt{\d}^{d-2}}{\d^{(d-2)m}}Y^d=0.$$
From now on we denote by $\wdt a _k$ the coefficients of $\wdt P(Y)$:
$$\wdt a_k:=a_k\frac{\wdt \d^{k-2}}{\d^{(k-2)m}}\ \ k=0, \ldots , d.$$
Since 
$$\nu_{\a}(a_0)=\nu_\a(P(0))>2\nu_{\a}(P'(0))=2\nu_{\a}(a_1)=\nu_{\a}\left(\frac{\wdt{\d}^2}{\d^{2m}}\right)$$
we have that $\wdt a_0\in V^{\fg}$. By assumption we have that $\nu_\a(\wdt a_1)=0$ thus $\wdt a_1\in V^{\fg}$. Since $\nu_\a\left(\frac{\wdt \d}{\d^m}\right)\geq 0$ we have that $\wdt a_k\in V^{\fg}$ for all $k\geq 2$.
Moreover we have 
$$\nu_\a(\wdt{P}(0))>0 \text{ and } \nu_{\a}(\wdt{P}'(0))=0.$$
Thus by  Hensel Lemma  this equation has a unique solution  $y\in  V^{\fg}$ such that 
$$\nu_\a(y)=\nu_\a\left(a_0\frac{\d^{2m}}{\wdt{\d}^2}\right)=\nu_{\a}(P(0))-2\nu_{\a}(P'(0))>0.$$ \\
Hence there exists a unique solution $z:=\frac{\wdt{\d}}{\d^m}y\in V^{\fg}$ of the equation $P(Z)=0$ such that 
$$\nu_\a(z)\geq \nu_\a(P(u))-\nu_\a(P'(u)).$$
Now we have to show that $z$ or $y\in  \V_{\a,\d\wdt{\d}}[\langle\g_1, \ldots , \g_s\rangle]$.\\

%
%
$\bullet$ We can write $y=\wdt a_0 \wdt y$ where $\wdt y\in V^{\fg}$ and $\nu_{\a}(\wdt y)=0$. Then $\wdt y$ is a root of the polynomial
$$\wdt P(\wdt a_0 Y)=\wdt a_0+\wdt a_1\wdt a_0Y+\wdt a_2\wdt a_0^2Y^2+\cdots+\wdt a_d\wdt a_0^d Y^d=$$
$$=\wdt a_0\left(1+\wdt a_1 Y+\wdt a_2\wdt a_0 Y^2+\cdots+\wdt a_d\wdt a_0^{d-1}Y^d\right)$$
and $y\in\V_{\a,\d\wdt\d}[\langle\g_1, \ldots ,\g_s\rangle]$ if and only if $\wdt y\in \V_{\a,\d\wdt\d}[\langle\g_1, \ldots ,\g_s\rangle]$.\\
Since $\nu_{\a}(\wdt a_0)>0$, by  replacing $\wdt P$ (resp. $y$) by $1+\wdt a_1 Y+\wdt a_2\wdt a_0 Y^2+\cdots+\wdt a_d\wdt a_0^{d-1}Y^d$ (resp. $\wdt y$), we may assume that
$$\nu_{\a}(\wdt a_i)>0 \text{ for } i\geq 2.$$
In this case we have $\wdt a_0=1$, $\ini_\a(\wdt a_1)=1$ and $\ini_\a(y)=-1$.\\

 Let $\La$ be a finitely generated sub-semigroup of $\R_{\geq 0}$ containing the  $\nu_{\a}$-supports of $y$ and the $\wdt a_k$. We denote by $\l_l$, $l\in\Z_{\geq 0}$, its elements ordered as follows:
 $$\l_0:=0<\l_1<\l_2<\cdots<\l_l<\l_{l+1}<\cdots.$$

Let us expand the coefficients of $\wdt P(Y)$ as 
$$\wdt a_k=\sum_{l\in\Z_{\geq 0}}\wdt{a}_{k,\l_l}$$
 where $\wdt{a}_{k,\l_l}$ is homogeneous of degree $\l_l$ with respect to $\nu_{\a}$. For every $l\in\N$ let $Y_{\l_l}$ be a new variable and set  $Y^*:=\sum_{l\in\N}Y_{\l_l}$.  We extend the valuation $\nu_{\a}$ to $V^{\fg}[Y_{\l_1}, \ldots ,Y_{\l_l},...]$ by setting $\nu_{\a}(Y_{\l_l}):=\l_l$ for any $l\in\N$. We may write formally $\wdt P(Y^*)=\sum_{l}\wdt P_{\l_l}(Y^*)$ where $\wdt P_{\l_l}(Y^*)\in\Z[\wdt{a}_{k,\l_i}][Y_{\l_j}]$ is the homogeneous term of degree $\l_l$ with respect to $\nu_{\a}$. \\
Since $\ini_\a(\wdt a_1)=1$ the equation
 
\begin{equation}\label{eq_solving}\wdt P(Y)=\wdt a_0+\wdt a_1 Y+\wdt a_2 Y^2+\dots+\wdt a_d Y^d=0,\end{equation}
 
 where $Y$ is replaced by $Y^*$, yields the following equation, for every $l\in\Z_{\geq 0}$:
 
 \begin{equation}\label{eq_eisenstein}\wdt P_{\l}(Y^*)=Y_{\l_l}+Q_{\l_l}(Y^*)=0. \end{equation}
 
 where $Q_{\l_l}(Y^*)\in\Z[\wdt{a}_{k,\l_i}][Y_{\l_j}]$ is a polynomial  depending only on the variables $\wdt{a}_{k,\l_i}$ ($\l_i\leq \l_l$) and $Y_{\l_j}$ ($j<l$). Since $y$ is a solution of Equation \eqref{eq_solving}, by replacing $Y^*$ by $y$ we have $\wdt P_{\l_l}(y)=0$, hence 
$$y_{\l_l}=-Q_{\l_l}(y_{\l_j}, j<l)\ \ \ \forall l\in\N.$$
So by induction on $l$ we see that we may write
$$y_{\l_l}=\frac{c_l}{(\d\wdt{\d})^{m(\l_l)}}$$
for some $c_l\in\k[x][\g_1, \ldots , \g_s]$ and $m(\l_l)\in\N$ for all $l$. \\
 
Let $i\lgm ai+b$ be a common bounding functions of the coefficients of $\wdt a_0$, $\wdt a_1$, $\wdt a_2$, \ldots , $\wdt a_d$ seen as elements of $\V_{\a,\d\wdt{\d}}[\langle \g_1, \ldots , \g_s\rangle]$.  By Remark \ref{bounding} we may assume that $b=0$ since $\nu_\a(\wdt a_k)>0$ for $k\geq 2$ and $\ini_\a(\wdt a_0)=\ini_\a(\wdt a_1)=1$.\\
Thus we have
$$(\d\wdt\d)^{a\l_i}\wdt a_{k,\l_i}\in\k[x][\g_1, \ldots ,\g_s]\ \ \ \forall i.$$
Let $m(\l_l)$ be the least integer such that $(\d\wdt\d)^{m(\l_l)} y_{\l_l}\in \k[x][\g_1, \ldots ,\g_s]$.  We will show by induction on $l$ that  

\begin{equation}\label{eq_m(l)}m(\l_l)\leq a\l_l.\end{equation}

This inequality is satisfied for $l=0$ since $\ini_\a(y)=-1$ implies that $m(\l_0)=0$.\\
We fix  an integer $l>0$ and we assume that \eqref{eq_m(l)} is satisfied for any integer less than $l$.\\
Let $Q$ be a monomial of $Q_i(Y^*)$. We may write 
$$Q=\wdt{a}_{k,\l_i}y_{\l_{j_1}}\cdots y_{\l_{j_k}}$$
 where $k\leq d$, $j_1\leq \cdots \leq j_k<l$ and
 $\l_i+\l_{j_1}+\cdots+\l_{j_k}=\l_l.$ 
 \\
 Then 
 $$(\d\wdt\d)^{a\l_i+a(\l_{j_1}+\cdots+\l_{j_k})}Q=(\d\wdt\d)^{a\l_l}Q\in \k\lb x\rb[\g_1,\cdots,\g_s].$$
 This proves \eqref{eq_m(l)}. So $y\in \V_{\a,\d\wdt\d}[\langle\g_1, \ldots , \g_s\rangle]$.
 \end{proof}

  We  deduce from this proposition the main result of this part (Theorem \ref{main2}) which is a general version of Eisenstein Theorem for algebraic power series over $\Q$. First we recall the classical Eisenstein Theorem:
  
  \begin{theorem}\cite{Ei}
  Let  $\displaystyle \sum_{k\in\Z_{\geq 0}}a_kT^k\in\Q\lb T\rb$ be a power series algebraic over $\Q[T]$. Then there exists an integers $a\in\N$ such that 
  $$a^{k+1}a_k\in\Z$$
   for every integer $k$.
  \end{theorem}

  \begin{theorem}[Eisenstein Theorem]\label{main2}
Let $\k$ be a field of characteristic zero. Let $\a\in\R_{>0}^n$ and let us set $N=\dim_{\Q}(\Q\a_1+\cdots+\Q\a_n)$.
Let 
$$P(Z)\in\V_{\a}[\langle\g_1, \ldots , \g_s\rangle][Z]$$
 be a monic polynomial  where $\g_1$, \ldots , $\g_s$ are  homogeneous elements with respect to $\nu_{\a}$. Then there exist integral homogeneous elements with respect to $\nu_{\a}$, denoted by $\g'_{1}$,... $\g_N'$, such that $P(Z)$ has all its roots in  $\V_{\a}[\langle\g_1', \ldots ,\g_N'\rangle]$.
\end{theorem}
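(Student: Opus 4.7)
The proof is modeled on that of Theorem \ref{main}, with Proposition \ref{IFT} playing the role of the Hensel lemma that was used there: the only serious work is to verify that the inductive Newton-type construction stays inside $\V_{\a}$, i.e. that at each step the denominators remain bounded by a linear function of the valuation. I proceed by induction on $d=\deg_Z P$.

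First, I reduce to the case $a_{1}=0$ by the Tschirnhaus substitution $Z\lgm Z-\frac{a_{1}}{d}$, available since $\operatorname{char}(\k)=0$. Choose $i_{0}\in\{2,\dots,d\}$ minimising $\nu_{\a}(a_{i})/i$ and let $\g$ be a homogeneous element with respect to $\nu_{\a}$ satisfying $\g^{i_{0}}=\ini_{\a}(a_{i_{0}})$ (this is a homogeneous element in the sense of Definition \ref{homo}, by the same argument as in the proof of Theorem \ref{main}). Substituting $Z=\g W$ yields $P(\g W)=\g^{d}S(W)$ with
$$S(W)=W^{d}+a_{2}'W^{d-2}+\cdots+a_{d}', \qquad a_{i}':=\frac{a_{i}}{\g^{i}}\in \V_{\a,\d}[\langle\g_{1},\dots,\g_{s},\g\rangle],$$
each $a_{i}'$ of non-negative valuation and $a_{i_{0}}'$ a unit. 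Reducing modulo the maximal ideal gives $\ovl{S}(W)\in\L[W]$ for a finite extension $\L$ of $\k_{\nu_{\a}}$; since $a_{1}=0$ and $\ovl{a'_{i_{0}}}\neq 0$, the polynomial $\ovl S$ is not of the form $(W+c)^{d}$, so after adjoining one homogeneous element $\g'$ of degree $0$ (an algebraic element of $\L$) we can factor $\ovl S=\ovl{S}_{1}\ovl{S}_{2}$ into coprime monic factors in $\L[\g'][W]$ of degree strictly less than $d$.

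The heart of the argument, and the main obstacle, is to lift this coprime factorization to an equality $S=S_{1}S_{2}$ with $S_{i}\in\V_{\a,\d'}[\langle\g_{1},\dots,\g_{s},\g,\g'\rangle][W]$ for some $(\a)$-homogeneous polynomial $\d'$ divisible by $\d$. By Lemma \ref{lemma1}, the Henselian ring $V^{\fg}_{\nu_{\a}}[\langle\g_{1},\dots,\g_{s},\g,\g'\rangle]$ admits a lift of this factorization and it is unique; what has to be checked is that it lies in $\V_{\a,\d'}[\langle\g_{1},\dots,\g_{s},\g,\g'\rangle]$ with a linear bounding function for the denominators. This is a multivariable analogue of Proposition \ref{IFT}: view the unknown as the vector $r$ of coefficients of $S_{1}$ and $S_{2}$, write the coefficient-matching equations $f_{k}(r)=0$, and observe that the Jacobian at the mod-$\m$ solution is a Sylvester-type matrix whose determinant is $\operatorname{Res}(\ovl S_{1},\ovl S_{2})\in\L[\g']^{\times}$, invertible by coprimality. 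One can then repeat \emph{verbatim} the Newton-type iteration used in the proof of Proposition \ref{IFT}, with $\tilde\d$ replaced by (a preimage of) this resultant: at each level one solves a Bezout equation in $\L[\g'][W]$, divides by a bounded power of $\d\tilde\d$, and the same bookkeeping as in the proof of Proposition \ref{IFT} produces a linear bounding function $i\mapsto\a i+\b$ for the resulting denominators. This is precisely the place where the hypothesis ``bounded growth of denominators'' is propagated through Newton's algorithm.

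Once the factorization $S=S_{1}S_{2}$ is obtained in $\V_{\a,\d'}[\langle\g_{1},\dots,\g_{s},\g,\g'\rangle][W]$, the induction hypothesis applied to $S_{1}$ and $S_{2}$ (both of degree $<d$) produces all their roots in $\V_{\a}[\langle\g_{1}'',\dots,\g_{r}''\rangle]$ for a finite list of homogeneous elements; undoing the substitution $Z=\g W$ and adding $\g,\g'$ to the list provides all roots of the original $P(Z)$ in $\V_{\a}[\langle\g_{1}'',\dots,\g_{r''}''\rangle]$. Finally, applying Proposition \ref{bound} reduces the number of homogeneous elements to $N=\dim_{\Q}(\Q\a_{1}+\cdots+\Q\a_{n})$, and Remark \ref{integral'} allows us to choose them integral, which finishes the proof.
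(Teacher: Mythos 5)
Your proof takes a genuinely different route from the paper's, and it has a gap precisely at the step you identify as ``the heart of the argument.''

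The paper's proof is much shorter and avoids the difficulty you run into. It first reduces to $P$ irreducible, then invokes Theorem \ref{main} as a black box to obtain a root $z\in V^{\fg}_{\nu_{\a}}[\langle\g'_1,\dots,\g'_N\rangle]$ with integral homogeneous $\g'_i$. Since $P$ is irreducible and $\operatorname{char}(\k)=0$, $P'(z)\neq 0$, so one can choose a \emph{finite truncation} $\tilde z$ of $z$ (automatically in $\V_{\a}[\langle\g'_1,\dots,\g'_N\rangle]$, because it is a finite sum) satisfying $\nu_{\a}(P(\tilde z))>2\nu_{\a}(P'(\tilde z))$ and separating $z$ from the other roots. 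A \emph{single} application of Proposition \ref{IFT} to the approximate root $\tilde z$ then produces an exact root $\ovl z\in\V_{\a}[\langle\g'_1,\dots,\g'_N\rangle]$; by the separation estimate $\ovl z=z$. In other words, the paper never runs the Newton--Puiseux factorization algorithm inside $\V_{\a}$: it first learns the shape of the answer from Theorem \ref{main}, and then uses the scalar IFT once to recover it with controlled denominators.

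Your proposal instead re-runs the factorization step of Theorem \ref{main} while tracking denominators, and this requires a factorization version of the controlled Hensel lifting (a ``multivariable Proposition \ref{IFT}'') that is not proved in the paper, nor in your argument. Asserting that ``one can repeat \emph{verbatim}'' the bookkeeping of Proposition \ref{IFT} is not enough: that bookkeeping is tailored to the single-unknown case, where the obstruction is one scalar $P'(u)$ whose initial form $\tilde\d/\d^m$ becomes the new denominator. In the factorization lift the iteration is vector-valued, the linearization is a Sylvester matrix, and its inverse is the adjugate over the resultant; the adjugate entries accumulate products of the coefficients of the partial lifts $S_1^{(k)},S_2^{(k)}$, so the recursion for the denominator exponent is structurally different and needs to be redone in detail, not quoted. (The paper is aware that this is not a triviality: in the companion Proposition \ref{approx2}, where denominator control is \emph{not} needed, it handles the multivariable lifting by appealing to Moret-Bailly's theorem rather than by a direct Newton computation.) Until that lemma is proved, your argument does not close. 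If you want to keep your structure, the fix is either to prove that factorization-lifting lemma (which would be a genuine contribution, not just a remark), or to switch to the paper's strategy: get the root from Theorem \ref{main}, truncate, and apply Proposition \ref{IFT} once.
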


\begin{proof}
 By replacing $P(Z)$ by one of its irreducible factors we may assume that  $P(Z)$ is irreducible. Let 
 $$z\in V^{\fg}_{\nu_{\a}}[\langle \g_1', \ldots ,\g'_N\rangle]$$
  be a root of $P(Z)$ where $\g'_i$ is an integral homogeneous with respect to $\nu_{\a}$ (by Theorem \ref{main} such a $z$ exists). Since $P(Z)$ is irreducible, then $P'(z)\neq 0$. Let us set $i_0:=\max\{\nu_{\a}(z-z')\}$ where the maximum is taken over all the roots $z'$ of $P$ different from $z$. Let us take $\wdt{z}\in \V_{\a}[\langle \g_1', \ldots ,\g'_N\rangle]$ such that 
  \begin{equation}\label{key_bound}\nu_{\a}(\wdt{z}-z)>\max\{2\nu_{\a}(P'(z)),i_0+\nu_{\a}(P'(z))\}.\end{equation}
 For instance if we expand $z=\sum_{i\in\La}z_i$ where $\La$ is a finitely generated sub-semigroup  of $\R_{\geq 0}$ and $z_i$ is a homogeneous element of degree $i$ with respect to $\nu_\a$ we can choose
 $$\wdt z:=\sum_{i\leq \max\{2\nu_{\a}(P'(z)),i_0+\nu_{\a}(P'(z))\}}z_i.$$
 By replacing $P(Z)$ by $P(Z+\wdt z)$ we may assume that $\wdt z=0$. In this case $P'(\wdt z)=P'(0)=a_{d-1}$ if we write
 $$P(Z)=Z^d+a_1Z^{d-1}+\cdots+a_{d-1}Z+a_d.$$
 Now $\ini_{\nu_\a}(a_{d-1})\in\k(\x)[\g_1', \ldots ,\g'_N]$ so if we denote by $a$ the product of the conjugates of $\ini_{\nu_\a}(a_{d-1})$ over $\k(\x)$ different from $\ini_{\nu_\a}(a_{d-1})$ we have $\ini_{\nu_\a}(aa_{d-1})\in\k(\x)$ and $a$ is a homogeneous element with respect to $\nu_\a$ by Lemma \ref{val_ini}. Let $b$ be a homogeneous element such that $b^{d-1}=a$. By Proposition \ref{bound} we may assume that $b\in V_{\nu_\a}^{\fg}[\langle \g_1', \ldots ,\g'_N\rangle]$. We have that
 $$b^dP\left(\frac{Z}{b}\right)=b^d\left(\frac{1}{b^d}Z^d+\frac{a_1}{b^{d-1}}Z^{d-1}+\cdots+\frac{a_{d-1}}{b}Z+a_d\right)$$
 $$=Z^d+a_1bZ^{d-1}+\cdots+b^{d-1}a_{d-1}Z+b^da_d.$$
 By replacing $P(Z)$ by $b^dP\left(\frac{Z}{b}\right)$ we may assume that $\ini_{\nu_\a}(P'(\wdt z))=\ini_{\nu_\a}(a_{d-1})\in\k(\x)$.\\

  Since $P(\wdt z)-P(z)\in (\wdt z -z)$ then by Inequality \eqref{key_bound}
   $$\nu_{\a}(P(\wdt{z}))>2\nu_{\a}(P'(\wdt{z}))$$
   and 
   $$\nu_{\a}(P(\wdt{z}))>i_0+\nu_{\a}(P'(z)).$$
   In the same way, since $P'(\wdt z)-P'(z)\in (\wdt z -z)$,  Inequality \eqref{key_bound} yields
 $$\nu_{\a}(P'(\wdt{z}))  =\nu_{\a}(P'(z)).$$
 Then we apply Proposition \ref{IFT} (with $u:=\wdt z=0$), and we get a root $\ovl{z}\in\V_{\a}[\langle\g_1', \ldots ,\g'_N\rangle]$ of $P(Z)$ such that 
 $$\nu_{a}(\wdt z-\ovl{z})\geq \nu_{\a}(P(\wdt z))-\nu_{\a}(P'(\wdt z))>i_0.$$
 Thus
 $$\nu_{\a}(z-\ovl{z})=\nu_{\a}(z-\wdt z+\wdt z-\ovl z)>i_0 =\max_{\begin{subarray}{l}z'\neq z\\  P(z')=0\end{subarray}}\{\nu_{\a}(z-z')\}.$$
 Hence $z=\ovl{z}\in\V_{\a}[\langle\g_1', \ldots ,\g'_N\rangle]$. 
\end{proof} 

\begin{corollary}\label{K_alg}
 The field $\K_{\nu_{\a}}^{\alg}$ is a subfield of $\KK_{\a}$.
 \end{corollary}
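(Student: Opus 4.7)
The plan is to prove the stronger statement $V_{\nu_\a}^{\alg} \subset \V_\a$; by Lemma \ref{val}, $\K_{\nu_\a}^{\alg}=\Frac(V_{\nu_\a}^{\alg})$, so taking fraction fields will yield $\K_{\nu_\a}^{\alg} \subset \Frac(\V_\a) = \KK_\a$. Given $z \in V_{\nu_\a}^{\alg}$, I pick an irreducible polynomial $P(Z) = a_dZ^d+\cdots+a_0 \in \F_n[Z]$ annihilating $z$ and form the monic irreducible polynomial
\[
Q(Z) := a_d^{d-1}\,P(Z/a_d) = Z^d + a_{d-1}Z^{d-1} + a_d a_{d-2}Z^{d-2}+\cdots+a_d^{d-1}a_0 \in \F_n[Z] \subset \V_\a[Z],
\]
of which $w := a_d z \in \wdh{V}_{\nu_\a}$ is a root. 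Since $a_d \in \V_\a$, it suffices to show $w \in \V_\a$: then $z = w/a_d \in \KK_\a$.

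To prove $w \in \V_\a$, I follow the argument of Theorem \ref{main2}, specialized to the case in which the root already lies in $\wdh{V}_{\nu_\a}$ (so no auxiliary homogeneous elements $\g_i'$ appear). Decomposing $w$ into its $\nu_\a$-homogeneous components and truncating at a sufficiently high order produces a finite sum $\wdt{w}$, which automatically lies in some $\V_{\a,D}$ (take $D$ to be a common denominator of the finitely many homogeneous rational fractions appearing in the truncation). Enlarging $D$ to also carry the coefficients of $Q$, we may assume $Q \in \V_{\a,D}[Z]$. I then choose the truncation so that
\[
\nu_\a(Q(\wdt{w})) > 2\nu_\a(Q'(\wdt{w})) = 2\nu_\a(Q'(w))
\]
and so that $\nu_\a(Q(\wdt{w})) - \nu_\a(Q'(\wdt{w}))$ strictly exceeds $\nu_\a(w - w'')$ for each of the finitely many other roots $w''$ of $Q$; this is possible because $Q$ is irreducible in characteristic zero, so $Q'(w) \neq 0$, and because $Q$ has finitely many roots. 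Proposition \ref{IFT} applied with $s=0$ then gives a unique $\ovl{w} \in \V_\a$ with $Q(\ovl{w})=0$ and $\nu_\a(\ovl{w}-\wdt{w}) \ge \nu_\a(Q(\wdt{w}))-\nu_\a(Q'(\wdt{w}))$. The same lower bound holds for $\nu_\a(w-\wdt{w})$ by the truncation choice, so $\nu_\a(\ovl{w}-w)$ exceeds $\nu_\a(w - w'')$ for every other root $w''$, forcing $\ovl{w}=w \in \V_\a$.

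The main obstacle is the truncation estimate separating $w$ from the other roots of $Q$; this is the same calculation that underlies Theorem \ref{main2}, and it goes through cleanly here because the irreducibility of $Q$ in characteristic zero guarantees that $w$ is a simple root, so its distance to the other roots is bounded, while Proposition \ref{IFT} provides the growth control that places the output $\ovl{w}$ back inside $\V_\a$ rather than a larger ring.
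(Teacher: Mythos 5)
Your proof is correct and follows essentially the same route as the paper. The paper's argument is the same monic-ification trick (multiply by the leading coefficient and replace $Z$ by $Z/a_d$ so that $a_dz$ annihilates a distinguished polynomial in $\F_n[Z]$), but it then simply cites Theorem \ref{main2} as a black box to conclude $a_dz\in\V_\a$, leaving implicit the observation that an element of $\V_\a[\langle\g_1',\dots,\g_N'\rangle]$ which also lies in $\wdh{V}_{\nu_\a}$ must already lie in $\V_\a$. What you do instead is re-run the proof of Theorem \ref{main2} in the special case $s=0$ (truncate the root, satisfy the hypotheses of Proposition \ref{IFT}, compare the output $\ovl{w}\in\V_\a$ with the distances between roots to force $\ovl{w}=w$), which makes that implicit intersection step explicit — a slightly more careful rendering of the same idea rather than a genuinely different method.
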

 \begin{proof}
 Let $z\in\K_{\nu_{\a}}^{\alg}$ and let $P(Z)=a_0Z^d+\cdots+a_d\in\k\lb \x\rb [Z]$ be a polynomial such that $P(z)=0$. Then $a_0z\in\K_{\nu_{\a}}^{\alg}$ is a root of the polynomial  $a_0^{d-1}P(Z/a_0)=Z^d+a_1Z^{d-1}+a_2a_0Z^{d-2}+\cdots+a_da_0^{d-1}$ which is a monic polynomial. Hence $a_0z\in\V_{\a}$ by Theorem \ref{main2} and $z\in\KK_{\a}$.
 
 \end{proof}

 \begin{ex}
Let us assume that $\Di_Z(P(Z))$ is normal crossing after a formal change of coordinates and let us assume that $\k$ is algebraically closed. This means that  there exist power series $x_i(\y)\in (\y)\k\lb \y\rb  $ ($\y=(y_1, \ldots ,y_n)$), for $1\leq i\leq n$, such that the morphism of $\k$-algebras $\phi : \k\lb \x\rb  \lgw \k\lb \y\rb  $ defined by $\phi(f(\x))=f(x_1(\y), \ldots ,x_n(\y))$ is an isomorphism, and such that
$$\phi(\Di_Z(P(Z)))\k\lb \y\rb  =y_1^{e_1}\cdots y_m^{e_m}\k\lb \y\rb  ,\ m\leq n.$$
By Abhyankar-Jung Theorem \cite{Ab} (or \cite{K-V}, \cite{P-R}, \cite{MS}),  the roots of $P(Z)$ can be written as  
$$t_k=\sum_{l=0}^dt_{k,l}(\y)\w^l$$
 where $\w=\y^{\b}$ for some $\b\in\Q_{\geq 0}^m\times\{0\}^{n-m}$, $d\in\Z_{\geq 0}$ and the $t_{k,l}(\y)$ are power series with coefficients in $\k$. Let us write:
$$\b=\left(\frac{b_1}{e}, \ldots ,\frac{b_m}{e},0, \ldots ,0\right)$$
 for some non negative integers $b_1$,  \ldots , $b_m$ and $e\in\N$.
Let us denote by $f_i(\x)$, $1\leq i\leq n$,  the power series satisfying $\phi(f_i(\x))=y_i$.\\
Let $\a\in\N^n$ and write $f_i(\x)=l_{i,\a}(\x)+\e_{i,\a}(\x)$ where $l_{i,\a}(\x)$   is $(\a)$-homogeneous  and $\nu_{\a}(\e_i(\x))> \nu_{\a}(l_{i,\a}(\x))$  for any $i$.
Thus we have for $1\leq i\leq m$:
$$y_i^{\frac{1}{e}}=l_{i,\a}(\x)^{\frac{1}{e}}\left(1+\frac{\e_{i,\a}(\x)}{l_{i,\a}(\x)}\right)^{\frac{1}{e}}=l_{i,\a}(\x)^{\frac{1}{e}}\left(1+\sum_{k\geq 1}c_k\frac{\e_{i,\a}(\x)^k}{l_{i,\a}(\x)^k}\right)$$
where $c_k\in\Q$ for all $k$ - here
$$c_k=\frac{\frac{1}{e}\left(\frac{1}{e}-1\right)\cdots\left(\frac{1}{e}-k+1\right)}{k!}.$$
Hence
\begin{equation*}\begin{split}\w=&y_1^{\frac{b_1}{e}}\cdots y_m^{\frac{b_m}{e}}=\\
&l_{1,\a}(\x)^{\frac{b_1}{e}}\cdots l_{m,\a}(\x)^{\frac{b_m}{e}}\prod_{j=1}^m\left(1+\sum_{k\geq 1}c_k\frac{\e_{j,\a}(\x)^k\prod_{p\neq j}l_{p,\a}(\x)^k}{(\prod_{p=1}^ml_p(\x))^k}\right)^{b_j}.\end{split}\end{equation*}
We remark that  $\Di_Z(P(Z))=\prod_{p=1}^ml_{p,\a}(\x)^{e_p}+\e(\x)$ with 
$$\nu_{\a}(\e(\x))>\nu_{\a}(\prod_{p=1}^ml_{p,\a}(\x)^{e_p}).$$
  Let  $\g:=\prod_{j=1}^ml_{j,\a}(\x)^{\frac{b_j}{e}}$ be a root of the polynomial 
$$Z^e-\prod_{j=1}^ml_{j,\a}(\x)^{b_j}$$ (in particular it is an integral homogeneous element with respect to $\nu_{\a}$),  and set  $\d:=\prod_{j=1}^ml_{j,\a}(\x)^{e_p}$. Here $\d$ is the $(\a)$-initial term of the discriminant of $P(Z)$. Hence we obtain the following three cases:
\begin{itemize}
\item[i)] If $\phi$ is a linear change of coordinates (i.e. $\a=(1, \ldots ,1)$ and $\e_{i,\a}=0$ $\forall i$), then the roots of $P(Z)$ are in $\k\lb \x\rb  [\g]$ (since in this case $\w=\g$).
\item[ii)] If $\phi$ is a quasi-linear change of variables (i.e. $\a\in\N^n$ and $\e_{i,\a}=0$ $\forall i$), then the roots of $P(Z)$ are still in $\k\lb \x\rb  [\g]$ (since in this case we also have $\w=\g$).
\item[iii)] If (at least) one of the $\e_{i,\a}$ is not zero, then the roots of $P(Z)$ are in $\V_{\a,\d}[\langle\g\rangle]$.
\end{itemize}
This example will be generalized later (see Theorem \ref{strong_analytic}).
 \end{ex}
 
 \begin{ex}
 Let $P(Z)=Z^2+2aZ+b$ where $a$ and $b$ are power series over $\k$ and let $\a\in \Q_{>0}^n$. Let $\d$  denote the $(\a)$-initial term of the discriminant of  $P(Z)$, i.e. the $(\a)$-initial term of $a^2-b$. Then the  roots of  $P(Z)$ are of the form $-a+\sqrt{a^2-b}\in\V_{\a,\d}[\langle\g\rangle]$ where $\g$ is a root square of $\d$. 
 \end{ex}

 \begin{ex}
 Let $P(Z)=Z^3+3x_2^2Z-2(x_1^3+\e)$ where $\e$ is a homogeneous polynomial of degree greater or equal to 4.
 Its discriminant is $D:=x_1^6+x_2^6+2x_1^3\e+\e^2$ whose initial term is  $x_1^6+x_2^6$. The roots of $P$ are

  $$ a\sqrt[3]{x_1^3+\e+\sqrt{D}}+b\sqrt[3]{x_1^3+\e-\sqrt{D}}$$
with $(a,b)=(1,1)$, $(j,j^2)$ or $(j^2,j)$. But we have

$$\sqrt[3]{x_1^3+\e+\sqrt{D}}=\g_1\sqrt[3]{1+\e+\frac{\g_2}{x_1^3+\g_2}\sqrt{1+\frac{2x_1^3\e+\e^2}{\d}}-\frac{\g_2}{x_1^3+\g_2}}$$
with $\g_2^2=x_1^6+x_2^6$, $\g_1^3=x_1^3+\g_2$ and $\d=x_1^6+x_2^6$ is the initial term of $D$. Thus $$\sqrt[3]{x_1^3+\e+\sqrt{D}} \in\V_{(1,1),\d}\left[\left\langle\g_1,\g_2,\frac{\g_2\e}{x_1^3+\g_2}\right\rangle\right].$$
By doing the same remark for $\sqrt[3]{x_1^3+\e-\sqrt{D}}$, we see that there exist $\g_1$, \ldots , $\g_5$ homogeneous elements with respect to $\ord$ such that the roots of $P(Z)$ are in $\V_{(1,1),\d}[\langle\g_1, \ldots , \g_5\rangle]$. But there is no reason that the roots of $P(Z)$ are  in $\V_{\a,\d}[\langle \g\rangle]$ where $\g$ is one (integral) homogeneous element with respect to $\nu_{\a}$.
      
 \end{ex}


\section{Approximation of monomial valuations by divisorial monomial valuations}\label{section_appro}
In several cases, it will be easier to work with a monomial valuation $\nu_{\a}$ which is divisorial, i.e. such that $\dim_{\Q}(\Q\a_1+\cdots+\Q\a_n)=1$. In order to extend some results which are proven for divisorial monomial valuations to general monomial valuations, we will approximate monomial valuations by divisorial monomial valuations. The aim of this section is to explain how this can be done.

\begin{definition}
Let $\a\in\R_{>0}^n$. Let $\a^* :\Q^n\lgw \R$ be the $\Q$-linear morphism defined by $\a^*(q_1, \ldots ,q_n):=\sum_i\a_iq_i$. We denote by $\Rel_{\a}$ the kernel of this morphism.\\
For any $\e>0$ and $q\in\N$, we define the following set:
$$\Rel(\a,q,\e):=\left\{\a'\in\N^n\ / \ \Rel_{\a}\subset \Rel_{\a'} \text{ and } \ \max_i\left|q-\frac{\a_i'}{\a_i}\right|<q\e\right\}.$$
\end{definition}

\begin{ex}
If $n=4$, and $\a_1=\sqrt{2}$, $\a_2=\sqrt{3}$, $\a_3=13\sqrt{2}+\sqrt{3}$, $\a_4=\sqrt{2}+757\sqrt{3}$, then any $\a'$ of the form $(n_1,n_2,13n_1+n_2,n_1+757n_2)$, where $n_1$, $n_2\in\N_{>0}$, will satisfy $\Rel_{\a}\subset \Rel_{\a'}$.
\end{ex}

\begin{rmk}\label{rmk_Rel}
For $\a$ and $\b\in\R_{>0}^n$ we have
$$\Rel_{\a}\subset\Rel_{\b} \Longleftrightarrow \b\in V\otimes_\Q\R$$
where $V:=(\Ker \a^*)^\perp\subset \Q^n$. 
By definition we have that $\a\in V\otimes_{\Q}\R$. Since $V$ is dense in $V\otimes_{\Q}\R$ there exists $\b\in V$ such that
$$\max_{1\leq i\leq n}\left|1-\frac{\b_i}{\a_i}\right|<\e.$$
Let us write $\b_i=\frac{\a'_i}{q}$ where the $\a'_i$ and $q$ are positive integers. This implies that
$$\max_{1\leq i\leq n}\left|q-\frac{\a'_i}{\a_i}\right|<q\e.$$
Since $\b\in V$ we have that $\a'\in V$ thus $\Rel_\a\subset\Rel_{\a'}$. This shows that
for any given $\a\in\R^n_{>0}$ and   $\e>0$ there always exists $q\in\N$ such that $\Rel(\a,q,\e)\neq \emptyset$.\\ Moreover if $\a\in\N^n$ then $\Rel(\a,q,\e)=\{q\a\}$ if $0<\e<\frac{1}{q\max\{\a_i\}}$. Indeed in this case the only $\a'\in\N^n$ satisfying $\displaystyle\max_i|q\a_i-\a'_i|<q\a_i\e$ is $\a'=q\a$.
\end{rmk}

\begin{lemma}\label{rel}
Let $\a$, $\a'\in\R_{>0}^n$. Then $\Rel_{\a}\subset \Rel_{\a'}$ if and only if every $(\a)$-homogeneous polynomial is a $(\a')$-homogeneous polynomial.\\
Moreover if $\a'\in\Rel(\a,q,\e)$ and if $a(\x)$ is a $(\a)$-homogeneous polynomial then 
$$q(1-\e)\nu_{\a}(a(\x))\leq \nu_{\a'}(a(\x))\leq q(1+\e)\nu_{\a}(a(\x)).$$
\end{lemma}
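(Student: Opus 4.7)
My plan is to handle the two halves of the lemma separately, reducing both to statements about exponent vectors of monomials.

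\textbf{First equivalence.} A polynomial $a(\x)=\sum_{\b}c_{\b}\x^{\b}$ is $(\a)$-homogeneous exactly when $\langle\a,\b-\b'\rangle=0$ for every pair $\b,\b'$ of exponent vectors in its support, i.e., when all such differences lie in $\Rel_{\a}$. So, for the forward direction, if $\Rel_{\a}\subset \Rel_{\a'}$ and $a(\x)$ is $(\a)$-homogeneous, then the differences of exponents in its support lie in $\Rel_{\a'}$, so $\langle\a',\b\rangle$ takes a common value on the support and $a(\x)$ is $(\a')$-homogeneous. For the converse, given $r\in\Rel_{\a}\subset\Q^n$, I clear denominators: choose $N\in\N$ with $Nr\in\Z^n$ and write $Nr=u-v$ with $u,v\in\N^n$ (positive and negative parts). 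Since $\langle\a,u\rangle=\langle\a,v\rangle$, the binomial $\x^{u}+\x^{v}$ is $(\a)$-homogeneous, hence by hypothesis $(\a')$-homogeneous, giving $\langle\a',u-v\rangle=0$, i.e., $\langle\a',r\rangle=0$, so $r\in\Rel_{\a'}$.

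\textbf{Second (quantitative) claim.} Assume $\a'\in\Rel(\a,q,\e)$. The condition $\Rel_{\a}\subset\Rel_{\a'}$ combined with the first part tells me that any $(\a)$-homogeneous polynomial $a(\x)$ is also $(\a')$-homogeneous, so $\nu_{\a'}(a(\x))$ is well-defined on the support as the common value $\langle\a',\b\rangle$ for any exponent $\b$ in the support, and similarly $\nu_{\a}(a(\x))=\langle\a,\b\rangle$. The inequality $\max_i|q-\a'_i/\a_i|<q\e$ means $q(1-\e)\a_i\leq \a'_i\leq q(1+\e)\a_i$ for every $i$. Since $\b\in\N^n$ has non-negative entries, multiplying componentwise and summing gives
\[
q(1-\e)\langle\a,\b\rangle\leq \langle\a',\b\rangle\leq q(1+\e)\langle\a,\b\rangle,
\]
which is exactly the desired double inequality.

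I do not anticipate a serious obstacle: the first part is a linear-algebra unpacking of the definition of weighted homogeneity, and the second part is an immediate componentwise estimate. The only mildly delicate point is the clearing-of-denominators step in the converse of the first part, where one must justify splitting $Nr\in\Z^n$ into non-negative parts so as to produce an honest polynomial with non-negative exponents; this is harmless and produces a binomial whose $(\a)$-homogeneity transfers the relation to $\Rel_{\a'}$.
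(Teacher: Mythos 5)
Your proof is correct and follows essentially the same route as the paper's: the forward direction unpacks the definition of $(\a)$-homogeneity as a condition on differences of exponents, the converse clears denominators and produces a binomial $\x^{u}+\x^{v}$ whose $(\a)$-homogeneity transfers the relation to $\a'$, and the quantitative estimate is the same componentwise bound $q(1-\e)\a_i\leq\a'_i\leq q(1+\e)\a_i$ summed against a non-negative exponent vector. The only cosmetic difference is that the paper writes $r=p-q$ with $p,q\in\Q_{>0}^n$ and then scales, whereas you scale first and then split into positive and negative integer parts; both yield the same binomial argument.
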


\begin{proof}
First let us assume that $\Rel_{\a}\subset \Rel_{\a'}$ and let $a(\x)$ be a $(\a)$-homogeneous polynomial. This means that for any $p$, $q\in\N^n$, if $\x^{p}$ and $\x^q$ are two nonzero  monomials of $a(\x)$, then $\sum_i\a_ip_i=\sum_i\a_iq_i$. In particular $p-q\in \Ker(\a^*)$, thus $\sum_i\a'_ip_i=\sum_i\a_i'q_i$. Thus $a(\x)$ is a $(\a')$-homogeneous.\\
On the other hand  let us assume that  every $(\a)$-homogeneous polynomial is a $(\a')$-homogeneous polynomial. Let $r\in \Rel_{\a}$. We can write $r=p-q$ where $p$, $q\in\Q_{>0}^n$. By multiplying $r$ by a positive integer $m$, we may assume that $mp$, $mq\in\N^n$. By assumption on $r$, the polynomial $\x^{mp}+\x^{mq}$ is $(\a)$-homogeneous. Thus it is $(\a')$-homogeneous. This means that $\sum_i\a'_imp_i=\sum_i\a'_imq_i$. Hence $\sum_i\a_i'(p_i-q_i)=0$ and $r=p-q\in \Rel_{\a'}$.\\
Now let $\x^{p}$ be a monomial. Then
$$\nu_{\a'}(\x^{p})=\sum_i\a'_ip_i.$$
But $q(1-\e)\a_i\leq\a'_i\leq q(1+\e)\a_i$ for any $1\leq i\leq n$. This proves both inequalities.

\end{proof}

\begin{ex}
Let $\a\in\N^n$ and $\a'\in\R_{>0}^n$. Then $\Rel_{\a}\subset\Rel_{\a'}$ if and only if there exists $\l\in\R$ such that $\a'=\l\a$. Indeed we have $\dim_\Q(\Rel_\a)=n-1$ hence either $\dim_\Q(\Rel_{\a'})=n$ and $\a'=0$, either $\dim_\Q(\Rel_{\a'})=n-1$ and there exists $\l\in\R^*$ such that $\a'=\l\a$.
\end{ex}

\begin{lemma}\label{approx}
Let $\a\in\R_{>0}^n$ and let $A\in\V_{\a}$. Let us write 
$$A=\sum_{i\in\La}\frac{a_i(\x)}{\d(\x)^{m(i)}}$$
where $\La$ is a finitely generated sub-semigroup of $\R_{\geq 0}$ and $i\lgm m(i)$ is bounded by an affine function.
Then there exists $\e_A>0$ such that for all $0<\e\leq\e_A$, for all $q\in\N$, for all $\a'\in\Rel(\a,q,\e)$, the element $\displaystyle \sum_{i\in\La}\frac{a_i(\x)}{\d(\x)^{m(i)}}$ is in the fraction field of $\V_{\a'}$.\\
Moreover if $A\in \V_{\a}$ is not invertible, i.e. $\nu_{\a}(A)>0$, then we may even choose $\e_A>0$ such that   for all $0\leq \e\leq\e_A$, for all $q\in\N$, for all $\a'\in\Rel(\a,q,\e)$, $\displaystyle \sum_{i\in\La}\frac{a_i(\x)}{\d(\x)^{m(i)}}\in\V_{\a'}$ and this element is not invertible in $\V_{\a'}$.\\
\end{lemma}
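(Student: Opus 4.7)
The strategy is to rewrite the given representation of $A$ directly in terms of $\nu_{\a'}$-valuations, using the two-sided estimate of Lemma \ref{rel}. Since $\a'\in \Rel(\a,q,\e)$, we have $\Rel_{\a}\subset \Rel_{\a'}$, so by Lemma \ref{rel} each $a_i(\x)$ and $\d(\x)$ is also $(\a')$-homogeneous, and
$$q(1-\e)\,\nu_{\a}(f)\leq \nu_{\a'}(f)\leq q(1+\e)\,\nu_{\a}(f)$$
for any $(\a)$-homogeneous polynomial $f$. Writing $d_{\a}:=\nu_{\a}(\d)$ and $j_i:=\nu_{\a'}\!\left(\frac{a_i}{\d^{m(i)}}\right)$, the identity $\nu_{\a}(a_i)=m(i)d_{\a}+i$ together with the bound $m(i)\leq ai+b$ gives
$$j_i\;\geq\; q(1-\e)(m(i)d_{\a}+i)-q(1+\e)m(i)d_{\a}\;\geq\; q\bigl[(1-\e)-2\e a d_{\a}\bigr]i-2q\e d_{\a} b.$$
Choosing $\e_A>0$ small enough that $c_\e:=(1-\e)-2\e a d_{\a}>0$ for all $\e\leq\e_A$, we see that $j_i$ is bounded below by an affine function of $i$ with positive slope, independent of the choice of $q$ and $\a'\in\Rel(\a,q,\e)$. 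In particular $i$, and hence $m(i)$, is bounded above by an affine function of $j_i$.

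Now I would reindex the sum by the values $j$ taken by $i\mapsto j_i$. For each such $j$ the fiber $I_j:=\{i\in\La\ /\ a_i\neq 0,\ j_i=j\}$ is finite (by the linear lower bound on $j_i$), and putting the terms of $I_j$ over the common denominator $\d^{M_j}$ with $M_j:=\max_{i\in I_j}m(i)$ one gets
$$A\;=\;\sum_{j}\frac{b_j}{\d^{M_j}},\qquad b_j\;:=\;\sum_{i\in I_j}a_i\,\d^{M_j-m(i)}.$$
A direct computation shows that each nonzero $b_j$ is $(\a')$-homogeneous with $\nu_{\a'}(b_j/\d^{M_j})=j$, and $M_j$ is bounded by an affine function of $j$ (inherited from the affine bound of $m(i)$ in $j_i$). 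Since $\a'\in\N^n_{>0}$, the valuation $\nu_{\a'}$ takes values in $\N$ on polynomials, so the set of non-negative $j$'s occurring lies in $\N$, a finitely generated sub-semigroup of $\R_{\geq 0}$.

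Finally, the affine lower bound $j_i\geq qc_\e i-2q\e d_{\a}b$ forces $j_i<0$ only for the finitely many $i\in\La$ with $i$ below a threshold going to $0$ with $\e$. These bad terms can be absorbed by multiplying $A$ by a sufficiently large power of $\d$, which shows $\d^NA\in \V_{\a',\d}$ for some $N\geq 0$, hence $A\in\Frac(\V_{\a'})$. For the non-invertible case we have $\nu_{\a}(A)=i_0>0$, so all indices satisfy $i\geq i_0$. Shrinking $\e_A$ further so that $qc_\e i_0>2q\e d_{\a}b$ (possible since the left side tends to $qi_0>0$ and the right side to $0$ as $\e\to 0^+$) ensures $j_i>0$ for every $i$ in the support, and $A\in \V_{\a',\d}\subset \V_{\a'}$ with $\nu_{\a'}(A)\geq \min_i j_i>0$, so $A$ is not invertible in $\V_{\a'}$.

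The only delicate point is the final step: making sure that after reindexing by $j_i$ the denominator exponents $M_j$ still satisfy an affine bound in $j$ and that the negative-$j$ terms are finite in number, so that the representation really lies in $\V_{\a',\d}$ (up to a power of $\d$). Both facts follow from the single key estimate in the first paragraph. The vacuous case $\e=0$ in the second assertion requires no separate argument since $\Rel(\a,q,0)$ is empty.
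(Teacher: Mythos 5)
Your proof is correct and follows the same route as the paper's: the core of both arguments is the two-sided estimate from Lemma \ref{rel} applied to each $(\a)$-homogeneous piece, giving $\nu_{\a'}(a_i/\d^{m(i)}) \ge q(1-\e)i - 2q\e m(i)\nu_{\a}(\d)$, and then absorbing the affine bound $m(i)\le ai+b$ to get an affine lower bound in $i$ with slope $q\bigl(1-\e(1+2a\nu_{\a}(\d))\bigr)>0$ for $\e$ small. Where you go beyond the paper is that the paper stops at this estimate and simply asserts membership in $\Frac(\V_{\a'})$, whereas you carry out the bookkeeping that the assertion silently requires: reindexing the sum by the new valuations $j_i=\nu_{\a'}(a_i/\d^{m(i)})$, checking that each fiber is finite, that the common-denominator sums $b_j$ stay $(\a')$-homogeneous, that the exponents $M_j$ inherit an affine bound in $j$, and that only finitely many $j_i$ can be negative so a single power $\d^N$ clears them. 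These steps are genuinely needed to verify the defining conditions of $\V_{\a',\d}$, and your write-up is the more complete of the two. The non-invertible case matches the paper's condition $i_0>\e\bigl((1+2a\nu_{\a}(\d))i_0+2b\nu_{\a}(\d)\bigr)$ after rearranging, and your observation that the $\e=0$ instance is vacuous (since $\Rel(\a,q,0)=\emptyset$) is a correct reading of Definition of $\Rel(\a,q,\e)$ that the paper does not comment on.
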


\begin{proof}
Let $a$, $b\geq 0$ such that $m(i)\leq ai+b$ for any $i\in\La$. By Lemma \ref{rel} we have
$$ \nu_{\a'}\left(\frac{a_i(\x)}{\d(\x)^{m(i)}}\right)=\nu_{\a'}(a_i(\x))-m(i)\nu_{\a'}(\d(\x))\geq q(1-\e)\nu_\a(a_i(\x))-q(1+\e)m(i)\nu_\a(\d(\x))$$
$$=q(1-\e)i-2q\e m(i)\nu_{\a}(\d(\x)).$$
Let $\e_A$ be a positive real number such that $\e_A<\frac{1}{1+2a\nu_{\a}(\d(\x))}$ and set 
$$\eta:=1-\e_A(1+2a\nu_{\a}(\d(\x)))>0.$$ Then for any $0\leq \e\leq \e_A$, any $q\in\N$ and any $\a'\in\Rel(\a,q,\e)$ we have
$$  \nu_{\a'}\left(\frac{a_i(\x)}{\d(\x)^{m(i)}}\right)\geq  \eta q i-2qb\e\nu_{\a}(\d(\x))\ \ \forall i\in\La.$$
This proves that $\displaystyle \sum_{i\in\La}\frac{a_i(\x)}{\d(\x)^{m(i)}}$ is in the fraction field of $\V_{\a'}$.\\
If $\nu_{\a}(A)>0$, then $a_0(\x)=0$. Let $i_0:=\nu_{\a}(A)$.  Let $\e\geq 0$ be such that  $\e\leq \e_A$ and
$$i_0>\e\left((1+2a\nu_{\a}(\d(\x)))i_0+2b\nu_{\a}(\d(\x))\right).$$
In this case $\nu_{\a'}\left(\frac{a_i(\x)}{\d(\x)^{m(i)}}\right)>0$ for any $i\in\La$, $i\geq i_0$. This proves the second assertion.
\end{proof}



\begin{definition}\label{phi_morp}
Let $\a\in\R_{>0}^n$ and $\a'\in \Rel_\a\cap\N^n$. Then every  $(\a)$-homogeneous polynomial $p(\x)$ is $(\a')$-homogenous by Lemma \ref{rel}. In particular if $\d(\x)$ is an other $(\a)$-homogeneous polynomial and $s\in\N$ then
$$p(x_1\d(\x)^{\a'_1s},\ldots,x_n\d(\x)^{\a'_ns})=p(\x)\d(\x)^{s\nu_{\a'}(p(\x))}$$
is also a $(\a)$-homogeneous polynomial.\\
If $A=\sum_{i\in\La}\frac{a_i(\x)}{\d(\x)^{m(i)}}\in\V_{\a,\d}$ and $\a'\in \Rel_\a\cap\N^n$, we will set 
$$\phi_{\a',s}(A):=\sum_{i\in\La}\frac{a_i}{\d_i^{m(i)}}(x_1\d(\x)^{\a_1's}, \ldots ,x_n\d(\x)^{\a_n's}).$$
Then $\phi_{\a,s} : \V_{\a,\d}\lgw \V_{\a,\d}$ is a ring morphism.
We also define 
$$\psi_{\a',s}(A):=\d^s\phi_{\a',s}(A) \ \ \forall A\in \V_{\a,\d}.$$
\end{definition}

\begin{lemma}\label{lemma2}
Let $\a\in\R_{>0}^n$ and $A\in\V_{\a,\d}$. For any $\e>0$ small enough there exists $s(\e)\in\N$ such that for every $q\in\N$,  $\a'\in\Rel(\a,q,\e)$ and $s\geq s(\e)$:
$$\psi_{\a',s}(A)\in \k\lb \x\rb.$$
If $\nu_{\a}(A)>0$ we may even assume that $\phi_{\a',s}(A)\in\k\lb\x\rb$ for every $q\in\N$,  $\a'\in\Rel(\a,q,\e)$ and $s\geq s(\e)$.
\end{lemma}

\begin{proof}
 Let $a(\x)$, $\d(\x)\in\k[\x]$ be $(\a)$-homogeneous polynomials and let $m\in \N$ such that $\nu_{\a}\left(\frac{a(\x)}{\d(\x)^m}\right)=i$. Let $s\in\N$ and $a'\in\N^n$ such that $\Rel_\a\subset\Rel_{\a'}$. By Lemma \ref{rel} we have
\begin{equation}\label{s}\frac{a(x_1\d(\x)^{\a_1's}, \ldots ,x_n\d(\x)^{\a_n's})}{\d(x_1\d(\x)^{\a_1's}, \ldots ,x_n\d(\x)^{\a_n's})^m}=a(\x)\d(\x)^{s[\nu_{\a'}(a(\x))-\nu_{\a'}(\d(\x))m]-m}.\end{equation}
Now let $A=\displaystyle\sum_{i\in\La}\frac{a_i}{\d^{m(i)}}\in\V_{\a,\d}$ with $m(i)\leq ai+b$ for any $i\in\La$, $\La$ being a finitely generated sub-semigroup of $\R_{\geq 0}$.  Set $d_{\a}:=\nu_{\a}(\d)$.
Thus $\nu_{\a}(a_i)= d_{\a} m(i)+i$ for any $i\in\La$. Hence by Lemma \ref{rel} we have that
$$ \nu_{\a'}(a_i)-m(i)\nu_{\a'}(\d)\geq q(1-\e)\left[d_{\a} m(i)+i\right]-q(1+\e)m(i)d_{\a}$$
\begin{equation}\label{ineq_lemma2} \nu_{\a'}(a_i)-m(i)\nu_{\a'}(\d)\geq q(1-\e)i-2q\e d_{\a}  m(i).\end{equation}
Since $(1-\e)i-2\e d_{\a} m(i)\geq (1-\e)i-2\e d_{\a}(ai+b)$, for every $\e$ small enough there exists $a_{\e}>0$ such that 
$$\nu_{\a'}(a_i)-m(i)\nu_{\a'}(\d)\geq q a_{\e}i$$ for all $q\in\N$, all $\a'\in\Rel(\a,q,\e)$ and all $i\in\La$, $i>0$.  Thus for $s\in\N$  and $i\in\La\backslash\{0\}$ we have that
$$s(\nu_{\a'}(a_i)-m(i)\nu_{\a'}(\d))-m(i)\geq sqa_\e i-m(i)\geq (sqa_\e -a)i-b\geq \left(sqa_\e -a-\frac{b}{\min \La\backslash\{0\}}\right)i.$$
In particular if $s\geq \left(a+\frac{b}{\min  \La\backslash\{0\}}\right)/a_\e$ then 
$$s(\nu_{\a'}(a_i)-m(i)\nu_{\a'}(\d))-m(i)\geq 0$$ and $\frac{a_i}{\d_i^{m(i)}}(x_1\d(\x)^{\a_1's}, \ldots ,x_n\d(\x)^{\a_n's})\in \k\lb \x\rb$ for all $i>0$. Thus if $\nu_\a(A)>0$, $a_0=0$ and $\phi_{\a',s}(A)\in\k\lb\x\rb$ for $s\geq \left(a+\frac{b}{\min  \La\backslash\{0\}}\right)/a_\e$.\\
In the general case where $a_0\neq 0$, if we assume moreover that $s\geq b$, we have that
$$\d(x)^s\frac{a_0}{\d_0^{m(0)}}(x_1\d(\x)^{\a_1's}, \ldots ,x_n\d(\x)^{\a_n's})\in\k\lb \x\rb.$$This proves the lemma.

\end{proof}

When the components of $\a$ are $\Q$-linearly independent, by using Lemma \ref{lemma2}, Theorem \ref{main2}  gives the following generalization of the main result of \cite{MD}:

 \begin{theorem} \cite{MD}\label{MD}
 Let $\k$ be a field of characteristic zero and $\a\in\R_{>0}^n$ such that $\dim_{\Q}(\Q\a_1+\cdots+\Q\a_n)=n$. Then 
 $$\K^{\alg}_{\nu_{a}}\subset\bigcup_{\s}\k\left(\!\left(x^{\b},\b\in\s\cap\Z^n\right)\!\right)$$
 where the first union runs over all rational strongly convex cones $\s$ such that  $\langle \a,\t\rangle >0$ for any $\t\in\s$, $\t\neq 0$. 
 Moreover we have:
 $$\ovl{\K}_{\nu_{\a}}^{\alg}\subset\bigcup_{\s}\bigcup_{\k'}\bigcup_{q\in\N}\k'\left(\!\left(x^{\b},\b\in\s\cap\frac{1}{q}\Z^n\right)\!\right)$$
 where the first union runs over all rational strongly convex cones $\s$ such that  $\langle \a,\t\rangle >0$ for any $\t\in\s$, $\t\neq 0$,  and the second union runs over all the fields $\k'$ finite over $\k$.

 \end{theorem}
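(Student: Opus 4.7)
The strategy is to prove the second (stronger) inclusion first by running an arbitrary element of $\ovl{\K}_{\nu_\a}$ through Theorem~\ref{main2}, and then to derive the first inclusion by restriction. First I would take $z\in\ovl{\K}_{\nu_\a}$; after multiplying by a suitable nonzero element of $\F_n$ and rescaling the variable (exactly as in the proof of Corollary~\ref{alg_clo}), one reduces to the case where $z$ is a root of a monic polynomial in $\V_\a[\langle\g_1,\dots,\g_N\rangle][Z]$, and Theorem~\ref{main2} then places $z$ in $\V_\a[\langle\g_1',\dots,\g_N'\rangle]$ for integral homogeneous elements $\g_1',\dots,\g_N'$ with respect to $\nu_\a$.

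The hypothesis $\dim_\Q(\Q\a_1+\cdots+\Q\a_n)=n$ is then used twice. By Example~\ref{homo_mono'}, each $\g_i'$ has the explicit form $c_i\x^{\b_i'}$ with $c_i$ algebraic over $\k$ and $\b_i'\in\Q_{\geq 0}^n$; let $\k'$ be a finite extension of $\k$ containing all the $c_i$'s and let $q\in\N$ be a common denominator of the $\b_i'$'s. By the same hypothesis, any $(\a)$-homogeneous polynomial is forced to be a monomial (two distinct exponents $p$ and $p'$ in $\N^n$ with the same $(\a)$-degree would yield $\langle\a,p-p'\rangle=0$ with $p-p'\in\Z^n\setminus\{0\}$, contradicting the $\Q$-linear independence of $\a_1,\dots,\a_n$), so elements of $\V_{\a,\d}$ are countable sums $\sum c_\b\x^\b$ with $\b\in\Z^n$ and $c_\b\in\k$. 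Combining these two descriptions gives a representation $z=\sum c_\b\x^\b$ with $c_\b\in\k'$ and $\b\in\frac{1}{q}\Z^n$.

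It remains to show that the support of $z$ is contained in $\s\cap\frac{1}{q}\Z^n$ for some rational strictly convex cone $\s$ with $\langle\a,\t\rangle>0$ on $\s\setminus\{0\}$. Writing $\d=\x^{\udl{d}}$, the typical exponent appearing in the expansion of $A_{\udl{k}}\g'^{\udl{k}}$ takes the form $\b=p-m\udl{d}+\sum k_i\b_i'$ with $p\in\N^n$ and $m\in\N$ satisfying $m\leq a\langle\a,\b\rangle+b$; rearranging yields $m\leq a''\langle\a,p\rangle+b''$ for some constant $a''<1/\langle\a,\udl{d}\rangle$. The asymptotic cone $C=\{p-t\udl{d}:p\in\R^n_{\geq 0},\,0\leq t\leq a''\langle\a,p\rangle\}$ is strictly convex (if $\b,-\b\in C$ then the parameters $t,t'$ must both vanish because $a''\langle\a,\udl{d}\rangle<1$) and satisfies $\langle\a,\cdot\rangle>0$ off the origin. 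Approximating $\a$ by a rational vector $\a'$ via Lemma~\ref{approx} and replacing $a''$ by a slightly larger rational $a'''<1/\langle\a',\udl{d}\rangle$ produces a rational strictly convex cone $C'\supset C$; enlarging $C'$ once more to absorb the finitely many exceptional exponents (coming from the affine constant $b''$ and the finite sum over $\udl{k}$) gives the desired $\s$. The first inclusion then follows by the same analysis applied to $z\in\K^{\alg}_{\nu_\a}$: Theorem~\ref{main2} and the above produce a representation $z=\sum c_\b\x^\b$ with $c_\b\in\k'$ and $\b\in\s\cap\frac{1}{q}\Z^n$, but the unique series representation of $z$ as an element of $\wdh{\K}_{\nu_\a}$ has $\k$-coefficients and $\Z^n$-exponents, so $c_\b$ vanishes for $\b\notin\Z^n$ and lies in $\k$ for $\b\in\Z^n$.

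The main obstacle is the cone construction in the third step: because the components of $\a$ are $\Q$-linearly independent, the natural cone $C$ bounding the support is defined by irrational inequalities, and producing a rational strictly convex cone containing $C$ while preserving the positivity $\langle\a,\cdot\rangle>0$ is the technical heart of the argument, requiring the rational approximation technology of Section~\ref{section_appro}.
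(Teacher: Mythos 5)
Your proposal is correct in substance but takes a genuinely different route through the cone construction, which is the technical heart of the argument. The paper instead deploys Lemma~\ref{lemma2}: it applies the monomial substitution $x_j\mapsto x_j\x^{s\a'_j\b}$ (with $\d=\x^{\b}$, $\a'\in\Rel(\a,q,\e)$) to an element $A\in\V_{\a,\x^{\b}}$ and observes that, up to the factor $\d^s$, the image lands in $\k\lb\x\rb$. On exponents this substitution is the integer matrix $M_1$ written out in the paper, and the proof reduces to checking $\det(M_1)\neq 0$ by a short eigenvalue argument; then $\s:=M_1^{-1}(\R_{\geq 0}^n)$ is automatically a \emph{rational} strictly convex cone (entries of $M_1$ are integers, $M_1$ invertible), and $\langle\a,\cdot\rangle>0$ on $\s\setminus\{0\}$ falls out of $A\in\V_{\a,\d}$. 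Your route instead produces the cone directly from the denominator bound $m(i)\leq ai+b$; the ``asymptotic cone'' $C$ you write down is indeed strictly convex for $a''\langle\a,\udl{d}\rangle<1$, but it is irrational, so you must pass to a nearby rational $\a'$ and a nearby rational slope $a'''$, and then absorb the finitely many exponents coming from the affine constant $b''$ by further enlarging the cone. Each of those steps can be made to work (you should check explicitly that (i) the finitely many exceptional exponents still satisfy $\langle\a,\cdot\rangle>0$ when nonzero, which follows from the $\Q$-independence of $\a_1,\dots,\a_n$ forcing $\langle\a,\g\rangle=0$, $\g\in\Q^n$, to imply $\g=0$; (ii) the choice $\a'''\in\bigl(\tfrac{a''}{q(1-\e)},\tfrac{1}{q(1+\e)\langle\a,\udl{d}\rangle}\bigr)$ is nonempty for $\e$ small and simultaneously guarantees $C\subset C'$ and $\langle\a,\cdot\rangle>0$ on $C'\setminus\{0\}$), but these checks constitute exactly the bookkeeping the paper's monomial-map trick bypasses. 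The paper also derives the first inclusion directly from Corollary~\ref{K_alg}, which is a bit more economical than your ``restrict and use uniqueness of the series expansion'' argument, though yours is also valid. Net: same high-level skeleton (Theorem~\ref{main2} plus Example~\ref{homo_mono'} to reduce to monomials), but a hand-built cone plus rational approximation versus the matrix $M_1^{-1}$ --- the latter is slicker, the former arguably more transparent about \emph{why} the support lives in a strictly convex cone.
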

 
 \begin{proof}
 In order to prove the first inclusion, by Corollary \ref{K_alg} it is enough to prove that $\KK_{\a}\subset \bigcup_{\s}\k\(x^{\b},\b\in\s\cap\Z^n\)$ or $\V_{\a}\subset \bigcup_{\s}\k\left\lb x^{\b},\b\in\s\cap\Z^n\right\rb$.\\
 Since the $\a_i$ are $\Q$-linearly independent the only $(\a)$-homogeneous polynomials  are the monomials. Let $\o\in\N^n$ and $A$ be an element of $\V_{\a,\x^{\o}}$ : $A=\sum_{i\in\La}\frac{\x^{p(i)}}{\x^{m(i)\o}}$ where $\La$ is a finitely generated sub-semigroup of $\R_{\geq 0}$. We have to prove that $A\in \bigcup_{\s}\k\left\lb x^{\b},\b\in\s\cap\Z^n\right\rb$. Since  $x_1A\in \bigcup_{\s}\k\left\lb x^{\b},\b\in\s\cap\Z^n\right\rb$ implies that $A\in \bigcup_{\s}\k\left\lb x^{\b},\b\in\s\cap\Z^n\right\rb$ we may assume that $\nu_{\a}(A)>0$.\\
 By Lemma \ref{lemma2}, we see that the monomial map $\phi_{\a',s}$ defined by $x_j\lgm x_j\x^{s\a'_j\o}$ maps $A$ onto an element of $\k\lb \x\rb$ for $\a'\in\Rel(\a,q,\e)$, $\e>0$ small enough and $s$ large enough. Such a monomial map is induced by a linear map on the set of monomials and its matrix is
 $$M_1:=\left(\begin{array}{ccccc}
 1+s\o_1\a'_1 & s\o_1\a_2' & s\o_1\a_3' & \cdots & s\o_1\a_n'\\
 s\o_2\a'_1 & 1+s\o_2\a_2' & s\o_2\a_3' & \cdots & s\o_2\a_n'\\
 s\o_3\a_1' & s\o_3\a_2' & 1+s\o_3\a_3' & \cdots & s\o_3\a_n'\\
 \vdots & \vdots & \vdots & \ddots & \vdots\\
 s\o_n\a'_1 & s\o_n\a_2' & s\o_n\a_{3}' & \cdots & 1+s\o_n\a_n'
 \end{array}\right)   $$
 Set
 $$M_2:=\left(\begin{array}{ccccc}
 -s\o_1\a_1' & -s\o_1\a_2' & -s\o_1\a_3' & \cdots & -s\o_1\a_n'\\
 -s\o_2\a_1' & -s\o_2\a_2' & -s\o_2\a_3' & \cdots & -s\o_2\a_n'\\
 -s\o_3\a_1' & -s\o_3\a_2' & -s\o_3\a_3' & \cdots & -s\o_3\a_n'\\
 \vdots & \vdots & \vdots & \ddots & \vdots\\
 -s\o_n\a_1' & -s\o_n\a_2' & -s\o_n\a_{3}' & \cdots & -s\o_n\a_n'
 \end{array}\right)   $$ 
 and let $\chi(t)$ be the characteristic polynomial of $M_2$. Then $\chi(1)=\det(M_1)$. If $\chi(1)=0$, then the vector $\o:=(\o_1, \ldots ,\o_n)$ is an eigenvector of $M_2$ with eigenvalue $1$ since the image of $M_2$ is generated by $\o$. Thus $-s(\o_1\a_1'+\cdots+\o_n\a_n')=1$ which is not possible since $\o_i\geq 0$ and $\a'_i>0$ for any $i$. Thus $\det(M_1)\neq 0$ and $M_1$ is invertible. In particular $\s:=M_1^{-1}(\R_{\geq 0}^n)$ is a rational strongly convex cone. Moreover, since $A\in\V_{\a,\d}$, we have $\langle \a,\t\rangle >0$ for any $\t\in\s$, $\t\neq 0$. 
 Hence $A\in \k\left\lb x^{\b},\b\in\s\cap\Z^n\right\rb$.\\
 \\
  By Example \ref{homo_mono'} integral  homogeneous elements with respect to $\nu_{\a}$ are either finite over $\k$, either of the form $cx_1^{\frac{n_1}{q}}\cdots x_n^{\frac{n_n}{q}}$ for some integers $n_1$,  \ldots , $n_n\in\Z_{\geq 0}$, $q\in\N$ such that $\displaystyle\sum_{j=1}^n\a_jn_j>0$. Using Theorem \ref{main2} and since $\ovl{\K}_{\nu_{\a}}=\underset{\underset{\g_1, \ldots , \g_s}{\lgw}}{\lim}\,\K_{\nu_{\a}}^{\alg}[\g_1, \ldots , \g_s]$ where the $\g_i$ are homogeneous with respect to $\nu_{\a}$,  we have the second inclusion by replacing $\s$ by the rational strongly convex cone generated by $\s$ and the $n$-uples $(n_1, \ldots ,n_n)$ corresponding to the homogeneous elements $\g_1$,  \ldots , $\g_s$.
 
 \end{proof}
 
 \begin{rmk}\label{rmkII}
In fact the proof shows that the field $\ovl{\KK}_{\a}$, as soon as
$$\dim_{\Q}(\Q\a_1+\cdots+\Q\a_n)=n,$$ is the field of Puiseux power series with support in rational strongly convex cones $\s$ such that $\langle \a,\g\rangle\geq 0$ for all $\g\in \s$. Thus $\ovl{\KK}_{\a}$ is the field of $\a$\emph{-positive Puiseux series} according to \cite{A-I}.\\
 \end{rmk}

\begin{lemma}\label{lemma_psi1}
Let $\a\in \R_{>0}^n$ and $\a'\in \Rel_\a\cap\N^n$. Then
$$\psi_{\a',t}\circ\psi_{\a',s}=\psi_{\a',\nu_{\a'}(\d)st+s+t} \ \ \ \forall s,t\in\Z_{\geq 0}.$$
\end{lemma}

\begin{proof}
Let $A=\displaystyle\sum_{i\in\La}\frac{a_i}{\d^{m(i)}}\in \V_{\a,\d}$. Then we have (see Equation (\ref{s}) in the proof of Lemma \ref{lemma2}):
$$\d^s\phi_{\a',s}(A)= \sum_{i\in\La}   a_i(\x)\d(\x)^{s(1+\nu_{\a'}(a_i(\x))-\nu_{\a'}(\d(\x))m(i))-m(i)}.$$
If  $t\in\Z_{\geq 0}$ and $l\in\Z_{\geq 0}$, and $\a(\x)$ and $\d(x)$ are $(\a')$-homogeneous,  we have that
$$\phi_{\a',t}(a(\x)\d(\x)^l)=a(\x)\d(\x)^{t\nu_{\a'}(a)+l(t\nu_{\a'}(\d)+1)}.$$
Thus by denoting
$$p_{\a'}(i):=\nu_{\a'}(a_i(\x))-\nu_{\a'}(\d(\x))m(i) \text{ and } d_{\a'}:=\nu_{\a'}(\d(\x))$$
we obtain
\begin{equation*}\begin{split}\d^t\phi_{\a',t}(\d^s\phi_{\a',s}(A))=&\\
=\d^t\sum_{i\in\La}   a_i&(x_1\d^{\a_1't}, \ldots ,x_n\d^{\a_n't})\times\\
\d(x_1&\d^{\a_1't}, \ldots ,x_n\d^{\a_n't})^{s(1+\nu_{\a'}(a_i(\x))-\nu_{\a'}(\d)m(i))-m(i)}=\\
=\sum_{i\in\La}&a_i\d^{t+t\nu_{\a'}(a_i)+(s+sp_{\a'}(i)-m(i))(td_{\a'}+1)}\\
=\sum_{i\in\La}&a_i\d^{d_{\a'}tsp_{\a'}(i)+tp_{\a'}(i)+sp_{\a'}(i)-m(i)+d_{\a'}ts+t+s}.\end{split}\end{equation*}

In particular we have
\begin{equation}\label{eq1}\d^t\phi_{\a',t}(\d^s\phi_{\a',s}(A))=\d^{d_{\a'}st+s+t}\phi_{\a',d_{\a'}st+s+t}(A)\ \ \forall t\in\Z_{\geq 0}.\end{equation}

\end{proof}

\begin{lemma}\label{lemma_psi2}
Let $\a\in \R_{>0}^n$ and $\a'\in \Rel_\a\cap\N^n$. 
For all $s_1$, $s_2\in\N$ there exist $t_1$, $t_2\in\N$ such that
$$\psi_{\a',t_1}\circ\psi_{\a',s_1}=\psi_{\a',t_2}\circ\psi_{\a',s_2}.$$

\end{lemma}

\begin{proof}
Let $d$ denote $\nu_{\a'}(\d)$. 
Let $p$ be a prime number  and $k\in\N$ such that $p^k$ divides $ds_1+1$ and $ds_2+1$. Then $\gcd(p,d)=1$ and $p^k$ divides $ds_1-ds_2$. Thus $p^k$ divides $s_1-s_2$. This proves that $\gcd(ds_1+1,ds_2+1)$ divides $s_1-s_2$. Thus there exist $t_1\in\Z$ and $t_2\in\Z$ such that $(ds_1+1)t_1-(ds_2+1)t_2=s_2-s_1$. If $t_1t_2< 0$, let say $t_1> 0$ and $t_2< 0$, then $(ds_1+1)t_1-(ds_2+1)t_2>s_1+s_2>|s_1-s_2|$ which is not possible. Thus we have that $t_1t_2\geq 0$. If $t_1\leq 0$ and $t_2\leq 0$, we can replace $t_1$ (resp. $t_2$) by $t_1+k(ds_2+1)$ (resp. by $t_2+k(ds_1+1)$) for some positive integer $k$ large enough. This will allows to assume that $t_1$ and $t_2$ are positive integers.
Hence $$\exists t_1,t_2\in\N, \ ds_1t_1+s_1+t_1=ds_2t_2+s_2+t_2.$$
This proves the lemma by Lemma \ref{lemma_psi1}.

\end{proof}

\begin{definition}\label{ABC}
Now we consider a subring $R$ of $\k\lb \x\rb  $ that is an excellent Henselian local ring with maximal ideal $\m_R$ and satisfying the following properties:
\begin{enumerate}
\item[(A)] $\k[x_1, \ldots ,x_n]_{(\x)}\subset R$,
\item[(B)] $\m_R=(\x)R$ and $\wdh{R}=\k\lb\x\rb$,
\item[(C)] if $p(\x)\in\k[\x]$ is $(\a)$-homogeneous for some $\a\in\R_{>0}^n$ then
$$f(\x)\in R\ \Longrightarrow \ f(p(\x)x_1, \ldots ,p(\x)x_n)\in R.$$
\end{enumerate}
\end{definition}

\begin{rmk}
If $\k$ is a field, the ring of algebraic power series $\k\langle\x\rangle$ is an excellent Henselian local ring satisfying Properties (A), (B) and (C). If $\k$ is a valued field, then the field of convergent power series $\k\{\x\}$ does also. \\
For a field $\k$, the ring $\k\lb x_1, \ldots ,x_r\rb\langle x_{r+1}, \ldots ,x_n\rangle$ for formal power series algebraic over $\k\lb x_1, \ldots ,x_r\rb[x_{r+1}, \ldots ,x_n]$ is also an excellent  Henselian local ring satisfying Properties (A), (B) and (C).
\end{rmk}

%
%

\begin{definition}\label{V^R}
Let $\a\in\R_{>0}^n$ and let $\d$ be a $(\a)$-homogeneous polynomial. Let $R$ be a ring satisfying Definition \ref{ABC}. We set
\begin{equation*}\begin{split}\V^R_{\a,\d}:=\left\{A\in \wdh{V}_{\nu_{\a}} \ / \ \exists \La\right. \text{ a finitely generated}&\text{ sub-semigroup of } \R_{\geq 0},\\
 \forall i\in \La \ \exists a_i\in\k[\x] \text{ $(\a)$-homogeneous}, \exists a,b\geq &\, 0\ \forall i\in \La \ \exists m(i)\in\N \text{ s.t. }  \\
 m(i)\leq ai+b,\ \nu_{\a}\left(\frac{a_i}{\d^{m(i)}}\right)&=i, \  A=\sum_{i\in\La}\frac{a_i}{\d^{m(i)}} \\
 \text{ and } \ \exists \e>0\ \forall q\in\N\ \forall\a' \in \Rel(\a,q,\e) \ &\left.\exists s\in\N \ \text{ such that } \psi_{\a',s}(A)\in R\right\}.
\end{split}
\end{equation*}
Then $\V_{\a}^R$ is the union of the sets $\V^R_{\a,\d}$ when $\d$ runs over all the $(\a)$-homogeneous polynomials.
\end{definition}

\begin{lemma}\label{denominators}
The sets $\V^R_{\a,\d}$ and $\V^R_\a$ are subrings of $\V_{\a,\d}$ and $\V_\a$. 
\end{lemma}

\begin{proof}
Let  $\displaystyle A=\sum_{i\in\La}\frac{a_i}{\d^{m(i)}}$ and $\displaystyle B= \sum_{i\in\La}\frac{b_i}{\d^{n(i)}}\in \V^R_{\a,\d}$. Then there exists $\e>0$ such that $\forall q\in\N$, $\forall \a'\in\Rel(\a,q,\e)$, there exist $s_1,s_2\in\N$ such that
$$\psi_{\a',s_1}(A), \psi_{\a',s_2}(B)\in R.$$
Then by Lemma \ref{lemma_psi1}, Lemma \ref{lemma_psi2} and condition (C) of Definition \ref{ABC}  there exists $s\in\N$ such that 
$$\psi_{\a',s}(A), \psi_{\a',s}(B)\in R.$$
This shows that  $\psi_{\a',s}(A+B)=\psi_{\a',s}(A)+\psi_{\a',s}(B)\in R$ and $A+B\in \V_{\a,\d}^R$.

Now by Lemma \ref{lemma2} we can assume that there exists $s(\e)\in\N$ such that $\psi_{\a',s}(AB)\in\k\lb\x\rb$ for all $s>s(\e)$, for all $q\in\N$ and all $\a'\in\Rel(\a,q,\e)$. On the other hand since $\psi_{\a',s}(A), \psi_{\a',s}(B)\in R$ then $\psi_{\a',\nu_{\a'}(\d)st+s+t}(A)$, $\psi_{\a',\nu_{\a'}(\d)st+s+t}(B)\in R$ for all $t\in\N$ by Lemma \ref{lemma_psi1} and Condition (C) of Definition \ref{ABC}. Thus there exists $s\in\N$ such that
$$\psi_{\a',s}(A), \psi_{\a',s}(B)\in R\text{ and } \psi_{\a',s}(AB)\in\k\lb\x\rb.$$
But  we have that
$$\psi_{\a',s}(A)\psi_{\a',s}(B)=\d^s\psi_{\a',s}(AB)\in R.$$
Hence by Artin Approximation Theorem  (cf. \cite{Po}, \cite{Sp}) $\psi_{\a',s}(AB)\in R.$

Thus  $AB\in\V_{\a,\d}^R$. This proves that  $\V_{\a,\d}^R$ is a ring. \\
Since $\V_\a^R$ is the direct limit of the $\V_{\a,\d}^R$ it is also a ring.
\end{proof}


\begin{ex}\label{ex_conv}
If $\a\in\N^n$  and $R=\C\{\x\}$ is the ring of convergent power series over $\C$, we claim that
\begin{equation*}\begin{split}\V^{\C\{\x\}}_{\a,\d}=\left\{\sum_{i\in\Z_{\geq 0}}\frac{a_i}{\d^{a(i+1)}}\right.\ / \  \forall i\ \ a_i\in \C[\x]  \text{ is $(\a)$-homogeneous},\\
 \nu_{\a}\left(\frac{a_i}{\d^{a(i+1)}}\right)= i,\   a\in\Z_{\geq 0}\\
  \text{ and }\ \exists C, r>0 \text{ such that }  \ |a_i(z)|\leq Cr^i\|z\|^{\nu_{\a}(a_i)}_{\a}\  \  \forall z\in\C^n\big\}
\end{split}\end{equation*}

where $\displaystyle\|z\|_{\a}:=\max_{j=1, \ldots ,n}\left|z_j^{\frac{1}{\a_j}}\right|$ for any $z\in\C^n$.\\
\\
First of all every element $A$ of $\V_{\a,\d}$ is of the form 
$$A=\displaystyle \sum_{i\in\Z_{\geq 0}}\frac{a_i}{\d^{m(i)}}$$
 where $\nu_{\a}\left(\frac{a_i}{\d^{m(i)}}\right)=i$ and $m(i)\leq ai+b$ for some $a$, $b\in\Z_{\geq 0}$. By multiplying the numerator and the denominator of $\frac{a_i}{\d^{m(i)}}$ by $\d^{ai+b-m(i)}$ and replacing $a_i$ by $a_i\d^{ai+b-m(i)}$, we may assume that $m(i)=ai+b$. If $a>b$, we may replace $\frac{a_i}{\d^{ai+b}}$ by $\frac{a_i\d^{a-b}}{\d^{ai+a}}$, if $a<b$ we may replace $\frac{a_i}{\d^{ai+b}}$ by $\frac{a_i\d^{(b-a)i}}{\d^{bi+b}}$. Thus any element of $\V_{\a,\d}$ is of the form $\displaystyle\sum_{i\in\Z_{\geq 0}}\frac{a_i}{\d^{a(i+1)}}$ where $\nu_{\a}\left(\frac{a_i}{\d^{a(i+1)}}\right)=i$ for all $i\in\Z_{\geq 0}$. In this case $\nu_{\a}(a_i)=(a\nu_{\a}(\d)+1)i+a\nu_{\a}(\d)$ for any $i\in\N$.\\
By  Remark \ref{rmk_Rel} $\Rel(\a,q,\e)=\{q\a\}$  for $\e>0$ small enough since $\a\in\N^n$. Then we have (with $s=a$ in Lemma \ref{lemma2}):
$$f(\x):=\psi_{\a,a}(A)=\d(\x)^a\sum_{i\in\Z_{\geq 0}}\frac{a_i}{\d^{a(i+1)}}(x_1\d(\x)^{\a_1a}, \ldots ,x_n\d(\x)^{\a_na})=\sum_{i\in\Z_{\geq 0}}a_i(\x)
$$
and $f(\x)\in\C\lb \x\rb$. Moreover we have for every $q\in\N$
$$f_q(\x):=\psi_{q\a,a}(A)=\d(\x)^a\sum_{i\in\Z_{\geq 0}}\frac{a_i}{\d^{a(i+1)}}(x_1\d(\x)^{\a_1qa}, \ldots ,x_n\d(\x)^{\a_nqa})=\sum_{i\in\Z_{\geq 0}}a_i(\x)\d(\x)^{a(q-1)i}
$$

Thus $f\in\C\{\x\}$ if and only if this power series is convergent on a neighborhood of the origin. This neighborhood may be chosen of the form:
$$B_{\a}(0,r):=\{z\in\C^n\ / \ |z_j|\leq r^{\a_j},\ j=1, \ldots ,n\}.$$
For any $z\in B_{\a}(0,r)$ set $t_j^{\a_j}=z_j$ for $j=1, \ldots ,n$ and $b_i(t)=a_i(z)$ for any $i\in\N$. Then $f$ is convergent on $B_{\a}(0,r)$ if and only $\displaystyle\sum_{i\in\Z_{\geq 0}}b_i(t)$ is convergent on  $B(0,r):=\{t\in\C^n\ / \ |t_j|\leq r, \ j=1, \ldots , n\}$.
But this series is convergent if and only if there exist $c\geq 0$ and $\rho<1$ such that $|b_i(t)|\leq c\rho^i$ for all $i\in\Z_{\geq 0}$ and all $t\in B(0,r)$. Since $b_i(t)$ is a homogeneous polynomial  of degree $\nu_{\a}(a_i)=(ad+1)i+ad$ where $d:=\nu_{\a}(\d)$, we have 
$$\sup_{|t_j|\leq r,j=1, \ldots ,n}|b_i(t)|=r^{(ad+1)i+ad}\sup_{|t_j|\leq 1,j=1, \ldots ,n}|b_i(t)|.$$ 
We see that $f$ is convergent if and only if there exist $C\geq 0$ and $R>0$ such that 
$$\sup_{|z_j|\leq 1,j=1, \ldots ,n}|a_i(z)|=\sup_{|t_j|\leq 1,j=1, \ldots ,n}|b_i(t)|\leq CR^i.$$
This is equivalent to the following inequality for any $z\in\C^n$:
\begin{equation}\label{bound1}|a_i(z)|=|b_i(t)|\leq\max_{j=1, \ldots ,n}\left|t_j\right|\sup_{|t_j|\leq 1, j=1, \ldots ,n}|b_i(t)|\leq CR^i\|z\|_{\a}^{\nu_{\a}(a_i)}.\end{equation}

On the other hand if $f\in\C\{\x\}$ we have seen that there exist $C\geq 0$ and $R>0$ such that 
$$\sup_{|z_j|\leq 1,j=1, \ldots ,n}|a_i(z)|\leq CR^i.$$
Thus 
$$\sup_{|z_j|\leq 1,j=1, \ldots ,n}|a_i(z)\d(z)^{a(q-1)i}|\leq C(RS)^i$$
where $S:=\max_{|z_j|\leq 1,j=1, \ldots ,n}|\d(z)|^{a(q-1)}$. Hence $f_q\in\C\{\x\}$ for every $q\in\N$.
 This proves the claim.\\
\end{ex}

We have the following analogue of Theorem \ref{main2} in the Henselian case:

 \begin{theorem}\label{main2hens}
Let $\k$ be a field of characteristic zero and let $R$ be a subring of $\k\lb \x\rb  $ satisfying Definition \ref{ABC}. Let $\a\in\R_{>0}^n$ and let us set $N=\dim_{\Q}(\Q\a_1+\cdots+\Q\a_n)$.\\
Let $P(Z)\in\V^{R}_{\a}[\langle\g_1, \ldots , \g_s\rangle][Z]$ be a distinguished polynomial of degree $d$ where the $\g_i$  are homogeneous elements with respect to $\nu_{\a}$. Then the roots of $P(Z)$ are in  $\V^R_{\a}[\langle\g_1', \ldots ,\g_N'\rangle]$ for some  integral homogeneous elements $\g_{1}'$, \ldots , $\g_N'$ with respect to $\nu_{\a}$.
\end{theorem}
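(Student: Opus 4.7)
The plan is to mimic the proof of Theorem \ref{main2}, replacing its appeal to Proposition \ref{IFT} by a Henselian implicit function theorem adapted to the valuation ring $\V^R_\a$. Since $P(Z)$ is distinguished, its irreducible monic factors in $\V^R_\a[\langle\g_1,\ldots,\g_s\rangle][Z]$ remain distinguished, so I reduce to the case $P(Z)$ irreducible. Theorem \ref{main2} together with Proposition \ref{bound} then furnishes integral homogeneous elements $\g_1',\ldots,\g_N'$ with respect to $\nu_\a$ and a root $z \in \V_\a[\langle\g_1',\ldots,\g_N'\rangle]$ of $P(Z)$, with $P'(z)\neq 0$ by irreducibility and $\cha\,\k = 0$.

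Next, I would choose a polynomial truncation $\wdt z$ of $z$ large enough that
$$\nu_\a(P(\wdt z)) - \nu_\a(P'(\wdt z)) \;>\; \max_{z'\neq z}\nu_\a(z-z'),$$
the maximum ranging over the other roots of $P(Z)$. Being a polynomial in $\x$ and the $\g_i'$ with coefficients in $\k$, the truncation $\wdt z$ lies in $\V^R_\a[\langle\g_1',\ldots,\g_N'\rangle]$ by property $(A)$. If the Henselian implicit function theorem below produces an exact root $\ovl z \in \V^R_\a[\langle\g_1',\ldots,\g_N'\rangle]$ of $P(Z)$ with $\nu_\a(\ovl z - \wdt z) \geq \nu_\a(P(\wdt z))-\nu_\a(P'(\wdt z))$, then the displayed inequality together with the usual Krasner-type argument forces $\ovl z = z$, whence $z \in \V^R_\a[\langle\g_1',\ldots,\g_N'\rangle]$.

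The main obstacle is therefore the Henselian analogue of Proposition \ref{IFT}: given $u \in \V^R_\a[\langle\g_1',\ldots,\g_N'\rangle]$ with $\nu_\a(P(u)) > 2\nu_\a(P'(u))$, one must produce a root $\ovl u$ of $P(Z)$, unique subject to $\nu_\a(\ovl u - u) \geq \nu_\a(P(u)) - \nu_\a(P'(u))$, lying in $\V^R_\a[\langle\g_1',\ldots,\g_N'\rangle]$ (after possibly enlarging the polynomial $\d$ used to describe the denominators). I would follow verbatim the reduction in the proof of Proposition \ref{IFT}: with $\frac{\wdt\d}{\d^m} := \ini_\a(P'(u))$, the substitution $Z = u + \frac{\wdt\d}{\d^m}Y$ converts $P(Z)=0$ into
$$S(Y) := A + BY + R(Y)Y^2 = 0,$$
where $A$, $B$ and the coefficients of $R(Y)$ lie in $\V^R_\a[\langle\g_1',\ldots,\g_N'\rangle]$, $B$ is a unit with initial term $1$, and $\nu_\a(A) > 0$.

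To solve $S(Y) = 0$ inside $\V^R_\a$, I would invoke Definition \ref{V^R}: each coefficient of $S(Y)$ can be transported into $R[\g_1',\ldots,\g_N']$ by a monomial substitution $x_j \mapsto x_j\d(\x)^{s\a'_j}$ with $\a' \in \Rel(\a,q,\e)$ for suitable $\e > 0$, $q \in \N$, and $s \in \N$. The divisibility arithmetic of Remark \ref{denominators} lets me choose a single pair $(s,\a')$ working for every coefficient of $S(Y)$ simultaneously. Since the $\g_i'$ are integral over $\k[\x] \subset R$, the ring $R[\g_1',\ldots,\g_N']$ is finite over the excellent Henselian local $R$, hence itself excellent Henselian; the transformed equation has constant term in its maximal ideal and unit $Y$-coefficient, so Hensel's lemma produces a unique solution $y$ there. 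The essential technical point, and the step I expect to be most delicate, is to show that pulling $y$ back through the monomial substitution lands in $\V^R_\a$ and not merely in $\V_\a$: this amounts to re-running the bounding-function estimate at the end of the proof of Proposition \ref{IFT} (the inductive bound $m(i) \leq \a i + \b$) in the present context, verifying that the constants $a,b,\e,s$ extracted there depend only on $P(Z)$ and remain uniform across the countably many homogeneous components of $y$. Once this uniformity is established, induction on $\deg P$ closes the argument exactly as in Theorem \ref{main2}.
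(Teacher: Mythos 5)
Your proposal differs from the paper's actual proof in a structurally significant way, and contains a genuine confusion at the crux. The paper does not truncate, does not invoke the implicit function theorem of Proposition \ref{IFT}, and does not use Hensel's lemma: it applies the ring morphism $\phi$ (the monomial substitution $x_j\mapsto x_j\d(\x)^{s\a_j'}$ extended to the $\g'$'s via Lemma \ref{lemma_ext}) directly to the root $z$ produced by Theorem \ref{main2}, observes that $z':=\d^s\phi(z)$ is a formal solution of a \emph{finite system of polynomial equations with coefficients in $R$} (obtained by writing $Z=\sum Z_{\underline{i}}\,{\g''}^{\underline{i}}$ in the equation $\ovl{P}(Z)=0$ and collecting coefficients), and then uses the Artin approximation theorem together with the finiteness of the root set to conclude that the coefficients $z'_{\underline{i}}$ lie in $R$. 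Your route, by contrast, tries to reduce to a single Henselian equation $S(Y)=0$ via a truncation and to solve it in a transported ring.

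There are two problems with the way you articulate the key step. First, "pulling $y$ back through the monomial substitution" is not a coherent operation: the substitution $x_j \mapsto x_j\d(\x)^{s\a_j'}$ is not invertible on power series. The correct logic would be to compare rather than pull back: the solution $y_\V\in\V_\a[\langle\g_1',\ldots,\g_N'\rangle]$ already supplied by Proposition \ref{IFT} maps, under $\phi$, to an element of $\k\lb\x\rb[\g''_1,\ldots,\g''_N]$ that satisfies the transported equation, and by the uniqueness clause of Hensel's lemma it must equal the Hensel lift $y_R\in R[\g''_1,\ldots,\g''_N]$, hence $\phi(y_\V)\in R[\g''_1,\ldots,\g''_N]$. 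Second, and more seriously, claiming the $R$-membership "amounts to re-running the bounding-function estimate" misidentifies what that estimate proves. The inductive bound $m(i)\le \a i + \b$ in Proposition \ref{IFT} controls denominator growth and delivers membership in $\V_\a$ -- something Theorem \ref{main2} already gives you. It says nothing about the extra requirement in Definition \ref{V^R} that the transported series lie in the subring $R\subsetneq\k\lb\x\rb$; that requirement is precisely what Artin approximation (in the paper) or the Hensel uniqueness comparison (in a corrected version of your argument) must supply, and no amount of refining the denominator bound will produce it. So the idea of replacing Artin approximation by a Hensel argument is defensible, but as written your proof locates the difficulty in the wrong place and does not actually close it.
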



\begin{proof}
Let $P(Z)=Z^d+a_1Z^{d-1}+\cdots+a_d$ with $a_j\in \V^{R}_{\a}[\langle\g_1, \ldots , \g_s\rangle]$ for $1\leq j\leq d$. By Theorem \ref{main2} we may assume that $P(Z)$ has a root  $z\in\V_{\a,\d}[\langle\g_1, \ldots ,\g_N\rangle]$. We denote 
$$a_i=\sum_{i_1,\cdots,i_N}A_{i,i_1, \ldots ,i_N}\g_1^{i_1}\cdots\g_N^{i_N}\text{  with } A_{i,i_1, \ldots ,i_N}\in \V_{\a,\d},$$
$$z=\sum_{i_1, \ldots ,i_N}z_{i_1, \ldots ,i_N}\g_1t^{i_1}\cdots\g_N^{i_N}\text{ with } z_{i_1, \ldots ,i_N}\in \V_{\a,\d}.$$
 Let us fix $\e>0$, $q\in\N$, $\a'\in\Rel(\a,q,\e)$ and $s$ satisfying Lemma \ref{lemma2} for the $A_{i,i_1, \ldots ,i_N}$ and for the $z_{i_1, \ldots ,i_N}$. For convenience we denote by $\phi$ the morphism $\phi_{\a',s}$ defined in Definition \ref{phi_morp}. Then if $A$ denotes one of the $A_{i,i_1, \ldots ,i_N}$ or the $Z_{i_1, \ldots ,i_N}$  we have $\phi(A)\in \V_{\a',\d}$ by Lemma \ref{lemma2}. We set 
 $$R:=\V_{\a,\d}\cap \phi^{-1}(\V_{\a',\d})$$
 and $R'$ denotes the subring of $\V_{\a,\g}[\langle\g_1, \ldots , \g_s\rangle]$ of elements $\sum_{i_1, \ldots ,i_N}A_{i_1, \ldots ,i_N}\g_1^{i_1}\cdots\g_N^{i_N}$ whose coefficients $A_{i_1, \ldots ,i_N}$ are in $R$. \\
 Of course $\phi$ induces a morphism $R\lgw \V_{\a',\d}$ but we have the following lemma:

\begin{lemma}\label{lemma_ext}
Let $\g_i$ be  homogeneous elements  with respect to $\nu_{\a}$ for $1\leq i\leq N$. Then there exist homogeneous element $\g'_i$ with respect to $\nu_{\a'}$, $1\leq i\leq N$, such that, 
 for any finite number of elements $A_{i_1, \ldots ,i_N}\in\V_{\a,\d}$, 
 $$\phi\left(\sum_{i_1, \ldots ,i_N} A_{i_1, \ldots ,i_N}{\g_1}^{i_1}\cdots{\g_N}^{i_N}\right):=\sum_{i_1, \ldots ,i_N}\phi(A_{i_1, \ldots ,i_N}){\g'_1}^{i_1}\cdots {\g'_N}^{i_N}$$
  defines an extension of $ \phi$ from $R'$ to $\V_{\a',\d}[\langle \g'_1, \ldots ,\g'_N\rangle]$.
\end{lemma}

\begin{proof}[Proof of Lemma \ref{lemma_ext}]
Let us assume that $\g_i$ is a homogeneous element of degree $e_i$ with respect to $\nu_{\a}$. Let 
$$Q_i(Z):=g_{i,0}(\x)Z^{q_i}+g_{i,1}(\x)Z^{q_i-1}+\cdots+g_{i,q_i}(\x)$$
 be a polynomial such that $Q_i(\g'_i)=0$ and such that $g_{i,j}(\x)$ is a $(\a)$-homogeneous polynomial of degree $d_i+je_i$ for some $d_i$.\\
Then $g_{i,j}(\x)$ is a $(\a')$-homogeneous polynomial of degree $d'_i+je'_i$ for some constants $d'_i$ and $e'_i$. Indeed, if $a$, $b$ and $c$ are $(\a)$-homogeneous polynomials and $\nu_{\a}(a)-\nu_{\a}(b)=\nu_{\a}(b)-\nu_{\a}(c)$, then $ac$ and $b^2$ are two $(\a)$-homogeneous polynomials of same degree, i.e. $ac-b^2$ is $(\a)$-homogeneous. Then, by Lemma \ref{rel}, $ac-b^2$ is $(\a')$-homogeneous, thus $\nu_{\a'}(a)-\nu_{\a'}(b)=\nu_{\a'}(b)-\nu_{\a'}(c)$.\\ 
 Set $\ovl{Q}_i(Z)=\d^{se_i'q_i}Q_i\left(\frac{Z}{\d^{se'_i}}\right)$. We have 
 $$\ovl{Q}_i(Z):=g_{i,0}(\x)Z^{q_i}+g_{i,1}(\x)\d(\x)^{se'_i}Z^{q_i-1}+\cdots+g_{q_i}(\x)\d(\x)^{se'_iq_i}.$$
 For any $i$ let $\g'_i$ denote a root of $\ovl{Q}_i(Z)$. So $\g'_i$ is a homogeneous element of degree $e_i'(1+\nu_{\a'}(\d(\x))s)$ with respect to $\nu_{\a'}$. Then it is straightforward to check that 
 $$\phi\left(\sum A_{i_1, \ldots ,i_N}{\g_1}^{i_1}\cdots {\g_N}^{i_N}\right)=\sum\phi(A_{i_1, \ldots ,i_N}){\g'_1}^{i_1}\cdots {\g'_N}^{i_N}$$
  defines an extension of $ \phi$ from $R'$ to $\V_{\a',\d}[\langle \g'_1, \ldots ,\g'_N\rangle]$. 
 \end{proof}
By Lemmas \ref{lemma_ext},  \ref{lemma_psi1} and \ref{lemma_psi2},  and Property (C) we can assume that $s$ is large enough for having that
$$\d^{js}\phi(A_j)\in R[\g'_1, \ldots ,\g'_N]$$
 for $1\leq j\leq d$. Again by applying Lemmas \ref{lemma_ext},  \ref{lemma_psi1} and \ref{lemma_psi2} we may even assume that 
 $$\d^s \phi(z)\in\k\lb \x\rb  [\g'_1, \ldots ,\g'_N]$$
  by taking $s$ large enough. Thus $z':=\d^s \phi(z)\in\k\lb \x\rb  [\g'_1, \ldots ,\g'_N]$ is a root of the polynomial 
  $$\ovl{P}(Z):=Z^d+\d^s \phi(A_1)Z^{d-1}+\cdots+\d^{ds} \phi(A_d)\in R[Z].$$
  
   Let us write 
$$z':=\sum_{i_1, \ldots ,i_N}z'_{i_1, \ldots ,i_N}{\g'_1}^{i_1}\cdots {\g'_N}^{i_r}$$ with $z'_{i_1, \ldots ,i_N}\in \k\lb x\rb$ for any $i_1$,  \ldots , $i_N$.  Let us set
 $$Z:=\sum_{i_1, \ldots ,i_N}Z_{i_1, \ldots ,i_N}{\g'_1}^{i_1}\cdots {\g'_N}^{i_N}$$
  where $Z_{i_1, \ldots ,i_N}$ are new variables.   Solving $P(Z)=0$ is equivalent to solve a finite system $(\mathcal{S})$ of polynomial equations in the variables $Z_{i_1, \ldots ,i_N}$ with coefficients in $R$, just by replacing $Z$ by $\sum_{i_1, \ldots ,i_N}Z_{i_1, \ldots ,i_N}{\g_1}^{i_1}\cdots {\g_N}^{i_N}$  and replacing the high powers of the $\g_i$ by smaller ones using the division by the $Q_i(Z_i)$.  By Artin Approximation Theorem  (cf. \cite{Po}, \cite{Sp}), the set of solutions of $(\mathcal{S})$ in $R$ is dense in the set of solutions in $\k\lb \x\rb$, but since $P(Z)=0$ has a finite number of solutions, then $(\mathcal{S})$ has a finite number of solutions and they are in $R$. Thus $z'_{i_1, \ldots ,i_N}\in R$ for all $i_1$,  \ldots , $i_N$, hence $z'\in R[\g'_1, \ldots ,\g'_N]$.  This proves that $z\in\V_{\a,\d}^R[\langle\g_1, \ldots ,\g_N\rangle]$.\end{proof}


\section{A generalization of Abhyankar-Jung Theorem}\label{part_AJ}

\begin{definition}\label{D}
Let $\a\in\N^n$ and let $\theta\in\C[\x]$ be a $(\a)$-homogeneous polynomial. Let $a>0$, $C>0$ and $\eta>0$. Set :
$$\mathcal{D}_{\th,C,a,\eta}:=\left(\bigcup_{\begin{subarray}{c}K>0,\e>0\\ \e<K^{a}C\end{subarray}}C_{K,\e}\right)\bigcap B(0,\eta)$$
where  $B(0,\eta)$ is the open ball centered in 0 and of radius $\eta$ and 
$$C_{K,\e}:=\left\{x\in\C^n\ /\ d_{\a}(x,\th^{-1}(0))>K\|x\|_{\a} \text{ and } \|x\|_{\a}<\e\right\}$$
where  $\|.\|_{\a}$ is defined in Example \ref{ex_conv} and $d_{\a}$ is defined as follows: for any $x$, $y\in\C^n$ let us denote by $x_i^{\frac{1}{\a_i}}$ (resp. $y_i^{\frac{1}{\a_i}}$) a complex $\a_i$-th root of $x_i$ (resp. $y_i$) and let $\U_i$ be the set of $\a_i$-roots of unity. Then we define $\displaystyle d_{\a}(x,y):=\max_i\inf_{\xi\in\U_i}\left|x_i^{\frac{1}{\a_i}}-\xi y_i^{\frac{1}{\a_i}}\right|$ and  $\displaystyle d_{\a}(x,\theta^{-1}(0)):=\inf_{x'\in\theta^{-1}(0)}d_{\a}(x,x')$.
\end{definition}
Then $\mathcal{D}_{\th,C,a,\eta}$ is the complement of a hornshaped neighborhood of $\{\th=0\}$ as we can see on the following picture (here $n=2$ and $\a=(1,1)$):
$$\begin{tikzpicture}[scale=1.4]
   
    \draw [<->] (0,3) node (yaxis) [above] {$x_2$}
        |- (4,0) node (xaxis)[right] {$x_1$} ;
      \draw (-1/3,-1) coordinate (a) -- (1,3) coordinate (a_2)  node[anchor=west]{$\th^{-1}(0)$};
    \draw  (-2,-2/4) coordinate (b)-- (4,1)  node[anchor=north]{$\th^{-1}(0)$};
    
    \draw[fill=black!5] (0:0cm) -- (21:3.3cm) 
  arc (31: 64:3.9cm) -- (0:0cm) -- cycle ;

  \draw[fill=black!5] (0:0cm) -- (20:2cm) 
  arc (23: 66:2cm) -- (0:0cm) -- cycle ;

  \draw[fill=black!5] (0:0cm) -- (17:1cm) 
  arc (18: 67:1cm) -- (0:0cm) -- cycle ;
  
      \draw[fill=black!5] (0:0cm) -- (80:2cm) 
  arc (81:184:2cm) -- (0:0cm) -- cycle ;   
    
    \draw[fill=black!5] (0:0cm) -- (76:1cm) 
  arc (76:189:1cm) -- (0:0cm) -- cycle   ;   
  
        \draw[line width=1pt]   (0,0) .. controls (0.4,1.2) and (1.2,2.4) .. (2.3,3.78) ;

      \draw[line width=1pt]    (0,0) .. controls (1.2,0.27) and (2.32,0.81) .. (3.99,1.62) ;

         \draw[line width=1pt]    (0,0) .. controls (0.32,0.8)  and (0.52,2) .. (0.27, 3) ;

      \draw[line width=1pt]    (0,0) .. controls (-1.2,-0.27) and (-2.32,-0.20) .. (-3.09,0.1) ;

    \end{tikzpicture}
$$

\begin{lemma}\label{D2}
Let $a\in\N^n$ and $A\in\V^{\C\{\x\}}_{\a,\th}$. Then there exist constants   $a>0$ and $C>0$ such that $A$ is analytic on $\mathcal{D}_{\th,C,a,\eta}$ for every $\eta>0$.
\end{lemma}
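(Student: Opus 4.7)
The plan is to expand $A$ in the canonical series form of Example~\ref{ex_conv} and bound each term on an appropriate horn-shaped complement of $\{\th = 0\}$. Concretely, write $A=\sum_{i\in\N} a_i/\th^{\mu(i+1)}$ where $\mu\in\N$, each $a_i\in\C[\x]$ is $(\a)$-homogeneous of weighted degree $(\mu d+1)i+\mu d$ with $d:=\nu_{\a}(\th)$, and the universal estimate $|a_i(z)|\leq B r^i ||z||_{\a}^{\nu_{\a}(a_i)}$ holds for some constants $B,r>0$. To show analyticity on some $\mathcal{D}_{\th,C,a,\eta}$, it suffices to prove that this series converges normally on compact subsets of this set, since each term $a_i/\th^{\mu(i+1)}$ is analytic on $\C^n\setminus\th^{-1}(0)$.

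The core of the proof is a weighted Lojasiewicz-type inequality: there exist a constant $c>0$ and an exponent $m>0$ such that
$$|\th(x)|\,\geq\, c\cdot d_{\a}\bigl(x,\th^{-1}(0)\bigr)^{m}\cdot||x||_{\a}^{d-m}\quad\text{for all }x\in\C^n.$$
I would prove this via the ramified cover $\Psi:(y_1,\ldots,y_n)\mapsto(y_1^{\a_1},\ldots,y_n^{\a_n})$: the pullback $H:=\th\circ\Psi$ is an ordinary homogeneous polynomial of degree $d$ in $y$, and its zero locus $H^{-1}(0)$ is invariant under the natural action of $\prod_i\U_i$ on $\C^n$. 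Consequently, for any preimage $y$ of $x$ the weighted distance $d_{\a}(x,\th^{-1}(0))$ equals the sup-norm distance $d_{\infty}(y,H^{-1}(0))$, and $||x||_{\a}$ equals $||y||_{\infty}$. The inequality reduces to the classical assertion
$$|H(y)|\,\geq\, c'\cdot d_{\infty}(y,H^{-1}(0))^{m}\cdot||y||_{\infty}^{d-m},$$
which follows by applying the standard Lojasiewicz inequality to $H$ on the unit sphere of $\C^n$ and extending globally via the scaling identities $|H(\l y)|=|\l|^{d}|H(y)|$ and $d_{\infty}(\l y,H^{-1}(0))=|\l|\,d_{\infty}(y,H^{-1}(0))$.

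With the Lojasiewicz bound in hand, combining it with $|a_i(x)|\leq B r^i ||x||_{\a}^{(\mu d+1)i+\mu d}$ causes the $||x||_{\a}$-exponents to collapse and yields
$$\left|\frac{a_i(x)}{\th(x)^{\mu(i+1)}}\right|\,\leq\, \frac{B}{c^{\mu}}\left(\frac{||x||_{\a}}{d_{\a}(x,\th^{-1}(0))}\right)^{\!m\mu}\left[\frac{r}{c^{\mu}}\cdot\frac{||x||_{\a}^{m\mu+1}}{d_{\a}(x,\th^{-1}(0))^{m\mu}}\right]^{i}.$$
Optimizing over $K$ in the definition of $\mathcal{D}_{\th,C,a,\eta}$ shows that this set is exactly $\{x:||x||_{\a}<\eta,\ d_{\a}(x,\th^{-1}(0))>C^{-1/a}||x||_{\a}^{1+1/a}\}$. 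Choosing $a:=m\mu$ and $C$ small enough that $C^{-1/a}>(r/c^{\mu})^{1/(m\mu)}$ forces the bracketed ratio to stay $\leq\rho<1$ uniformly on compact subsets, providing a convergent geometric majorant and hence the desired analyticity. The main technical obstacle is the weighted Lojasiewicz inequality itself: identifying the correct exponent $m$ from the geometry of $H^{-1}(0)$ and transferring the compact-sphere estimate to a global bound on $\C^n$ via the homogeneity scaling while keeping track of both factors $d_{\a}^{m}$ and $||x||_{\a}^{d-m}$.
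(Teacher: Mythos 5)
Your proof is correct and follows the same skeleton as the paper's: expand $A$ in the canonical form of Example \ref{ex_conv}, invoke the a priori estimate $|a_i(x)|\leq Cr^i\|x\|_{\a}^{\nu_{\a}(a_i)}$, and combine it with a lower bound for $|\th(x)|$ in terms of $d_{\a}(x,\th^{-1}(0))$ and $\|x\|_{\a}$ to get a geometric majorant on each $C_{K,\e}$. Where you diverge is in the proof of that lower bound. The paper pulls $\th$ back via $x_i\mapsto y_i^{\a_i}$ to a homogeneous polynomial $\t$ of degree $d=\nu_{\a}(\th)$, makes it monic in $y_n$ by a linear change, factors $\t=c'\prod_i(y_n-\phi_i)$, and observes that $\min_i|y_n-\phi_i|\geq d_\infty(y,\t^{-1}(0))$; this gives the sharp inequality $|\th(x)|\geq C'\,d_{\a}(x,\th^{-1}(0))^d$ with no $\|x\|_{\a}$ factor at all (i.e.\ your $m=d$). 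You instead invoke the standard Lojasiewicz inequality on the compact unit sphere for the pair $(|H|,\,d_\infty(\cdot,H^{-1}(0)))$ and then scale by homogeneity, which yields the same shape of inequality for some possibly non-sharp exponent $m$. Both arguments are correct; the paper's factorization buys the explicit exponent $d$ (hence explicit $a$, $C$), while your abstract Lojasiewicz route is shorter to state and sidesteps the generic linear change of coordinates, at the cost of not controlling $m$. Your identification $d_{\a}(x,\th^{-1}(0))=d_\infty(y,H^{-1}(0))$ and $\|x\|_{\a}=\|y\|_\infty$ under the ramified cover $\Psi$ is correct (the infimum over the independent root-of-unity choices commutes with the max over coordinates), and the optimization over $K,\e$ describing $\mathcal{D}_{\th,C,a,\eta}$ is also right. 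One small remark: you present "identifying the correct exponent $m$" as the main obstacle, but since the lemma only asserts the existence of some $a,C$, any Lojasiewicz exponent suffices and this is not actually a gap.
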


\begin{proof}
We write $A=\sum_i\frac{a_i}{\th^{m(i)}}$ where $a_i$ is $(\a)$-homogeneous for every $i\in\N$. By multiplying $a_i$ by a convenient power of $\th$ we may even assume that there exist positive constants $a$ and $b$ such that $m(i)=ai+b$ for every $i$. \\
If $\nu_{\a}(a_i)=d_i$ there exist $C>0$ and $r>0$ such that
\begin{equation}|a_i(x)|\leq Cr^i\| x\|_{\a}^{d_i}\ \ \ \ \forall x\in\C^n\end{equation} by Theorem \ref{main2hens}, Example \ref{ex_conv}  and Inequality (\ref{bound1}) of Example \ref{ex_conv}.
On the other hand we claim that there exists a constant $C'>0$ such that 
\begin{equation}\label{7} |\th(x)|\geq C'd_{\a}(x,\th^{-1}(0))^{\nu_{\a}(\theta)}\ \ \ \ \forall x\in\C^n.
\end{equation}

Indeed if we embed $\C\{ \x\}  $ in $\C\{ \mathbf{y}\}  $ by sending $x_i$ onto $y_i^{\a_i}$, we have 
$$\theta(\x)=\theta(y_1^{\a_1}, \ldots ,y_n^{\a_n})=\t(y_1, \ldots ,y_n)$$
 and $\t$ is a homogeneous polynomial of degree $\nu_{\a}(\theta)$.  After a linear change of coordinates, we may assume that $\t$ is a monic polynomial in $y_n$ of degree $\nu_{\a}(\theta)$ multiplied by a constant.  Then, for all $y_1, \ldots ,y_n\in\C^n$, we have
$$|\t(y_1, \ldots ,y_n)|=C'\left|\prod_{i=1}^{\nu_{\a}(\theta)}\left(y_n-\phi_i(y_1, \ldots ,y_{n-1})\right)\right|$$
where $\phi_i$ is a homogeneous function which is locally analytic outside  the discriminant locus of $\t$, for some constant $C'>0$. Thus
$$|\t(y_1, \ldots ,y_n)|\geq C'\min_i\left|y_n-\phi_i(y_1, \ldots ,y_{n-1})\right|^{\nu_{\a}(\theta)}\geq$$
$$\geq  C'\inf_{y'\in\t^{-1}(0)}\max_k\left|y_k-y'_k\right|^{\nu_{\a}(\theta)}=C'd(y,\t^{-1}(0))^{\nu_{\a}(\theta)}$$
since $(y_1, \ldots ,y_{n-1},\phi_i(y_1, \ldots  ,y_{n-1}))\in\t^{-1}(0)$ for any $i$.
This proves (\ref{7}).\\
\\
Hence we have (for positive constants $\e$, $K$ and $x\in C_{K,\e}$):
$$\left|\frac{a_i(x)}{\th^{m(i)}(x)}\right|  \leq  \frac{C}{C'^{m(i)}}\frac{r^{i}\| x\|_{\a}^{d_i}}{d_{\a}(x,\th^{-1}(0))^{\nu_{\a}(\theta)m(i)}}  =
 \frac{C}{C'^{m(i)}}\frac{r^{i}\| x\|_{\a}^{i+\nu_{\a}(\theta)m(i)}}{d_{\a}(x,\th^{-1}(0))^{\nu_{\a}(\theta)m(i)}}\leq $$
 $$ \leq \frac{Cr^i\| x\|_{\a}^i}{C'^{m(i)}K^{\nu_{\a}(\theta)m(i)}}  \leq \frac{C(r\e)^i}{C'^{m(i)}K^{\nu_{\a}(\theta)m(i)}}=\frac{C}{C'^bK^{\nu_{\a}(\th)b}}\left(\frac{r\e}{C'^{a}K^{\nu_{\a}(\theta)a}}\right)^i.$$
Then if $\e<K^{a\nu_{\a}(\theta)}\left(\frac{C'^a}{r}\right)$, $A$ defines an analytic function on the domain $C_{K,\e}$. Thus $A$ defines an analytic function on the domain 
$\mathcal{D}_{\th,C'^a/r,a\nu_{\a}(\theta),\eta}$ for every $\eta>0$.
\end{proof}

This following proposition has been proven by Tougeron in the case $\a=(1, \ldots ,1)$ (see Proposition 2.8 \cite{To}):

\begin{prop}\label{analytic_disc}
Let $\a\in\N^n$ and let $P(Z)\in\C\{\x\}[Z]$ be a monic  polynomial whose discriminant is equal to $\d u$ where $\d\in\C[\x]$ is $(\a)$-homogeneous and $u\in\C\{\x\}$ is invertible. If $P(Z)$ factors as $P(Z)=P_1(Z)\cdots P_r(Z)$ where $P_i(Z)\in\C\{\x\}[Z]$ is an irreducible monic polynomial of $\C\{\x\}[Z]$ for all $i$, then $P_i(Z)$ is irreducible in $\V_{\a}^{\C\{\x\}}[Z]$. 
\end{prop}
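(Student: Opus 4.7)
I proceed by contradiction, assuming some $P_i$ factors nontrivially in $\V_{\a}^{\C\{\x\}}[Z]$, and deriving a contradiction via a monodromy argument on the horn-complement where Lemma~\ref{D2} provides analyticity.

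\textbf{Step 1 (Analyticity on a common horn complement).} Suppose $P_i(Z)=A(Z)B(Z)$ with $A,B\in\V_{\a}^{\C\{\x\}}[Z]$ both monic of positive degree. Each coefficient of $A$ and of $B$ belongs to some $\V_{\a,\th_k}^{\C\{\x\}}$; enlarging the denominator by multiplying the finitely many $\th_k$ together, and then multiplying by $\d$ (allowed by Lemma~\ref{VV}), I may replace all these rings by a single $\V_{\a,\th}^{\C\{\x\}}$ with $\d\mid\th$. By Lemma~\ref{D2}, there exist $C,a>0$ such that every coefficient of $A$ and $B$ is analytic on $\mathcal{D}_{\th,C,a,\eta}$ for every $\eta>0$. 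I fix such an $\eta$ small enough that the unit $u$ does not vanish on $B(0,\eta)$.

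\textbf{Step 2 (Local factorisation of $P_i$).} On $\mathcal{D}:=\mathcal{D}_{\th,C,a,\eta}$ the discriminant $\d u$ of $P$ does not vanish (since $\d\mid\th$ and $u$ is invertible near $0$), so by the analytic implicit function theorem the roots of $P_i$ extend locally near each point of $\mathcal{D}$ as analytic functions $z_1(x),\dots,z_{d_i}(x)$. Hence near any point $x_0\in\mathcal{D}$ one has a factorisation $A(Z)=\prod_{j\in S}(Z-z_j(x))$ for some subset $S\subset\{1,\dots,d_i\}$ with $|S|=\deg A$, and similarly for $B$. Because the coefficients of $A$ are \emph{single-valued} on $\mathcal{D}$, the subset $S$ is preserved by analytic continuation along any loop in $\mathcal{D}$; i.e.\ the monodromy representation of $\pi_1(\mathcal{D},x_0)$ on the fibre $\{z_1,\dots,z_{d_i}\}$ stabilises the proper nonempty subset $\{z_j\,:\,j\in S\}$.

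\textbf{Step 3 (Transitivity of the monodromy).} Since $P_i$ is irreducible in $\C\{\x\}[Z]$, the germ at $0$ of the analytic hypersurface $\{(x,z)\,:\,P_i(x,z)=0\}$ is irreducible; equivalently, for a sufficiently small punctured ball $B(0,\eta)\setminus\{\d u=0\}$, the monodromy of $\pi_1$ acts transitively on the $d_i$ local branches $z_1,\dots,z_{d_i}$. The key remaining point is that this transitivity already holds for loops lying inside the smaller set $\mathcal{D}$: the inclusion $\mathcal{D}\hookrightarrow B(0,\eta)\setminus\{\d=0\}$ induces a surjection on fundamental groups. This is because the hornshaped neighbourhood removed from $B(0,\eta)$ to produce $\mathcal{D}$ retracts onto $\{\d=0\}$ along the $(\a)$-homogeneous flow $t\cdot(x_1,\dots,x_n)=(t^{\a_1}x_1,\dots,t^{\a_n}x_n)$; in the complement, any small loop encircling a smooth point of $\{\d=0\}$ can be pushed by this flow (and by small perturbations on the angular direction transverse to $\{\d=0\}$) until it lies entirely in $\mathcal{D}$, and such loops generate the monodromy. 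Combined with Step~2, this yields a nontrivial partition of $\{z_1,\dots,z_{d_i}\}$ invariant under a transitive group action, a contradiction. Thus $P_i$ is irreducible in $\V_{\a}^{\C\{\x\}}[Z]$.

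\textbf{Main obstacle.} Steps~1 and~2 are essentially formal once Lemma~\ref{D2} is in hand. The delicate point is Step~3: the hornshaped domain $\mathcal{D}$ shrinks faster than linearly toward $\{\d=0\}$ near the origin, so it is not obvious a priori that $\pi_1(\mathcal{D})\twoheadrightarrow\pi_1(B(0,\eta)\setminus\{\d=0\})$. The $(\a)$-homogeneity of $\th$ (and of $\d$) is exactly what is needed to construct the retraction, since the family of sets $\{C_{K,\e}\}$ in Definition~\ref{D} is invariant under the weighted $\C^{*}$-action, and small punctured neighbourhoods of smooth points of $\{\d=0\}$ meet $\mathcal{D}$ in sets whose inclusion into the punctured neighbourhood is a homotopy equivalence. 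This is where the weighted analogue of Tougeron's argument (his Proposition~2.8, proven for the case $\a=(1,\dots,1)$) must be transported, using the identification of $\C\{\x\}$ with $\C\{\mathbf{y}\}^{G}$ via $x_i=y_i^{\a_i}$ and the distance $d_{\a}$ introduced in Definition~\ref{D}.
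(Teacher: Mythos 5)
Your Steps 1 and 2 are fine and match the first half of the paper's argument: take the denominators $\th_k$, lump them into a single $\th$ with $\d\mid\th$, invoke Lemma~\ref{D2} to get analyticity of all relevant coefficients on a common horn complement $\mathcal D=\mathcal D_{\th,C,a,\eta}$, and observe that away from $\{\th=0\}$ the roots are locally analytic. The overall strategy (a monodromy argument on the horn complement) is the same as the paper's.

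The gap is in Step~3. You assert that $\pi_1(\mathcal D)\twoheadrightarrow\pi_1(B(0,\eta)\setminus\{\th=0\})$, and you justify it by saying that the removed horn retracts onto $\{\d=0\}$ and that meridian loops around smooth points of $\{\d=0\}$ generate the monodromy and can be pushed into $\mathcal D$. None of that is quite right as stated: there is no retraction onto a set that has been deleted, and the claim that meridians generate would itself need a Zariski--van~Kampen type argument that you neither state nor need. What actually makes Step~3 work is precisely the weighted scaling used in Lemma~\ref{monodromy}: given any loop $\phi$ in $D_{\th,\eta}$ based at $x_0\in\mathcal D$, set $\Phi(t,s):=(s^{\a_1}\phi_1(t),\dots,s^{\a_n}\phi_n(t))$. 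Both $\|\cdot\|_{\a}$ and $d_{\a}(\cdot,\th^{-1}(0))$ scale by $|s|$, so the angular ratio in the definition of $\mathcal D$ is preserved, while $\|\Phi(t,s)\|_{\a}$ shrinks; hence for small real $s$ the loop $\Phi(\cdot,s)$, the shrunken basepoint $s\cdot x_0$, and the path $u\mapsto u\cdot x_0$ joining $x_0$ to it all lie in $\mathcal D$. This exhibits $\phi$ as conjugate in $\pi_1(D_{\th,\eta},x_0)$ to a loop in $\mathcal D$, giving the surjectivity you want — but you should say this, rather than talk about retractions and meridians.

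Once Step~3 is repaired, note also that your route is slightly different from the paper's in its bookkeeping. You argue by contradiction, exploiting transitivity of the monodromy of $\pi_1(D_{\th,\eta})$ on the branches (which follows from irreducibility of $P_i$ in $\C\{\x\}[Z]$ together with the fact that deleting the extra codimension-two locus $\{\th=0\}\setminus\{\Delta_{P_i}=0\}$ keeps $\pi_1$ surjective — another small point you should state). The paper is more direct and does not need transitivity at all: it takes an arbitrary irreducible factor $Q$ of $P$ in $\V_{\a}[Z]$, applies Lemma~\ref{D2} and Lemma~\ref{monodromy} to each \emph{coefficient} of $Q$ (a bounded, multi-valued function on $D_{\th,\eta}$ that is single-valued on $\mathcal D$), concludes that each coefficient is a bounded single-valued holomorphic function on the punctured ball and hence extends across $\{\th=0\}$, so $Q\in\C\{\x\}[Z]$; irreducibility of $P_i$ in $\V^{\C\{\x\}}_{\a}[Z]$ then follows formally. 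I would recommend either invoking Lemma~\ref{monodromy} outright (it is exactly the statement your Step~3 is trying to reproduce) or proving your $\pi_1$-surjectivity via the explicit weighted-flow homotopy above; as written, Step~3 does not stand on its own.
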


\begin{proof}
Let $Q(Z)$  be an irreducible monic  factor of $P(Z)$ in $ \V_{\a}[Z]$. By Theorem \ref{main2} there exists a $(\a)$-homogeneous polynomial $\th\in\C[x]$ such that the coefficients of $Q(Z)$ are in $\V_{\a,\th}$. Let us denote by $A$ one of these coefficients.\\
Since $\V_{\a,\th}\subset\V_{\a,\th\d}$ we may assume that $\d$ divides $\th$, thus 
$$\d^{-1}(0)\cap B(0,\e)\subset\th^{-1}(0)\cap B(0,\e)$$
 for every $\e>0$. \\
Let $\eta>0$ small enough such that 
 the roots of $P(Z)$ are locally analytic on the domain 
$$D_{\th,\eta}:=B(0,\eta)\backslash \th^{-1}(0)\subset B(0,\eta)\backslash \d^{-1}(0).$$
Since $A$ is a polynomial depending on the roots of $P(Z)$ it is locally analytic on $D_{\th,\eta}$.\\
On the other hand by Lemma  \ref{D2} $A$ defines an analytic function on a domain 
$\mathcal{D}_{\th,C,a,\eta}$.\\
Thus by Lemma \ref{monodromy} given below $A$ is global analytic on $D_{\th,\eta}$. Since the roots of $P(Z)$ are bounded near the origin, $A$ is bounded near the origin, thus $A$ extends to an analytic function near the origin. This proves that $A$ is analytic on a neighborhood of the origin and $Q(Z)\in\C\{\x\}[Z]$.

\end{proof}

\begin{lemma}\label{monodromy}
Set $C>0$, $a>0$ and $\eta>0$ and let $\th\in\C[\x]$ be a $(\a)$-homogeneous polynomial. Let $A:D_{\th,\eta}\lgw \C$ be a multivalued function. Let us assume that $A$ is analytic on $\mathcal{D}_{\th,C,a,\eta}$ and locally analytic on $D_{\th,\eta}$. Then $A$ is analytic on $D_{\th,\eta}$.
\end{lemma}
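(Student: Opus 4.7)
The argument is a classical monodromy argument. Since $A$ is locally analytic on $D_{\th,\eta}$, any germ of $A$ can be continued analytically along any continuous path in $D_{\th,\eta}$, and the continuation is uniquely determined by the starting germ. Fix a base point $x_0\in\mathcal{D}_{\th,C,a,\eta}$ and the germ of $A$ at $x_0$ determined by its single-valued restriction to $\mathcal{D}_{\th,C,a,\eta}$. By the classical monodromy theorem, this process yields a single-valued analytic function on the whole $D_{\th,\eta}$ if and only if, for every closed loop $\g$ in $D_{\th,\eta}$ based at $x_0$, the continuation of the germ of $A$ along $\g$ returns to the initial germ.

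Every closed loop contained in $\mathcal{D}_{\th,C,a,\eta}$ has this trivial monodromy property, since $A$ is already a single-valued analytic function there. Hence it suffices to show that the inclusion $\iota:\mathcal{D}_{\th,C,a,\eta}\hookrightarrow D_{\th,\eta}$ induces a surjection
$$\iota_*:\pi_1(\mathcal{D}_{\th,C,a,\eta},x_0)\twoheadrightarrow \pi_1(D_{\th,\eta},x_0),$$
i.e.\ that every loop in $D_{\th,\eta}$ is homotopic inside $D_{\th,\eta}$ to a loop contained in $\mathcal{D}_{\th,C,a,\eta}$.

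This reduces to a topological statement about the horn-shaped neighborhood $H:=B(0,\eta)\setminus\mathcal{D}_{\th,C,a,\eta}$, which (away from the origin, where it pinches) is a tubular neighborhood of the hypersurface $\th^{-1}(0)\cap B(0,\eta)$ of thickness of order $||x||_{\a}^{1+1/a}$ at a point $x$, hence much thinner than $||x||_{\a}$ itself. By a standard Zariski--Lefschetz-type argument, $\pi_1(D_{\th,\eta})$ is generated by small meridian loops around smooth points of the irreducible components of $\th^{-1}(0)$; each such meridian can be taken at a fixed positive distance from $0$ and with transverse radius of order $||x||_{\a}$, so the loop automatically lies in $\mathcal{D}_{\th,C,a,\eta}$. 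Equivalently, one may construct a deformation retraction of $D_{\th,\eta}$ onto $\mathcal{D}_{\th,C,a,\eta}$ by flowing the open shell $H\setminus\th^{-1}(0)$ outward to $\partial\mathcal{D}_{\th,C,a,\eta}$ along directions transverse to $\th^{-1}(0)$, suitably damped near the origin.

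The main difficulty is this geometric step, namely verifying that $\iota_*$ is surjective uniformly near the origin, where the horn $H$ degenerates to a point; it can be handled either by an explicit subanalytic partition-of-unity retraction, or by invoking resolution of singularities of $\th^{-1}(0)$ at $0$ to reduce to the normal-crossings case, where the statement is transparent. Once this topological step is granted, the monodromy theorem provides a single-valued analytic continuation of $A$ on all of $D_{\th,\eta}$; local analyticity then forces this continuation to coincide locally with the given branches of $A$, producing a genuine analytic function on $D_{\th,\eta}$ as required.
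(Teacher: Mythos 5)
Your plan reduces the statement to the topological claim that the inclusion $\iota:\mathcal{D}_{\th,C,a,\eta}\hookrightarrow D_{\th,\eta}$ induces a surjection on $\pi_1$, and then you explicitly flag that verifying this claim is ``the main difficulty'' and propose handling it by an unspecified subanalytic retraction or by resolution of singularities. As written this is a genuine gap, not a proof: the retraction you need must push the horn-shaped shell $H\setminus\th^{-1}(0)$ outward ``transversely to $\th^{-1}(0)$'' while staying inside $B(0,\eta)$ and respecting the pinching at the origin, and nothing is given to guarantee that such a vector field exists or that the Zariski–Lefschetz meridians can be chosen to lie in $\mathcal{D}_{\th,C,a,\eta}$ with a \emph{uniform} constant $K$. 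Until that geometric step is actually carried out, the monodromy conclusion does not follow.

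The paper avoids the $\pi_1$-surjectivity question entirely and exploits the weighted homogeneity of $\th$ in a different way. Given a hypothetical loop $\phi$ with nontrivial monodromy, one deforms it holomorphically by the weighted scaling $\Phi(t,s):=(s^{\a_1}\phi_1(t),\dots,s^{\a_n}\phi_n(t))$, $s$ ranging over the simply connected half-disc $S=\{|s|\le1,\ \Re s>0\}$. Homogeneity of $\th$ keeps the whole family inside $D_{\th,\eta}$, simple connectedness of $[0,1]\times S$ lets one define $A\circ\Phi$ unambiguously, and the scaling identities $\|\Phi(t,s)\|_{\a}=|s|\,\|\phi(t)\|_{\a}$ and $d_{\a}(\Phi(t,s),\th^{-1}(0))=|s|\,d_{\a}(\phi(t),\th^{-1}(0))$ show that for $|s|$ small the scaled loop lies in $\mathcal{D}_{\th,C,a,\eta}$, where the monodromy difference $h(s)=A\circ\Phi(0,s)-A\circ\Phi(1,s)$ vanishes. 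Since $h$ is holomorphic on the connected set $S$, the identity theorem forces $h\equiv0$, contradicting $h(1)\neq0$. This is a softer argument that trades your topological surjectivity claim for holomorphy in the scaling parameter; it is both shorter and does not require any analysis of the singularities of $\th^{-1}(0)$. If you want to pursue your route, you would need to supply a complete construction of the retraction (or a complete verification that generators of $\pi_1$ can be realized inside $\mathcal{D}_{\th,C,a,\eta}$), and at present that construction is missing.
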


\begin{proof}
Since $A$ is locally analytic on $D_{\th,\eta}$, then $A$ extends to an  analytic function on a small neighborhood of every path in $D_{\th,\eta}$. If $A$ is not analytic  on $D_{\th,\eta}$, then there exists a loop based at a point $p$ of $D_{\th,\eta}$, denoted by $\phi:[0,1]\lgw D_{\th,\eta}$ with $\phi(0)=\phi(1)=p$, such that $A$ extends to an analytic function on a neighborhood of $\phi$ but $A\circ\phi(0)\neq A\circ\phi(1)$.
 Let us write $\phi(t)=(\phi_1(t), \ldots ,\phi_n(t))$ and let us define $\Phi : [0,1]\times S\lgw \C^n$ by
 $$\Phi(t,s):=(s^{\a_1}\phi_1(t), \ldots ,s^{\a_n}\phi_n(t))$$
 where $S:=\{z\in \C\ / \ |z|\leq 1, \Re(z)>0\}$.\\
 Then we have that $\d(\Phi(t,s))=s^{\nu_{\a}(\d)}\d(\phi(t))\neq 0$ for any $(t,s)\in [0,1]\times S$ since $\Im(\phi)\subset D_{\th,\eta}$ and $s\neq 0$. Thus the image of $\Phi$ is included in $D_{\th,\eta}$. Moreover, for any $t\in [0,1]$, let $\Phi_t : S\lgw D_{\th,\eta}$ be the function defined by $\Phi_t(s):=\Phi(t,s)$. Its image is simply connected since $S$ is simply connected and $\Phi_t$ is analytic. Thus $A\circ\Phi_t$, which is locally analytic, extends to an analytic function on $S$ by the Monodromy Theorem.\\
  Let us denote by $h$ the holomorphic function on $S$ defined by 
  $$h(s):=A\circ\Phi(0,s)-A\circ\Phi(1,s)$$
   for any  $s\in S$.\\
  For any $s\in S$ and any $t\in[0,1]$ we have 
  $$\|\Phi(t,s)\|_{\a}=|s\||\phi(t)\|_{\a}$$
   and $$d_{\a}\left(\Phi(t,s),\theta^{-1}(0)\right)=|s|d_{\a}\left(\phi(t),\theta^{-1}(0)\right).$$
  Let us set $$K:=\frac{1}{2}\min_{t\in[0,1]}\frac{d_{\a}\left(\Phi(t,s),\theta^{-1}(0)\right)}{\|\Phi(t,s)\|_{\a}}=\frac{1}{2}\min_{t\in[0,1]}\frac{d_{\a}\left(\phi(t),\theta^{-1}(0)\right)}{\|\phi(t)\|_{\a}}>0.$$
  Thus for any $s$ belonging to the domain  $S\cap\{|s|<K^{a}C\}$, we have $\Phi(t,s)\in \mathcal{D}_{\th,C,a,\eta}$. Since $\Phi(t,s)\in \mathcal{D}_{\th,C,a,\eta}$ and $A$ is analytic on $\mathcal{D}_{\th,C,a,\eta}$, then $A\circ\Phi(0,s)=A\circ\Phi(1,s)$, thus $h(s)=0$ on $S\cap \{s<K^{a}C\}$. Since $h$ is holomorphic on the connected domain $S$, then $h\equiv 0$ on $S$. This contradicts the assumption. Hence $A$ is  analytic on $D_{\th,\eta}$. 
\end{proof}

Then we can extend Proposition \ref{analytic_disc} to the formal setting over any field of characteristic zero:

\begin{theorem}\label{AJ_gen}
Let $\k$ be a field of characteristic zero and $\a\in\R_{>0}^n$. Let $P(Z)\in\k\lb \x\rb [Z]$ be a monic polynomial whose discriminant is equal to $\d u$ where $\d\in\k[\x]$ is $(\a)$-homogeneous and $u\in\k\lb \x\rb $ is  a unit. If $P(Z)$ factors as $P(Z)=P_1(Z)\cdots P_s(Z)$ where the $P_i(Z)$ are irreducible monic polynomials of $\k\lb \x\rb [Z]$, then the $P_i(Z)$ remain irreducible in $\V_{\a}[Z]$. \end{theorem}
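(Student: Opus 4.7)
The strategy is to reduce Theorem \ref{AJ_gen} to the complex-analytic Proposition \ref{analytic_disc} by successively approximating the weight, the polynomial, and the coefficient field. Assume for contradiction that some $P_i$ is reducible in $\V_\a[Z]$ as $P_i = Q_1 Q_2$ with $Q_1, Q_2$ monic and non-constant. First I would reduce to $\a \in \N^n$: by Lemma \ref{approx} applied to the finitely many coefficients of $Q_1, Q_2$, for $\e > 0$ small enough and any $\a' \in \Rel(\a, q, \e) \cap \N^n$ (nonempty by density of $\Q$), these coefficients lie in the fraction field of $\V_{\a'}$. Since $\V_{\a'}$ is a valuation ring hence integrally closed, and $Q_1, Q_2$ are monic factors of the monic polynomial $P_i \in \V_{\a'}[Z]$, their coefficients in fact lie in $\V_{\a'}$. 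Lemma \ref{rel} ensures $\d$ remains $(\a')$-homogeneous, so we may assume $\a \in \N^n$.

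Next, by Artin approximation in the excellent Henselian ring $R := \k\langle\x\rangle$ applied to the polynomial system ``$\Di_Z(\tilde P) = \d \tilde u$ with $\tilde u$ a unit'', I would obtain $\tilde P \in R[Z]$ arbitrarily close to $P_i$ in the $\m$-adic (and hence $\nu_\a$) topology with $\Di_Z(\tilde P) = \d \tilde u$. For $\tilde P$ sufficiently close to $P_i$, a Hensel-type stability of the $\F_n[Z]$-factorization of a separable polynomial with prescribed discriminant structure ensures that $\tilde P$ remains $\F_n[Z]$-irreducible, hence $R[Z]$-irreducible. Proposition \ref{cor_factor_limit} itself, applied in $V^{\fg}_{\nu_\a}[Z]$, transfers the factorization $P_i = Q_1 Q_2$ to a nontrivial factorization $\tilde P = \tilde Q_1 \tilde Q_2$ with $\tilde Q_j$ close to $Q_j$, and I would argue that the growth condition defining $\V_\a$ is preserved under this perturbation, placing $\tilde Q_j \in \V_\a[Z]$. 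Theorem \ref{main2hens} applied to $\tilde P \in R[Z] \subset \V_\a^R[Z]$ then places the roots of $\tilde P$ in $\V_\a^R[\langle\g_1', \ldots, \g_N'\rangle]$ for integral homogeneous elements $\g_j'$; uniqueness of the $\g'$-expansion (the $\g_j'$ being linearly independent over $\V_\a^R$) forces $\tilde Q_j \in \V_\a[Z] \cap \V_\a^R[\langle\g'\rangle][Z] = \V_\a^R[Z]$.

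Now all coefficients of $\tilde P, \tilde u, \d$ and of the $\tilde Q_j$ are algebraic over $\k[\x]$ and involve only finitely many $\k$-data. Let $\k_0 \subset \k$ be a subfield finitely generated over $\Q$ containing all these data. Fix an embedding $\sigma : \k_0 \hookrightarrow \C$, which extends to $\sigma : \k_0\langle\x\rangle \hookrightarrow \C\langle\x\rangle \subset \C\{\x\}$. Proposition \ref{analytic_disc} applied to $\sigma\tilde P \in \C\{\x\}[Z]$, whose discriminant is $\sigma\d \cdot \sigma\tilde u$ of the required form, yields that its $\C\{\x\}$-irreducible factors are $\V_\a^{\C\{\x\}}$-irreducible. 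The nontrivial $\V_\a$-factorization $\sigma\tilde P = \sigma\tilde Q_1 \cdot \sigma\tilde Q_2$ therefore produces at least two $\C\{\x\}$-irreducible factors of $\sigma\tilde P$; each $\sigma\tilde Q_j$, being a product of some of these, lies in $\C\{\x\}[Z] \cap \V_\a^{\C\langle\x\rangle}[Z]$, and algebraicity forces $\sigma\tilde Q_j \in \C\langle\x\rangle[Z]$.

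\textbf{The main obstacle} is the final Galois descent: concluding that $\sigma\tilde Q_j$ in fact lies in $\sigma(\k_0\langle\x\rangle)[Z]$, which would give a nontrivial $R[Z]$-factorization of $\tilde P$ and contradict the $R[Z]$-irreducibility established earlier. This descent requires enlarging $\k_0$ so as to absorb the finitely many algebraic data of the $\C\langle\x\rangle$-irreducible factorization of $\sigma\tilde P$, and then exploiting the invariance of the partition $\{\sigma\tilde Q_1, \sigma\tilde Q_2\}$ of these irreducible factors under $\Gal(\overline{\sigma(\k_0)}/\sigma(\k_0))$ to conclude that each $\sigma\tilde Q_j$ is defined over $\sigma(\k_0\langle\x\rangle)$. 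A secondary technical point that must be checked is the stability of both $\F_n[Z]$-irreducibility and the growth-condition definition of $\V_\a$ under the $\nu_\a$-adic perturbation $P_i \leadsto \tilde P$, which uses the rigidity imposed by the fixed weighted homogeneous polynomial $\d$ in the discriminant.
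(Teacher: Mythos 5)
Your proposal follows the same broad strategy as the paper: reduce to a setting where Proposition \ref{analytic_disc} applies by approximating $P$ via the Artin approximation theorem, and then descend from $\C$ back to $\k$. The reordering (reducing to $\a\in\N^n$ before, rather than inside, the analytic step) and the choice of $R=\k\langle\x\rangle$ rather than $\C\{\x\}$ are legitimate variations. However, the descent step that you flag as ``the main obstacle'' is a genuine gap, and your proposed Galois-descent framing is not the route the paper uses nor, as stated, a route that closes on its own: knowing that $\{\sigma\tilde{Q}_1,\sigma\tilde{Q}_2\}$ is permuted by $\Gal(\overline{\sigma(\k_0)}/\sigma(\k_0))$ does not make each $\sigma\tilde{Q}_j$ individually invariant.

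What you are missing is Proposition \ref{CK}, which the paper invokes precisely at this point, together with the simple intersection $\V_{\a}\cap\k[\g]=\k$. Concretely: the coefficients of $\sigma\tilde{Q}_j$ lie in $\C\{\x\}$ (by Prop.\ \ref{analytic_disc}) and are algebraic over $\sigma(\k_0)\lb\x\rb$, so by Proposition \ref{CK} they generate a \emph{finite} extension $\L=\sigma(\k_0)[\g]$ of $\sigma(\k_0)$; but by your own step via Theorem \ref{main2hens} those coefficients also lie in $\V_{\a}^{\sigma(\k_0)\langle\x\rangle}$, whose $\nu_\a$-homogeneous components are $\sigma(\k_0)$-rational, forcing $\L=\sigma(\k_0)$. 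This one-line observation replaces the Galois-theoretic enlargement of $\k_0$ you sketch, and in fact renders it unnecessary (once the coefficients are known to be $\sigma(\k_0)$-valued, Galois-invariance is automatic and carries no new content). In other words, the information you already secured in your step applying Theorem \ref{main2hens} --- that $\tilde{Q}_j\in\V_{\a}^{\k\langle\x\rangle}[Z]$, with expansion coefficients over the base field --- is exactly what completes the descent, but you dropped it when you wrote $\sigma\tilde{Q}_j\in\V_{\a}^{\C\langle\x\rangle}[Z]$ rather than $\V_{\a}^{\sigma(\k_0)\langle\x\rangle}[Z]$.

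Two smaller points. First, ``$\k_0$ finitely generated over $\Q$'' is not immediate: the coefficients of $\tilde{Q}_j\in\V_\a^R[Z]$ involve countably many elements of $\k$ a priori, and justifying finite generation again requires Proposition \ref{CK}; countably generated is easy and suffices for the embedding into $\C$. Second, the discriminant of $P_i$ is not $\d u$ but $\d_i u_i$ for some $(\a)$-homogeneous divisor $\d_i\mid\d$ (a monic factor of a monic polynomial contributes a divisor of the discriminant, and every $\F_n$-divisor of $\d u$ is a weighted homogeneous polynomial times a unit), so the Artin system must be set up with the correct $\d_i$, or, as the paper does, one approximates the full factorization $P=P_1\cdots P_s$ simultaneously.
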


\begin{proof}
Let us prove this theorem when $P(Z)\in\C\{\x\}[Z]$. If $\a\in\N^n$, this is exactly Proposition \ref{analytic_disc}. If $\a\notin\N^n$, then by Lemma \ref{approx}, any decomposition $P(Z)=Q_1(Z)\cdots Q_r(Z)$ in $\V_{\a}[Z]$ is also  a decomposition in $\V_{\a'}[Z]$ for $\a'\in\Rel(\a,q,\e)$ where $\e$ is small enough. Then every irreducible monic factor of $Q_i(Z)$ in $\V_{\a'}[Z]$ is in $\C\{\x\}[Z]$ by Proposition \ref{analytic_disc}, thus $Q_i(Z)\in\C\{\x\}[Z]$ for every $i$. In particular since the $Q_i(Z)$ are irreducible in $\V_\a[Z]$ then they are irreducible polynomials of $\C\{\x\}[Z]$.\\
\\
Now let us consider the general case.
Let 
$$P(Z)=Z^d+a_{d-1}(\x)Z^{d-1}+\cdots+a_0(\x)$$
 be a polynomial satisfying the hypothesis of the theorem with $a_k(\x)\in\k\lb \x\rb $ for $0\leq k\leq d-1$. Since $P(Z)$ is defined over a field extension of $\Q$  generated by countably many elements and since such a field extension embeds in $\C$,  we may assume that $\C$ is a field extension of $\k$ and $P(Z)\in\C\lb \x\rb$.\\ 
The discriminant of $P(Z)$ is a polynomial depending on the coefficients $a_0(\x)$, \ldots , $a_{d-1}(\x)$ that we denote by $D(a_0(\x), \ldots ,a_{d-1}(\x))$. Let 
$$R(A_0, \ldots ,A_{d-1},U):=D(A_0, \ldots ,A_{d-1})-\d(\x)U\in\C[\x][A_0, \ldots ,A_{d-1},U].$$
Then $R(a_0(\x), \ldots ,a_{d-1}(\x),u(\x))=0$. \\
On the other hand, saying that $P(Z)$ factors as $P=P_1\cdots P_s$ is equivalent to
$$\exists b_1(\x), \ldots , b_r(\x) \text{ such that } a_i(\x)=R_i(b_1(\x), \ldots ,b_r(\x)) \ \ \ \forall i$$
for some polynomials $R_i(B_1, \ldots ,B_r)\in\Q[B_1, \ldots ,B_r]$, $0\leq i\leq d-1$ (these $R_i$ are the coefficients of $Z^i$ in the product $P_1(Z)\cdots P_s(Z)$ and the $b_j$  are the coefficients of the $P_k(Z)$).

By  Artin Approximation Theorem \cite{Ar}, for any integer $c>0$ there exist convergent power series 
$$\ovl{a}_{0,c}(\x), \ldots ,\ovl{a}_{d-1,c}(\x), \ovl{u}_c(\x),\ovl{b}_{1,c}(\x), \ldots ,\ovl{b}_{r,c}(\x)\in\C\{\x\}$$
 such that
\begin{equation}\label{eq_disc}R(\ovl{a}_{0,c}(\x), \ldots , \ovl{a}_{d-1,c}(\x), \ovl{u}_c(\x))=0,\end{equation}
\begin{equation}\label{eq_factors}\ovl{a}_{i,c}(\x)-R_i(\ovl{b}_{1,c}(\x), \ldots ,\ovl{b}_{r,c}(\x))=0 \text{ for } 0\leq i\leq d-1\end{equation}
and 
$$\ovl{a}_{k,c}(\x)-a_k(\x),\ \ovl{u}_c(\x)-u(\x),\ \ovl{b}_{l,c}(\x)-b_l(\x)\in (\x)^c$$
 for $0\leq k\leq d, 1\leq l\leq r.$
Set 
$$P_{(c)}(Z):=Z^d+\ovl{a}_{d-1,c}(\x)Z^{d-1}+\cdots+\ovl{a}_{0,c}(\x).$$
 Then $P_{(c)}(Z)$ factors as

$$P_{(c)}(Z)=P_{1,(c)}(Z)\cdots P_{s,(c)}(Z)$$ 
in $\C\{\x\}[Z]$ because of Equation (\ref{eq_factors}) (the coefficients of the $P_{i,c}(Z)$ are the $b_{k,c}$) , and $P_{i,(c)}(Z)-P_i(Z)\in(\x)^c\k\lb \x\rb [Z]$ for $1\leq i\leq s$. Moreover the discriminant of $P_{(c)}(Z)$ is of the form $\d(\x)u_{(c)}$ where $u_{(c)}$ is a unit in $\C\{\x\}$ if $c\geq 1$  by Equation (\ref{eq_disc}). Since $P_i(Z)$ is irreducible in $\k\lb \x\rb [Z]$, then $P_{i,(c)}(Z)$ is irreducible in $\k\lb \x\rb [Z]$ for all $i$ for $c$ large enough (let us say for $c\geq c_0$). Moreover we can remark that $\nu_{\a}(a)\geq\min_i\{\a_i\}\ord(a)$ for any $a\in\k\lb \x\rb $, thus $\nu_{\a}(\ovl{b}_{k,c}(\x)-b_k(\x))\geq \min_i\{\a_i\}c$ .\\

Let $c\geq c_0$ and let us assume that $P_{i,(c)}(Z)$ is not irreducible in $\V_{\a}[Z]$. Thus it is the product of two monic polynomials: let us say 
$$P_{i,(c)}(Z)=P_{i,(c),1}(Z)P_{i,(c),2}(Z)$$ 
with $P_{i,(c),1}(Z)$, $P_{i,(c),2}(Z)\in\V_{\a}[Z]$ and $\deg_Z(P_{i,(c),k}(Z))>0$ for $k=1,2$.  In fact by Theorem \ref{main2hens} we may assume that $P_{i,(c),1}(Z)$, $P_{i,(c),2}(Z)\in\V^{\C\{\x\}}_{\a}[Z]$. By Proposition \ref{analytic_disc} we see that $P_{i,(c),1}(Z)$, $P_{i,(c),2}(Z)\in\C\{x\}[Z]$, and by Proposition \ref{CK} $P_{i,(c)1}(Z)$, $P_{i,(c),2}(Z)\in\L\{x\}[Z]$ where $\L$ is a subfield of $\C$ which is finite over $\k$. Thus $\L=\k[\g]$ by the Primitive Element Theorem where $\g$ is a homogeneous element of degree 0 with respect to $\nu_{\a}$ by Example \ref{homo_0}. But we have $\V_{\a}\bigcap\k[\g]=\k$. Thus $P_{i,(c),1}(Z)$, $P_{i,(c),2}(Z)\in\k\{ \x\}[Z]\subset\k\lb\x\rb[Z]$ which contradicts the assumption that $P_{i,(c)}$ is irreducible in $\k\lb \x\rb [Z]$. Thus $P_{i,(c)}(Z)$ is irreducible in $\V_{\a}[Z]$.
Hence, by Corollary \ref{cor_factor_limit}, $P_i(Z)$ is irreducible in $\V_{\a}[Z]$ since $\nu_{\a}(\ovl{b}_{k,c}(\x)-b_k(\x))$ increases at least linearly with $c$.

\end{proof}
\noindent The next  proposition is a generalization of a result of S. Cutkosky and O. Kashcheyeva \cite{C-K} (see also Proposition 1 \cite{AM}) and we will use it to prove Theorem \ref{strong_analytic}. It is again an application of Theorem \ref{main2}.
\begin{prop}\label{CK}
Let $\k\lgw \k'$ be a characteristic zero field extension. Let $f\in\k'\lb\x\rb$ be algebraic over $\k\lb \x\rb$ and let $\L$ be the  field extension  of $\k$ generated by all the coefficients of $f$. Then $\k\lgw \L$ is a finite field extension. 
\end{prop}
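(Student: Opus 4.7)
The plan is to exploit the fact that any polynomial $P(T)\in\k\lb\x\rb[T]$ annihilating $f$ is fixed coefficient-wise under the natural action of $\k$-embeddings of $\k'$ into a large algebraically closed ambient field. This bounds the orbit of $f$ by $\deg_T P$, and hence the orbit of each coefficient of $f$, forcing each coefficient to be algebraic over $\k$ with a uniform degree bound.

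First, fix a non-zero polynomial $P(T)\in\k\lb\x\rb[T]$ of $T$-degree $d$ with $P(f)=0$. Let $\Omega$ be an algebraically closed field containing $\k'$ and having transcendence degree over $\k$ large enough that any $\k$-embedding of a subfield of $\k'$ into $\Omega$ extends to a $\k$-embedding $\k'\to\Omega$ (a \emph{universal domain} over $\k$, built as the algebraic closure of $\k'$ adjoined a sufficiently large transcendental family). For any such $\s:\k'\to\Omega$, extend $\s$ coefficient-wise to $\s:\k'\lb\x\rb\to\Omega\lb\x\rb$; this extension restricts to the identity on $\k\lb\x\rb$, so $P(\s(f))=\s(P(f))=0$ and $\s(f)$ is a root of $P$ in the integral domain $\Omega\lb\x\rb$. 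Since $P$ has at most $d$ roots there, $\{\s(f):\s\}$ has cardinality at most $d$; in particular $\{\s(c_{\b}):\s\}$ has cardinality at most $d$ for every coefficient $c_{\b}$ of $f$.

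If some $c_{\b}$ were transcendental over $\k$, then for each element $c'\in\Omega$ transcendental over $\k$ the isomorphism $\k(c_{\b})\to\k(c')$ sending $c_{\b}\mapsto c'$ is a $\k$-embedding that extends to a $\k$-embedding $\k'\to\Omega$ by the choice of $\Omega$; since $\Omega$ contains infinitely many $\k$-transcendentals, this produces infinitely many distinct values $\s(c_{\b})$, contradicting the previous bound. Hence every $c_{\b}$ lies in $\ovl{\k}\subset\Omega$, and so $\L\subseteq\ovl{\k}$.

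To conclude, the map $\text{Hom}_{\k}(\L,\ovl{\k})\to\ovl{\k}\lb\x\rb$ sending $\tau\mapsto\tau(f)$ (where $\tau$ is extended to $\k'\to\Omega$ and applied coefficient-wise) is well-defined, since the result depends only on $\tau|_{\L}$; it is injective, since two embeddings of $\L$ agreeing on every $c_{\b}$ agree on $\L=\k(c_{\b}:\b)$; and its image consists of roots of $P$, of which there are at most $d$. Thus $|\text{Hom}_{\k}(\L,\ovl{\k})|\leq d$, and because $\cha(\k)=0$ the algebraic extension $\L/\k$ is separable, so $|\text{Hom}_{\k}(\L,\ovl{\k})|=[\L:\k]$. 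We conclude $[\L:\k]\leq d$. The only mildly technical point is the choice of $\Omega$, a standard universal-domain construction that presents no real obstacle.
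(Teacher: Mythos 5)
Your proof is correct and takes a genuinely different route from the paper's. The paper's argument relies on the structure theory developed earlier: it picks a monomial valuation $\nu_\a$ with $\Q$-linearly independent weights, invokes Theorem~\ref{main2} to place $f$ in $\V_\a[\langle \g_1,\dots,\g_n\rangle]$, uses Example~\ref{homo_mono} to write each $\g_i=c_i\x^{\b_i}$ with $c_i$ algebraic over $\k$, and then matches $(\a)$-homogeneous terms (which are single monomials thanks to the independence of the weights) to conclude $\L\subseteq\k[c_1,\dots,c_n]$. Your proof instead is entirely self-contained: the universal-domain argument and the orbit-counting under coefficient-wise extensions of $\k$-embeddings $\k'\to\Omega$ need nothing from the rest of the paper, and it yields the clean quantitative bound $[\L:\k]\le\deg_T P$ for any annihilating $P$, whereas the paper's proof yields a more structural but less explicit description of $\L$. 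The only place you are slightly imprecise is the line ``$|\mathrm{Hom}_\k(\L,\ovl\k)|=[\L:\k]$'': for a possibly infinite algebraic extension this equality is not literally true (e.g.\ $\Q(\sqrt2,\sqrt3,\dots)/\Q$), so one should first observe that $|\mathrm{Hom}_\k(\L,\ovl\k)|\le d<\infty$ already forces $\L/\k$ to be finite (otherwise restriction to finite subextensions of unbounded degree would give arbitrarily many embeddings), after which the separable-degree equality applies. This is a routine fix and does not affect the argument's validity.
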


\begin{proof}
Let $\a\in\R_{>0}^n$ such that $\dim_{\Q}(\Q\a_1+\cdots+\Q\a_n)=n$. By Theorem \ref{main2} the roots of the minimal polynomial of $f$ are in $\V_{\a}[\langle \g_1, \ldots , \g_n\rangle]$ for some homogeneous elements $\g_1$,  \ldots , $\g_n$ with respect to $\nu_\a$. Let us denote by $\V'_\a$ the ring defined in Definition \ref{growth'} and Lemma \ref{VV} where $\k$ is replaced by $\k'$. Then $\k'\lb \x\rb$ and $\V_{\a}[\langle \g_1, \ldots , \g_n\rangle]$ are subrings of $\V_{\a}'[\langle \g_1, \ldots , \g_n\rangle]$. Thus by unicity of the roots of the minimal polynomial of $f$ we have that $f\in \V_{\a}[\langle \g_1, \ldots , \g_n\rangle]$.
 By Example \ref{homo_mono} the homogeneous elements $\g_i$ may be written as  $\g_i=c_i\x^{\b_i}$ where $c_i$ is algebraic over $\k'$ (and so over $\k$) and $\b_i\in\Q^n$ for $1\leq i\leq n$.\\
By expanding $f$ either as a formal power series of $\k'\lb \x\rb$, $f=\sum_ib_i(\x)$ where $b_i(\x)\in\k'[\x]$ is a $(\a)$-homogeneous polynomial for any $i$, either as an element of $\V_{\a}[\langle \g_1, \ldots , \g_n\rangle]$, $f=\sum_i\frac{a_i(\x)}{\d(\x)^{m(i)}}\g_1^{k_1(i)}...\g_n^{k_n(i)}$, and by identifying the homogeneous terms of same valuation (which are monomials by Example \ref{homo_mono}), we obtain a countable number of relations of the following form:
\begin{equation}\label{cut}b(\x)\d^m(\x)=\sum_{} a_{n_1, \ldots ,n_s}(\x)\g_1^{n_1}...\g_n^{n_n}\end{equation}
 where $b(\x)$ (corresponding to the $b_i(\x)$), $a_{n_1, \ldots ,n_s}(\x)$ (corresponding to the $a_i(\x)$) and $\d$ are monomials, $b(\x)\in\k'[\x]$, $a_{n_1, \ldots ,n_s}(\x)\in\k[\x]$, $m\in\N$, and the sum is finite.  By dividing Equality \eqref{cut} by $\x^{\b}$ for $\b$ well chosen, we see that the coefficient of $b(\x)$ is in $\k[c_1, \ldots ,c_n]$ and $\L$ is a subfield of $\k[c_1, \ldots ,c_n]$.
\end{proof}

We can strengthen Theorem \ref{AJ_gen} as follows:
\begin{theorem}\label{strong_analytic}
Let $\a\in\R_{>0}^n$ and let $P(Z)\in\k\lb \x\rb[Z]$ be a monic polynomial such that its discriminant $\D=\d u$ where $\d\in\k[\x]$ is $(\a)$-homogeneous and $u\in\k\lb\x\rb$ is a unit. Let us set $N:=\dim_{\Q}(\Q\a_1+\cdots+\Q\a_n)$. Then there exist $\g_1$, \ldots , $\g_N$ integral homogeneous elements with respect to $\nu_{\a}$ and a $(\a)$-homogeneous polynomial $c(\x)\in\k[\x]$ such that the roots of $P(Z)$ are in $\frac{1}{c(\x)}\k'\lb\x\rb[\g_1, \ldots , \g_N]$ where $\k\lgw \k'$ is finite.
\end{theorem}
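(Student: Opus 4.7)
My plan is to combine the factorization theorem \ref{AJ_gen} and the splitting result \ref{main2} with an integral-closure argument exploiting the $(\a)$-homogeneous structure of the discriminant. Factor $P=P_1\cdots P_s$ in $\F_n[Z]$ into monic irreducibles, possibly after a finite field extension $\k\lgw\k'$ and an application of Hensel's lemma to the reduced form of $P(Z,0)$ so as to make each $P_i$ distinguished. By Theorem \ref{AJ_gen}, each $P_i$ remains irreducible in $\V_{\a}[Z]$; Theorem \ref{main2} together with Proposition \ref{bound} then produces a single family of integral homogeneous elements $\g_1,\ldots,\g_N$ with respect to $\nu_{\a}$ such that every root of $P$ lies in $\V_{\a}[\langle\g_1,\ldots,\g_N\rangle]$, and by Remark \ref{Galois} these roots already lie in the finite extension $\K_n[\g_1,\ldots,\g_N]$ of $\K_n$.

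The key step is then to pin down a common $(\a)$-homogeneous denominator. Put $B:=\k'\lb\x\rb[\g_1,\ldots,\g_N]$, which is a free $\k'\lb\x\rb$-module of finite rank. By Definition \ref{integral}, each $\g_i$ has minimal polynomial over the previous level of the tower that is $(\nu_{\a},d_i)$-homogeneous with coefficients being $(\a)$-homogeneous polynomials in $\k'[\x]$. Consequently the relative discriminants along the tower $\k'\lb\x\rb\subset\k'\lb\x\rb[\g_1]\subset\cdots\subset B$ are $(\a)$-homogeneous polynomials in $\k'[\x]$; let $c_0(\x)\in\k'[\x]$ be their product. Then $B[1/c_0]$ is \'etale over $\k'\lb\x\rb[1/c_0]$ and hence integrally closed, so the integral closure $\wdt{T}$ of $\k'\lb\x\rb$ inside $\K_n[\g_1,\ldots,\g_N]$ satisfies $\wdt{T}[1/c_0]=B[1/c_0]$. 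Noetherianness ($\k'\lb\x\rb$ being excellent) yields $c_0^M\wdt{T}\subset B$ for some $M\geq 0$. The root $z$ is integral over $\k\lb\x\rb$ (as $P$ is monic) and sits in $\K_n[\g_1,\ldots,\g_N]$, hence in $\wdt{T}$, giving $z\in\frac{1}{c_0(\x)^M}\,\k'\lb\x\rb[\g_1,\ldots,\g_N]$.

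To replace $c_0\in\k'[\x]$ by a polynomial $c(\x)\in\k[\x]$, set $c(\x):=N_{\k'/\k}(c_0^M)$ after if needed enlarging $\k'$ to its Galois closure over $\k$; this is $(\a)$-homogeneous with $\k$-coefficients and is divisible by $c_0^M$ in $\k'[\x]$, so the target inclusion $z\in\frac{1}{c(\x)}\k'\lb\x\rb[\g_1,\ldots,\g_N]$ follows. I expect the main obstacle to be the rigorous justification that the integral closure $\wdt{T}$ is precisely captured by the \'etale property at the discriminant $c_0$ and that the finite extension $\k'$ already absorbs both the residue-field contributions coming from this integral closure and from the degree-zero integral homogeneous elements of Example \ref{homo_0}. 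These points rest on the propagation of $(\a)$-homogeneity through products and discriminants via Lemma \ref{homogenes1}, together with the Henselian nature of $\k'\lb\x\rb$ that keeps the integral closure local.
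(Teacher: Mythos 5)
Your argument has a genuine gap at the step where you invoke Remark~\ref{Galois} to place the roots in $\K_n[\g_1,\ldots,\g_N]$.

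After Theorem~\ref{main2} and Proposition~\ref{bound}, a root $z$ of $P$ lies in $\V_{\a}[\langle\g_1,\ldots,\g_N\rangle]$, so $z=\sum_{\underline{i}}A_{\underline{i}}\g_1^{i_1}\cdots\g_N^{i_N}$ with $A_{\underline{i}}\in\KK_{\a}$. A Vandermonde argument (the one the paper uses to show $\ovl{\K}^{\alg}_{\nu}$ is the algebraic closure of $\K_n$ in $\ovl{\K}_{\nu}$) shows each $A_{\underline{i}}$ is algebraic over $\K_n$, hence lies in $\K_{\nu_\a}^{\alg}$ — so $z\in\K_{\nu_\a}^{\alg}[\g_1,\ldots,\g_N]$. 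But $\K_{\nu_\a}^{\alg}$ is the Henselization of $\K_n$ in $\wdh{\K}_{\nu_\a}$, which is vastly bigger than $\K_n$; the assertion $z\in\K_n[\g_1,\ldots,\g_N]$ (equivalently, the $A_{\underline{i}}$ are ratios of formal power series) is essentially the content of the theorem you are trying to prove. Remark~\ref{Galois} gives only an isomorphism of Galois groups $\Gal(\L/\K_n)\simeq\Gal(\K_n[\g_1,\ldots,\g_N]/\K_n)$ together with the equality of composita $\L.\K_{\nu_\a}^{\alg}=\K_n[\g_1,\ldots,\g_N].\K_{\nu_\a}^{\alg}$; this does not imply $\L=\K_n[\g_1,\ldots,\g_N]$. (Compare $\L=\Q(\sqrt{3})$ and $\K_n[\g]=\Q(\sqrt{6})$ over $\K_n=\Q$ inside $\Q(\sqrt{2},\sqrt{3})$, with $\K_{\nu_\a}^{\alg}$ played by $\Q(\sqrt{2})$: both are Galois of degree~$2$, both have the same compositum with $\Q(\sqrt{2})$, yet they are distinct.) In fact the paper uses Remark~\ref{Galois} in its proof of this theorem only to describe the conjugate shape of the roots, not to descend the coefficients $A_{\underline{i}}$ to $\K_n$.

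The paper's proof supplies exactly the missing input by transcendental means: it shows that $c(\x)A_{\underline{i}}$ (with $c$ a product of Vandermonde determinants and their conjugates, hence $(\a)$-homogeneous) is analytic on the hornshaped domain $\mathcal{D}_{\th,C,a,\eta}$ of Lemma~\ref{D2}, is then globally analytic on $B(0,\eta)\backslash\th^{-1}(0)$ by the monodromy Lemma~\ref{monodromy}, and is bounded near the origin, hence convergent. General weights are handled by the $\Rel(\a,q,\e)$-approximation of Section~\ref{section_appro}, and the general ground field by Artin approximation together with Proposition~\ref{CK}. Your \'etale/integral-closure bookkeeping (the $c_0^{M}\wdt{T}\subset B$ step and the norm $N_{\k'/\k}$ to return to $\k$-coefficients) is a perfectly reasonable alternative way to identify a common $(\a)$-homogeneous denominator \emph{once} the roots are known to lie in $\K'_n[\g_1,\ldots,\g_N]$ — but as written, that premise is assumed rather than proved, and it is where the real difficulty (and the paper's analytic machinery) resides.
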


\begin{rmk}\label{Galois}
This result shows that for a given  root $z$ of the polynomial $P(Z)$ the other roots of $P(Z)$ are obtained from $z$ by the action of the elements of the Galois groups of the elements $\g_1$, \ldots , $\g_N$ on $z$. For instance if $\a\in\N^n$ (so $N=1$ -- we can always assume this by Lemma \ref{rel}), then the Galois group of $P(Z)$ is a quotient of the Galois group of the minimal polynomial of $\g_1$, i.e.  the Galois group of one weighted homogeneous polynomial.
\end{rmk}

\begin{proof}[Proof of Theorem \ref{strong_analytic}]
If $Q(Z)$ is a monic polynomial dividing $P(Z)$ in $\k\lb \x\rb[Z]$, then the discriminant of $Q(Z)$ divides the discriminant of $P(Z)$. Thus we may assume that $P(Z)$ is irreducible.\\
We will  consider three cases: first the case where the coefficients of $P(Z)$ are complex analytic with $\a\in\N^n$, then with $\a\in\R_{>0}^n$, and finally the general case.\\
\\
$\bullet$ Let us assume that $\a\in\N^n$ and that $P(Z)\in\C\{\x\}[Z]$. By Theorem \ref{main2} the roots of $P(Z)$ are of the form 
$$\displaystyle\sum_{i_1, \ldots ,i_s}A_{i_1, \ldots ,i_s}\g_1^{i_1}\cdots \g_s^{i_s}$$
 where $\g_1$,  \ldots , $\g_s$ are integral homogeneous elements with respect to $\nu_{\a}$ and $A_{i_1, \ldots ,i_s}\in\KK_{\a}^{\C\{\x\}}$ for any $i_1$,  \ldots , $i_s$. We may even choose $s=1$ by Proposition \ref{bound}, but we treat here the general case $s\geq 1$ that will be used in the sequel. \\
\\
We replace $\g_1$, \ldots , $\g_s$ by other integral  homogeneous elements with respect to $\nu_\a$ as follows: let us denote by $\g_{1,1}:=\g_1$,  \ldots , $\g_{1,q_1}$ the conjugates of $\g_1$ over $\K_n$. If $\g_2\notin \K_n[\g_{1,1}, \ldots , \g_{1,q_1}]$ we denote by $\g_{2,1}:=\g_2$,  \ldots , $\g_{2,q_2}$ its conjugates over $\K_n[\g_{1,1}, \ldots , \g_{1,q_1}]$ and so on. So for 
 $1\leq l\leq s$, $q_l$ denotes the degree of the minimal polynomial of $\g_l$ over $\K_n[\g_{i,j}]_{1\leq i<l, 1\leq j\leq q_i}$, and for $1\leq l\leq s$, $\g_{l,1}$,  \ldots , $\g_{l,q_l}$ denote the conjugates of $\g_l=\g_{l,1}$ over $\K_n[\g_{i,j}]_{1\leq i<l, 1\leq j\leq q_i}$. Then we may assume that the roots of $P(Z)$ are of the form
$$\sum_{{\begin{subarray}{c}0\leq i_1< q_1\\ \cdots\\ 0\leq i_s<q_s\end{subarray}}}A_{i_1, \ldots ,i_s}\g_{1,j_1}^{i_1}\cdots \g_{s,j_s}^{i_s}$$
where $A_{i_1, \ldots ,i_s}\in\KK_{\a}^{\C\{\x\}}$, $\nu_{\a}\left(A_{i_1, \ldots ,i_s}\g_{1,j_1}^{i_1}\cdots \g_{s,j_s}^{i_s}\right)\geq 0$ for any $i_1$,  \ldots , $i_s$, and $1\leq j_i\leq q_i$ for any $i$.\\
Let us assume that $P(Z)$ factors into a product of monic irreducible polynomials as $P(Z)=P_1(Z)\cdots P_r(Z)$ in $\KK_{\a}^{\C\{\x\}}[\g_{i,j}]_{1\leq i<s, 1\leq j\leq q_i}[Z]$.
We write the roots of $P_1(Z)$ as $z_j=\displaystyle\sum_{i=0}^{q_s-1}B_{i}\g_{s,j}^i$ where 
$$B_{i}\in \KK_{\a}^{\C\{\x\}}[\g_{i,j}]_{1\leq i<s, 1\leq j\leq q_i}$$
 for all $i$. Then the roots of the other $P_l(Z)$  are $\displaystyle\sum_{i=0}^{q_s-1}B_{i}'\g_{s,j}^i$ where $(B_0', \ldots ,B'_{q_s-1})$ is the image $(B_0, \ldots , B_{q_s-1})$ by a $\KK_{\a}^{\C\{x\}}$-automorphism of $ \KK_{\a}^{\C\{\x\}}[\g_{i,j}]_{1\leq i<s, 0\leq j<q_i}$. If the roots of $P_1(Z)$ satisfy the theorem, then we see that  the roots of the other $P_l(Z)$  will also satisfy the theorem since they are conjugates of the roots of $P_1(Z)$ by $\KK_{\a}^{\C\{x\}}$-automorphisms of $ \KK_{\a}^{\C\{\x\}}[\g_{i,j}]_{1\leq i<s, 0\leq j<q_i}$. Thus it is enough to prove the result for the roots of $P_1(Z)$. We have

$$\left(\begin{array}{c}z_1 \\ z_2\\ \vdots \\ z_{q_s}\end{array}\right)=\left(\begin{array}{cccc} 1 & \g_{s,1} & \cdots & \g_{s,1}^{q_s-1} \\
 1 & \g_{s,2} & \cdots & \g_{s,2}^{q_s-1}\\
 \vdots &\vdots & \vdots & \vdots\\
 1 & \g_{s,q_s} & \cdots & \g_{s,q_s}^{q_s-1}\end{array}\right)\left(\begin{array}{c}B_0 \\ B_1\\ \vdots\\ B_{q_s-1}\end{array}\right).$$
Let us set $\displaystyle M:=\left(\begin{array}{cccc} 1 & \g_{s,1} & \cdots & \g_{s,1}^{q_s-1} \\
 1 & \g_{s,2} & \cdots & \g_{s,2}^{q_s-1}\\
 \vdots &\vdots & \vdots & \vdots\\
 1 & \g_{s,q_s} & \cdots & \g_{s,q_s}^{q_s-1}\end{array}\right)$. The determinant of $M$ is a homogeneous element $c$ with respect to $\nu_{\a}$ where $\nu_{\a}(c)=\frac{1}{2}q_s(q_s-1)\nu_{\a}(\g_s)$. Thus we have 
 $$B_i=\frac{1}{c}\left(R_{i,1}(\g_{s,1}, \ldots ,\g_{s,q_s})z_1+\cdots+R_{i,s}(\g_{s,1}, \ldots ,\g_{s,q_s})z_{q_s}\right)$$
 where the $R_{i,j}$ are polynomials with coefficients in $\Q$ and the element $R_{i,j}(\g_{s,1}, \ldots ,\g_{s,q_s})$ is homogeneous with respect to $\nu_{\a}$. By multiplying $c$ and $R_{i,1}(\g_{s,1}, \ldots ,\g_{s,q_s})z_1+\cdots+R_{i,s}(\g_{s,1}, \ldots ,\g_{s,q_s})z_{q_s}$ by the conjugates of $c$ over $\k[\x]$ we may assume that $c=c(\x)\in\k[\x]$ is a $(\a)$-homogeneous polynomial. The $z_i$ and the $\g_{s,j}$ are locally analytic on $D_{\th,\eta}:=B(0,\eta)\backslash \th^{-1}(0)$ and bounded near the origin, where $\{\th=0\}$ contains the discriminant locus of $P(Z)$ and of the minimal polynomials of the $\g_i$  and $\eta$ is small enough. Thus  $c(\x)B_i$ is locally analytic on $D_{\th,\eta}$ for $1\leq i\leq q_s$ and is bounded near the origin. Moreover $c(\x)B_i$ is algebraic over $\k\lb \x\rb $ since the $g_{s,j}$ and the $z_k$ are algebraic over $\k\lb \x\rb $. By induction on $s$ (we replace $z_1$,  \ldots , $z_{q_s}$ by $c(\x)B_0$, \ldots , $c(\x)B_{q_s-1}$ - here we just used the fact that the roots of $P(Z)$ are algebraic over $\k\lb \x\rb $ and locally analytic over a domain of the form $D_{\th,\eta}$) we see that there exists a $(\a)$-homogeneous polynomial $c(\x)$ such that $c(\x)A_{\underline{i}}$ is locally analytic on $D_{\th,\eta}$ and bounded near the origin for any $\underline{i}:=(i_1, \ldots , i_s)$. Since $c(\x)A_{\underline{i}}\in\KK_{\a}^{\C\{\x\}}$ and it is bounded near the origin, we see that $c(\x)A_{\underline{i}}\in\V_{\a}^{\C\{\x\}}$. Thus it is analytic on $\mathcal{D}_{\th,C,a,\eta}$ for $C$, $a$ and $\eta$ well chosen (see Lemma \ref{D2}). Hence by Lemma \ref{monodromy} it is analytic on $D_{\th,\eta}$ and since it is bounded near the origin, $c(\x)A_{\underline{i}}\in\C\{\x\}$ for any $\underline{i}$. \\
 \\
 $\bullet$ Now let us consider any $\a\in\R_{>0}^n$ and $P(Z)\in\C\{\x\}[Z]$. Then the roots of $P(Z)$ are in $\V_{\a}[\langle \g_1, \ldots , \g_s\rangle]$ for some integral homogeneous elements with respect to $\nu_{\a}$ denoted by $\g_1$,  \ldots , $\g_s$. Let us denote these roots by $z_1$,  \ldots , $z_d$. For any $\a'\in\N^n$ such that $\Rel_{\a}\subset\Rel_{\a'}$, $\g_1$,  \ldots , $\g_s$ are integral homogeneous elements with respect to $\nu_{\a'}$. Thus, for any $\e>0$ small enough (say $\e<\e_0$), for any $q\in\N$ and any $\a'\in\Rel(\a,q,\e)$, $z_1$, \ldots , $z_d\in\V_{\a'}[\langle\g_1, \ldots , \g_s\rangle]$ by Proposition \ref{approx}. Moreover, by the previous case, we see that 
 $$\forall \e\in]0,\e_0[,\ \forall q\in\N, \ \forall \a'\in\Rel(\a,q,\e),$$
  $$\exists c_{\a'}(\x) \text{ an } (\a')\text{-homogeneous polynomial such that }$$
 $$c_{\a'}(\x)z_1, \ldots , \ c_{\a'}(\x)z_d\in\C\{\x\}[\g_1, \ldots , \g_s].$$
 Moreover we see that that $c_{\a}(\x)$ may be chosen as being the product of the determinants of Vandermonde matrices as $M$ depending only on $\g_1$,  \ldots , $\g_s$, thus $c_{\a'}(\x)$ does not depend on $\a'$. Let us denote $c(\x):=c_{\a'}(\x)$.  Since $c(\x)$ is a $(\a')$-homogeneous polynomial for all $\a'\in\Rel(\a,q,\e)$ then $c(\x)$ is a $(\a)$-homogeneous polynomial. This proves the result.\\
 \\
 $\bullet$ Now let us consider the general case, $\a\in\R_{>0}^n$ and $P(Z)\in\k\lb \x\rb[Z]$ where $\k$ is  a field of characteristic zero.\\
 Let  us write $P(Z)=Z^d+a_{d-1}(\x)Z^{d-1}+\cdots+a_0(\x)$. Exactly as in the proof of Theorem \ref{AJ_gen} we may assume that $\C$ is a field extension of $\k$. Let us use the notations of the proof of Theorem \ref{AJ_gen}. Let 
$$R(A_0, \ldots ,A_{d-1},U):=D(A_0, \ldots ,A_{d-1})-\d(\x)U\in\C[\x][A_0, \ldots ,A_{d-1},U]$$
where $D$ is the universal discriminant of a monic polynomial of degree $d$.
Then $$R(a_0(\x), \ldots ,a_{d-1}(\x),u(\x))=0.$$
By  Artin Approximation Theorem \cite{Ar}, for any integer $c>0$, there exist convergent power series $\ovl{a}_{0,c}(\x)$,  \ldots , $\ovl{a}_{d-1,c}(\x)$, $\ovl{u}_c(\x)\in\C\{\x\}$ such that
\begin{equation}\label{eq_disc'}R(\ovl{a}_{0,c}(\x), \ldots , \ovl{a}_{d-1,c}(\x), \ovl{u}_c(\x))=0,\end{equation}
and 
$$\ovl{a}_{k,c}(\x)-a_k(\x),\ \ovl{u}_c(\x)-u(\x)\in (\x)^c \ \ \text{ for } 0\leq k\leq d.$$
Let $P_{(c)}(Z):=Z^d+\ovl{a}_{d-1,c}(\x)Z^{d-1}+\cdots+\ovl{a}_{0,c}(\x)$. Then $P_{(c)}(Z)$  is irreducible for $c$ large enough (say $c\geq c_0$). Moreover the discriminant of $P_{(c)}(Z)$ is of the form $\d(\x)u_{(c)}$ where $u_{(c)}$ is a unit in $\C\{\x\}$ if $c\geq 1$ by Equation (\ref{eq_disc'}). By the previous case, the roots of $P_{(c)}(Z)$ are in $\frac{1}{c_c(\x)}\C\{\x\}[\g_{1,c}, \ldots ,\g_{N,c}]$ where $\g_{1,c}$,  \ldots , $\g_{N,c}$ are  integral homogeneous elements with respect to $\nu_{\a}$  and $c_c(\x)$ is a $(\a)$-homogeneous polynomial. By Proposition \ref{approx2} and the previous cases, we may assume that the $\g_{i,c}$ does not depend on $c$, thus let us denote $\g_{i,c}$ by $\g_i$. Moreover $c_{c}(\x)$ may be chosen as being the product of the determinants  of Vandermonde matrices as $M$ depending only on $\g_1$,  \ldots , $\g_s$, thus $c_c(\x)$ does not depend on $c$. Let us denote by $c(\x)$ this common $(\a)$-homogeneous polynomial.\\
Thus, when $c$ goes to infinity, we see that  the roots of $P(Z)$ are in $\frac{1}{c(\x)}\C\lb\x\rb[\g_1, \ldots , \g_N]$. Such a root has the form $\sum_{i_1, \ldots ,i_N}A_{i_1, \ldots ,i_N}\g_1^{i_1}\cdots \g_N^{i_N}$ where $i_k$ runs from 0 to $q_k-1$. In this case $c(\x)A_{i_1, \ldots ,i_N}\in\C\lb\x\rb$ is algebraic over $\k\lb\x\rb$, thus   $c(\x)A_{i_1, \ldots ,i_N}\in\k'\lb\x\rb$ where $\k\lgw \k'$ is finite by Proposition \ref{CK}. Thus the roots of $P(Z)$ are in $\frac{1}{c(\x)}\k'\lb\x\rb[\g_1, \ldots , \g_N]$.

\end{proof}

In the case where the $\a_i$  are linearly independent over $\Q$, we can choose $c(\x)=1$. This is exactly the Abhyankar-Jung Theorem:

\begin{corollary}[Abhyankar-Jung Theorem]\label{AJ}
Let $P(Z)\in\k\lb \x\rb [Z]$ be a monic polynomial whose discriminant  has the form $\x^{\b}u(\x)$ where $\b\in\N^n$ and $u(0)\neq 0$. Then there exist an integer $q\in\N$ and a finite field extension $\k\lgw \k'$  such that the roots of $P(Z)$ are in $\k'\lb x_1^{\frac{1}{q}}, \ldots ,x_n^{\frac{1}{q}}\rb$.

\end{corollary}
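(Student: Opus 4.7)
The plan is to apply Theorem \ref{strong_analytic} with a cleverly chosen monomial valuation, and then to eliminate the remaining denominator by an integral closure argument.

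First I would pick weights $\a=(\a_1,\dots,\a_n)\in\R_{>0}^n$ with $\a_1,\dots,\a_n$ linearly independent over $\Q$, so that $N:=\dim_{\Q}(\Q\a_1+\cdots+\Q\a_n)=n$. For this choice the hypothesis of Theorem \ref{strong_analytic} is met trivially: the monomial $\x^{\b}$ is automatically $(\a)$-homogeneous, and $u(\x)$ is a unit since $u(0)\neq 0$. The theorem then yields a finite extension $\k\lgw \k'$, integral homogeneous elements $\g_1,\dots,\g_n$ with respect to $\nu_{\a}$, and a $(\a)$-homogeneous polynomial $c(\x)\in\k[\x]$ such that every root of $P(Z)$ lies in $\frac{1}{c(\x)}\k'\lb\x\rb[\g_1,\dots,\g_n]$.

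Next I would use the $\Q$-linear independence of the $\a_i$ to pin down the shape of $c(\x)$ and of the $\g_i$. By Example \ref{homo_mono} the only $(\a)$-homogeneous polynomials over $\k$ are scalar multiples of monomials, so $c(\x)=\lambda\x^{\eta}$ with $\lambda\in\k^*$ and $\eta\in\N^n$; the scalar $\lambda$ is harmless. By Example \ref{homo_mono'}, each $\g_i$ has the form $c_i\x^{\b_i}$ with $c_i$ algebraic over $\k$ and $\b_i\in\Q_{\geq 0}^n$. Enlarging $\k'$ to a finite field extension $\k''$ of $\k$ containing every $c_i$, and taking $q\in\N$ to be a common denominator of the coordinates of the $\b_i$, the roots of $P(Z)$ now lie in $\x^{-\eta}\k''\lb x_1^{1/q},\dots,x_n^{1/q}\rb$, which is contained in the fraction field of the ring $A:=\k''\lb x_1^{1/q},\dots,x_n^{1/q}\rb$.

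The final step, and the one I expect to require the most care (though it remains short), is to kill the denominator $\x^{\eta}$. Since $P(Z)$ is a monic polynomial with coefficients in $\F_n=\k\lb\x\rb\subset A$, each of its roots is integral over $\F_n$, and a fortiori over $A$. But $A$ is a formal power series ring over a field, so it is a regular local ring, and in particular it is integrally closed. Any element of $\Frac(A)$ which is integral over $A$ must therefore lie in $A$, so the roots of $P(Z)$ are already elements of $\k''\lb x_1^{1/q},\dots,x_n^{1/q}\rb$, as required. The heart of the argument is that the $\Q$-linear independence of the weights forces $c(\x)$ to be a single monomial and each $\g_i$ to be an honest fractional monomial, after which the normality of $A$ disposes of the spurious denominator.
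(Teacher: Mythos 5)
Your proposal is correct, and it takes a genuinely different and shorter route than the paper in the final step. The paper, after obtaining roots in $\frac{1}{\x^{\g}}\k'\lb x_1^{1/q},\dots,x_n^{1/q}\rb$ from Theorem \ref{strong_analytic}, first reduces to the case $P$ irreducible over $\k'\lb\x\rb$ (so $\k'=\k$), and then eliminates the denominator $\x^{\g}$ by a Newton polyhedron argument: it deforms the weight vector, isolates a "first" weight $\a_{t_0}$ where the Newton polyhedron of a root $z$ touches a supporting hyperplane outside $\R_{\geq 0}^n$, and then derives a contradiction by expressing the constant coefficient of $P(Z)$ as a product of conjugate roots $z_{\xi}$ and comparing Minkowski sums of Newton polyhedra, using that this coefficient lies in $\k[\x]$ and hence has Newton polyhedron in $\R_{\geq 0}^n$. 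Your argument replaces all of this with a single observation: the roots lie in $\Frac(A)$ for $A=\k''\lb x_1^{1/q},\dots,x_n^{1/q}\rb$, they are integral over $A$ (being roots of a monic polynomial over $\F_n\subset A$), and $A$, being a formal power series ring over a field, is a normal domain, so the roots already lie in $A$. Both proofs are valid; yours is considerably more economical and relies only on basic commutative algebra (normality of a regular local ring), while the paper's combinatorial route is more elaborate but exposes more of the geometry of the Newton polyhedra involved. One small remark: you should make explicit that $\k''$ can be taken to contain $\k'$ (you do say "enlarging $\k'$", which is fine) so that $\k'\lb\x\rb\subset\k''\lb\x\rb\subset A$ and hence $\frac{1}{c(\x)}\k'\lb\x\rb[\g_1,\dots,\g_n]\subset\Frac(A)$.
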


\begin{proof} 
 By the previous theorem applied to any $\a\in\R_{>0}^n$ satisfying $\dim_{\Q}(\Q\a_1+\cdots+\Q\a_n)=n$, the roots of $P(Z)$ are in $\frac{1}{\x^{\g}}\k'\lb x_1^{\frac{1}{q}}, \ldots ,x_n^{\frac{1}{q}}\rb$ for some $\b\in\N^n$, $q\in\N$ and $\k\lgw \k'$ a finite field extension. Since  the discriminant of any monic factor of $P(Z)$ in $\k'\lb x_1, \ldots ,x_n\rb[Z]$ divides the discriminant of $P(Z)$, we may assume that $P(Z)$ is irreducible in $\k'\lb x_1, \ldots ,x_n\rb[Z]$, thus we assume that $\k'=\k$.\\
 Let $z$ be a root of $P(Z)$ and let us denote by $\NP(z)$ its Newton polyhedron. Then $\NP(z)\subset -\g+\R_{\geq 0}^n$. Let us assume that $\NP(z)\not\subset\R_{\geq 0}^n$. This means that there exists $\g'\in\NP(z)$ such that one its coordinates, let us say $\g'_n$, is negative. But since $z$ is a root of $P(Z)$ that is a monic polynomial with coefficients in $\k\lb \x\rb $ then $\nu_{\a}(z)\geq 0$ for any $\a\in\R_{>0}^n$. But in this case there exists $\a\in\R^{n}_{>0}$ such that $\langle \a,\g'\rangle <0$ which is a contradiction. Thus $\NP(z)\subset \R_{\geq 0}^n$ which proves the corollary.
\end{proof}

Let us finish this part by giving a  few results which are analogous to  the fact that if $z\in\C\{t^{\frac{1}{k}}\}$ for some $k\in\N$, $t$ being a single variable, then its minimal polynomial over $\C\lb t\rb$ is a polynomial with convergent power series. The next result can also be seen as the converse of Theorem \ref{main2hens}:

\begin{corollary}
Let $P(Z)\in\k\lb \x\rb [Z]$ be an irreducible monic polynomial whose discriminant has the form $\d(\x)u(\x)$, where $\d(\x)$ is a $(\a)$-homogeneous polynomial, $\a\in\R_{>0}^n$, and $u(\x)\in\k\lb \x\rb $ is invertible. Let us assume that $P(Z)$ has a root in $\V_{\a}^R[\langle \g_1, \ldots , \g_s\rangle]$ where $R$ is an excellent Henselian local ring satisfying Properties (A), (B) and (C) and $\g_1$,  \ldots , $\g_s$ are homogeneous elements with respect to $\nu_{\a}$. Then the coefficients of $P(Z)$ are in $R$.
\end{corollary}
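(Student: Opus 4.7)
My strategy is to combine Theorem~\ref{AJ_gen} with the Artin-approximation argument from the proof of Theorem~\ref{main2hens}. First I would invoke Theorem~\ref{AJ_gen}: since the discriminant of $P$ is $\d u$ with $\d$ $(\a)$-homogeneous and $u\in\F_n$ a unit, $P$ remains irreducible in $\V_\a[Z]$, so $P$ is the minimal polynomial of $z$ over $\F_n$ and over $\V_\a$.

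Next, by the defining property of $\V_\a^R$ and Remark~\ref{denominators}, one may choose $\a'\in\Rel(\a,q,\e)$ and integers $s,k\geq 1$ such that $z':=\d^k\phi_s(z)$ lies in $R[\g_1'',\ldots,\g_N'']$, where $\phi_s\colon x_i\mapsto x_i\d^{s\a_i'}$ and the $\g_j''$ are integral homogeneous elements for $\nu_{\a'}$, extracted from the $\g_j$'s via Lemma~\ref{lemma_ext} and cut down to $N$ via Proposition~\ref{bound}; the minimal polynomials of the $\g_j''$ are $(\a')$-homogeneous, hence lie in $\k[\x]\subset R$. Applying $\phi_s$ to $P(z)=0$ and multiplying by $\d^{dk}$ yields
\[
\wdt{P}(z')=0,\qquad \wdt{P}(Y)=Y^d+\d^k\phi_s(a_{d-1})Y^{d-1}+\cdots+\d^{dk}\phi_s(a_0)\in\F_n[Y].
\]
Writing $z'=\sum_{\underline{i}}z'_{\underline{i}}\g_1''^{i_1}\cdots\g_N''^{i_N}$ with $z'_{\underline{i}}\in R$ and expanding $\wdt{P}(Y)=0$ in the $R$-basis $\{\g_1''^{i_1}\cdots\g_N''^{i_N}\}$ of $R[\g_1'',\ldots,\g_N'']$ produces a finite polynomial system $(\mathcal{S})$ over $R$ in the unknowns $B_j:=\d^{jk}\phi_s(a_{d-j})$ and the components of a candidate root $Y$. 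After enlarging the $\g_j''$'s (once more via Proposition~\ref{bound}) so that every root of $\wdt{P}$ has its components in $R[\g_1'',\ldots,\g_N'']$, the system $(\mathcal{S})$ admits only finitely many solutions in $\wdh{R}$, and Artin's approximation theorem for the excellent Henselian ring $R$ forces each such solution to lie in $R$. In particular $B_j\in R$; dividing by $\d^{jk}\in R$ and using $\Frac(R)\cap\F_n=R$ (faithful flatness of $R\hookrightarrow\wdh{R}$) gives $\phi_s(a_{d-j})\in R$.

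Finally, recovering $a_{d-j}\in R$ from $\phi_s(a_{d-j})\in R$ uses the injectivity of $\phi_s$ on $\F_n$ together with a second Artin-approximation argument on the equation $\phi_s(Y)=\phi_s(a_{d-j})$; since this equation has the unique solution $Y=a_{d-j}$, Artin's theorem yields an $R$-valued solution that must coincide with $a_{d-j}$. The two most delicate points are (i) the enlargement ensuring every root of $\wdt{P}$ lands in $R[\g_1'',\ldots,\g_N'']$, which requires that all Galois conjugates of $z$ themselves lie in a common $\V_\a^R$-extension by integral homogeneous elements---a compatibility between the $\V_\a^R$-structure and the Galois action on homogeneous elements---and (ii) the final $\phi_s$-descent, which is essentially automatic for $R=\k\langle\x\rangle$ or $R=\k\{\x\}$ but in general rests on a careful combination of the hypotheses $(A)$, $(B)$, $(C)$ together with the excellence and Henselianity of $R$.
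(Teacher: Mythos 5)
Your approach deviates substantially from the paper's, and where it deviates it develops genuine gaps.

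The paper's argument is short and conceptual: by Theorem~\ref{AJ_gen} $P$ is irreducible in $\V_\a[Z]$, hence \emph{a fortiori} in $V_{\nu_\a}^{\alg}[Z]$. Writing $z=\sum_{\underline{i}}A_{\underline{i}}\g_1^{i_1}\cdots\g_s^{i_s}$ with $A_{\underline{i}}\in\V_\a^R$, the remaining roots of $P$ are the Galois conjugates of $z$ over $\K_{\nu_\a}^{\alg}$; a $\K_{\nu_\a}^{\alg}$-automorphism $\s$ permutes the homogeneous elements $\g_i$ while fixing the $A_{\underline{i}}$, so each root $\sum_{\underline{i}}A_{\underline{i}}\s(\g_1)^{i_1}\cdots\s(\g_s)^{i_s}$ again lies in $\ovl{\V}_\a^R$. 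The coefficients of $P$, being elementary symmetric functions of the roots, then lie in $\ovl{\V}_\a^R\cap\F_n=R$. No Artin approximation, no transform $\phi_s$, no descent beyond the single identification $\ovl{\V}_\a^R\cap\F_n=R$.

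Your route instead tries to push the Artin-approximation machinery from the proof of Theorem~\ref{main2hens} in the reverse direction, and this is where it fails. Two concrete problems. First, in the system $(\mathcal{S})$ you list both the unknowns $B_j:=\d^{jk}\phi_s(a_{d-j})$ and the components of a candidate root $Y$. But $\wdt{P}(Y)=0$ with both $Y$ and the $B_j$ free is a hugely underdetermined system: for \emph{any} $Y$ the equations in the $B_j$ are linear and solvable, so the solution set in $\wdh{R}$ is positive-dimensional, not finite. The ``enlargement'' of the $\g_j''$'s does nothing to remedy this; the finiteness in Theorem~\ref{main2hens} comes precisely from the coefficients of $\wdt{P}$ being \emph{fixed} elements of $R$, which is exactly what you do not yet know here. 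Second, the final ``$\phi_s$-descent'' is unjustified. The equation $\phi_s(Y)=\phi_s(a_{d-j})$ is a \emph{functional} identity involving precomposition with a non-finite substitution $x_i\mapsto x_i\d^{s\a_i'}$; this is not a polynomial (or even an algebraic) system over $R$ in finitely many indeterminates, so Artin's theorem does not apply to it. You acknowledge the problem but offer no argument, and for a general excellent Henselian $R$ satisfying (A)--(C) the claim is not automatic. The Galois argument in the paper lands the coefficients directly in $\ovl{\V}_\a^R\cap\F_n$, bypassing both of these issues; you should reorganise your proof along those lines rather than trying to invert Theorem~\ref{main2hens}.
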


\begin{proof}
By Theorem \ref{AJ_gen}, $P(Z)$ is irreducible in $\V_{\a}[Z]$. Let 
$$z\in\V_{\a}^R[\langle \g_1, \ldots , \g_s\rangle]$$
 be a root of $P(Z)$ as given in the statement. We can write $z=\sum A_{i_1, \ldots ,i_s}\g_1^{i_1}\cdots \g_s^{i_s}$ where the sum is finite and $A_{i_1, \ldots ,i_s}\in\V_{\a}^R$. Then the others roots of $P(Z)$ are of the form $\sum A_{i_1, \ldots ,i_s}\s(\g_1)^{i_1}\cdots \s(\g_s)^{i_s}$ where $\s$ is a $\K_{\nu_{\a}}^{\alg}$-automorphism of $\ovl{\K}^{\alg}_{\nu}$. Thus all the roots of $P(Z)$ are in $\ovl{\V}_{\a}^R$. Hence the coefficients of $P(Z)$ are in $\ovl{\V}_{\a}^R\cap\k\lb \x\rb =R$.
\end{proof}

\begin{definition}
Let  $\k$ be a valued field and let $\s$ be a strongly  convex rational cone of $\R^n$ containing $\R_{\geq 0}^n$. There exists an invertible $n\times n$ matrix $M=(m_{i,j})_{1\leq i,j\leq n}$ such that $M\g\in\R_{\geq 0}^n$ for any $\g\in\s$.  We denote by $\k\{\x^{\b},\b\in\s\cap\Z^n\}$ the subring of $\k\lb \x^{\b},\b\in\s\cap\Z^n\rb$ of power series $f(\x)$ such that $f(\t(\x))\in\k\{\x\}$ where $\t$ is the map defined by 
$$(\t(x_1), \ldots ,\t(x_n\)=(x_1^{m_{1,1}}\cdots x_n^{m_{1,n}}, \ldots , x_1^{m_{n,1}}\cdots x_n^{m_{n,n}}).$$
By Example \ref{ex_conv} $\k\{\x^{\b},\b\in\s\cap\Z^n\}$ is a subring of $\V_{\a,\d}^{\k\{x\}}$ for any $\a$ such that $\langle\a,\g\rangle>0$ for all $\g\in\s\backslash\{0\}$.
\end{definition}

Let us mention the following theorem proven by A. Gabrielov and J.-Cl. Tougeron by using transcendental methods (they use in a crucial way the maximum principle for analytic functions):

\begin{theorem}\cite{Ga}\cite{To}
Let $P(Z)\in\C\lb\x\rb[Z]$ be an irreducible monic polynomial. If one root of $P(Z)$ is in $\C\{ \x^{\b},\b\in\s\cap\frac{1}{q}\Z^n\}$ where $\s$ is a strongly convex rational cone  and $q\in\N$, then $P(Z)\in\C\{\x\}[Z]$.
\end{theorem}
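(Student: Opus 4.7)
Pick $\a \in \N^n$ such that $\langle \a, \gamma \rangle > 0$ for all $\gamma \in \s \setminus \{0\}$, which exists because $\s$ is a strictly convex rational cone. By Gordan's lemma $\s \cap \Z^n$ is finitely generated, say by $v_1,\ldots,v_k$, and the elements $\g_i := \x^{v_i/q}$ are integral homogeneous with respect to $\nu_\a$. Combining the hypothesis with the remark preceding the theorem gives $z \in \V_\a^{\C\{\x\}}[\langle \g_1,\ldots,\g_k\rangle]$, and by Proposition \ref{bound} we may reduce to integral homogeneous elements $\g_1',\ldots,\g_N'$ with $N:=\dim_\Q(\Q\a_1+\cdots+\Q\a_n)$, so in particular $z \in \ovl{\V}_\a^{\C\{\x\}}$.

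Let $Q(Z)\in\V_\a^{\C\{\x\}}[Z]$ be the minimal polynomial of $z$ over the fraction field of $\V_\a^{\C\{\x\}}$; its coefficients lie in the valuation ring $\V_\a^{\C\{\x\}}$ because that ring is integrally closed and $z$ is integral over it. Arguing as in the proof of the preceding Corollary (the one asserting that if $P$ has weighted-homogeneous discriminant and a root in $\V_\a^R[\langle\g_1,\ldots,\g_s\rangle]$ then $P\in R[Z]$), every $\K_{\nu_\a}^{\alg}$-automorphism of $\ovl{\K}_{\nu_\a}^{\alg}$ sends each $\g_i'$ to a conjugate integral homogeneous element with respect to $\nu_\a$, so all roots of $Q$ lie in $\ovl{\V}_\a^{\C\{\x\}}$. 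The heart of the proof is to promote this from ``roots of $Q$'' to ``all roots of $P$'': one cannot invoke Theorem \ref{AJ_gen} since the discriminant of $P$ need not be of the form $\delta u$, so $P$ may fail to be irreducible in $\V_\a[Z]$ and its remaining irreducible factors over $\wdh{V}_{\nu_\a}$ need not be $\wdh{\K}_{\nu_\a}$-conjugates of $Q$.

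To bridge this gap I would adapt the Gabrielov--Tougeron analytic machinery of Section \ref{part_AJ}. By Lemma \ref{D2}, the coefficients of the $\g_i'$-expansion of $z$ extend to bounded analytic multivalued functions on a hornshaped domain $\mathcal{D}_{\theta,C,a,\eta}$ for a suitable $(\a)$-homogeneous polynomial $\theta$ whose zero set contains the relevant discriminant loci. Since $P$ is irreducible in $\C\lb\x\rb[Z]$, the monodromy group of the branched $d$-sheeted cover $\{P(\x,Z)=0\}\to\C^n$ acts transitively on its sheets, so every root of $P$ arises from $z$ by analytic continuation along loops in $D_{\theta,\eta}$ and is multivalued analytic on $\mathcal{D}_{\theta,C,a,\eta}$ and locally analytic on $D_{\theta,\eta}$. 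Lemma \ref{monodromy} upgrades this to global analyticity on $D_{\theta,\eta}$, and boundedness of the roots near the origin yields a Riemann extension to a full neighbourhood of $0$. Consequently all roots of $P$ lie in $\ovl{\V}_\a^{\C\{\x\}}$, the coefficients of $P$ lie in $\ovl{\V}_\a^{\C\{\x\}}\cap\C\lb\x\rb$, and the identity $\ovl{\V}_\a^{\C\{\x\}}\cap\F_n=\C\{\x\}$ used in the preceding Corollary's proof closes the argument. The main obstacle is precisely this monodromy step: ensuring each analytic continuation of $z$ remains defined and bounded on the hornshaped domain without acquiring essential singularities on $\theta^{-1}(0)$, which is the delicate point handled by Gabrielov and Tougeron via the maximum principle.
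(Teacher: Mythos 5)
First, note that the paper does not actually prove this theorem: it is stated with citations to \cite{Ga} and \cite{To}, accompanied by the explicit remark that their proof uses ``in a crucial way the maximum principle for analytic functions.'' The paper then proves only a \emph{particular case} of the theorem (the lemma and corollary that follow) under an extra hypothesis on the discriminant. So your proposal is attempting to reconstruct an argument the paper deliberately leaves to the cited references.

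Your sketch contains a genuine gap, which you yourself flag at the end, but it is worth making the breakdown precise. Lemma \ref{monodromy} requires two inputs: (a) analyticity of the multivalued function on a hornshaped domain $\mathcal{D}_{\th,C,a,\eta}$, and (b) \emph{local} analyticity on all of $D_{\th,\eta}=B(0,\eta)\setminus\th^{-1}(0)$. You obtain (a) from Lemma \ref{D2}, but you have no source for (b). In the proof of Proposition \ref{analytic_disc}, local analyticity came from the fact that the coefficient under study is a polynomial in the branches of the \emph{analytic} algebroid cover defined by $P\in\C\{\x\}[Z]$, together with the hypothesis that the discriminant is $\d u$ with $\d$ weighted homogeneous, which makes it possible to pick a weighted homogeneous $\th$ whose zero set contains the discriminant locus. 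In your setting neither ingredient is available: the discriminant of $P$ is arbitrary, so no weighted homogeneous $\th$ has $\Di_Z(P)^{-1}(0)\subset\th^{-1}(0)$; and the ``branched $d$-sheeted cover $\{P(\x,Z)=0\}\to\C^n$'' you invoke is not defined as a complex-analytic object, since the coefficients of $P$ are only formal — its analyticity is exactly the conclusion of the theorem, so the monodromy appeal is circular. Consequently you cannot conclude that $z$ or its $\C\lb\x\rb$-conjugates extend locally analytically off the hornshaped domain, which is precisely the step where Gabrielov and Tougeron invoke the maximum principle and which is not reproducible from the machinery developed in this paper.
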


Using what we have done we extend this theorem to any algebraically closed valued field of characteristic zero under the assumption that the discriminant of $P(Z)$ is close to be weighted homogeneous. First we need the following lemma:

\begin{lemma}
Let $P(Z)\in\k\lb \x\rb[Z]$ be an irreducible monic polynomial  where $\k$ is a characteristic zero algebraically closed valued field. Let $\a\in\R_{>0}^n$ such that $\dim_{\Q}(\Q\a_1+\cdots+\Q\a_n)=n$ and $P(Z)$ is irreducible in $\V_{\a}[Z]$. By Theorem \ref{MD}, the roots of $P(Z)$ are in $\k\lb \x^{\b},\b\in\s\cap\frac{1}{q}\Z^n\rb$ where $\s$ is a strongly convex rational cone such that $\langle\a,\g\rangle>0$ for any $\g\in\s$, $\g\neq 0$, and $q\in\N$. If one root of $P(Z)$ is in  $\k\{ \x^{\b},\b\in\s\cap\frac{1}{q}\Z^n\}$, then the others roots of $P(Z)$ are in  $\k\{ \x^{\b},\b\in\s\cap\frac{1}{q}\Z^n\}$ and $P(Z)\in\k\{\x\}[Z]$.
\end{lemma}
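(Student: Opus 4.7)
My plan is to exploit the fact that in our setting the Galois action on the roots of $P(Z)$ reduces to multiplication of monomials in the splitting field by roots of unity, an operation that manifestly preserves convergence.

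First, since $\k$ is algebraically closed and $\a_1,\ldots,\a_n$ are $\Q$-linearly independent, Example \ref{homo_mono'} identifies the integral homogeneous elements with respect to $\nu_\a$ as monomials $\x^{\b}$ with $\b\in\Q_{\geq 0}^n$. Applying Theorem \ref{main2} to $P(Z)$, which is irreducible in $\V_\a[Z]$, I obtain (after enlarging $q$ if necessary) vectors $\b_1,\ldots,\b_N\in\frac{1}{q}\N^n$ such that the roots of $P(Z)$ lie in $\V_\a[\langle\x^{\b_1},\ldots,\x^{\b_N}\rangle]$.

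Next, I invoke Remark \ref{Galois}. Letting $\L$ denote the splitting field of $P(Z)$ over $\K_n$, the isomorphism $\Gal(\L/\K_n)\simeq\Gal(\K_n[\x^{\b_1},\ldots,\x^{\b_N}]/\K_n)$ identifies $\Gal(\L/\K_n)$ with the Pontryagin dual of the finite abelian group $\La/\Z^n$, where $\La=\Z^n+\Z\b_1+\cdots+\Z\b_N\subset\frac{1}{q}\Z^n$. Each character $\chi$ acts on $\K_n[\x^{\b_1},\ldots,\x^{\b_N}]$ by $\x^{\b}\mapsto\chi(\b)\x^{\b}$ for $\b\in\La$ and fixes $\K_n$ because $\chi$ is trivial on $\Z^n$. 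Writing $z_1=\sum_{\g\in\s\cap\frac{1}{q}\Z^n}a_{\g}\x^{\g}$ as a generalized power series in $L$, the Galois conjugate of $z_1$ by $\chi$ therefore takes the form $\sigma_\chi(z_1)=\sum_{\g}a_{\g}\chi(\g)\x^{\g}$, where $\chi(\g)$ depends only on the class of $\g$ in $\La/\Z^n$. Since $P$ is irreducible over $\K_n$, these conjugates exhaust the roots of $P$; and since $|\chi(\g)|=1$, each $\sigma_\chi(z_1)$ has coefficients of the same moduli as $z_1$. Convergence in $\k\{\x^{\b},\b\in\s\cap\frac{1}{q}\Z^n\}$, being detected via the monomial substitution $\t$ by the moduli of the coefficients (cf.\ Example \ref{ex_conv}), is therefore inherited by every $\sigma_\chi(z_1)$, which gives the first assertion.

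For the second assertion, the coefficients of $P(Z)$ are elementary symmetric functions of its roots, hence lie in $\k\{\x^{\b},\b\in\s\cap\frac{1}{q}\Z^n\}$; by hypothesis they are also in $\k\lb\x\rb$, so it remains to verify $\k\lb\x\rb\cap\k\{\x^{\b},\b\in\s\cap\frac{1}{q}\Z^n\}=\k\{\x\}$. This is a direct calculation: if $f=\sum_\alpha a_\alpha\x^\alpha\in\k\lb\x\rb$ and $f(\t(\x))\in\k\{\x\}$ with $\t$ coming from an invertible non-negative integer matrix $M$, then a bound $|a_\alpha|\leq CR^{-|M^T\alpha|_1}$ combined with $|M^T\alpha|_1\geq|\alpha|_1$ (each row of $M$ has sum at least one, because $M$ is invertible with non-negative integer entries) yields $|a_\alpha|\leq CR^{-|\alpha|_1}$, hence $f\in\k\{\x\}$. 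The step I expect to be most delicate is the justification of the form $\sigma_\chi(z_1)=\sum_{\g}a_{\g}\chi(\g)\x^{\g}$: one must carefully match the $\La/\Z^n$-grading of $L$ (by cosets of exponents modulo $\Z^n$) with the $\K_{\nu_\a}^{\alg}$-basis decomposition $\K_{\nu_\a}^{\alg}[\g_1,\ldots,\g_N]=\bigoplus_{\underline j}\K_{\nu_\a}^{\alg}\cdot\g_1^{j_1}\cdots\g_N^{j_N}$ provided by Remark \ref{Galois}, and check that the $\K_{\nu_\a}^{\alg}$-linearity of the Galois action translates into the advertised coset-wise scaling on the coefficients $a_{\g}$; once this bookkeeping is in place, the remaining analytic argument is routine.
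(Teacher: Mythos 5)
Your overall strategy—the Galois action on the roots is by scaling monomials by roots of unity, which have modulus one and hence preserve convergence of the coefficients—is exactly the paper's idea, though the paper packages it much more lightly. Rather than routing through Remark \ref{Galois} and Pontryagin duality, the paper simply observes that substituting $x_i^{1/q}\mapsto\xi_ix_i^{1/q}$ (for a vector $\xi$ of $q$-th roots of unity) sends the given root $z$ to another root $z_\xi$ lying in $\k\{\x^{\b},\b\in\s\cap\frac{1}{q}\Z^n\}$, then forms $Q(Z)=\prod_{\xi\in I}(Z-z_\xi)$ over a set of distinct $z_\xi$'s, notes $Q(Z)\in\V_\a[Z]$ divides $P(Z)$, and concludes $Q=P$ by irreducibility. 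Your invocation of Remark \ref{Galois} is also slightly off-target: that remark is stated for polynomials whose discriminant is $(\a)$-homogeneous times a unit, not for polynomials merely assumed irreducible in $\V_\a[Z]$; the underlying chain of Galois isomorphisms does apply here (since irreducibility in $\V_\a[Z]$ gives irreducibility over $\K_{\nu_\a}^{\alg}\subset\KK_\a$), but you cannot cite the remark literally and would need to re-derive that part.

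The genuine error is in the intersection step $\k\lb\x\rb\cap\k\{\x^{\b},\b\in\s\cap\frac{1}{q}\Z^n\}=\k\{\x\}$. Convergence of $f(\t(\x))=\sum_\alpha a_\alpha\x^{M^T\alpha}$ gives an exponential \emph{growth} bound on the coefficients, of the shape $|a_\alpha|\leq C\,R^{|M^T\alpha|_1}$ with $R>1$, not the \emph{decay} bound $|a_\alpha|\leq C\,R^{-|M^T\alpha|_1}$ that you wrote. With a growth bound, the inequality $|M^T\alpha|_1\geq|\alpha|_1$ points the wrong way and is useless. What you actually need is the \emph{upper} bound $|M^T\alpha|_1\leq K\,|\alpha|_1$, where $K:=\max_j\sum_i m_{j,i}$ is the largest row sum of $M$; this gives $|a_\alpha|\leq C\,R^{K|\alpha|_1}=C\,(R^K)^{|\alpha|_1}$ and hence convergence of $f$. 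The conclusion is true, and the fix is small, but the argument as written does not prove it.
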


\begin{proof}
Let $z\in \k\{ \x^{\b},\b\in\s\cap\frac{1}{q}\Z^n\}$ be a root of $P(Z)$. For any $\xi=(\xi_1, \ldots ,\xi_n)$  vector of $q$-th roots of unity  let us denote by $z_{\xi}$ the element of $\k\{ \x^{\b},\b\in\s\cap\frac{1}{q}\Z^n\}$ obtain from $z$ by replacing $(x_1^{\frac{1}{q}}, \ldots ,x_n^{\frac{1}{q}})$ by $(\xi_1x_1^{\frac{1}{q}}, \ldots ,\xi_nx_n^{\frac{1}{q}})$. In particular $z_{\xi}\in \k\{ \x^{\b},\b\in\s\cap\frac{1}{q}\Z^n\}$. Then for any $\xi$, $z_{\xi}$ is a root of $P(Z)$. Let $I$ be a subset of $\U_q^n$, where $\U_q$ is the group of $q$-th root of unity, such that 
$$z_{\xi}\neq z_{\xi'}\text{ for any }\xi, \,\xi'\in I, \,\xi\neq \xi',$$ 
$$\text{ and } \forall \xi\in \U_q^n,\, \exists \xi'\in I,\ z_{\xi'}=z_{\xi}.$$
Let us set $Q(Z)=\prod_{\xi\in I}(Z-z_{\xi})$. Then $Q(Z)$ is a monic polynomial of $\V_{\a}[Z]$ whose roots are roots of $P(Z)$. Thus it divides $P(Z)$ in $\V_{\a}[Z]$ hence, since $P(Z)$ is irreducible, $Q(Z)=P(Z)$. Thus the other roots of $P(Z)$ are in $\k\{ \x^{\b},\b\in\s\cap\frac{1}{q}\Z^n\}$ and $P(Z)\in\k\{\x\}[Z]$.
\end{proof}

\begin{corollary}
Let $P(Z)\in\k\lb \x\rb[Z]$ be an irreducible monic polynomial of degree $d$ where $\k$ is a characteristic zero algebraically closed valued field. Let $\a\in\R_{>0}^n$ such that $\dim_{\Q}(\Q\a_1+\cdots+\Q\a_n)=n$. Let us assume that there exists an irreducible monic polynomial $Q(Z)\in\k\lb \x\rb[Z]$ of degree $d$ whose  discriminant $\D_Q$ is a monomial times a unit and such that 
$$\nu_{\a}(P(Z)-Q(Z))\geq \frac{d}{2}\nu_{\a}(\D_Q).$$
 Let us assume moreover that one of the roots of $P(Z)$ is in $\k\{\x^{\b}, \b\in\s\cap\frac{1}{q}\Z^n\}$ for some strongly convex rational cone $\s$, where $\langle\a,\g\rangle>0$ for any $\g\in\s\backslash\{0\}$, and $q\in\N$.  Then the coefficients of $P(Z)$ are in $\k\{\x\}$.

\end{corollary}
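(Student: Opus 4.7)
The plan is to transfer the irreducibility of $Q$ (which is quasi-ordinary, hence controlled by Abhyankar--Jung) to $P$ via the approximation hypothesis, and then to invoke the lemma immediately preceding the corollary. First, since $\D_Q$ is a monomial times a unit, Corollary \ref{AJ} applies to $Q$: there exists $e\in\N$ such that the roots $z_1,\dots,z_d$ of $Q$ lie in $\k\lb x_1^{1/e},\dots,x_n^{1/e}\rb$. The elements $x_i^{1/e}$ are integral homogeneous with respect to $\nu_\a$ (by Example \ref{homo_mono'}), so these roots in fact live in $V^{\fg}_{\nu_\a}[\langle x_1^{1/e},\dots,x_n^{1/e}\rangle]$. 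Since $Q$ is irreducible in $\k\lb\x\rb[Z]$, the Galois group $G\cong(\Z/e\Z)^n$ of $\k\lb\x\rb\subset\k\lb x^{1/e}\rb$ acts transitively on $\{z_1,\dots,z_d\}$. This action extends by $x_i^{1/e}\mapsto\zeta_i x_i^{1/e}$ (with $\zeta_i$ an $e$-th root of unity) to an action on $V^{\fg}_{\nu_\a}[\langle x_1^{1/e},\dots,x_n^{1/e}\rangle]$ fixing $V^{\fg}_{\nu_\a}$ pointwise, and remains transitive on the roots; hence $Q$ is irreducible in $V^{\fg}_{\nu_\a}[Z]$.

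Next, from $\D_Q=\prod_{i<j}(z_i-z_j)^2$ one has $\max_{i\neq j}\nu_\a(z_i-z_j)\leq\tfrac{1}{2}\nu_\a(\D_Q)$, so the hypothesis $\nu_\a(P-Q)\geq\tfrac{d}{2}\nu_\a(\D_Q)$ produces, via Remark \ref{maj}, the approximation bound required by Proposition \ref{cor_factor_limit}. Applying that proposition in $V^{\fg}_{\nu_\a}[Z]$ with $P$ and $Q$ in the reversed roles (so that the polynomial whose factorization is already known is $Q$), the factorization type of $Q$ transfers to one of $P$; since $Q$ is irreducible in $V^{\fg}_{\nu_\a}[Z]$, so is $P$, and \emph{a fortiori} $P$ is irreducible in the subring $\V_\a[Z]$ (any factorization in $\V_\a[Z]$ would also be a factorization in $V^{\fg}_{\nu_\a}[Z]$).

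With $P$ now known to be irreducible in both $\k\lb\x\rb[Z]$ and $\V_\a[Z]$, and to possess one root in $\k\{\x^\b,\b\in\s\cap\tfrac{1}{q}\Z^n\}$, the lemma preceding the corollary has its hypotheses exactly fulfilled and yields $P(Z)\in\k\{\x\}[Z]$, as required. The hard step is the second half of the first paragraph: Theorem \ref{AJ_gen} directly provides only the irreducibility of $Q$ in $\V_\a[Z]$, which is too small a ring to feed into Proposition \ref{cor_factor_limit}. The Galois-theoretic argument using the explicit Abhyankar--Jung description of the roots is the essential ingredient that upgrades this to irreducibility in $V^{\fg}_{\nu_\a}[Z]$ and allows the approximation transfer to go through.
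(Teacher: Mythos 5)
Your proof is correct and follows the same route as the paper's, whose proof of this corollary is compressed to a single citation of Remark \ref{maj}, Proposition \ref{cor_factor_limit}, and the preceding lemma. Your Galois-theoretic step---using Corollary \ref{AJ} and the transitive action of $(\Z/e\Z)^n$ on $\wdh{V}_{\nu_\a}[x_1^{1/e},\dots,x_n^{1/e}]$ to upgrade $Q$'s irreducibility from $\k\lb\x\rb[Z]$ to $V^{\fg}_{\nu_\a}[Z]$, the ring Proposition \ref{cor_factor_limit} actually operates in---makes explicit what the paper leaves implicit here, and is exactly the argument the paper runs in the corollary following Corollary \ref{Newton}; your remark that Theorem \ref{AJ_gen} alone would only give irreducibility in the smaller ring $\V_\a[Z]$ and hence cannot feed directly into Proposition \ref{cor_factor_limit} is the right reason this extra step is needed. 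One caveat you inherit silently from the statement itself: the hypothesis reads $\nu_\a(P-Q)\geq \frac{d}{2}\nu_\a(\D_Q)$, while Remark \ref{maj} and the hypothesis of Proposition \ref{cor_factor_limit} both require strict inequalities, so strictly speaking the passage from the non-strict bound to the strict one deserves a word (it is automatic when more than one pair of roots of $Q$ has positive $\nu_\a$-distance, but the degenerate cases are not discussed either here or in the paper).
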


\begin{proof}
By Remark \ref{maj} and Proposition \ref{cor_factor_limit}, the polynomial $P(Z)$ is irreducible in $\V_{\a}[Z]$. Thus we can apply the previous Lemma. \end{proof}


\section{Diophantine Approximation}\label{part_diop}

Here we give a necessary condition for an element of $\wdh{\K}_{\nu}$ to be algebraic over $\K_n$:

\begin{theorem}\label{diop}\cite{Ro}\cite{I-I}
Let $\nu$ be an Abhyankar valuation and let $z\in \K^{\alg}_{\nu}$. Then there exist two constants $C>0$ and $a\geq 1$ such that 
$$\left|z-\frac{f}{g}\right|_{\nu}\geq C|g|^a_{\nu}\ \ \forall f,g\in\k\lb \x\rb .$$
\end{theorem}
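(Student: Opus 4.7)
The plan is to reduce the inequality to Izumi's theorem on the finite ring extension $R := \F_n[z]$. Since the inequality degenerates when $f/g = z$, I treat the nontrivial case $z \notin \K_n$, and let $P(Z) \in \F_n[Z]$ be the minimal monic polynomial of $z$, of degree $d \geq 2$. The ring $R = \F_n[Z]/(P(Z))$ is then a free $\F_n$-module of rank $d$ with basis $\{1,z,\ldots,z^{d-1}\}$; its localization at the center of $\nu$ is a local Noetherian domain, and $\nu$ extends uniquely to it (because $\wdh{V}_\nu$ is henselian), still as an Abhyankar valuation, since the rational rank of the value group and the transcendence degree of the residue field are both preserved under finite algebraic extensions.

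First, I invoke Izumi's theorem (cf.\ \cite{I-I}) on this localization of $R$: there exist constants $C_1,C_2 > 0$ such that
$$\nu(r) \leq C_1 \ord_R(r) + C_2 \qquad \forall r \in R \setminus \{0\},$$
where $\ord_R$ denotes the $\m R$-adic order.

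Next, for $f, g \in \F_n$ with $g \neq 0$, I set $r := f - zg \in R$. Since $z \notin \K_n$, $r \neq 0$. Writing $r$ in the $\F_n$-basis $\{1, z, \ldots, z^{d-1}\}$, it has coordinates $(f,-g,0,\ldots,0)$, and the decomposition $\m^k R = \m^k \oplus \m^k z \oplus \cdots \oplus \m^k z^{d-1}$ shows that $\ord_R(r) = \min\{\ord(f), \ord(g)\} \leq \ord(g)$. Combining with the elementary bound $\ord(g) \leq \nu(g)/\alpha$, where $\alpha := \min_i \nu(x_i) > 0$, Izumi's inequality yields $\nu(f - zg) \leq (C_1/\alpha)\,\nu(g) + C_2$, hence
$$\nu(z - f/g) = \nu(f - zg) - \nu(g) \leq (C_1/\alpha - 1)\,\nu(g) + C_2.$$
Translating to the norm and setting $a := \max\{1,\, C_1/\alpha - 1\}$ and $C := e^{-C_2}$ (and using $|g|_\nu \leq 1$, so that a larger exponent $a$ only weakens the bound) yields the desired inequality $|z - f/g|_\nu \geq C|g|_\nu^a$.

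The main obstacle is the invocation of Izumi's theorem on $R$: one must verify carefully that $\nu$ remains an Abhyankar valuation on (the relevant localization of) the finite extension $R = \F_n[z]$, and that the comparison between $\nu$ and $\ord_R$ is available in the required affine linear form on that ring. This is exactly the content of the "easy generalization of \cite{Ro} and \cite{I-I}" alluded to in the introduction; once it is in hand, the rest of the argument is a direct calculation relying only on the free $\F_n$-module structure of $R$ and the elementary bound $\alpha\cdot\ord \leq \nu$ on $\F_n$.
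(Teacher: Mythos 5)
Your approach is genuinely different from the paper's, and it is conceptually attractive, but several steps that you treat as routine are in fact gaps.

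The paper takes the homogenized norm form $Q(Z_1,Z_2):=Z_1^d P(Z_2/Z_1)$ (so that $Q(f,g)=g^dP(f/g)$), applies Theorem 3.1 of \cite{Ro} to bound $\ord(Q(f,g))$ by an affine function of $\min\{\ord(f),\ord(g)\}$ on $\F_n$, uses Izumi's theorem only on $\F_n$ to pass from $\ord$ to $\nu$, and then makes a two--case analysis (according to whether $\nu(z-f/g)\le\nu(z)$ or not) using the factorization $P(Z)=R(Z)(Z-z)$ in $\wdh{V}_\nu[Z]$. You instead apply Izumi directly to the finite extension $R=\F_n[z]$, and compute the relevant order via the free module structure $R=\oplus_i\F_n z^i$. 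The two routes are related (the norm $Q(f,g)$ is $N_{R/\F_n}(f-zg)$), but yours is more direct if one grants the Izumi-type inequality on $R$. What the paper's route buys is that one never leaves $\F_n$: both Izumi and the linear Artin function are applied on $\F_n$, where the hypotheses are unambiguous; what yours buys is a one-line order computation in place of the homogenization trick. However, the following issues in your argument are unaddressed.

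First, you assert that the minimal polynomial $P(Z)$ of $z$ is monic with coefficients in $\F_n$ and that $R$ sits inside the valuation ring; but for $z\in\K_\nu^{\alg}$ this requires $z$ to be integral over $\F_n$ (equivalently, $z\in V_\nu^{\alg}$ after a normalization), which need not hold a priori. The paper performs the corresponding reduction explicitly via the substitution $P_h$, and you need an analogous step, together with reducing to $\nu(z)>0$. Second, your key computation $\ord_R(f-zg)=\min\{\ord(f),\ord(g)\}$ is a statement about the $\m R$-adic order (using $\m^kR=\oplus_i\m^k z^i$), whereas Izumi's inequality on the local ring $R_{\m_R}$ compares $\nu$ to the $\m_{R_{\m_R}}$-adic order; since $\m R\subsetneq\m_R$ in general, these orders differ, and the affine-linear comparison between them (via $\m_R^N\subset\m R$ in $R_{\m_R}$, which holds because $R/\m R$ is Artinian) must be stated. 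This is fixable but is not automatic, and it changes the constants. Third, the applicability of Izumi's theorem to $R_{\m_R}$ requires analytic irreducibility of that local ring, which you do not verify; this is not obvious and is one place where the assumption $z\in V_\nu^{\alg}$ (so that $z$ lies in the henselization $V_\nu^{\alg}$ of $V_\nu$) is genuinely used. You flag the first and third issues informally as "the main obstacle," but the second one (the $\m R$ versus $\m_R$ order discrepancy) is not flagged and is exactly the kind of detail that would be wrong as written.
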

\begin{proof}
Let $P(Z):=a_0Z^d+a_1Z^{d-1}+\cdots+a_d\in\K_n[Z]$ be an irreducible polynomial such that $P(z)=0$. Let $h\in \k\lb \x\rb $ and set
$$P_h(Z):=h^da_0^{d-1}P\left(\frac{Z}{ha_0}\right).$$
Then $P_h(Z)=Z^d+a_1hZ^{d-1}+a_2a_0h^2Z^{d-2}+\cdots+a_da_0^{d-1}h^d$ and $zha_d$ is a root of $P_h(Z)$. It is straightforward to check that $z$ satisfies the theorem if and only if $zha_d$ does.  Thus we may assume that $P(Z)$ is a monic polynomial and $\nu(z)>0$ by choosing $h$ such that $\nu(h)$ is large enough. Let us set  $Q(Z_1,Z_2):=Z_1^dP(Z_2/Z_1)$. By Theorem 3.1 \cite{Ro} there exist two constants $a\geq d$ and $b\geq 0$ such that
$$\ord(Q(f,g))\leq a\min\{\ord(f),\ord(g)\}+b\ \ \ \forall f,g\in\k\lb \x\rb .$$
Moreover, by Izumi's Theorem (\cite{Iz}, \cite{Re}, \cite{ELS}), there exists a constant $c\geq 1$ such that for all $f\in\k\lb \x\rb $, $\ord(f)\leq \nu(f)\leq c\,\ord(f)$. Thus
$$\nu(Q(f,g))\leq ac\min\{\nu(f),\nu(g)\}+bc\ \ \ \forall f,g\in\k\lb \x\rb .$$
Since $P(Z)$ is irreducible in $\K_n[Z]$ and $\K_n$ is a characteristic zero field, $P(Z)$ has no multiple roots in $\wdh{V}_{\nu}$ and we may write 
$$P(Z)=R(Z)(Z-z)$$
where $R(Z)\in \wdh{V}_{\nu}[Z]$ and $R(z)\neq 0$.  Set $r:=\nu(z)$.
Let $f, g\in\k\lb \x\rb $ with $g\neq 0$. Two cases may occur: either 
\begin{equation}\label{diop1}\left|z-\frac{f}{g}\right|_{\nu}\geq e^{-r}\end{equation} either $\nu\left(z-\frac{f}{g}\right)> r$. In the last case we have $\nu\left(\frac{f}{g}\right)=\nu(z)>0$. In particular $\nu\left(R\left(\frac{f}{g}\right)\!\right)\geq 0$ and $\nu(f)>\nu(g)$. Thus
$$(ac-d)\nu(g)+bc\geq \nu\left(P\left(\frac{f}{g}\right)\!\right)\geq \nu\left(\frac{f}{g}-z\right).$$
Thus we have
\begin{equation}\label{diop2}A\nu(g)+B\geq \nu\left(\frac{f}{g}-z\right)\ \ \text{ or }\ \  \left|z-\frac{f}{g}\right|_{\nu}\geq e^{-B}|g|_{\nu}\end{equation}
with $A=ac-d$ and $B=bc$. Then (\ref{diop1}) and (\ref{diop2}) prove the theorem.

\end{proof}

\begin{ex}
Let $\s:=(-1,1)\R_{\geq 0}+(1,0)\R_{\geq 0}\subset \R^2$. This is a rational strongly convex cone of $\R^2$. Let $f(x_1,x_2)$ be a power series, $f(x_1,x_2)\in\k\lb x_1,x_2\rb$. Let us set
$$g(x_1,x_2):=\sum_{i=0}^{\infty}\left(\frac{x_2}{x_1}\right)^{i!}+f(x_1,x_2)\in\k\lb x^{\b}, \b\in \s\cap\Z\rb.$$
Then $g\in\V_{\a}$ for any $\a\in\R_{>0}^2$ such that $\a_2>\a_1$. Moreover
$$\nu_{\a}\left(g-f-\sum_{i=0}^n\left(\frac{x_2}{x_1}\right)^{i!}\right)=(n+1)!(\a_2-\a_1)=\frac{\a_2-\a_1}{\a_1}(n+1) \nu_{\a}(x_1^{n!}).$$ Thus there do not exist  constants $A$ and $B$ such that 
$$A\nu_{\a}(x_1^{n!})+B\geq \nu_{\a}\left(g-f-\sum_{i=0}^n\left(\frac{x_2}{x_1}\right)^{i!}\right)\ \ \forall n\in\N.$$
Hence $g(x_1,x_2)$ is not algebraic over $\F_2$ by Theorem \ref{diop}.
\end{ex}

\appendix

\section*{Notations}
\begin{enumerate}
\item[$\bullet$] $\nu_{\a}$ is the monomial valuation defined by $\nu_{\a}(x_i):=\a_i$ for any $i$ (cf. Example \ref{monomial}).
\item[$\bullet$] $V_{\nu}$ is the valuation ring associated to $\nu$.
\item[$\bullet$] $\wdh{V}_{\nu}$ is the completion of $V_{\nu}$.
\item[$\bullet$] $\K_n$ is the fraction field of $\k\lb \x\rb $ and $V_{\nu}$.
\item[$\bullet$] $\wdh{\K}_{\nu}$ is the fraction field of $\wdh{V}_{\nu}$.
\item[$\bullet$] $\text{Gr}_{\nu}V_{\nu}$ is the graded ring associated to $V_{\nu}$ (cf. Part \ref{part_homo}).
\item[$\bullet$] $V^{\alg}_{\nu}$ is the algebraic closure (or the Henselization) of $V_{\nu}$ in $\wdh{V}_{\nu}$ (see Lemma \ref{val}).
\item[$\bullet$] $\K^{\alg}_{\nu}$ is the fraction field of $V^{\alg}_{\nu}$.
\item[$\bullet$] $V^{\fg}_{\nu}$ is the subring of $\wdh{V}_{\nu}$ whose elements have $\nu$-support included in a finitely generated sub-semigroup of $\R_{\geq 0}$ (cf. Definition \ref{fg}).
\item[$\bullet$] $\K_{\nu}^{\fg}$ is the fraction field of $V_{\nu}^{\fg}$.
\item[$\bullet$] For any $\a\in\R_{>0}^n$, a $(\a)$-homogeneous polynomial is a weighted homogeneous polynomial for the weights $\a_1$, \ldots , $\a_n$ (see Definition \ref{alpha-hom}).
\item[$\bullet$] $A[\langle \g_1, \ldots , \g_s\rangle]$ is the valuation ring associated to $A[\g_1, \ldots , \g_s]$ when $A=\wdh{V}_{\nu}$, $V^{\fg}_{\nu}$ or $V^{\alg}_{\nu}$ (cf. Definition \ref{Vlr}).
\item[$\bullet$] $\ovl{V}_{\nu}$ is the direct limit of the rings $\wdh{V}_{\nu}[\langle\g_1, \ldots , \g_s\rangle]$ where the $\g_i$  are homogeneous elements with respect to $\nu$ (cf. Definition \ref{limit}).
\item[$\bullet$] $\ovl{\K}_{\nu}$ is the fraction field of $\ovl{V}_{\nu}$. 
\item[$\bullet$] $\ovl{V}_{\nu}^{\alg}$ is the direct limit of the rings $V_{\nu}^{\alg}[\langle\g_1, \ldots , \g_s\rangle]$ where the $\g_i$  are homogeneous elements with respect to $\nu$.
\item[$\bullet$] $\ovl{\K}_{\nu}^{\alg}$ is the fraction field of $\ovl{V}_{\nu}^{\alg}$. 
\item[$\bullet$] $\ovl{V}_{\nu}^{\fg}$ is the direct limit of the rings $V_{\nu}^{\fg}[\langle\g_1, \ldots , \g_s\rangle]$ where the $\g_i$  are homogeneous elements with respect to $\nu$.
\item[$\bullet$] $\ovl{\K}_{\nu}^{\fg}$ is the fraction field of $\ovl{V}_{\nu}^{\fg}$. 
\item[$\bullet$] $\V_{\a,\d}$ is the subring of $V^{\fg}_{\nu_{\a}}$ of elements  of the form $\displaystyle\sum_{i\in\La}\frac{a_i}{\d^{m(i)}}$ where $\La\subset\R $ is a finitely generated semigroup, $\nu_{\a}\left(\frac{a_i}{\d^{m(i)}}\right)=i$ and $i\lgm m(i)$ is bounded by an affine function (see Definition \ref{growth'}).
\item[$\bullet$] $\V_{\a}$ is the direct limit of the $\V_{\a,\d}$   over all the $(\a)$-homogeneous polynomials $\d$. It is a valuation ring (cf. Proposition \ref{VV}).
\item[$\bullet$] $\KK_{\a}$ is the fraction field of $\V_{\a}$ (cf. Definition \ref{KK}).
\item[$\bullet$] $\ovl{\KK}_{\a}$  is the direct limit of the fields $\KK[\langle\g_1, \ldots , \g_s\rangle]$ where the $\g_i$  are homogeneous elements with respect to $\nu$ (cf. Definition \ref{KK}).
\item[$\bullet$] $\V_{\a,\d}^R$ is the subring $\V_{\a,\d}$ whose elements are in the Henselian ring $R$ after a suitable transform (cf. Definition \ref{V^R}).
\item[$\bullet$] $\V_{\a}^R$ is the direct limit of the $\V_{\a,\d}^R$ over all the $(\a)$-homogeneous polynomials $\d$.
\end{enumerate}



\begin{thebibliography}{00}
 \bibitem[Ab]{Ab} S. Abhyankar, On the ramification of algebraic functions, \textit{Amer. J. Math.}, \textbf{77},  (1955), 575-592.
 \bibitem[AM]{AM} S. Abhyankar, T. Moh,  On analytic independence, \textit{Trans. A.M.S}, \textbf{219}, (1976), 77-87.
 \bibitem[Aro]{Aro} F. Aroca, Puiseux parametric equations of analytic sets, \textit{Proc. Amer. Math. Soc.}, \textbf{132}, (2004), no. 10, 3035-3045.
\bibitem[AI]{A-I} F. Aroca, G.  Ilardi, A family of algebraically closed fields containing polynomials in several variables,
\textit{Comm. Algebra}, \textbf{37}, (2009), no. 4, 1284-1296. 
\bibitem[Art]{Ar} M. Artin, On the solutions of analytic equations, \textit{Invent. Math.}, \textbf{5}, (1968), 277-291.
\bibitem[BM]{BM} E. Bierstone, P. D. Milman, Arc-analytic functions, \textit{Invent. Math}, \textbf{101} (1990), no. 2, 411-424.

 \bibitem[BK]{B-K} E. Brieskorn, H. Kn\"orrer, Plane algebraic curves, Birkh\"auser, (1986).
 \bibitem[Cu]{Cu}  S. D. Cutkosky, Resolution of Singularities, \textit{Graduate Studies in Mathematics}, \textbf{63}, American Math. Soc., (2004). 
 \bibitem[CK]{C-K} S. D. Cutkosky, O. Kashcheyeva, Algebraic series and valuation rings over nonclosed fields, \textit{J. of Pure and Applied Algebra}, \textbf{212}, (2008), 1996-2012.
 \bibitem[ELS]{ELS} L. Ein, R. Lazarsfeld, K. Smith,  Uniform approximation of Abhyankar valuation ideals in smooth function fields,  \textit{Amer. J. Math.}, \textbf{125}, (2003), no. 2, 409-440.
 \bibitem[Ei]{Ei} G. Eisenstein, \"Uber eine allgemeine Eigenschaft der Reihen-Entwicklungen aller Algebraischen Funktionen, \textit{Bericht K\"onigl. Preuss. Akad. d. Wiss. Zu Berlin}, 441-443, (1852).
  
 \bibitem[FJ]{F-J} C. Favre, M. Jonsson, The Valuative Tree, Lecture Notes in Mathematics, 1853. Springer-Verlag, Berlin, 2004. xiv+234 pp.  
 \bibitem[Ga]{Ga} A. Gabrielov, Formal relations between analytic functions, \textit{Izv. Akad. Nawk. SSSR}, \textbf{37}, 1056-1088, (1973).
 \bibitem[Go]{Go} P. D.  Gonz\'alez P\'erez, Singularit\'es quasi-ordinaires toriques et poly\`edre de Newton du discriminant, \textit{Canad. J. Math.}, \textbf{52}, (2000), 348-368.
  \bibitem[Gr]{Gr} A. Grothendieck, \'Etude locale des sch\'emas et des morphismes de sch\'emas IV,  \textit{Inst. Hautes \'Etudes Sci. Publ. Math.}, No. 32, 1967.
  \bibitem[HOV]{HOV} F.  Herrera Govantes, M.  Olalla Acosta, J. L. Vicente C\'ordoba,  Valuations in fields of power series, \textit{Proceedings of the International Conference on Algebraic Geometry and Singularities (Sevilla, 2001)}, \textit{Rev. Mat. Iberoamericana}, \textbf{19}, (2003), no. 2, 467-482. 
\bibitem[II]{I-I} H. Ito, S. Izumi, Diophantine inequality for equicharacteristic excellent Henselian local domains,  \textit{C. R. Math. Acad. Sci. Soc. R. Can.},  \textbf{30},  (2008),  no. 2, 48-55.
 \bibitem[Iz]{Iz} S. Izumi, A mesure of integrity for local analytic algebras, \textit{Publ. RIMS, Kyoto Univ.}, \textbf{21}, (1985), 719-736.

 
 \bibitem[Ju]{J} H. E. W. Jung, Darstellung der Funktionen eines algebraischen K\"orpers zweier unabh\"angiger Var\"anderlichen $x$, $y$ in der Umbebung einer Stelle $x=a$, $y=b$. \textit{J. Reine Angew. Math.}, \textbf{133},  (1908), 289-314.
 
 \bibitem[KV]{K-V} K. Kiyek, J. L. Vicente,  On the Jung-Abhyankar theorem, \textit{Arch. Math. (Basel)}, \textbf{83}, (2004), no. 2, 123-134.
 \bibitem[McC]{MC}  P. J. McCarthy, Algebraic extensions of fields,  Chelsea Publishing Co., New York, (1976), ix+166 pp.

 \bibitem[McD]{MD} J. McDonald, Fiber polytopes and fractional power series, \textit{Journal of Pure and Applied Algebra}, \textbf{104}, (1995), 213-233.   
 \bibitem[M-B]{M-B} L. Moret-Bailly,  An extension of Greenberg's theorem to general valuation rings, \textit{Manuscripta Math.}, \textbf{139}, (2012), no. 1-2, 153-166. 
 
 \bibitem[MS]{MS} H. Mourtada, B. Schober, A polyhedral characterization of quasi-ordinary singularities,  arXiv:1512.07507.
 
  \bibitem[PR]{P-R} A. Parusi\'nski, G. Rond, The Abhyankar-Jung Theorem, \textit{Journal of Algebra}, \textbf{365}, (2012), 29-41.
\bibitem[Po]{Po} D. Popescu, General Neron desingularisation and approximation, \textit{Nagoya Math. J.}, \textbf{104}, (1986), 85-115.  
\bibitem[Pu1]{Pu1} V. Puiseux, Recherches sur les fonctions alg\'ebriques, \textit{J. Math. Pures Appl.}, \textbf{15},  (1850), 365-480.
\bibitem[Pu2]{Pu2} V. Puiseux, Nouvelles recherches sur les fonctions alg\'ebriques, \textit{J. Math. Pures Appl.}, \textbf{16},  (1851), 228-240.
\bibitem[Re]{Re} D. Rees, Izumi's theorem, Commutative algebra (Berkeley, CA, 1987),  407--416, \textit{Math. Sci. Res. Inst. Publ.}, \textbf{15}, (1989).


\bibitem[Ri]{Ri} P. Ribenboim, Fields: algebraically closed and others, \textit{Manuscripta Math.}, \textbf{75}, (1992), 115-150.
  \bibitem[Ro1]{Ro} G. Rond, Approximation diophantienne dans le corps des s\'eries en plusieurs variables, \textit{Ann. Institut Fourier}, vol. 56, no. 2, (2006), 299-308.
  \bibitem[Ro2]{Ro2} G. Rond, Homomorphisms of local algebras in positive characteristic, \textit{J. Algebra}, \textbf{322}, (2009), no. 12, 4382-4407.
  \bibitem[RS]{RS} G. Rond, M. Spivakovsky, The analogue of Izumi's Theorem for Abhyankar valuations, \textit{J. Lond. Math. Soc.}, \textbf{90} (3), (2014),  725-740. 
\bibitem[Sa]{Sa} A. Sathaye, Generalized Newton-Puiseux expansions and Abhyankar-Moh semigroup theorem, \textit{Invent. Math.}, \textbf{74}, (1983), 149-157.  
\bibitem[Sp1]{Sp90} M. Spivakovsky, Valuations in function fields of surfaces, \textit{Amer. J. Math.}, \textbf{112}, (1990), no. 1, 107-156.
\bibitem[Sp2]{Sp} M. Spivakovsky, A new proof of D. Popescu's theorem on smoothing of ring homomorphisms,  \textit{J. Amer. Math. Soc.}, \textbf{12}  (1999),  no. 2, 381-444.  
\bibitem[SV]{SV}  M. J. Soto, J. L.  Vicente, The Newton procedure for several variables, \textit{Linear Algebra Appl.}, \textbf{435}, (2011), no. 2, 255-269.
\bibitem[Te]{Te} B. Teissier, Overweight deformations of affine toric varieties and local uniformization,  "Valuation theory in interaction", Proceedings of the second international conference on valuation theory, Segovia--El Escorial, 2011. European Math. Soc. Publishing House, Congress Reports Series, (2014), 474-565.
  \bibitem[To]{To} J.-Cl. Tougeron, Sur les racines d'un polyn\^ome \`a coefficients s\'eries formelles, \textit{Real analytic and algebraic geometry (Trento 1988)}, 325-363, \textit{Lectures Notes in Math.}, \textbf{1420}, (1990).
    \end{thebibliography}
\end{document}